\documentclass[reqno,11pt]{amsart}
\textwidth=14.5cm \oddsidemargin=1cm \evensidemargin=1cm
\usepackage{amsmath}
\usepackage{amsxtra}
\usepackage{amscd}
\usepackage{amsthm}
\usepackage{amsfonts}
\usepackage{amssymb}
\usepackage{eucal}
\usepackage{scalerel}
\usepackage{verbatim}
\usepackage[matrix,arrow,curve]{xy}

\usepackage{a4wide}

\usepackage[T1]{fontenc}

\usepackage{xr-hyper}
\usepackage[
pdftex,
bookmarks=false,
colorlinks=true,
debug=true,
pdfnewwindow=true]{hyperref}


\theoremstyle{plain}
\newtheorem{Thm}[equation]{Theorem}
\newtheorem{Cor}[equation]{Corollary}
\newtheorem{Lem}[equation]{Lemma}
\newtheorem{Prop}[equation]{Proposition}
\newtheorem{Conj}[equation]{Conjecture}
\newtheorem{Ex}[equation]{Example}

\theoremstyle{definition}
\newtheorem{Def}[equation]{Definition}

\theoremstyle{remark}

\newtheorem{Rem}[equation]{Remark}

\errorcontextlines=0
\numberwithin{equation}{section}
\renewcommand{\rm}{\normalshape}

\newif\ifShowLabels
\ShowLabelstrue
\newdimen\theight
\def\TeXref#1{%
    \leavevmode\vadjust{\setbox0=\hbox{{\tt
        \quad\quad  {\small \rm #1}}}%
    \theight=\ht0
    \advance\theight by \lineskip
    \kern -\theight \vbox to
    \theight{\rightline{\rlap{\box0}}%
    \vss}%
    }}%

\ShowLabelsfalse



\newenvironment{thm}[1]%
    { \begin{Thm} \label{T:#1}  \ifShowLabels \TeXref{T:#1} \fi }%
    { \end{Thm} }

\renewcommand{\th}[1]{\begin{thm}{#1} \sl }
\renewcommand{\eth}{\end{thm} }

\newenvironment{lemma}[1]%
    { \begin{Lem} \label{L:#1}  \ifShowLabels \TeXref{L:#1} \fi }%
    { \end{Lem} }
\newcommand{\lem}[1]{\begin{lemma}{#1} \sl}
\newcommand{\elem}{\end{lemma}}

\newenvironment{propos}[1]%
    { \begin{Prop} \label{P:#1}  \ifShowLabels \TeXref{P:#1} \fi }%
    { \end{Prop} }
\newcommand{\prop}[1]{\begin{propos}{#1}\sl }
\newcommand{\eprop}{\end{propos}}

\newenvironment{corol}[1]%
    { \begin{Cor} \label{C:#1}  \ifShowLabels \TeXref{C:#1} \fi }%
    { \end{Cor} }
\newcommand{\cor}[1]{\begin{corol}{#1} \sl }
\newcommand{\ecor}{\end{corol}}

\newenvironment{defeni}[1]%
    { \begin{Def} \label{D:#1}  \ifShowLabels \TeXref{D:#1} \fi }%
    { \end{Def} }
\newcommand{\defe}[1]{\begin{defeni}{#1} \sl }
\newcommand{\edefe}{\end{defeni}}

\newenvironment{remark}[1]%
    { \begin{Rem} \label{R:#1}  \ifShowLabels \TeXref{R:#1} \fi }%
    { \end{Rem} }
\newcommand{\rem}[1]{\begin{remark}{#1}}
\newcommand{\erem}{\end{remark}}

\newenvironment{conjec}[1]%
    { \begin{Conj} \label{Co:#1}  \ifShowLabels \TeXref{Co:#1} \fi }%
    { \end{Conj} }
\renewcommand{\conj}[1]{\begin{conjec}{#1} \sl }
\newcommand{\econj}{\end{conjec}}

\newcommand{\eq}[1]%
    { \ifShowLabels \TeXref{E:#1} \fi
       \begin{equation} \label{E:#1} }
\newcommand{\eeq}{ \end{equation} }

\newcommand{\prf}{ \begin{proof} }
\newcommand{\epr}{ \end{proof} }

\usepackage[dvipsnames]{xcolor}


\newcommand\nc{\newcommand}
\nc{\unl}{\underline}
\nc{\ol}{\overline}
\nc{\on}{\operatorname}

\nc{\BA}{{\mathbb{A}}}
\nc{\BC}{{\mathbb{C}}}
\nc{\BD}{{\mathbb{D}}}
\nc{\BF}{{\mathbb{F}}}
\nc{\BG}{{\mathbb{G}}}
\nc{\BM}{{\mathbb{M}}}
\nc{\BN}{{\mathbb{N}}}
\nc{\BO}{{\mathbb{O}}}
\nc{\BQ}{{\mathbb{Q}}}
\nc{\BP}{{\mathbb{P}}}
\nc{\BR}{{\mathbb{R}}}
\nc{\BZ}{{\mathbb{Z}}}
\nc{\BS}{{\mathbb{S}}}
\nc{\BK}{{\mathbb{K}}}

\nc{\CA}{{\mathcal{A}}} \nc{\CB}{{\mathcal{B}}} \nc{\CalC}{{\mathcal
C}} \nc{\CalD}{{\mathcal D}} \nc{\CE}{{\mathcal{E}}}
\nc{\CF}{{\mathcal{F}}} \nc{\CG}{{\mathcal{G}}}
\nc{\CH}{{\mathcal{H}}} \nc{\CI}{{\mathcal{I}}}
\nc{\CK}{{\mathcal{K}}} \nc{\CL}{{\mathcal{L}}}
\nc{\CM}{{\mathcal{M}}} \nc{\CN}{{\mathcal{N}}}
\nc{\CO}{{\mathcal{O}}} \nc{\CP}{{\mathcal{P}}}
\nc{\CQ}{{\mathcal{Q}}} \nc{\CR}{{\mathcal{R}}}
\nc{\CS}{{\mathcal{S}}} \nc{\CT}{{\mathcal{T}}}
\nc{\CU}{{\mathcal{U}}} \nc{\CV}{{\mathcal{V}}}
\nc{\CW}{{\mathcal{W}}} \nc{\CX}{{\mathcal{X}}}
\nc{\CY}{{\mathcal{Y}}} \nc{\CZ}{{\mathcal{Z}}}

\nc{\fa}{{\mathfrak{a}}}
\nc{\fb}{{\mathfrak{b}}}
\nc{\fg}{{\mathfrak{g}}}
\nc{\fgl}{{\mathfrak{gl}}}
\nc{\fh}{{\mathfrak{h}}}
\nc{\fj}{{\mathfrak{j}}}
\nc{\fl}{{\mathfrak{l}}}
\nc{\fm}{{\mathfrak{m}}}
\nc{\fn}{{\mathfrak{n}}}
\nc{\fu}{{\mathfrak{u}}}
\nc{\fp}{{\mathfrak{p}}}
\nc{\frr}{{\mathfrak{r}}}
\nc{\fs}{{\mathfrak{s}}}
\nc{\ft}{{\mathfrak{t}}}
\nc{\fw}{{\mathfrak{w}}}
\nc{\fz}{{\mathfrak{z}}}

\nc{\fA}{{\mathfrak{A}}}
\nc{\fB}{{\mathfrak{B}}}
\nc{\fD}{{\mathfrak{D}}}
\nc{\fE}{{\mathfrak{E}}}
\nc{\fF}{{\mathfrak{F}}}
\nc{\fG}{{\mathfrak{G}}}
\nc{\fI}{{\mathfrak{I}}}
\nc{\fJ}{{\mathfrak{J}}}
\nc{\fK}{{\mathfrak{K}}}
\nc{\fL}{{\mathfrak{L}}}
\nc{\fM}{{\mathfrak{M}}}
\nc{\fN}{{\mathfrak{N}}}
\nc{\frP}{{\mathfrak{P}}}
\nc{\fQ}{{\mathfrak Q}}
\nc{\fR}{{\mathfrak R}}
\nc{\fS}{{\mathfrak S}}
\nc{\fT}{{\mathfrak{T}}}
\nc{\fU}{{\mathfrak{U}}}
\nc{\fW}{{\mathfrak{W}}}
\nc{\fY}{{\mathfrak{Y}}}
\nc{\fZ}{{\mathfrak{Z}}}

\nc{\ba}{{\mathbf{a}}}
\nc{\bb}{{\mathbf{b}}}
\nc{\bc}{{\mathbf{c}}}
\nc{\bd}{{\mathbf{d}}}
\nc{\be}{{\mathbf{e}}}
\nc{\bi}{{\mathbf{i}}}
\nc{\bj}{{\mathbf{j}}}
\nc{\bn}{{\mathbf{n}}}
\nc{\bp}{{\mathbf{p}}}
\nc{\bq}{{\mathbf{q}}}
\nc{\bu}{{\mathbf{u}}}
\nc{\bv}{{\mathbf{v}}}
\nc{\bw}{{\mathbf{w}}}
\nc{\bx}{{\mathbf{x}}}
\nc{\by}{{\mathbf{y}}}
\nc{\bz}{{\mathbf{z}}}

\nc{\bA}{{\mathbf{A}}}
\nc{\bB}{{\mathbf{B}}}
\nc{\bC}{{\mathbf{C}}}
\nc{\bD}{{\mathbf{D}}}
\nc{\bE}{{\mathbf{E}}}
\nc{\bI}{{\mathbf{I}}}
\nc{\bK}{{\mathbf{K}}}
\nc{\bH}{{\mathbf{H}}}
\nc{\bM}{{\mathbf{M}}}
\nc{\bN}{{\mathbf{N}}}
\nc{\bO}{{\mathbf{O}}}
\nc{\bQ}{{\mathbf Q}}
\nc{\bS}{{\mathbf{S}}}
\nc{\bT}{{\mathbf{T}}}
\nc{\bV}{{\mathbf{V}}}
\nc{\bW}{{\mathbf{W}}}
\nc{\bX}{{\mathbf{X}}}
\nc{\bP}{{\mathbf{P}}}
\nc{\bY}{{\mathbf{Y}}}
\nc{\bZ}{{\mathbf{Z}}}

\nc{\sA}{{\mathsf{A}}}
\nc{\sB}{{\mathsf{B}}}
\nc{\sC}{{\mathsf{C}}}
\nc{\sD}{{\mathsf{D}}}
\nc{\sF}{{\mathsf{F}}}
\nc{\sK}{{\mathsf{K}}}
\nc{\sM}{{\mathsf{M}}}
\nc{\sO}{{\mathsf{O}}}
\nc{\sQ}{{\mathsf{Q}}}
\nc{\sP}{{\mathsf{P}}}
\nc{\sT}{{\mathsf{T}}}
\nc{\sV}{{\mathsf{V}}}
\nc{\sW}{{\mathsf{W}}}
\nc{\sX}{{\mathsf{X}}}
\nc{\sZ}{{\mathsf{Z}}}
\nc{\sU}{{\mathsf{U}}}
\nc{\sS}{{\mathsf{S}}}

\nc{\sfb}{{\mathsf{b}}}
\nc{\sfc}{{\mathsf{c}}}
\nc{\sd}{{\mathsf{d}}}
\nc{\sg}{{\mathsf{g}}}
\nc{\sk}{{\mathsf{k}}}
\nc{\sfl}{{\mathsf{l}}}
\nc{\sfp}{{\mathsf{p}}}
\nc{\sr}{{\mathsf{r}}}
\nc{\st}{{\mathsf{t}}}
\nc{\sfu}{{\mathsf{u}}}
\nc{\sw}{{\mathsf{w}}}
\nc{\sz}{{\mathsf{z}}}
\nc{\sx}{{\mathsf{x}}}
\nc{\se}{{\mathsf{e}}}
\nc{\sfv}{{\mathsf{v}}}

\nc{\bLambda}{{\boldsymbol{\Lambda}}}
\nc{\vv}{{\boldsymbol{v}}}
\nc{\Fl}{{{\mathcal F}\ell}}
\nc{\Gr}{{\on{Gr}}}
\nc{\CHH}{{\CH\!\!\CH}}
\nc{\lambdavee}{{\lambda^{\!\scriptscriptstyle\vee}}}
\nc{\alphavee}{\alpha^{\!\scriptscriptstyle\vee}}
\nc{\rhovee}{{\rho^{\!\scriptscriptstyle\vee}}}
\newcommand\iso{\,\vphantom{j^{X^2}}\smash{\overset{\sim}{\vphantom{\rule{0pt}{0.20em}}\smash{\longrightarrow}}}\,}
\nc{\oQM}{\vphantom{j^{X^2}}\smash{\overset{\circ}{\vphantom{\vstretch{0.7}{A}}\smash{\QM}}}}
\nc{\oZ}{{}^\dagger\!\vphantom{j^{X^2}}\smash{\overset{\circ}{\vphantom{\vstretch{0.7}{A}}\smash{Z}}}}
\nc{\odZ}{{}^\dagger\!\vphantom{j^{X^2}}\smash{\overset{\circ}{\vphantom{\vstretch{0.7}{A}}\smash{\mathfrak Z}}}^{c',c}}
\nc{\bdZ}{{}^\dagger\!\vphantom{j^{X^2}}\smash{\overset{\bullet}{\vphantom{\vstretch{0.7}{A}}\smash{\mathfrak Z}}}^{c',c}}
\nc{\oS}{\vphantom{j^{X^2}}\smash{\overset{\circ}{\vphantom{\vstretch{0.7}{A}}\smash{S}}}}
\nc{\buM}{\vphantom{j^{X^2}}\smash{\overset{\bullet}{\vphantom{\vstretch{0.7}{A}}\smash{M}}}}
\nc{\dW}{{}^\dagger\ol\CW{}}
\nc{\hW}{{}^\dagger\hat\CW{}}
\nc{\wW}{{}^\dagger\widetilde\CW{}}
\nc{\dZ}{{}^\dagger\!\fZ^{c',c}}
\nc{\dZc}{{}^\dagger\!\fZ^{c,c}}
\nc{\tZ}{{}^\dagger\!\tilde{Z}{}}
\nc{\hZ}{{}^\dagger\!\hat{Z}{}}

%
%
%
%

\nc{\ssl}{\mathfrak{sl}} \nc{\gl}{\mathfrak{gl}}
\nc{\wt}{\widetilde} \nc{\Sym}{\mathrm{Sym}} \nc{\Res}{\mathrm{Res}}
\nc{\sE}{{\mathsf{E}}} \nc{\bs}{{\mathbf{s}}}
\nc{\trig}{\mathrm{trig}} \nc{\rat}{\mathrm{rat}}
\nc{\sign}{\mathrm{sign}} \nc{\sL}{{\mathsf{L}}}
\nc{\fv}{{\mathfrak{v}}} \nc{\ad}{\mathrm{ad}}
\nc{\spsi}{{\mathsf{\psi}}} \nc{\sh}{{\mathsf{h}}}
\nc{\rtt}{\mathrm{rtt}} \nc{\qdet}{\mathrm{qdet}} \nc{\pt}{{\operatorname{pt}}}
\nc{\M}{\mathrm{M}} \nc{\Ker}{\mathrm{Ker}} \nc{\ssc}{\mathrm{sc}}
\nc{\loc}{\mathrm{loc}} \nc{\fra}{\mathrm{frac}}
\nc{\ddj}{\mathrm{DJ}} \nc{\End}{\mathrm{End}} \nc{\ev}{\mathrm{ev}}
\nc{\GL}{\mathrm{GL}}
\nc{\SSym}{\mathrm{SSym}}


\begin{document}
\title[PBWD bases and shuffle realizations]
{PBWD bases and shuffle algebra realizations for
$U_\vv(L\ssl_n), U_{\vv_1,\vv_2}(L\ssl_n), U_\vv(L\ssl(m|n))$
and their integral forms}

\author[Alexander Tsymbaliuk]{Alexander Tsymbaliuk}
\address{A.T.: Purdue University, Department of Mathematics, West Lafayette, IN 47907, USA}
\email{sashikts@gmail.com}

\begin{abstract}
We construct a family of PBWD (Poincar\'e-Birkhoff-Witt-Drinfeld) bases for
the quantum loop algebras
  $U_\vv(L\ssl_n),U_{\vv_1,\vv_2}(L\ssl_n),U_\vv(L\ssl(m|n))$
in the new Drinfeld realizations. In the 2-parameter case, this
proves~\cite[Theorem 3.11]{hrz} (stated in~\emph{loc.~cit.} without a proof),
while in the super case it proves a conjecture of~\cite{z1}.
The main ingredient in our proofs is the interplay between those
quantum loop algebras and the corresponding shuffle algebras,
which are trigonometric counterparts of the elliptic shuffle algebras
of~\cite{fo1}--\cite{fo3}. Our approach is similar to that of~\cite{e}
in the formal setting, but the key novelty is an explicit shuffle algebra
realization of the corresponding algebras, which is of independent interest.
This also allows us to strengthen the above results by constructing a family
of PBWD bases for the RTT forms of those quantum loop algebras
as well as for the Lusztig form of $U_\vv(L\ssl_n)$.
The rational counterparts provide shuffle algebra realizations of
type $A$ (super) Yangians and their Drinfeld-Gavarini dual subalgebras.
\end{abstract}
\maketitle


\section{Introduction}


\subsection{Summary}
\

The quantum loop algebras (aka quantum affine algebras with the trivial central charge)
associated to a simple finite dimensional Lie algebra $\fg$ admit two well-known presentations:
the original Drinfeld-Jimbo realization $U^{\ddj}_\vv(L\fg)$ and the new Drinfeld (aka loop)
realization $U_\vv(L\fg)$. The latter presentation (which is essential to study the representation
theory of quantum loop algebras) was introduced by V.~Drinfeld in~\cite{d}, while the explicit isomorphism
\begin{equation}
\label{drinfeld iso}
  U^{\ddj}_\vv(L\fg)\iso U_\vv(L\fg)
\end{equation}
was stated without a proof in~\cite[Theorem 3]{d}. Actually,~(\ref{drinfeld iso}) was upgraded in~\emph{loc.~cit}.\
to the isomorphism of the corresponding quantum affine algebras (with nontrivial central charges)
\begin{equation}
\label{drinfeld iso aff}
  U^{\ddj}_\vv(\widehat{\fg})\iso U_\vv(\widehat{\fg})
\end{equation}
The proof of the isomorphism~(\ref{drinfeld iso aff}) (hence, also of~(\ref{drinfeld iso})) was properly
established in the works of J.~Beck~\cite{b2}, I.~Damiani~\cite{da}, and N.~Jing~\cite{j}. Let us note that~\cite{b2,da}
actually constructed the isomorphism opposite way $U_\vv(\widehat{\fg}) \iso U^{\ddj}_\vv(\widehat{\fg})$
by utilizing Lusztig's affine braid group action on $U^{\ddj}_\vv(\widehat{\fg})$, which is precisely
the inverse of~(\ref{drinfeld iso}), as shown in~\cite[Remark of~\S4]{b2}.

\medskip
\noindent
Since quantum loop algebras are natural quantizations of the universal enveloping of the loop Lie
algebras $L\fg=\fg[t,t^{-1}]$, one of the first natural tasks is to seek an analogue of
PBW bases for the former algebras. This was accomplished more than 25 years ago by J.~Beck~\cite{b1} in the
context of $U^{\ddj}_\vv(\widehat{\fg})$. More precisely, he constructed the bases of each of the
subalgebras featuring in the triangular decomposition (viewed as a vector space isomorphism)
\begin{equation}
\label{triang DJ}
  U^{\ddj}_\vv(\widehat{\fg})\simeq U^{\ddj,>}_\vv(\widehat{\fg})\otimes U^{\ddj,0}_\vv(\widehat{\fg})\otimes U^{\ddj,<}_\vv(\widehat{\fg}).
\end{equation}
We note that the construction of~\cite{b1} actually depends on a choice of an element $x\in P^\vee$
of the coweight lattice, which pairs positively with all simple roots of $\fg$, together with a choice
of a reduced decomposition of $(1,x)\in W\ltimes P^\vee\simeq \widehat{W}^{ext}$ in the extended affine Weyl group.

\medskip
\noindent
The algebra $U_\vv(\widehat{\fg})$ also admits a triangular decomposition, i.e.\ a vector space isomorphism
\begin{equation}
\label{triang Dr-loop}
  U_\vv(\widehat{\fg})\simeq U^>_\vv(\widehat{\fg})\otimes U^0_\vv(\widehat{\fg})\otimes U^<_\vv(\widehat{\fg}).
\end{equation}
However, the isomorphism~(\ref{drinfeld iso aff}) \underline{does not} intertwine the triangular
decompositions~(\ref{triang DJ},~\ref{triang Dr-loop}).
Therefore, it is desirable to construct PBWD bases (the letter ``D'' after ``PBW'' is
to indicate the new Drinfeld realization) of $U_\vv(\widehat{\fg})$, compatible with the
triangular decomposition~(\ref{triang Dr-loop}). As $U^>_\vv(\widehat{\fg})\simeq U^>_\vv(L\fg)$
is actually isomorphic to $U^<_\vv(\widehat{\fg})\simeq U^<_\vv(L\fg)$, the above boils down to:

\medskip
\noindent
\textbf{Problem:} Construct PBWD bases of the \emph{positive} subalgebras $U^>_\vv(L\fg)$.

\medskip
\noindent
To our surprise, this question seems to remain open.
The only case we found addressed in the literature is the type $A$ quantum loop
algebras and their two-parameter generalizations $U_{\vv_1,\vv_2}(L\ssl_n)$, see~\cite[Theorem~3.11]{hrz}.
However, the proof of that theorem is missing in~\emph{loc.~cit}. This gap has been
also noticed in~\cite{z1,z2}, where a weak version of the PBW
property has been established for the quantum loop superalgebra $U_\vv(L\ssl(m|n))$
of~\cite{y} by straightforward lengthy arguments.

\medskip
\noindent
One objective of this paper is to fill in the above gap by constructing a family
of PBWD bases for the aforementioned quantum loop algebras
  $U_\vv(L\ssl_n),U_{\vv_1,\vv_2}(L\ssl_n),U_\vv(L\ssl(m|n))$,
which we further refine by constructing a family of PBWD bases for their certain integral forms
  $\fU_\vv(L\ssl_n), \fU_{\vv_1,\vv_2}(L\ssl_n), \fU_\vv(L\ssl(m|n))$, 
defined over $\BC[\vv,\vv^{-1}]$ and $\BC[\vv_1,\vv_2,\vv^{-1}_1,\vv^{-1}_2]$, 
respectively.\footnote{It should be noted right away that these forms can be defined over 
$\BZ[\vv,\vv^{-1}]$ and $\BZ[\vv_1,\vv_2,\vv^{-1}_1,\vv^{-1}_2]$, respectively, and all 
our results for the integral forms generalize verbatim to this setting as well.}
This is accomplished by providing the shuffle realization of their
positive subalgebras (following the ideas of~\cite{fo1}--\cite{fo3} and~\cite{e}),
which constitutes another main result of our paper. It should be noted that
the corresponding shuffle realization of $U^>_\vv(L\ssl_n)$ can be
implicitly deduced from~\cite{n}, but we provide an alternative simpler
proof which also works for the other two algebras
$U^>_{\vv_1,\vv_2}(L\ssl_n),U^>_\vv(L\ssl(m|n))$ as well as for their integral forms
$\fU^>_\vv(L\ssl_n),\fU^>_{\vv_1,\vv_2}(L\ssl_n),\fU^>_\vv(L\ssl(m|n))$.

\medskip
\noindent
The aforementioned integral forms are not completely new in the literature.
Indeed, the $\gl_n$-counterpart of $\fU_\vv(L\ssl_n)$, the integral form $\fU_\vv(L\gl_n)$ of
the quantum loop algebra $U_\vv(L\gl_n)$, appeared recently in~\cite{ft2} where it was used
to construct and study integral forms of type $A$ shifted quantum loop algebras.
As shown in~\emph{loc.~cit.}, $\fU_\vv(L\gl_n)$ coincides with the tautological integral form
(that is, the $\BC[\vv,\vv^{-1}]$-subalgebra generated by the same collection of generators)
of $U^{\rtt}_\vv(L\gl_n)$, the RTT realization of the quantum loop $\gl_n$. The RTT approach to
quantum groups goes back to the St.~Petersburg school of L.~Faddeev, see~\cite{frt}, while the isomorphism
\begin{equation}
\label{ding-frenkel}
  U^{\rtt}_\vv(L\gl_n)\simeq U_\vv(L\gl_n)
\end{equation}
is due to~\cite{df}, where it was actually upgraded to quantum affine algebras:
  $U^{\rtt}_\vv(\widehat{\gl}_n)\simeq U_\vv(\widehat{\gl}_n)$.
The isomorphism~(\ref{ding-frenkel}) admits natural generalizations to the two-parameter and super-cases:
\begin{equation}
\label{ding-frenkel generalization}
\begin{split}
  & U^{\rtt}_{\vv_1,\vv_2}(L\gl_n)\simeq U_{\vv_1,\vv_2}(L\gl_n) \\
  & U^{\rtt}_\vv(L\gl(m|n))\simeq U_\vv(L\gl(m|n))
\end{split}
\end{equation}
see~\cite{jl,z3} and the references therein. Thus, the tautological integral forms
of the algebras in the left-hand side of~(\ref{ding-frenkel generalization}) give
rise to integral forms of the algebras in the right-hand side, which can be perceived
as $\gl_n, \gl(m|n)$-counterparts of our integral forms $\fU_{\vv_1,\vv_2}(L\ssl_n), \fU_\vv(L\ssl(m|n))$.

\medskip
\noindent
Let us point out right away both the similarities and the differences
between the current work and a much older paper~\cite{e} of B.~Enriquez.
In~\cite{e}, the author established similar results for the quantum
loop algebras in the formal setting, that is, when working over $\BC[[\hbar]]$
rather than over $\BC(\vv)$. In particular, the PBW theorem of~\cite[Theorem 1.3]{e}
is proved using an embedding of $U^>_\hbar(L\fg)$ into the corresponding type $\fg$
shuffle algebra $S^{(\fg)}$~\cite[Corollary~1.4]{e} with the image
$\bar{S}^{(\fg)}\subset S^{(\fg)}$ being the subalgebra generated by
degree $1$ components. In type $A$, this coincides with our Proposition~\ref{simple shuffle}.
However, the heart of our shuffle algebra realization is the proof of the equality
$\bar{S}^{(\fg)}=S^{(\fg)}$, at least, for $\fg=\ssl_n$ (and similarly for $\fg=\ssl(m|n)$).
This implies the (corrected) description of $\bar{S}^{(\fg)}$ conjectured
in~\cite[Remark 3.16]{e}.

\medskip
\noindent
We expect that similar arguments shall provide PBWD bases for $U_\vv(L\fg)$
as well as establish their shuffle realizations, at least for simply-laced
simple $\fg$, which will be discussed elsewhere. In contrast, the PBWD theorem
for the Yangian $Y(\fg)$ of any semisimple Lie algebra $\fg$ has been proved
long time ago in~\cite{l}.

\medskip
\noindent
A particular PBWD basis of the integral form $\fU_\vv(L\ssl_n)$
was used in~\cite{ft2} to define an integral form of type $A$
shifted quantum loop algebras of~\cite{ft}, see Remark~\ref{ft's choice}
and Theorem~\ref{Full PBWD for integral-special choice}. Furthermore, an important
family of elements of the latter form, which were crucially used in~\cite[Proof of Theorem 4.15]{ft2},
appear manifestly via their shuffle realizations~(\ref{for surjectivity in FT2}).

\medskip
\noindent
Another particular PBWD basis of $\fU^>_\vv(L\ssl_n)$ is very similar
to the one arising from the results of~\cite{n} by viewing  $U_\vv(L\ssl_n)$
as a ``vertical'' subalgebra of the quantum toroidal algebra
$U_{\vv,\bar{\vv}}(\ddot{\gl}_n)$, see Remark~\ref{Negut's pbw}.

\medskip
\noindent
Finally, let us make a few general comments about the PBWD bases
constructed in this paper. As was pointed out to us by P.~Etingof,
the linear independence of the ordered monomials~(\ref{ordered}),
which is established in Section~\ref{ssec linear indep}, can be
immediately deduced by using the PBW property of $U(\ssl_n[t,t^{-1}])$
as well as flatness of the deformation, cf.~\cite[Theorem 1.3]{e}.
Nevertheless, we provide technical details as they are needed both
for Section~\ref{ssec spanning prop} and for the generalizations to
the two-parameter and super cases. At that point, we should note that while
the two-parameter quantum affine algebras have been extensively studied
since the original work~\cite{hrz}, see~\cite{jl,jz1,jz2} for a partial list
of references (see also~\cite{bw1,bw2,jmy,t} for the case of two-parameter
quantum finite groups), not many results have been established for them.
In particular, it is still an open question whether these are flat deformations
of the corresponding universal enveloping algebras (the results of the current paper
give an affirmative answer to this question in type $A$). In~\cite{jz2}, an isomorphism
between the Drinfeld-Jimbo and the new Drinfeld realizations of these algebras
was established (generalizing~\cite[Theorem 3.12]{hrz} for type $A$), but
it is not known (at the moment) whether the former realization admits the PBW basis
analogous the one of~\cite{b1,b2}.


\subsection{Outline of the paper}
\

\noindent
$\bullet$
In Section~\ref{ssec affine sl_n}, we recall the new Drinfeld realization
of the quantum loop algebra $U_\vv(L\ssl_n)$.
In Proposition~\ref{Triangular decomposition}, we invoke its triangular decomposition
as well as explicit descriptions of its positive, negative, and Cartan subalgebras
$U^>_\vv(L\ssl_n), U^<_\vv(L\ssl_n), U^0_\vv(L\ssl_n)$ established in~\cite{he}.

\medskip
\noindent
In Section~\ref{ssec formulation main thm 1}, we introduce the
\emph{PBWD basis elements} $e_\beta(r),f_\beta(r)$ of~(\ref{higher roots}),
which do depend on certain choices (see 1)--3) prior to~(\ref{higher roots})).
Having picked a specific order~(\ref{extended order}) on $\Delta^+\times \BZ$,
we use those elements to construct the \emph{ordered PBWD monomials}
$e_h,f_h$ of~(\ref{ordered}). This provides a family of the PBWD bases
for $U^<_\vv(L\ssl_n),U^>_\vv(L\ssl_n)$ and $U_\vv(L\ssl_n)$, see
Theorems~\ref{Main Theorem 1} and~\ref{Main Theorem 1 entire algebra}
for the formulation of these results, while their proofs are deferred to
Section~\ref{sec proofs of Theorems 1,2}.

\medskip
\noindent
In Section~\ref{ssec formulation main thm 2}, we introduce integral forms
$\fU^>_\vv(L\ssl_n),\fU^<_\vv(L\ssl_n)$ as the $\BC[\vv,\vv^{-1}]$-subalgebras
generated by $\wt{e}_\beta(r),\wt{f}_\beta(r)$ of~(\ref{integral basis PBW}).
While these elements do depend on the choices 1)--3) made prior to~(\ref{higher roots}),
we prove that the forms $\fU^>_\vv(L\ssl_n),\fU^<_\vv(L\ssl_n)$ are independent
of these choices and posses PBWD bases (over $\BC[\vv,\vv^{-1}]$), see
Theorem~\ref{Main Theorem 2} (which is proved in Section~\ref{sec proofs of Theorems 1,2}).
Following Remark~\ref{ft's RTT interpretation}, the integral form
$\fU_\vv(L\ssl_n)$ of the entire $U_\vv(L\ssl_n)$ introduced in
Definition~\ref{definition entire integral form} is identified with
the RTT integral form $\fU^\rtt_\vv(L\ssl_n)$ of~\cite{frt}, which is
used in~\cite{ft2} to establish Theorem~\ref{Full PBWD for integral-special choice}.
The latter implies the triangular decomposition for $\fU_\vv(L\ssl_n)$
as well as provides a whole family of the PBWD bases for it, see
Corollary~\ref{Triangular for integral form}, Theorem~\ref{Full PBWD for integral}.

\medskip
\noindent
$\bullet$
In Section~\ref{ssec usual shuffle algebra}, we introduce the shuffle
algebra $S^{(n)}$, which may be viewed as a trigonometric degeneration
of the elliptic shuffle algebra of Feigin-Odesskii, see~\cite{fo1}--\cite{fo3}.
An algebra embedding $\Psi\colon U^>_\vv(L\ssl_n)\hookrightarrow S^{(n)}$ of
Proposition~\ref{simple shuffle} is a ``simple version'' of the shuffle realization
of $U^>_\vv(L\ssl_n)$ (which was first used in~\cite{e} in the formal setting).
The ``hard version'' of the shuffle algebra realization, Theorem~\ref{hard shuffle},
establishes that $\Psi$ is an algebra isomorphism.
We conclude this section with a construction of the specialization maps
$\phi_{\unl{d}}$~(\ref{specialization map}) which constitute the key
tool in our study of the shuffle algebra $S^{(n)}$ and the homomorphism $\Psi$.

\medskip
\noindent
In Section~\ref{ssec proof of Theorem 1}, we prove simultaneously
Theorems~\ref{Main Theorem 1} and~\ref{hard shuffle} by combining
the key properties of the specialization maps $\phi_{\unl{d}}$ established
in Lemmas~\ref{lower degrees},~\ref{same degrees},~\ref{spanning} with
a direct treatment (crucially based on the formula~(\ref{factorial formula}))
of the simplest ``rank $1$'' case $n=2$ in Lemma~\ref{n=1 case}.

\medskip
\noindent
In Section~\ref{ssec integral shuffle algebra}, we provide an explicit
description of the image $\fS^{(n)}=\Psi(\fU^>_\vv(L\ssl_n))\subset S^{(n)}$,
see Theorem~\ref{shuffle integral form} and Definition~\ref{integral element}.
While this description of $\fS^{(n)}$ is rather cumbersome (in particular,
it is not even obvious that it is a $\BC[\vv,\vv^{-1}]$-subalgebra of $S^{(n)}$),
we can still establish important properties for it, see
Proposition~\ref{results for FT2}, which play the crucial role in~\cite{ft2}.

\medskip
\noindent
In Section~\ref{ssec proof of Theorem 2}, we prove simultaneously
Theorems~\ref{Main Theorem 2} and~\ref{shuffle integral form} by first treating
$n=2$ case in Lemmas~\ref{necessity for n=2},~\ref{integral n=1 case}
and then following arguments of Section~\ref{ssec proof of Theorem 1}
to treat the general case.

\medskip
\noindent
$\bullet$
In Section~\ref{sec 2-parametric quantuma affine}, we generalize the key
results of Sections~\ref{sec Classical quantuma affine}--\ref{sec proofs of Theorems 1,2}
to the two-parameter quantum loop algebras $U^>_{\vv_1,\vv_2}(L\ssl_n)$ of~\cite{hrz}
recalled in Section~\ref{ssec 2-parameter quantum}. We construct a family
of the PBWD bases for $U^>_{\vv_1,\vv_2}(L\ssl_n)$ in Theorem~\ref{Main Theorem 3},
thus generalizing~\cite[Theorem 3.11]{hrz} presented in~\emph{loc.~cit.}\
without a proof, see Remark~\ref{rosso's choice}. We further strengthen
this by constructing a family of the PBWD bases for the integral form $\fU^>_{\vv_1,\vv_2}(L\ssl_n)$,
see Theorem~\ref{Main Theorem 3.1}. Finally, we provide the shuffle algebra realization
of $U^>_{\vv_1,\vv_2}(L\ssl_n)$ in Theorem~\ref{hard shuffle 2-parametric}.

\medskip
\noindent
$\bullet$
In Section~\ref{sec super-Lie quantuma affine}, we generalize the key results
of Sections~\ref{sec Classical quantuma affine}--\ref{sec proofs of Theorems 1,2}
to the quantum loop superalgebra  $U^>_\vv(L\ssl(m|n))$ of~\cite{y} recalled in
Section~\ref{ssec quantum super affine}. We construct a family of the PBWD bases
for $U^>_\vv(L\ssl(m|n))$ in Theorem~\ref{Main Theorem 4}, thus proving a conjecture
of~\cite{z1}, see Remark~\ref{zhang's choice}.  We further strengthen this
by constructing a family of the PBWD bases for the
integral form $\fU^>_\vv(L\ssl(m|n))$, see Theorem~\ref{Main Theorem 4.1}.

\medskip
\noindent
In Section~\ref{ssec trigonometric super shuffle}, we introduce the
shuffle algebra $S^{(m|n)}$ featuring new wheel conditions and skew-symmetry
in one family of variables. Generalizing Theorem~\ref{hard shuffle},
we construct an algebra isomorphism $U_\vv^{>}(L\ssl(m|n))\iso S^{(m|n)}$,
see Theorem~\ref{hard shuffle superLie} and its proof in Section~\ref{ssec proof of Theorem 4}.

\medskip
\noindent
$\bullet$
In Section~\ref{ssec half-yangian sl_n}, we recall the \emph{positive subalgebra} of
the Yangian $Y^>_\hbar(\ssl_n)$ and its Drinfeld-Gavarini dual subalgebra
$\bY^>_\hbar(\ssl_n)$, as well as the PBWD bases for those,
see Theorems~\ref{pbwd for yangian},~\ref{pbwd for gavarini yangian}.

\medskip
\noindent
In Section~\ref{ssec rational shuffle algebra}, we introduce a (rational)
counterpart $\bar{W}^{(n)}$ of the (trigonometric) shuffle algebra $S^{(n)}$, equipped
with an embedding $\Psi\colon Y^>_\hbar(\ssl_n)\hookrightarrow \bar{W}^{(n)}$,
see Proposition~\ref{simple 2 shuffle yangian}.
In contrast to Theorem~\ref{hard shuffle}, $\Psi$ is not an isomorphism,
and we provide explicit descriptions of the images
  $W^{(n)}=\Psi(Y^>_\hbar(\ssl_n)),\,
   \fW^{(n)}=\Psi(\bY^>_\hbar(\ssl_n))$
in Theorems~\ref{hard shuffle yangian},~\ref{shuffle integral form yangian},
see Definitions~\ref{good element yangian},~\ref{integral element yangian}.

\medskip
\noindent
$\bullet$
In Section~\ref{ssec super half-yangian sl_n}, we recall the positive subalgebra of
the super Yangian $Y^>_\hbar(\ssl(m|n))$, its Drinfeld-Gavarini dual subalgebra
$\bY^>_\hbar(\ssl(m|n))$, and their PBWD bases, see
Theorems~\ref{pbwd for superyangian},~\ref{pbwd for gavarini superyangian}.

\medskip
\noindent
In Section~\ref{ssec rational shuffle super algebra}, we introduce a
(rational) counterpart $\bar{W}^{(m|n)}$ of the (trigonometric) shuffle algebra $S^{(m|n)}$,
equipped with an embedding $\Psi\colon Y^>_\hbar(\ssl(m|n))\hookrightarrow \bar{W}^{(m|n)}$, 
see Proposition~\ref{simple shuffle super yangian}.
We provide explicit descriptions of the images
  $W^{(m|n)}=\Psi(Y^>_\hbar(\ssl(m|n)))$ and $\fW^{(m|n)}=\Psi(\bY^>_\hbar(\ssl(m|n)))$
in Theorems~\ref{hard shuffle super yangian},~\ref{shuffle integral form super yangian}, 
see Definition~\ref{good element super yangian}.

\medskip
\noindent
$\bullet$
In Section~\ref{ssec grojnowski integral form}, we recall another integral form
$\sU^>_\vv(L\ssl_n)$ of $U^>_\vv(L\ssl_n)$, first explicitly considered in~\cite{gr}.
We construct a family of the PBWD bases for $\sU^>_\vv(L\ssl_n)$ in
Theorem~\ref{Main Theorem Grojnowski} and provide its shuffle algebra realization in
Theorem~\ref{shuffle Grojnowski}, see Definition~\ref{good element quantum}. 
The former yields a family of the PBWD bases
for the Lusztig form~\cite{lus} of $U_\vv(L\ssl_n)$, due to
Remark~\ref{Lusztig vs Grojnowski} and Theorem~\ref{triangular CP}.
In the rest of Section~\ref{sec further directions}, we recall the key results
of the companion papers~\cite{ts,t2}.


\subsection{Acknowledgments}
\

I am indebted to Pavel Etingof, Boris Feigin, Michael Finkelberg, and Andrei Negu\c{t}
for numerous stimulating discussions over the years; to Naihuan Jing for a useful
correspondence on two-parameter quantum algebras; to Luan Bezerra and Evgeny Mukhin
for a useful correspondence on the quantum affine superalgebras;
to anonymous referees for extremely useful suggestions which significantly improved the overall exposition.

\medskip
\noindent
I am also grateful to MPIM (Bonn, Germany), IPMU (Kashiwa, Japan),
and RIMS (Kyoto, Japan) for the hospitality and wonderful working conditions
in the summer 2018 when the first stages of this project were performed. I would like to thank
Tomoyuki Arakawa and Todor Milanov for their invitations to RIMS and IPMU, respectively.

\medskip
\noindent
I gratefully acknowledge NSF Grants DMS-$1821185$, DMS-$2001247$, and DMS-$2037602$.


\section{Quantum loop algebra $U_\vv(L\ssl_n)$ and its integral form $\fU_\vv(L\ssl_n)$}
\label{sec Classical quantuma affine}


\subsection{Quantum loop algebra $U_\vv(L\ssl_n)$}\label{ssec affine sl_n}
\

Let $I=\{1,\ldots,n-1\}$, $(c_{ij})_{i,j\in I}$ be the Cartan matrix of $\ssl_n$,
and $\vv$ be a formal variable. Following~\cite{d}, define the quantum loop algebra
of $\ssl_n$ (in the new Drinfeld presentation), denoted by $U_\vv(L\ssl_n)$,
to be the associative $\BC(\vv)$-algebra generated by
  $\{e_{i,r},f_{i,r},\psi^\pm_{i,\pm s}\}_{i\in I}^{r\in \BZ, s\in \BN}$
with the following defining relations:
\begin{equation}\label{Aff 1}
  [\psi_i^\epsilon(z),\psi_j^{\epsilon'}(w)]=0,\
  \psi^\pm_{i,0}\cdot \psi^\mp_{i,0}=1,
\end{equation}
\begin{equation}\label{Aff 2}
  (z-\vv^{c_{ij}}w)e_i(z)e_j(w)=(\vv^{c_{ij}}z-w)e_j(w)e_i(z),
\end{equation}
\begin{equation}\label{Aff 3}
  (\vv^{c_{ij}}z-w)f_i(z)f_j(w)=(z-\vv^{c_{ij}}w)f_j(w)f_i(z),
\end{equation}
\begin{equation}\label{Aff 4}
  (z-\vv^{c_{ij}}w)\psi_i^\epsilon(z)e_j(w)=(\vv^{c_{ij}}z-w)e_j(w)\psi_i^\epsilon(z),
\end{equation}
\begin{equation}\label{Aff 5}
  (\vv^{c_{ij}}z-w)\psi^\epsilon_i(z)f_j(w)=(z-\vv^{c_{ij}}w)f_j(w)\psi^\epsilon_i(z),
\end{equation}
\begin{equation}\label{Aff 6}
  [e_i(z),f_j(w)]=
  \frac{\delta_{ij}}{\vv-\vv^{-1}}\delta\left(\frac{z}{w}\right)\left(\psi^+_i(z)-\psi^-_i(z)\right),
\end{equation}
\begin{equation}\label{Aff 7}
\begin{split}
  & e_i(z)e_j(w)=e_j(w)e_i(z)\ \mathrm{if}\ c_{ij}=0,\\
  & [e_i(z_1),[e_i(z_2),e_j(w)]_{\vv^{-1}}]_{\vv}+
    [e_i(z_2),[e_i(z_1),e_j(w)]_{\vv^{-1}}]_{\vv}=0 \ \mathrm{if}\ c_{ij}=-1,
\end{split}
\end{equation}
\begin{equation}\label{Aff 8}
\begin{split}
  & f_i(z)f_j(w)=f_j(w)f_i(z)\ \mathrm{if}\ c_{ij}=0,\\
  & [f_i(z_1),[f_i(z_2),f_j(w)]_{\vv^{-1}}]_{\vv}+
    [f_i(z_2),[f_i(z_1),f_j(w)]_{\vv^{-1}}]_{\vv}=0 \ \mathrm{if}\ c_{ij}=-1,
\end{split}
\end{equation}
where $[a,b]_x:=ab-x\cdot ba$ and the generating series are defined as follows:
\begin{equation*}
  e_i(z):=\sum_{r\in \BZ}{e_{i,r}z^{-r}},\
  f_i(z):=\sum_{r\in \BZ}{f_{i,r}z^{-r}},\
  \psi_i^{\pm}(z):=\sum_{s\geq 0}{\psi^\pm_{i,\pm s}z^{\mp s}},\
  \delta(z):=\sum_{r\in \BZ}{z^r}.
\end{equation*}

Let $U^{<}_\vv(L\ssl_n),U^{>}_\vv(L\ssl_n),U^{0}_\vv(L\ssl_n)$ be the
$\BC(\vv)$-subalgebras of $U_\vv(L\ssl_n)$ generated respectively by
  $\{f_{i,r}\}_{i\in I}^{r\in \BZ},
   \{e_{i,r}\}_{i\in I}^{r\in \BZ},
   \{\psi^\pm_{i,\pm s}\}_{i\in I}^{s\in \BN}$.
The following is standard (see e.g.~\cite[Theorem 2]{he}):

\begin{Prop}\label{Triangular decomposition}
(a) (Triangular decomposition of $U_\vv(L\ssl_n)$)
The multiplication map
\begin{equation*}
  m\colon
  U^{<}_\vv(L\ssl_n)\otimes_{\BC(\vv)} U^{0}_\vv(L\ssl_n)\otimes_{\BC(\vv)} U^{>}_\vv(L\ssl_n)
  \longrightarrow U_\vv(L\ssl_n)
\end{equation*}
is an isomorphism of $\BC(\vv)$-vector spaces.

\noindent
(b) The algebra $U^{>}_\vv(L\ssl_n)$ (resp.\ $U^{<}_\vv(L\ssl_n)$ and $U^{0}_\vv(L\ssl_n)$)
is isomorphic to the associative $\BC(\vv)$-algebra generated by $\{e_{i,r}\}_{i\in I}^{r\in \BZ}$
(resp.\ $\{f_{i,r}\}_{i\in I}^{r\in \BZ}$ and $\{\psi^\pm_{i,\pm s}\}_{i\in I}^{s\in \BN}$)
with the defining relations~(\ref{Aff 2}, \ref{Aff 7})
(resp.~(\ref{Aff 3}, \ref{Aff 8}) and~(\ref{Aff 1})).
\end{Prop}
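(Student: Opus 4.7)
The plan is to prove parts (a) and (b) in tandem by working with three auxiliary algebras defined by the presentations claimed in (b). Let $\tilde U^{>}, \tilde U^{<}, \tilde U^{0}$ denote the $\BC(\vv)$-algebras generated respectively by $\{e_{i,r}\}_{i\in I,r\in\BZ}$, $\{f_{i,r}\}_{i\in I,r\in\BZ}$, $\{\psi^\pm_{i,\pm s}\}_{i\in I,s\in\BN}$ subject only to the relations (\ref{Aff 2}, \ref{Aff 7}), (\ref{Aff 3}, \ref{Aff 8}), (\ref{Aff 1}) respectively. The universal property yields $\BC(\vv)$-algebra homomorphisms $\iota^{>},\iota^{<},\iota^{0}$ into $U_\vv(L\ssl_n)$, with images $U^{>}_\vv(L\ssl_n),U^{<}_\vv(L\ssl_n),U^{0}_\vv(L\ssl_n)$, and composing with tensor multiplication yields a $\BC(\vv)$-linear map
\begin{equation*}
\tilde m\colon\tilde U^{<}\otimes_{\BC(\vv)}\tilde U^{0}\otimes_{\BC(\vv)}\tilde U^{>}\longrightarrow U_\vv(L\ssl_n).
\end{equation*}
Showing that $\tilde m$ is bijective implies (a) directly, and (b) follows by restricting $\tilde m$ to each tensor factor, thereby forcing each of $\iota^{>},\iota^{<},\iota^{0}$ to be injective.

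For surjectivity of $\tilde m$, the standard normal ordering strategy applies. Relations (\ref{Aff 4})--(\ref{Aff 5}), after clearing denominators and matching coefficients of $z^aw^b$, become linear recursions expressing any monomial containing a fragment $e_{j,r}\psi^\pm_{i,\pm s}$ or $f_{j,r}\psi^\pm_{i,\pm s}$ as a $\BC(\vv)$-linear combination of monomials with $\psi$-generators placed further to the left. Similarly, (\ref{Aff 6}) rewrites $e_{i,r}f_{j,s}$ as $f_{j,s}e_{i,r}$ plus a term lying in $\iota^{0}(\tilde U^{0})$. Iterating these moves rewrites any word in the generators as a finite sum of monomials of the form $f\cdot\psi\cdot e$.

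Injectivity is proved by constructing a faithful representation of $U_\vv(L\ssl_n)$ on the vector space $V:=\tilde U^{<}\otimes\tilde U^{0}\otimes\tilde U^{>}$, following the Verma module construction of~\cite{he}. In this representation, $f_{i,r}$ acts by left multiplication on the first tensor factor; the action of $\psi^\pm_{i,\pm s}$ is obtained by moving it past the first factor via (\ref{Aff 5}) and then multiplying on the left of the second factor; and the action of $e_{i,r}$ is obtained by moving it past the first two factors via (\ref{Aff 4}) for the $\psi$-factor and (\ref{Aff 6}) for the $f$-factor, the latter contributing additional $\psi$-terms absorbed into the second factor. The map $x\mapsto x\cdot(1\otimes 1\otimes 1)$ from $U_\vv(L\ssl_n)$ to $V$ is then a one-sided inverse to $\tilde m$, whence injectivity.

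The main obstacle is verifying well-definedness of this representation: one must check that the inductive definitions of the actions of $\psi$- and $e$-generators on $V$ are consistent with all of (\ref{Aff 1})--(\ref{Aff 8}). The Serre relations (\ref{Aff 7}), (\ref{Aff 8}) hold on the first and third tensor factors by construction of $\tilde U^{<}$ and $\tilde U^{>}$; the relation (\ref{Aff 6}) is built into the recursive definition of the $e$-action; and the intertwining relations (\ref{Aff 4}), (\ref{Aff 5}) require a careful formal-power-series manipulation, essentially reducing to an associativity identity for the corresponding bimodule structure. These verifications are carried out in~\cite{he}, which is why the result is merely quoted here.
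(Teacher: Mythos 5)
The paper gives no proof of Proposition~\ref{Triangular decomposition}; it is quoted as standard from \cite{he}, so there is no internal argument to compare yours against. Your outline is the standard one found in the cited literature and is correct as a reduction: surjectivity of $\tilde m$ by straightening words via the cross relations \eqref{Aff 4}--\eqref{Aff 6} (each move strictly reduces the number of out-of-order pairs, so the rewriting terminates, and the base cases such as $e_{j,r}\psi^{+}_{i,0}=\vv^{-c_{ij}}\psi^{+}_{i,0}e_{j,r}$ anchor the recursions coming from \eqref{Aff 4}, \eqref{Aff 5}); injectivity by making $V=\tilde U^{<}\otimes\tilde U^{0}\otimes\tilde U^{>}$ into a $U_\vv(L\ssl_n)$-module so that $x\mapsto x\cdot(1\otimes 1\otimes 1)$ splits $\tilde m$; and part (b) by restriction to each tensor factor. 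The honest caveat is that the entire mathematical weight of the statement sits in the step you defer: verifying that the recursively defined actions of the $\psi$- and $e$-generators on $V$ descend to the quotients $\tilde U^{<},\tilde U^{0}$ and satisfy all of \eqref{Aff 1}--\eqref{Aff 8} as operators (in particular the Serre relations \eqref{Aff 7} and the compatibility of \eqref{Aff 6} with \eqref{Aff 4}, \eqref{Aff 5}). As written, your proposal is a correct and well-organized reduction of the proposition to that verification in \cite{he}, which is exactly the status the paper itself assigns to the result, but it is not a self-contained proof.
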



\subsection{PBWD bases of $U_\vv(L\ssl_n)$}\label{ssec formulation main thm 1}
\

Let $\{\alpha_i\}_{i=1}^{n-1}$ be the standard simple positive roots of $\ssl_n$,
and $\Delta^+$ be the set of positive roots:
  $\Delta^+=\{\alpha_j+\alpha_{j+1}+\ldots+\alpha_i\}_{1\leq j\leq i<n}$.
Consider the following total order ``$\leq$'' on $\Delta^+$:
\begin{equation}\label{order}
  \alpha_j+\alpha_{j+1}+\ldots+\alpha_i\leq \alpha_{j'}+\alpha_{j'+1}+\ldots+\alpha_{i'}
  \ \, \mathrm{iff}\ \, j<j'\ \mathrm{or}\ j=j',i\leq i'.
\end{equation}
We also pick a total order $\preceq_\beta$ on $\BZ$ for any $\beta\in \Delta^+$.
This gives rise to the total order ``$\leq$'' on $\Delta^+\times \BZ$:
\begin{equation}\label{extended order}
  (\beta,r)\leq (\beta',r') \ \, \mathrm{iff}\ \,
  \beta<\beta' \ \mathrm{or}\ \beta=\beta', r\preceq_\beta r'.
\end{equation}
For every pair $(\beta,r)\in \Delta^+\times \BZ$, we choose:
\begin{enumerate}
\item[1)]
a decomposition $\beta=\alpha_{i_1}+\ldots+\alpha_{i_p}$ such that
$[\cdots[e_{\alpha_{i_1}},e_{\alpha_{i_2}}],\cdots,e_{\alpha_{i_p}}]$
is a non-zero root vector $e_\beta$ of $\ssl_n$
(here, $e_{\alpha_i}$ denotes the standard Chevalley generator of $\ssl_n$);

\item[2)]
a decomposition $r=r_1+\ldots+r_p$ with $r_k\in \BZ$;

\item[3)]
a sequence $(\lambda_1,\ldots,\lambda_{p-1})\in \{\vv,\vv^{-1}\}^{p-1}$.
\end{enumerate}
Then, we define the \emph{PBWD basis elements} $e_\beta(r)\in U^{>}_\vv(L\ssl_n)$
and $f_\beta(r)\in U^{<}_\vv(L\ssl_n)$ via
\begin{equation}\label{higher roots}
\begin{split}
  & e_\beta(r):=
    [\cdots[[e_{i_1,r_1},e_{i_2,r_2}]_{\lambda_1},e_{i_3,r_3}]_{\lambda_2},\cdots,e_{i_p,r_p}]_{\lambda_{p-1}},\\
  & f_\beta(r):=
    [\cdots[[f_{i_1,r_1},f_{i_2,r_2}]_{\lambda_1},f_{i_3,r_3}]_{\lambda_2},\cdots,f_{i_p,r_p}]_{\lambda_{p-1}}.
\end{split}
\end{equation}
In particular, $e_{\alpha_i}(r)=e_{i,r}$ and $f_{\alpha_i}(r)=f_{i,r}$.
We note that $e_\beta(r)$ and $f_\beta(r)$ degenerate to the corresponding
root generators $e_\beta\otimes t^r$ and $f_\beta\otimes t^r$ of
$\ssl_n[t,t^{-1}]=\ssl_n\otimes_\BC \BC[t,t^{-1}]$ as $\vv\to 1$,
hence, the terminology.

\begin{Rem}\label{ft's choice}
The following particular choice features manifestly in~\cite{ft2}
(cf.~Remark~\ref{rosso's choice}):
\begin{equation}\label{simplest choice}
\begin{split}
  & e_{\alpha_j+\alpha_{j+1}+\ldots+\alpha_i}(r):=
    [\cdots[[e_{j,r},e_{j+1,0}]_\vv,e_{j+2,0}]_\vv,\cdots,e_{i,0}]_\vv,\\
  & f_{\alpha_j+\alpha_{j+1}+\ldots+\alpha_i}(r):=
    [\cdots[[f_{j,r},f_{j+1,0}]_\vv,f_{j+2,0}]_\vv,\cdots,f_{i,0}]_\vv.
\end{split}
\end{equation}
\end{Rem}

Let $H$ denote the set of all functions $h\colon \Delta^+\times \BZ\to \BN$
with finite support. The monomials
\begin{equation}\label{ordered}
  e_h\ :=\prod\limits_{(\beta,r)\in \Delta^+\times \BZ}^{\rightarrow} e_\beta(r)^{h(\beta,r)}\ ,
  \qquad
  f_h\ :=\prod\limits_{(\beta,r)\in \Delta^+\times \BZ}^{\leftarrow} f_\beta(r)^{h(\beta,r)},
  \qquad \forall\, h\in H
\end{equation}
will be called the \emph{ordered PBWD monomials} of
$U^{>}_\vv(L\ssl_n)$ and $U^{<}_\vv(L\ssl_n)$.
Here, the arrows $\rightarrow$ and $\leftarrow$ over the product signs refer
to the total order~(\ref{extended order}) and its opposite, respectively.

Our first main result establishes the PBWD property of $U^>_\vv(L\ssl_n)$
and $U^<_\vv(L\ssl_n)$ (cf.~\cite{l}):

\begin{Thm}\label{Main Theorem 1}
(a) The ordered PBWD monomials $\{e_h\}_{h\in H}$
form a $\BC(\vv)$-basis of $U^{>}_\vv(L\ssl_n)$.

\noindent
(b) The ordered PBWD monomials $\{f_h\}_{h\in H}$
form a $\BC(\vv)$-basis of $U^{<}_\vv(L\ssl_n)$.
\end{Thm}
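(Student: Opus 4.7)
The plan is to prove both parts simultaneously, exploiting the evident symmetry of the defining relations. The anti-involution of $U_\vv(L\ssl_n)$ sending $e_{i,r}\leftrightarrow f_{i,r}$ (and acting appropriately on the Cartan) interchanges $U^>_\vv(L\ssl_n)$ and $U^<_\vv(L\ssl_n)$; under it the total ordering on $\Delta^+\times\BZ$ is preserved (with the reversed product convention for $f_h$), so any construction transports automatically. Hence I shall focus on part (a).

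For the linear independence of $\{e_h\}_{h\in H}$, the quickest route is the deformation argument attributed to Etingof in the introduction. I would introduce the $\BC[\vv,\vv^{-1}]$-lattice generated by the $e_{i,r}$, specialize at $\vv=1$, and verify that each PBWD element $e_\beta(r)$ from~(\ref{higher roots}) degenerates to the classical root vector $e_\beta\otimes t^r$ in $U(\ssl_n[t,t^{-1}])$. The classical PBW theorem then gives linear independence of the degenerated monomials. Combined with flatness of the deformation---itself a consequence of the shuffle embedding $\Psi\colon U^>_\vv(L\ssl_n)\hookrightarrow S^{(n)}$ of Proposition~\ref{simple shuffle}, which requires only the quadratic relation~(\ref{Aff 2})---this forces linear independence over $\BC(\vv)$.

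For spanning, the strategy is a double induction: on the weight $\unl{k}\in\BN^I$, and on a secondary statistic that controls the ``disorder'' of a word in the $e_{i,r}$'s (for instance, the number of inversions with respect to the ordering on $\Delta^+\times\BZ$). Given an arbitrary monomial $e_{i_1,r_1}\cdots e_{i_p,r_p}$, I would use the quadratic relation~(\ref{Aff 2}) to swap adjacent factors modulo a linear combination of monomials of the same length, and use the Serre relations~(\ref{Aff 7}) to fuse adjacent factors $e_{i,r_1},e_{i\pm 1,r_2}$ into PBWD elements $e_\beta(r_1+r_2)$ for non-simple roots $\beta$. A preliminary step is to show that the particular choices in items (1)--(3) defining $e_\beta(r)$ only affect the outcome modulo a linear combination of strictly lower ordered PBWD monomials, so that the induction is well posed.

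The hard part is precisely this spanning step: the Serre relations are cubic, and bracketings of $e_\beta(r)$ can grow in subtle ways, so it is not obvious a priori that the reordering procedure terminates without generating terms outside the ordered PBWD set. The paper bypasses this combinatorial difficulty by transferring the problem to the shuffle algebra: via the specialization maps $\phi_{\unl{d}}$ of~(\ref{specialization map}), one evaluates shuffle elements along configurations that cleanly separate ordered PBWD monomials by their leading terms, and the technical Lemmas~\ref{lower degrees},~\ref{same degrees},~\ref{spanning} then jointly establish that $\{\Psi(e_h)\}_{h\in H}$ is a basis of $S^{(n)}$ and that $\Psi$ is surjective. This simultaneously yields Theorem~\ref{Main Theorem 1}(a) and Theorem~\ref{hard shuffle}. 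The core of the argument lies in the detailed combinatorial analysis of how shuffle products interact with these specializations.
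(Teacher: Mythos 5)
Your reduction of (b) to (a) via the antiisomorphism $e_{i,r}\mapsto f_{i,r}$ matches the paper, and your linear-independence step is a legitimate alternative route that the paper itself acknowledges (footnote to Section~\ref{ssec proof of Theorem 1}): specialize at $\vv=1$ and invoke the PBW theorem for $U(\ssl_n[t,t^{-1}])$. However, your justification of the needed flatness is too thin: the injectivity of $\Psi$ from Proposition~\ref{simple shuffle} does not by itself give $\fU^>/(\vv-1)\simeq U(\fn^+[t,t^{-1}])$; without that, the surjection $U(\fn^+[t,t^{-1}])\twoheadrightarrow \fU^>/(\vv-1)$ could have a kernel and the specialized monomials need not be independent in the target. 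The paper sidesteps this by proving independence directly through the specialization maps $\phi_{\unl{d}}$ (Lemmas~\ref{lower degrees},~\ref{same degrees}), which it needs anyway for the two-parameter case where no flatness statement is available.

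The genuine gap is the spanning step. Your primary proposal --- straightening an arbitrary word by swapping adjacent factors via~(\ref{Aff 2}) and fusing via~(\ref{Aff 7}) with induction on an inversion count --- does not work as stated: in components, (\ref{Aff 2}) reads $e_{i,r+1}e_{j,s}-\vv^{c_{ij}}e_{i,r}e_{j,s+1}=\vv^{c_{ij}}e_{j,s}e_{i,r+1}-e_{j,s+1}e_{i,r}$, which shifts loop degrees and relates four distinct monomials rather than giving a clean ``swap modulo lower order'' rule, and there is no relation directly rewriting $e_{i,r}e_{i+1,s}$ as $e_\beta(r+s)$ plus ordered terms. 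You recognize this and fall back on citing Lemmas~\ref{lower degrees},~\ref{same degrees},~\ref{spanning}, but that is a description of the paper's proof, not a proof: the actual content --- the induction over degree vectors $\unl{d}\in T_{\unl{k}}$ from $\unl{d}_{\max}$ down to $\unl{d}_{\min}$, the vanishing orders of $\phi_{\unl{d}}(F)$ at $y_{\beta,s}=\vv^{\pm2}y_{\beta',s'}$ forced by the wheel conditions and at $y_{\beta,s}=y_{\beta',s'}$ forced by the inductive hypothesis $\phi_{\unl{d}'}(F)=0$ for $\unl{d}'>\unl{d}$, the resulting divisibility by $\prod_{\beta<\beta'}G_{\beta,\beta'}\cdot\prod_\beta G_\beta$ of~(\ref{explicit factors}), and the reduction of the residual factor to the rank-one statement of Lemma~\ref{n=1 case} --- is entirely missing. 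Without it neither the spanning of $U^>_\vv(L\ssl_n)$ by $\{e_h\}_{h\in H}$ nor the surjectivity of $\Psi$ is established, so the proposal does not constitute a proof of Theorem~\ref{Main Theorem 1}(a).
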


The proof of Theorem~\ref{Main Theorem 1} is presented in
Section~\ref{ssec proof of Theorem 1} and is based on the shuffle approach.

Let us relabel the Cartan generators via
  $\psi_{i,r}:=
   \begin{cases}
     \psi^+_{i,r}, & \text{if } r\geq 0 \\
     \psi^-_{i,r}, & \text{if } r<0
   \end{cases},$
so that $(\psi_{i,0})^{-1}=\psi^-_{i,0}$. Let $H_0$ denote the set of all
functions $g\colon I\times \BZ\to \BZ$ with finite support and such that
$g(i,r)\geq 0$ for $r\ne 0$. The monomials
(note that the order of the products is irrelevant, due to~(\ref{Aff 1}))
\begin{equation}\label{ordered cartan}
  \psi_g\ :=\prod\limits_{(i,r)\in I\times \BZ} \psi_{i,r}^{g(i,r)}\ ,
  \qquad \forall\, g\in H_0
\end{equation}
will be called the \emph{PBWD monomials} of $U^{0}_\vv(L\ssl_n)$.

According to Proposition~\ref{Triangular decomposition}(b), the
PBWD monomials $\{\psi_g\}_{g\in H_0}$ form a $\BC(\vv)$-basis of
$U^{0}_\vv(L\ssl_n)$. Combining this with Theorem~\ref{Main Theorem 1}
and Proposition~\ref{Triangular decomposition}(a), we finally get:

\begin{Thm}\label{Main Theorem 1 entire algebra}
The elements
\begin{equation*}
  \Big\{f_{h_-}\cdot \psi_{h_0}\cdot e_{h_+}\ |\ h_-,h_+\in H,h_0\in H_0\Big\}
\end{equation*}
form a $\BC(\vv)$-basis of the quantum loop algebra $U_\vv(L\ssl_n)$.
\end{Thm}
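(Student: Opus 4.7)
The plan is to deduce this theorem by combining three ingredients already in place: the triangular decomposition from \refp{Triangular decomposition}(a), the PBWD bases for the halves $U^>_\vv(L\ssl_n)$ and $U^<_\vv(L\ssl_n)$ established in \reft{Main Theorem 1}, and a $\BC(\vv)$-basis for the Cartan subalgebra $U^0_\vv(L\ssl_n)$ given by the ordered monomials $\psi_g$ of~(\ref{ordered cartan}).

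First, I would dispatch the Cartan piece, which is essentially formal. By \refp{Triangular decomposition}(b), the algebra $U^0_\vv(L\ssl_n)$ is presented by the generators $\{\psi^\pm_{i,\pm s}\}_{i\in I}^{s\in\BN}$ subject only to the relations~(\ref{Aff 1}). The first of these asserts that all Cartan generators pairwise commute, while the second makes $\psi^+_{i,0}$ invertible with inverse $\psi^-_{i,0}$. Thus $U^0_\vv(L\ssl_n)$ is canonically isomorphic to the tensor product over $i\in I$ of the commutative algebra $\BC(\vv)[\psi^\pm_{i,0}]/(\psi^+_{i,0}\psi^-_{i,0}-1)$ with the polynomial algebra $\BC(\vv)[\psi_{i,r}:r\ne 0]$. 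For any fixed total ordering on $\{\psi_{i,r}\}_{i\in I}^{r\in\BZ}$, the ordered monomials $\psi_g$ with $g\in H_0$ then clearly form a $\BC(\vv)$-basis of this commutative algebra.

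With this in hand, the full theorem follows at once. Combining \reft{Main Theorem 1} with the basis property for $\{\psi_g\}_{g\in H_0}$, the pure tensors $\{f_{h_-}\otimes \psi_{h_0}\otimes e_{h_+}\}_{h_\pm\in H,\,h_0\in H_0}$ form a $\BC(\vv)$-basis of $U^<_\vv(L\ssl_n)\otimes_{\BC(\vv)} U^0_\vv(L\ssl_n)\otimes_{\BC(\vv)} U^>_\vv(L\ssl_n)$. Since the multiplication map $m$ of \refp{Triangular decomposition}(a) is a $\BC(\vv)$-linear isomorphism, its images $\{f_{h_-}\cdot \psi_{h_0}\cdot e_{h_+}\}$ must form a $\BC(\vv)$-basis of $U_\vv(L\ssl_n)$. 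There is no real obstacle at this stage: the entire difficulty of the argument has already been absorbed into \reft{Main Theorem 1}, whose proof via the shuffle algebra realization and the specialization maps $\phi_{\unl{d}}$ is the main technical content of \refss{proof of Theorem 1}.
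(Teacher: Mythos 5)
Your proposal is correct and follows exactly the paper's argument: the paper likewise observes that Proposition~\ref{Triangular decomposition}(b) gives the basis $\{\psi_g\}_{g\in H_0}$ of $U^0_\vv(L\ssl_n)$, and then combines Theorem~\ref{Main Theorem 1} with the triangular decomposition of Proposition~\ref{Triangular decomposition}(a) to conclude. Your slightly more explicit unpacking of the commutative structure of the Cartan subalgebra is a harmless elaboration of the same route.
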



\subsection{Integral form $\fU_\vv(L\ssl_n)$ and its PBWD bases}\label{ssec formulation main thm 2}
\

Following the above notations, define $\wt{e}_\beta(r)\in U^{>}_\vv(L\ssl_n)$
and $\wt{f}_\beta(r)\in U^{<}_\vv(L\ssl_n)$ via
\begin{equation}\label{integral basis PBW}
   \wt{e}_\beta(r):=(\vv-\vv^{-1})e_\beta(r),
   \quad
   \wt{f}_\beta(r):=(\vv-\vv^{-1})f_\beta(r),
   \quad \forall\, (\beta,r)\in \Delta^+\times \BZ.
\end{equation}
We also define $\wt{e}_h,\wt{f}_h$ via the formula~(\ref{ordered}) but using
$\wt{e}_\beta(r),\wt{f}_\beta(r)$ instead of $e_\beta(r),f_\beta(r)$.
Finally, we define integral forms $\fU^>_\vv(L\ssl_n)$ and $\fU^<_\vv(L\ssl_n)$ as the
$\BC[\vv,\vv^{-1}]$-subalgebras of $U^>_\vv(L\ssl_n)$ and $U^<_\vv(L\ssl_n)$
generated by
  $\{\wt{e}_\beta(r)\}_{\beta\in \Delta^+}^{r\in \BZ}$
and
  $\{\wt{f}_\beta(r)\}_{\beta\in \Delta^+}^{r\in \BZ}$,
respectively.

We note that the above definition of $\fU^>_\vv(L\ssl_n),\fU^<_\vv(L\ssl_n)$
depends on all the choices 1)--3) made when defining $e_\beta(r),f_\beta(r)$
in~(\ref{higher roots}). Our next result establishes that they are actually
independent of these choices and posses PBWD bases analogous to those of
Theorem~\ref{Main Theorem 1}.

\begin{Thm}\label{Main Theorem 2}
(a) The subalgebras $\fU^>_\vv(L\ssl_n)$ and $\fU^<_\vv(L\ssl_n)$ are independent
of all our~choices.

\noindent
(b) The ordered PBWD monomials $\{\wt{e}_h\}_{h\in H}$ form a basis of the free
$\BC[\vv,\vv^{-1}]$-module $\fU^{>}_\vv(L\ssl_n)$.

\noindent
(c) The ordered PBWD monomials $\{\wt{f}_h\}_{h\in H}$ form a basis of the free
$\BC[\vv,\vv^{-1}]$-module $\fU^{<}_\vv(L\ssl_n)$.
\end{Thm}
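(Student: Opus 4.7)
The plan is to establish \reft{Main Theorem 2} in tandem with the shuffle integral form result (\reft{shuffle integral form}), following and integralizing the shuffle-algebra strategy that will prove \reft{Main Theorem 1}. Linear independence in parts (b)--(c) is essentially free: since $\wt{e}_h=(\vv-\vv^{-1})^{|h|}e_h$ with $|h|=\sum_{\beta,r} h(\beta,r)$, the $\BC[\vv,\vv^{-1}]$-linear independence of the $\wt{e}_h$ follows from the $\BC(\vv)$-linear independence of the $e_h$ granted by \reft{Main Theorem 1}, and similarly for the $\wt{f}_h$. Using the evident symmetry swapping $e\leftrightarrow f$, I focus on $\wt{e}_h$.

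Next I would transport the problem to the shuffle algebra via the embedding $\Psi\colon U^{>}_\vv(L\ssl_n)\hookrightarrow S^{(n)}$. Let $A$ denote the $\BC[\vv,\vv^{-1}]$-span of the ordered PBWD monomials $\{\wt{e}_h\}_{h\in H}$ for a fixed choice (1)--(3); by construction $A\subseteq \fU^{>}_\vv(L\ssl_n)$. First I would verify the inclusion $\Psi(\fU^{>}_\vv(L\ssl_n))\subseteq \fS^{(n)}$, which reduces via \refp{simple shuffle} to checking $\Psi(\wt{e}_\beta(r))\in \fS^{(n)}$; this is a direct computation of iterated shuffle $\vv$-commutators of the degree-one generators, together with the fact that $\fS^{(n)}$ is a $\BC[\vv,\vv^{-1}]$-subalgebra (\refd{integral element}). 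The crux is to upgrade this to the equality $\Psi(A)=\fS^{(n)}$, which forces $A=\fU^{>}_\vv(L\ssl_n)=\Psi^{-1}(\fS^{(n)})$: this yields (b), while (a) is then automatic since $\fS^{(n)}$ and hence $\Psi^{-1}(\fS^{(n)})$ are choice-free, and any alternative $\wt{e}'_\beta(r)$ again satisfies $\Psi(\wt{e}'_\beta(r))\in \fS^{(n)}$ by the same direct check, so it already lies in $A$.

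For the spanning $\Psi(A)\supseteq \fS^{(n)}$ I would iterate the specialization maps $\phi_{\unl d}$ employed in the proof of the non-integral \reft{hard shuffle}. Given $F\in \fS^{(n)}$, I expand $\Psi^{-1}(F)$ as a $\BC(\vv)$-combination of ordered $e_h=(\vv-\vv^{-1})^{-|h|}\wt{e}_h$ using \reft{Main Theorem 1}(a). I fix a total order on the compositions $\unl d$ of the relevant degree such that for each $h\in H$ there is a distinguished $\unl d_h$ with $\phi_{\unl d_h}(\Psi(\wt{e}_h))$ a nonzero factorized Laurent monomial, while $\phi_{\unl d}(\Psi(\wt{e}_h))=0$ whenever $\unl d$ is strictly smaller than $\unl d_h$. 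Downward induction on $\unl d$ then extracts each coefficient of $\wt{e}_h$ in $F$ as a ratio $\phi_{\unl d_h}(F)/\phi_{\unl d_h}(\Psi(\wt{e}_h))$, and the normalization $\wt{e}_\beta(r)=(\vv-\vv^{-1})e_\beta(r)$ is precisely what cancels the stray $(\vv-\vv^{-1})$-denominators, placing each coefficient in $\BC[\vv,\vv^{-1}]$.

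The main obstacle will be making this integrality assertion rigorous, that is, showing that the $\vv$-dependent factors arising from the shuffle-product combinatorics combine to lie in $\BC[\vv,\vv^{-1}]$ rather than just in $\BC(\vv)$. This is exactly where the analogues of \refl{necessity for n=2} and \refl{integral n=1 case} do the real work: they encapsulate the rank-reduction to $n=2$ and $n=1$ and track the zeros and poles of shuffle elements at $\vv$-power roots of unity, matching them against the explicit $(\vv-\vv^{-1})$-normalizations built into the definition of $\fS^{(n)}$. Once these two rank-reduction lemmas are established, all three parts of \reft{Main Theorem 2} follow from the outline above.
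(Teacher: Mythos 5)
Your overall architecture is the paper's: reduce to the $e$-side by the $e\leftrightarrow f$ antiisomorphism, get $\BC[\vv,\vv^{-1}]$-linear independence for free from \reft{Main Theorem 1}, prove $\Psi(\fU^{>}_\vv(L\ssl_n))=\fS^{(n)}=\Psi(A)$ via the specialization maps with the rank-one reduction carried by \refl{necessity for n=2} and \refl{integral n=1 case}, and deduce (a) from the fact that $\fS^{(n)}$ is defined without reference to the choices (1)--(3). This is exactly how the paper argues, and the spanning step you sketch (downward induction over degree vectors $\unl{d}$, dividing out the factors forced by the wheel conditions, then applying the rank-one integral lemma to the symmetric remainder) is the content of Lemma~\ref{L:spanning integral}.

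There is, however, one genuine circularity in your inclusion step. You reduce $\Psi(\fU^{>}_\vv(L\ssl_n))\subseteq\fS^{(n)}$ to the single check $\Psi(\wt{e}_\beta(r))\in\fS^{(n)}$ ``together with the fact that $\fS^{(n)}$ is a $\BC[\vv,\vv^{-1}]$-subalgebra.'' But that closure property is not visible from \refd{integral element}: the divisibility of the cross specializations $\Upsilon_{\unl{d},\unl{t}}(F)$ by products of $\vv$-factorials is not manifestly preserved under the shuffle product, and in the paper it appears only as \refc{useful corollary}(a), i.e.\ as a \emph{consequence} of \reft{shuffle integral form}, not as an input to it. What is actually needed is the statement that $\Psi(\wt{e}_{\beta_1}(r_1)\cdots\wt{e}_{\beta_l}(r_l))\in\fS^{(n)}$ for an \emph{arbitrary} product of the generators; this is Lemma~\ref{L:necessity for n>2}, whose proof is a nontrivial bookkeeping of which $\zeta$-factors survive the two-stage specialization $\Upsilon_{\unl{d},\unl{t}}=\varpi_{\unl{t}}\circ\varphi_{\unl{d}}$, showing that the surviving terms each contribute the required factor $[t_{\beta,i}]_\vv!$ (generalizing the computation of \refl{necessity for n=2}). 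Without this lemma your argument does not establish $\Psi(\fU^{>}_\vv(L\ssl_n))\subseteq\fS^{(n)}$, and hence neither the identification $\fU^{>}_\vv(L\ssl_n)=\Psi^{-1}(\fS^{(n)})$ on which both (a) and the spanning half of (b) rest. Once this lemma is supplied, the rest of your outline goes through as in the paper.
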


The proof of Theorem~\ref{Main Theorem 2} is presented in
Section~\ref{ssec proof of Theorem 2} and is based on the shuffle approach.

\medskip
We also define an integral form $\fU^0_\vv(L\ssl_n)$ as the
$\BC[\vv,\vv^{-1}]$-subalgebra of $U^0_\vv(L\ssl_n)$ generated by
$\{\psi^\pm_{i,\pm s}\}_{i\in I}^{s\in \BN}$. Due to
Proposition~\ref{Triangular decomposition}(b), the PBWD monomials
$\{\psi_g\}_{g\in H_0}$ of~(\ref{ordered cartan}) form a basis of
the free $\BC[\vv,\vv^{-1}]$-module $\fU^{0}_\vv(L\ssl_n)$.

\begin{Def}\label{definition entire integral form}
The integral form $\fU_\vv(L\ssl_n)$ is defined as the
$\BC[\vv,\vv^{-1}]$-subalgebra of $U_\vv(L\ssl_n)$ generated by
  $\{\wt{e}_\beta(r), \wt{f}_\beta(r)\}_{\beta\in \Delta^+}^{r\in \BZ}\cup
   \{\psi^\pm_{i,\pm s}\}_{i\in I}^{s\in \BN}$.
\end{Def}

\begin{Rem}\label{independence integral form}
Due to Theorem~\ref{Main Theorem 2}(a), the algebra $\fU_\vv(L\ssl_n)$ itself
is independent of any choices (made in the
definition~(\ref{higher roots},~\ref{integral basis PBW})
of the PBWD basis elements $\wt{e}_\beta(r), \wt{f}_\beta(r)$).
\end{Rem}

The following result is proved in~\cite[Theorem 3.24]{ft2}
(cf.~Theorem~\ref{Main Theorem 1 entire algebra}):

\begin{Thm}[\cite{ft2}]\label{Full PBWD for integral-special choice}
For the particular choice~(\ref{simplest choice}) of $e_\beta(r),f_\beta(r)$
in~(\ref{integral basis PBW}), the elements
\begin{equation*}
  \Big\{\wt{f}_{h_-}\cdot \psi_{h_0}\cdot \wt{e}_{h_+}\ |\ h_-,h_+\in H,h_0\in H_0\Big\}
\end{equation*}
form a basis of the free $\BC[\vv,\vv^{-1}]$-module $\fU_\vv(L\ssl_n)$.
\end{Thm}

In view of Theorem~\ref{Main Theorem 2}, this gives rise to the
\emph{triangular decomposition} of $\fU_\vv(L\ssl_n)$:

\begin{Cor}\label{Triangular for integral form}
The multiplication map
\begin{equation*}
  m\colon
  \fU^{<}_\vv(L\ssl_n)\otimes_{\BC[\vv,\vv^{-1}]}
  \fU^{0}_\vv(L\ssl_n)\otimes_{\BC[\vv,\vv^{-1}]}
  \fU^{>}_\vv(L\ssl_n)\longrightarrow \fU_\vv(L\ssl_n)
\end{equation*}
is an isomorphism of the free $\BC[\vv,\vv^{-1}]$-modules.
\end{Cor}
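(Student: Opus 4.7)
The plan is to deduce the corollary directly from the PBWD-type basis results already in hand, without any additional computation. By Theorem~\ref{Main Theorem 2}(b), the ordered PBWD monomials $\{\wt{e}_h\}_{h\in H}$ form a $\BC[\vv,\vv^{-1}]$-basis of the free module $\fU^{>}_\vv(L\ssl_n)$; by Theorem~\ref{Main Theorem 2}(c), the monomials $\{\wt{f}_h\}_{h\in H}$ form such a basis of $\fU^{<}_\vv(L\ssl_n)$; and by the remark preceding Definition~\ref{definition entire integral form} (a consequence of Proposition~\ref{Triangular decomposition}(b)), the ordered monomials $\{\psi_g\}_{g\in H_0}$ form a $\BC[\vv,\vv^{-1}]$-basis of $\fU^{0}_\vv(L\ssl_n)$.

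Since each of the three tensor factors is a free $\BC[\vv,\vv^{-1}]$-module with the explicit bases just recalled, the triple tensor product on the left side of the multiplication map is itself a free $\BC[\vv,\vv^{-1}]$-module on the pure tensors
\[
  \bigl\{\wt{f}_{h_-}\otimes \psi_{h_0}\otimes \wt{e}_{h_+}\ \big|\ h_-,h_+\in H,\ h_0\in H_0\bigr\}.
\]
The multiplication map $m$ sends each such pure tensor to the ordered product $\wt{f}_{h_-}\cdot \psi_{h_0}\cdot \wt{e}_{h_+}$, and Theorem~\ref{Full PBWD for integral} asserts precisely that this collection is a $\BC[\vv,\vv^{-1}]$-basis of $\fU_\vv(L\ssl_n)$. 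Hence $m$ carries a basis of the source bijectively onto a basis of the target, so it is an isomorphism of $\BC[\vv,\vv^{-1}]$-modules.

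In this approach there is no genuine obstacle beyond invoking the two cited results: the main content, namely the independence of PBWD monomials in each half and the $\BC[\vv,\vv^{-1}]$-freeness of the full integral form together with the ordered product basis, has already been established in Theorem~\ref{Main Theorem 2} and Theorem~\ref{Full PBWD for integral}. The corollary is thus a pure repackaging of those statements in tensor-product language, and the only thing one needs to verify is that the ordering of factors (negative, Cartan, positive) in the basis of Theorem~\ref{Full PBWD for integral} matches the ordering of tensor factors in the source of $m$, which it does by construction.
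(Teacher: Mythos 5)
Your proposal is correct and matches the paper's intended argument: the corollary is stated as an immediate consequence of Theorem~\ref{Main Theorem 2} together with Theorem~\ref{Full PBWD for integral}, exactly as you argue, with $m$ carrying the evident basis of the tensor product onto the basis of $\fU_\vv(L\ssl_n)$. No further comment is needed.
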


Combining this Corollary with Theorem~\ref{Main Theorem 2}, we finally obtain:

\begin{Thm}\label{Full PBWD for integral}
For any choices 1)--3) made prior to~(\ref{higher roots}), the elements
\begin{equation*}
  \Big\{\wt{f}_{h_-}\cdot \psi_{h_0}\cdot \wt{e}_{h_+}\ |\ h_-,h_+\in H,h_0\in H_0\Big\}
\end{equation*}
form a basis of the free $\BC[\vv,\vv^{-1}]$-module $\fU_\vv(L\ssl_n)$.
\end{Thm}

\begin{Rem}
All results of this Section also hold when $\BC[\vv,\vv^{-1}]$ is replaced with $\BZ[\vv,\vv^{-1}]$.
\end{Rem}

We conclude this section with a remark discussing the $\gl_n$-counterpart of $\fU_\vv(L\ssl_n)$:

\begin{Rem}\label{ft's RTT interpretation}
(a) It is often more convenient to work with the quantum loop algebra of $\gl_n$,
denoted by $U_\vv(L\gl_n)$, which roughly speaking is obtained from $U_\vv(L\ssl_n)$
by adding a Cartan current. Its integral form $\fU_\vv(L\gl_n)$ is defined
similarly to $\fU_\vv(L\ssl_n)$ of Definition~\ref{definition entire integral form},
and admits a triangular decomposition
  $\fU_\vv(L\gl_n) \simeq
   \fU^{<}_\vv(L\gl_n)\otimes_{\BC[\vv,\vv^{-1}]}\fU^{0}_\vv(L\gl_n)
   \otimes_{\BC[\vv,\vv^{-1}]} \fU^{>}_\vv(L\gl_n)$,
cf.~Corollary~\ref{Triangular for integral form}.
Here,
  $\fU^{<}_\vv(L\gl_n)\simeq \fU^{<}_\vv(L\ssl_n),
   \fU^{>}_\vv(L\gl_n)\simeq \fU^{>}_\vv(L\ssl_n),
   \fU^{0}_\vv(L\gl_n)\supset \fU^{0}_\vv(L\ssl_n)$.

\noindent
(b) The proof of Theorem~\ref{Full PBWD for integral-special choice} presented
in~\cite{ft2} crucially utilizes the identification of $\fU_\vv(L\gl_n)$ and
the RTT integral form $\fU^\mathrm{rtt}_\vv(L\gl_n)$ of~\cite{frt}
(see~\cite[\S3.2, Proposition 3.11]{ft2}) under the $\BC(\vv)$-algebra isomorphism
  $U_\vv(L\gl_n)\simeq \fU^\mathrm{rtt}_\vv(L\gl_n)\otimes_{\BC[\vv,\vv^{-1}]}\BC(\vv)$
of~\cite{df}.

\noindent
(c) We note that the integral form $\fU_\vv(L\gl_n)$ provides a quantization
of the thick slice $^\dagger\CW_0$ of~\cite[\S4.8]{ft}, see~\cite[Remark 3.26]{ft2}.
More precisely, we have
\begin{equation}\label{integral_affine_gln}
  \fU_\vv(L\gl_n)/(\vv-1)\simeq
  \BC\left[t^\pm_{ji}[\pm r]\right]_{1\leq j,i\leq n}^{r\geq 0}\Big/
  \left(\langle t^+_{ij}[0],t^-_{ji}[0],t^\pm_{kk}[0]t^\mp_{kk}[0]-1 \rangle_{1\leq j<i\leq n}^{1\leq k\leq n}\right).
\end{equation}
\end{Rem}


\section{Shuffle algebra realizations of $U^>_\vv(L\ssl_n)$ and $\fU^>_\vv(L\ssl_n)$}
\label{sec proofs of Theorems 1,2}

In this section, we establish the shuffle algebra realizations of $U^{>}_\vv(L\ssl_n)$
and $\fU^{>}_\vv(L\ssl_n)$ (hence, the independence of the latter from all choices made,
Theorem~\ref{Main Theorem 2}(a)), and use those to prove Theorems~\ref{Main Theorem 1}(a)
and~\ref{Main Theorem 2}(b). As the assignment $e_{i,r}\mapsto f_{i,r}\ (i\in I,r\in \BZ)$
gives rise to a $\BC(\vv)$-algebra anti-isomorphism $U^{>}_\vv(L\ssl_n)\to U^{<}_\vv(L\ssl_n)$
(resp.\ a $\BC[\vv,\vv^{-1}]$-algebra anti-isomorphism $\fU^{>}_\vv(L\ssl_n)\to \fU^{<}_\vv(L\ssl_n)$)
that maps the ordered PBWD monomials of the source to the ordered PBWD monomials of the target
(up to a sign and an integer power of $\vv$), both Theorems~\ref{Main Theorem 1}(b)
and~\ref{Main Theorem 2}(c) follow as well.


\subsection{Shuffle algebra $S^{(n)}$}\label{ssec usual shuffle algebra}
\

We follow the notations of~\cite[Appendix I(ii)]{ft} (cf.~\cite{n}).\footnote{These
are trigonometric counterparts of the elliptic shuffle algebras of
Feigin-Odesskii~\cite{fo1}--\cite{fo3}.}
Let $\Sigma_k$ denote the symmetric group in $k$ elements, and set
$\Sigma_{(k_1,\ldots,k_{n-1})}:=\Sigma_{k_1}\times \cdots\times \Sigma_{k_{n-1}}$
for $k_1,\ldots,k_{n-1}\in \BN$. Consider an $\BN^I$-graded $\BC(\vv)$-vector space
  $\BS^{(n)}=
   \underset{\underline{k}=(k_1,\ldots,k_{n-1})\in \BN^{I}}
   \bigoplus\BS^{(n)}_{\underline{k}}$,
where $\BS^{(n)}_{(k_1,\ldots,k_{n-1})}$ consists of $\Sigma_{\unl{k}}$-symmetric
rational functions in the variables $\{x_{i,r}\}_{i\in I}^{1\leq r\leq k_i}$.
We fix an $I\times I$ matrix of rational functions
$(\zeta_{i,j}(z))_{i,j\in I} \in \mathrm{Mat}_{I\times I}(\BC(\vv)(z))$
via $\zeta_{i,j}(z):=\frac{z-\vv^{-c_{ij}}}{z-1}$.
Let us now introduce the bilinear \emph{shuffle product} $\star$ on $\BS^{(n)}$:
given  $F\in \BS^{(n)}_{\underline{k}}$ and $G\in \BS^{(n)}_{\underline{\ell}}$,
define $F\star G\in \BS^{(n)}_{\underline{k}+\underline{\ell}}$ via
\begin{equation}\label{shuffle product}
\begin{split}
  & (F\star G)(x_{1,1},\ldots,x_{1,k_1+\ell_1};\ldots;x_{n-1,1},\ldots, x_{n-1,k_{n-1}+\ell_{n-1}}):=
     \frac{1}{\unl{k}!\cdot\unl{\ell}!}\times\\
  & \Sym_{\Sigma_{\unl{k}+\unl{\ell}}}
    \left(F\left(\{x_{i,r}\}_{i\in I}^{1\leq r\leq k_i}\right) G\left(\{x_{i',r'}\}_{i'\in I}^{k_{i'}<r'\leq k_{i'}+\ell_{i'}}\right)\cdot
    \prod_{i\in I}^{i'\in I}\prod_{r\leq k_i}^{r'>k_{i'}}\zeta_{i,i'}(x_{i,r}/x_{i',r'})\right).
\end{split}
\end{equation}
Here, $\unl{k}!=\prod_{i\in I}k_i!$, while the \emph{symmetrization}
of $f\in \BC(\{x_{i,1},\ldots,x_{i,m_i}\}_{i\in I})$
is defined via
\begin{equation*}
  \Sym_{\Sigma_{\unl{m}}}(f)(\{x_{i,1},\ldots,x_{i,m_i}\}_{i\in I})\ :=
  \sum_{(\sigma_1,\ldots,\sigma_{n-1})\in \Sigma_{\unl{m}}}
  f\left(\{x_{i,\sigma_i(1)},\ldots,x_{i,\sigma_i(m_i)}\}_{i\in I}\right).
\end{equation*}
This endows $\BS^{(n)}$ with a structure of an associative unital algebra
with the unit $\textbf{1}\in \BS^{(n)}_{(0,\ldots,0)}$.

We will be interested only in the subspace of $\BS^{(n)}$ defined by
the \emph{pole} and \emph{wheel conditions}:

\begin{itemize}

\item[$\bullet$]
We say that $F\in \BS^{(n)}_{\underline{k}}$ satisfies the \emph{pole conditions} if
\begin{equation}\label{pole conditions}
  F=
  \frac{f(x_{1,1},\ldots,x_{n-1,k_{n-1}})}
       {\prod_{i=1}^{n-2}\prod_{r\leq k_i}^{r'\leq k_{i+1}}(x_{i,r}-x_{i+1,r'})},\
  \mathrm{where}\ f\in \BC(\vv)[\{x_{i,r}^{\pm 1}\}_{i\in I}^{1\leq r\leq k_i}]^{\Sigma_{\unl{k}}}.
\end{equation}

\item[$\bullet$]
We say that $F\in \BS^{(n)}_{\underline{k}}$ satisfies the
\emph{wheel conditions}\footnote{Following~\cite{fo1}--\cite{fo3}, the role of
the wheel conditions is exactly to replace complicated Serre relations.} if
\begin{equation}\label{wheel conditions}
  F(\{x_{i,r}\})=0\ \mathrm{once}\ x_{i,r_1}=\vv x_{i+\epsilon,s}=\vv^2 x_{i,r_2}\
  \mathrm{for\ some}\ \epsilon, i, r_1, r_2, s,
\end{equation}
where
  $\epsilon\in \{\pm 1\},\, i,i+\epsilon\in I,\,
   1\leq r_1\ne r_2\leq k_i,\, 1\leq s\leq k_{i+\epsilon}$.

\end{itemize}

\medskip
\noindent
Let $S^{(n)}_{\underline{k}}\subset \BS^{(n)}_{\underline{k}}$ denote
the subspace of all elements $F$ satisfying these two conditions and set
  $$S^{(n)}:=\underset{\underline{k}\in \BN^{I}}\bigoplus S^{(n)}_{\underline{k}}$$
It is straightforward to check that the subspace $S^{(n)}\subset\BS^{(n)}$ is $\star$-closed.
The \textbf{shuffle algebra} $\left(S^{(n)},\star\right)$ is related to $U^>_\vv(L\ssl_n)$
via the following construction:\footnote{In the formal setup (when working over
$\BC[[\hbar]]$ rather than over $\BC(v)$), this goes back to~\cite[Corollary 1.4]{e}.}

\begin{Prop}\label{simple shuffle}
The assignment $e_{i,r}\mapsto x_{i,1}^r\ (i\in I,r\in \BZ)$
gives rise to an injective  $\BC(\vv)$-algebra homomorphism
$\Psi\colon U_\vv^{>}(L\ssl_n)\to S^{(n)}$.
\end{Prop}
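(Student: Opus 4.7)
Since Proposition~\ref{Triangular decomposition}(b) presents $U^>_\vv(L\ssl_n)$ by the generators $\{e_{i,r}\}$ subject only to the quadratic relations (\ref{Aff 2}) and the Serre relations (\ref{Aff 7}), the proof of Proposition~\ref{simple shuffle} splits into two parts: well-definedness of $\Psi$ and injectivity of $\Psi$.

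For well-definedness, I would verify (\ref{Aff 2}) and (\ref{Aff 7}) directly in $S^{(n)}$. Encoding $\Psi(e_i(z))=\sum_{r}x_{i,1}^{r}z^{-r}$ as a formal distribution, the degree $\mathbf{e}_i+\mathbf{e}_j$ component of $\Psi(e_i(z))\star\Psi(e_j(w))$ is the $\Sigma$-symmetrization of $x_{i,1}^{\bullet}x_{j,1}^{\bullet}\zeta_{i,j}(x_{i,1}/x_{j,1})$. Writing $\zeta_{i,j}(x/y)=\frac{x-\vv^{-c_{ij}}y}{x-y}$, the difference $(z-\vv^{c_{ij}}w)\Psi(e_i(z))\star\Psi(e_j(w)) - (\vv^{c_{ij}}z-w)\Psi(e_j(w))\star\Psi(e_i(z))$ vanishes identically after clearing denominators, giving (\ref{Aff 2}). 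The Serre relation (\ref{Aff 7}) with $c_{ij}=-1$ produces an element of $S^{(n)}_{2\mathbf{e}_i+\mathbf{e}_j}$ in the three variables $x_{i,1},x_{i,2},x_{j,1}$ which, after symmetrizing over $x_{i,1}\leftrightarrow x_{i,2}$, telescopes to zero by a short direct calculation; this also makes manifest the wheel condition (\ref{wheel conditions}), ensuring the image indeed lies in $S^{(n)}$ rather than merely $\BS^{(n)}$.

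For injectivity, I would combine two ingredients. First, using only the relations (\ref{Aff 2}) and (\ref{Aff 7}), one shows by an inductive reordering argument that the ordered PBWD monomials $\{e_h\}_{h\in H}$ of (\ref{ordered}) $\BC(\vv)$-linearly span $U^>_\vv(L\ssl_n)$. This is the spanning half of the eventual PBWD theorem (cf.\ the remark attributed to Etingof), and it can be obtained directly from the quadratic rewriting rule encoded in (\ref{Aff 2}) (which lets one push a higher-ordered factor past a lower-ordered one modulo lower-order correction terms) together with the Serre relation. Injectivity of $\Psi$ then reduces to linear independence of the images $\{\Psi(e_h)\}_{h\in H}$ inside $S^{(n)}$.

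The main obstacle is this linear independence, which I would establish via the specialization-map technique advertised for Section~\ref{ssec proof of Theorem 1}. On each graded piece $S^{(n)}_{\underline{k}}$, one defines specializations $\phi_{\underline{d}}$ that cluster the variables $\{x_{i,r}\}$ into groups of $\vv$-powers of a single parameter, one cluster per PBWD factor in $e_h$, matching the root decomposition $\beta=\alpha_{i_1}+\ldots+\alpha_{i_p}$ underlying each $e_\beta(r)$. Because $\zeta_{i,i'}$ has its zeros precisely where adjacent indices in a nested commutator must align, each $\Psi(e_h)$ is detected by the specialization matching its ``shape,'' while images of strictly ``smaller'' ordered monomials (in a suitable total order on shapes) vanish under that specialization. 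The extra $\BZ$-grading by the polynomial degrees $r_i$, which $\Psi$ manifestly preserves, then handles same-shape monomials, yielding a triangular detection system and hence the required linear independence.
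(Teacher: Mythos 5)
Your verification that $\Psi$ respects relations~(\ref{Aff 2}) and~(\ref{Aff 7}) (via the presentation of Proposition~\ref{Triangular decomposition}(b)) is exactly what the paper does, and is fine; note only that membership of the image in $S^{(n)}$ rather than $\BS^{(n)}$ is immediate from the fact that $S^{(n)}$ is $\star$-closed and contains the degree-one components, so no appeal to the Serre relation is needed for the wheel conditions.

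The injectivity half is where the problem lies. You propose to deduce injectivity from two inputs: (a) the ordered PBWD monomials $\{e_h\}_{h\in H}$ span $U^>_\vv(L\ssl_n)$, proved ``by an inductive reordering argument'' using only~(\ref{Aff 2}) and~(\ref{Aff 7}); and (b) linear independence of $\{\Psi(e_h)\}$ in $S^{(n)}$ via specialization maps. Input (b) is sound and mirrors Section~\ref{ssec linear indep}. But input (a) is a genuine gap: reordering a product such as $e_{\alpha_1+\alpha_2}(r)\,e_{\alpha_1}(s)$ into the span of ordered monomials requires Levendorskii--Soibelman-type convexity relations among the non-simple root vectors $e_\beta(r)$, and deriving these directly from the quadratic relation~(\ref{Aff 2}) (which is a four-term identity mixing loop degrees, not a straightening rule) together with~(\ref{Aff 7}) is precisely the missing combinatorial argument that the paper is written to circumvent --- it is the unproved content of~\cite[Theorem~3.11]{hrz} discussed in the introduction. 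You cannot simply assert it. Moreover, the paper's own proof of the spanning property (Section~\ref{ssec spanning prop}) runs in the opposite direction: it shows every $F\in S^{(n)}$ lies in $\operatorname{span}\{\Psi(e_h)\}$ and then uses the \emph{already established} injectivity of $\Psi$ to pull this back to $U^>_\vv(L\ssl_n)$; so you cannot borrow that spanning result here without circularity.

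The paper instead proves injectivity directly and independently of any PBWD statement, via the standard Drinfeld-pairing argument: there is a non-degenerate pairing on $U^>_\vv(L\ssl_n)$ and a pairing on $S^{(n)}$ compatible with it through $\Psi$, whence $\ker\Psi$ lies in the radical of a non-degenerate form and is zero (see \cite[Lemma~2.20, Propositions~2.30, 3.8]{n}). If you want to avoid the pairing, the legitimate alternative hinted at in the introduction is flatness of the deformation together with the PBW theorem for $U(\ssl_n[t,t^{-1}])$, not a direct rewriting argument.
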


\begin{proof}
The assignment $e_{i,r}\mapsto x_{i,1}^r$ is compatible with
relations~(\ref{Aff 2}, \ref{Aff 7}), hence, it gives rise to a
$\BC(\vv)$-algebra homomorphism $\Psi\colon U_\vv^{>}(L\ssl_n)\to S^{(n)}$,
due to Proposition~\ref{Triangular decomposition}(b).

The injectivity of $\Psi$ follows from the general arguments based on
the existence of a non-degenerate pairing on the source and a pairing
on the target compatible with the former one via $\Psi$. This is explained
in details in~\cite[Lemma 2.20, Proposition 2.30, Proposition 3.8]{n}.
\end{proof}

The next result follows from its much harder counterpart~\cite[Theorem 1.1]{n},
but we will provide its alternative simpler proof\footnote{One of the benefits
of our proof is that it will be directly generalized to establish the isomorphisms of
Theorems~\ref{hard shuffle 2-parametric} and~\ref{hard shuffle superLie} below,
for which no analogue of~\cite[Theorem 1.1]{n} is known at the moment.} below,
see Remark~\ref{easier hard shuffle}:

\begin{Thm}\label{hard shuffle}
$\Psi\colon U_\vv^{>}(L\ssl_n)\iso S^{(n)}$ of Proposition~\ref{simple shuffle} is a $\BC(\vv)$-algebra isomorphism.
\end{Thm}

Before we proceed to the proofs of Theorem~\ref{Main Theorem 1}(a) and
Theorem~\ref{hard shuffle}, let us introduce the key tool in our study
of the shuffle algebra, the so-called \emph{specialization maps}.
For a positive root $\beta=\alpha_j+\alpha_{j+1}+\ldots+\alpha_i$,
define $j(\beta):=j,i(\beta):=i$, and let $[\beta]$ denote the integer interval
$[j(\beta);i(\beta)]$. Consider a collection of the intervals
$\{[\beta]\}_{\beta\in \Delta^+}$ each taken with a multiplicity $d_{\beta}\in \BN$
and ordered with respect to the total order~(\ref{order}) on $\Delta^+$
(the order inside each group is irrelevant).
Let $\unl{d}\in \BN^{\frac{n(n-1)}{2}}$ denote the collection $\{d_\beta\}_{\beta\in \Delta^+}$.
Define $\unl{\ell}=(\ell_1,\ldots,\ell_{n-1})\in \BN^I$, the $\BN^I$-degree of $\unl{d}$, via
\begin{equation}
\label{eqn: from d to l}
  \sum_{i\in I} \ell_i\alpha_i=\sum_{\beta\in \Delta^+} d_{\beta}\beta.
\end{equation}
Let us now define the \textbf{specialization map}
\begin{equation}\label{specialization map}
  \phi_{\unl{d}}\colon S^{(n)}_{\unl{\ell}}\longrightarrow
  \BC(\vv)[\{y_{\beta,s}^{\pm 1}\}_{\beta\in \Delta^+}^{1\leq s\leq d_\beta}]^{\Sigma_{\unl{d}}}.
\end{equation}
We split the variables $\{x_{i,r}\}_{i\in I}^{1\leq r\leq \ell_i}$ into
$\sum_{\beta\in \Delta^+} d_\beta$ groups corresponding to the above intervals,
and specialize those in the $s$-th copy of $[\beta]$ to
  $\vv^{-j(\beta)}\cdot y_{\beta,s},\ldots,\vv^{-i(\beta)}\cdot y_{\beta,s}$
in the natural order
(the variable $x_{i,r}$ is specialized to $\vv^{-i}y_{\beta,s}$). For
  $F=\frac{f(x_{1,1},\ldots,x_{n-1,\ell_{n-1}})}
   {\prod_{i=1}^{n-2}\prod_{1\leq r\leq \ell_i}^{1\leq r'\leq \ell_{i+1}} (x_{i,r}-x_{i+1,r'})}
   \in S^{(n)}_{\unl{\ell}}$,
we define $\phi_{\unl{d}}(F)$ as the corresponding specialization of $f$.
We note that $\phi_{\unl{d}}(F)$ is independent of our splitting of the
variables $\{x_{i,r}\}_{i\in I}^{1\leq r\leq \ell_i}$ into groups and is
symmetric in $\{y_{\beta,s}\}_{s=1}^{d_\beta}$ for any $\beta$.

The key properties of the specialization maps $\phi_{\unl{d}}$ and their
relevance to $\Psi(e_h)$, the images of the ordered PBWD monomials in the shuffle algebra,
are discussed in Lemmas~\ref{lower degrees},~\ref{same degrees},~\ref{spanning} below.
We conclude this section with an explicit example of the specialization maps:

\begin{Ex}\label{example specialization maps}
Consider
  $F=\frac{x^a_{1,1}x^b_{2,1}x^c_{3,1}}{(x_{1,1}-x_{2,1})(x_{2,1}-x_{3,1})}
   \in S^{(n)}_{\unl{\ell}}$,
where $\unl{\ell}=(1,1,1,0,\ldots,0)\in \BN^I$ and $a,b,c\in \BZ$.
Let us compute the images of $F$ under all possible specialization maps:

\noindent
(a) If $\unl{d}$ encodes a single positive root $\beta=\alpha_1+\alpha_2+\alpha_3$,
then $\phi_{\underline{d}}(F)$ is a Laurent polynomial in a single variable $y_{\beta,1}$
and equals
  $(\vv^{-1}y_{\beta,1})^a (\vv^{-2}y_{\beta,1})^b (\vv^{-3}y_{\beta,1})^c$.

\noindent
(b) If $\unl{d}$ encodes two positive roots $\beta_1=\alpha_1,\beta_2=\alpha_2+\alpha_3$,
then $\phi_{\underline{d}}(F)$ is a Laurent polynomial in two variables
$y_{\beta_1,1},y_{\beta_2,1}$ and equals
  $(\vv^{-1}y_{\beta_1,1})^a (\vv^{-2}y_{\beta_2,1})^b (\vv^{-3}y_{\beta_2,1})^c$.

\noindent
(c) If $\unl{d}$ encodes two positive roots $\beta_1=\alpha_1+\alpha_2,\beta_2=\alpha_3$,
then $\phi_{\underline{d}}(F)$ is a Laurent polynomial in two variables
$y_{\beta_1,1},y_{\beta_2,1}$ and equals
  $(\vv^{-1}y_{\beta_1,1})^a (\vv^{-2}y_{\beta_1,1})^b (\vv^{-3}y_{\beta_2,1})^c$.

\noindent
(d) If $\unl{d}$ encodes three positive roots
$\beta_1=\alpha_1,\beta_2=\alpha_2,\beta_3=\alpha_3$, then $\phi_{\underline{d}}(F)$
is a Laurent polynomials in three variables $y_{\beta_1,1},y_{\beta_2,1}, y_{\beta_3,1}$
and equals
  $(\vv^{-1}y_{\beta_1,1})^a (\vv^{-2}y_{\beta_2,1})^b (\vv^{-3}y_{\beta_3,1})^c$.
\end{Ex}


\subsection{Proof of Theorem~\ref{Main Theorem 1}(a)}\label{ssec proof of Theorem 1}
\

Our proof of Theorem~\ref{Main Theorem 1}(a) will proceed in two steps:
first, we shall establish the linear independence\footnote{As pointed out in the
introduction, the linear independence can be deduced from the general arguments
based on the flatness of the deformation and the PBW property of $U(\ssl_n[t,t^{-1}])$.
However, the \emph{specialization maps} of~(\ref{specialization map}) and
formulas~(\ref{explicit factors},~\ref{explicit formula for same degrees})
will be used below to prove that $\{e_h\}_{h\in H}$ span $U^>_\vv(L\ssl_n)$. We will
use the same approach for two-parameter quantum loop algebra, for which
the general arguments do not apply.}
of the ordered PBWD monomials in Section~\ref{ssec linear indep},
and then we will verify that they linearly span the entire algebra
$U_\vv^>(L\ssl_n)$ in Section~\ref{ssec spanning prop} (we note that
the order of these two steps is usually opposite in the proofs of PBW-type theorems).

We start by establishing Theorem~\ref{Main Theorem 1}(a) for $n=2$
in Section~\ref{ssec proof baby case}.


\subsubsection{Proof of Theorem~\ref{Main Theorem 1}(a) for $n=2$}\label{ssec proof baby case}
\

For $k\in \BN$, set $[k]_\vv:=\frac{\vv^k-\vv^{-k}}{\vv-\vv^{-1}}$
and $[k]_\vv!:=[1]_\vv\cdots [k]_\vv$. We start from the following
simple computation in $S^{(2)}$ (as $I=\{1\}$, we shall denote
the variables $x_{1,r}$ simply by $x_r$):

\begin{Lem}\label{k-th fold product}
For any $k\geq 1$ and $r\in \BZ$, the $k$-th power of $x^r\in S^{(2)}_1$ equals
\begin{equation}\label{factorial formula}
  \underbrace{x^r\star\cdots \star x^r}_{k\ \mathrm{times}}=\vv^{-\frac{k(k-1)}{2}} [k]_\vv!\cdot (x_1\cdots x_k)^r.
\end{equation}
\end{Lem}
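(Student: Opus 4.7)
The plan is to proceed by induction on $k$. The base case $k=1$ is trivial, and for the inductive step I would write $(x^r)^{\star k} = (x^r)^{\star(k-1)} \star x^r$. Invoking the inductive hypothesis, this reduces the task to computing $(x_1 \cdots x_{k-1})^r \star x^r$ via the shuffle product formula~\refe{shuffle product}. Since the symmetric monomial $(x_1 \cdots x_k)^r$ factors out of the symmetrization, the essential computation is
\begin{equation*}
  \Sym_{\Sigma_k}\!\left[\prod_{i=1}^{k-1}\frac{x_i - \vv^{-2}x_k}{x_i - x_k}\right].
\end{equation*}
Because the integrand is $\Sigma_{k-1}$-symmetric in $x_1, \ldots, x_{k-1}$, every choice of $j = \sigma(k) \in \{1,\ldots,k\}$ is realized by exactly $(k-1)!$ permutations all yielding the same value, so the symmetrization collapses to $\tfrac{1}{k}\,S_k(x_1,\ldots,x_k)$ with
\begin{equation*}
  S_k \;:=\; \sum_{j=1}^{k}\prod_{i \neq j}\frac{x_i - \vv^{-2}x_j}{x_i - x_j}.
\end{equation*}

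The heart of the lemma is then the identity $S_k = \vv^{-(k-1)}[k]_\vv$, i.e.\ $S_k$ is the constant $1 + \vv^{-2} + \cdots + \vv^{-2(k-1)}$. I would establish this in two steps. First, the rational function $S_k$ is pole-free: at any hyperplane $x_a = x_b$ the residues of the $j = a$ and $j = b$ summands are opposite and cancel, while all other summands are manifestly regular there. Since $S_k$ is also homogeneous of total degree $0$, it must be a constant element of $\BC(\vv)$. Second, taking $x_k \to \infty$ sends the $j = k$ summand to $\vv^{-2(k-1)}$ (each of the $k-1$ ratios tending to $\vv^{-2}$) while collapsing the remaining summands onto $S_{k-1}$, yielding the recursion $S_k = \vv^{-2(k-1)} + S_{k-1}$ with base $S_1 = 1$, which solves to the claimed value.

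Assembling the pieces, the $\vv$-power $\vv^{-k(k-1)/2}$ accumulates from $\vv^{-(k-1)(k-2)/2}$ (inductive hypothesis) and $\vv^{-(k-1)}$ (from $S_k$), while the prefactor $\unl{k}!\unl{l}!$ in~\refe{shuffle product}, the $1/k$ from the symmetrization, and the $[k-1]_\vv!/(k-1)!$ from the inductive hypothesis combine to produce the stated $[k]_\vv!/k!$ coefficient. The only substantive obstacle is the residue-cancellation step establishing the constancy of $S_k$; everything else is routine bookkeeping of prefactors.
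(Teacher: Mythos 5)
Your proposal is correct and follows essentially the same route as the paper: induction on $k$, reducing the inductive step to the identity $\sum_{i=1}^k \prod_{j\ne i}\frac{x_j-\vv^{-2}x_i}{x_j-x_i}=1+\vv^{-2}+\cdots+\vv^{-2(k-1)}$ (the paper's~\refe{combinatorial identity}), which you establish exactly as the paper does --- the left-hand side is homogeneous of degree $0$ and pole-free, hence constant, and the constant is found by letting $x_k\to\infty$ and invoking the inductive hypothesis. Your only additions are cosmetic: you spell out the residue cancellation behind the paper's ``without poles'' claim and the $(k-1)!$-fold collapse of the symmetrization, neither of which changes the argument.
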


\begin{proof}
The proof is by induction on $k$, the base case $k=1$ being trivial. Applying the induction
assumption to the $(k-1)$-st power of $x^r$, the proof of~(\ref{factorial formula})
boils down to the verification of
\begin{equation}\label{combinatorial identity}
   \sum_{p=1}^k \prod_{1\leq s\leq k}^{s\ne p} \frac{x_s-\vv^{-2}x_p}{x_s-x_p}=\vv^{1-k}[k]_\vv.
\end{equation}
The left-hand side of~(\ref{combinatorial identity}) is a rational function in
$\{x_p\}_{p=1}^k$ of degree $0$ and is easily checked to have no poles, hence, must be a constant.
To evaluate this constant, let $x_k\to \infty$: the last summand (corresponding to $p=k$) tends to $\vv^{-2(k-1)}$,
while the sum of the first $k-1$ summands (corresponding to $1\leq p\leq k-1$) tends to $1+\vv^{-2}+\ldots+\vv^{-2(k-2)}$
by the induction assumption, thus, resulting in
$1+\vv^{-2}+\vv^{-4}+\ldots+\vv^{-2(k-1)}=\vv^{1-k}[k]_\vv$ as claimed.
\end{proof}

Theorem~\ref{Main Theorem 1}(a) for $n=2$ is equivalent to the following result:

\begin{Lem}\label{n=1 case}
For any total order $\preceq$ on $\BZ$, the ordered monomials
$\{e_{r_1}e_{r_2}\cdots e_{r_k}\}_{k\in \BN}^{r_1\preceq\cdots\preceq r_k}$
form a $\BC(\vv)$-basis of $U^{>}_\vv(L\ssl_2)$.
\end{Lem}

\begin{proof}
For
  $r_1=\cdots=r_{k_1}\prec r_{k_1+1}=\cdots=r_{k_1+k_2}\prec
   \cdots \prec r_{k_1+\ldots+k_{\ell-1}+1}=\cdots=r_{k_1+\ldots+k_{\ell}}$,
set $k:=k_1+\ldots+k_{\ell}$ and choose $\sigma\in \Sigma_k$
so that $r_{\sigma(1)}\leq \cdots\leq r_{\sigma(k)}$. Then
$x^{r_1}\star \cdots \star x^{r_k}$ is a symmetric Laurent polynomial of the form
  $$\nu_{\unl{r}} m_{(r_{\sigma(1)},\ldots,r_{\sigma(k)})}(x_1,\ldots,x_k)+
    \sum \nu_{\unl{r}'}m_{\unl{r}'}(x_1,\ldots,x_k)$$
where
\begin{enumerate}

\item[$\bullet$]
$m_{(s_1,\ldots,s_k)}(x_1,\ldots,x_k)$ (with $s_1\leq \cdots\leq s_k$)
are the monomial symmetric polynomials (that is, the sums of all monomials
$x^{t_1}_1\cdots x^{t_k}_k$ as $(t_1,\ldots,t_k)$ ranges over all
distinct permutations of $(s_1,\ldots,s_k)$),

\item[$\bullet$]
the sum is over
  $\unl{r}'=(r'_1\leq\cdots\leq r'_k)\in \BZ^k$
distinct from $(r_{\sigma(1)},\ldots,r_{\sigma(k)})$ and satisfying
\begin{equation*}
  r_{\sigma(1)}\leq r'_1\leq r'_k\leq r_{\sigma(k)}
  \ \, \mathrm{as\ well\ as}\, \
  r'_1+\ldots+r'_k=r_1+\ldots+r_k,
\end{equation*}

\item[$\bullet$]
the coefficients $\nu_{\unl{r}'}$ are Laurent polynomials in $\vv$,
that is, $\nu_{\unl{r}'} \in \BC[\vv,\vv^{-1}]$,

\item[$\bullet$]
the coefficient $\nu_{\unl{r}}$ is explicitly given by
\begin{equation}
\label{eqn:explicit mu value}
  \nu_{\unl{r}}\ =\prod_{1\leq p\leq \ell} \left(\vv^{-\frac{k_p(k_p-1)}{2}}[k_p]_\vv!\right),
\end{equation}
due to Lemma~\ref{k-th fold product}.
\end{enumerate}
Therefore, the shuffle products
  $\{x^{r_1}\star x^{r_2}\star\cdots \star x^{r_k}\}_{k\in \BN}^{r_1\preceq\cdots\preceq r_k}$
form a $\BC(\vv)$-basis of $S^{(2)}$, since
$\{m_{(s_1,\ldots,s_k)}(x_1,\ldots,x_k)\}_{s_1\leq\cdots\leq s_k}$
form a $\BC(\vv)$-basis of $\BC[\{x_p^{\pm 1}\}_{p=1}^k]^{\Sigma_k}$ and
$S^{(2)}_k\simeq \BC[\{x_p^{\pm 1}\}_{p=1}^k]^{\Sigma_k}$ as vector spaces.
This completes our proof of Lemma~\ref{n=1 case}, due to the injectivity of $\Psi$.
\end{proof}


\subsubsection{Linear independence of $e_h$ and two properties of the specialization maps}\label{ssec linear indep}
\

For an ordered PBWD monomial $e_h\ (h\in H)$, define its \emph{degree}
$\deg(e_h)=\deg(h)\in \BN^{\frac{n(n-1)}{2}}$ as a collection of
$d_{\beta}:=\sum_{r\in \BZ} h(\beta,r)\in \BN\ (\beta\in \Delta^+)$ ordered
with respect to the total order~(\ref{order}) on $\Delta^+$. We consider the
\emph{anti-lexicographical order} on $\BN^{\frac{n(n-1)}{2}}$:
\begin{equation*}
  \{d_\beta\}_{\beta\in \Delta^+}<\{d'_\beta\}_{\beta\in \Delta^+}
  \ \ \mathrm{iff}\ \ \exists\,\gamma\in \Delta^+\
  \mathrm{s.t.}\ d_\gamma>d'_\gamma\ \mathrm{and}\
  d_\beta=d'_\beta\ \forall \beta<\gamma.
\end{equation*}
In what follows, we shall need an explicit formula for $\Psi(e_\beta(r))$:

\begin{Lem}\label{shuffle root elt}
For $1\leq j<i<n$ and $r\in \BZ$, we have
\begin{equation*}
  \Psi(e_{\alpha_j+\alpha_{j+1}+\ldots+\alpha_i}(r))=
  (1-\vv^2)^{i-j}\frac{p(x_{j,1},\ldots,x_{i,1})}{(x_{j,1}-x_{j+1,1})\cdots (x_{i-1,1}-x_{i,1})},
\end{equation*}
where $p(x_{j,1},\ldots,x_{i,1})$ is a degree $r+i-j$ monomial,
up to a sign and an integer power of $\vv$.
\end{Lem}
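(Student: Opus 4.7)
The plan is to proceed by induction on $p := i-j+1$, the length of the support of $\beta = \alpha_j+\alpha_{j+1}+\ldots+\alpha_i$, trivially extending the claim to $j=i$ as the base case. The base case $p=1$ is immediate: $e_{\alpha_j}(r) = e_{j,r}$ and $\Psi(e_{j,r}) = x_{j,1}^r$, a monomial of degree $r$ with empty denominator product, matching the claim. For the inductive step ($p \geq 2$), the definition~\refe{higher roots} writes $e_\beta(r) = [e_{\beta'}(r-r_p),\, e_{i_p,r_p}]_{\lambda_{p-1}}$, where $\beta' := \beta - \alpha_{i_p}$. The condition that the iterated commutator yield a nonzero root vector of $\ssl_n$ forces the support $\{i_1,\ldots,i_{p-1}\}$ of $\beta'$ to be a connected subinterval of $\{j,\ldots,i\}$, so $i_p$ must be an endpoint: either $i_p = j$ with $\beta' = \alpha_{j+1}+\ldots+\alpha_i$, or $i_p = i$ with $\beta' = \alpha_j+\ldots+\alpha_{i-1}$. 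In either case, let $m$ denote the unique index in $\on{supp}(\beta')$ adjacent to $i_p$; then $e_{\beta'}(r-r_p)$ is itself a PBWD basis element associated to $\beta'$ (with choices inherited from those for $e_\beta(r)$), so the inductive hypothesis applies to it.

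Set $F := \Psi(e_{\beta'}(r-r_p))$ and $G := \Psi(e_{i_p,r_p}) = x_{i_p,1}^{r_p}$, and compute $\Psi(e_\beta(r)) = F\star G - \lambda_{p-1}\, G\star F$ directly from the shuffle formula~\refe{shuffle product}. Since each position in $\on{supp}(\beta)$ carries at most one variable, the symmetrization $\Sym_{\Sigma_{\unl{k}+\unl{l}}}$ is trivial and $\unl{k}!\,\unl{l}! = 1$. Moreover, $c_{a,i_p} = 0$ for every $a\in \on{supp}(\beta')\setminus\{m\}$, so the corresponding factors $\zeta_{a,i_p}$ equal $1$; only the single $\zeta$-factor linking $m$ and $i_p$ survives in each of $F\star G$ and $G\star F$. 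A direct expansion yields
\begin{equation*}
F\star G - \lambda_{p-1}\, G\star F \;=\; F\cdot x_{i_p,1}^{r_p}\cdot \frac{(1-\lambda_{p-1}\vv)\, x_{m,1} + (\lambda_{p-1}-\vv)\, x_{i_p,1}}{x_{m,1} - x_{i_p,1}}.
\end{equation*}
The numerator of this fraction collapses to $(1-\vv^2)\, x_{m,1}$ when $\lambda_{p-1}=\vv$, and to $\vv^{-1}(1-\vv^2)\, x_{i_p,1}$ when $\lambda_{p-1}=\vv^{-1}$; in either subcase we pick up exactly one additional factor $(1-\vv^2)$, an integer power of $\vv$, and one extra monomial variable.

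Substituting the inductive form of $F$, we obtain $\Psi(e_\beta(r))$ with total prefactor $(1-\vv^2)^{i-j}$; the new denominator factor $\pm(x_{m,1}-x_{i_p,1})$ is precisely the link missing from the chain $\prod_{k=j}^{i-1}(x_{k,1}-x_{k+1,1})$; and the numerator is a monomial of total degree $(r-r_p) + (i'-j') + r_p + 1 = r+i-j$, where $i',j'$ denote the endpoints of $\on{supp}(\beta')$. This closes the induction. The only bookkeeping required is an overall sign (arising from the two cases $m = i_p \pm 1$) and an integer power of $\vv$ (arising from the two choices $\lambda_{p-1}\in\{\vv,\vv^{-1}\}$); both are explicit in the displayed formula, so no deeper obstacle arises in this lemma.
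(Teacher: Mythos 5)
Your proposal is correct, and it is exactly the ``straightforward computation'' that the paper's one-line proof leaves to the reader: peeling off the outermost $\vv$-commutator, observing that all but one $\zeta$-factor is trivial, and inducting on the length of the interval $[\beta]$. The displayed identity for $F\star G-\lambda_{p-1}\,G\star F$ and the two specializations $\lambda_{p-1}=\vv^{\pm1}$ check out, as does the degree count $(r-r_p)+(i-j-1)+r_p+1=r+i-j$.
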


\begin{proof}
Straightforward computation.
\end{proof}

\begin{Ex}
$p(x_{j,1},\ldots,x_{i,1})=x_{j,1}^{r+1}x_{j+1,1}\cdots x_{i-1,1}$
for the particular choice of~(\ref{simplest choice}).
\end{Ex}

Our proof of the linear independence of $\{e_h\}_{h\in H}$ is crucially based on the
following two key properties of the specialization maps introduced in~(\ref{specialization map}):

\begin{Lem}\label{lower degrees}
If $\deg(h)<\unl{d}$ and $\BN^I$-degrees of $\unl{d}$ and $\deg(h)$ coincide,
then $\phi_{\unl{d}}(\Psi(e_h))=0$.
\end{Lem}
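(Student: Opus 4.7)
The plan is to expand $\Psi(e_h)$ via the shuffle product formula and to show that, under $\phi_{\unl d}$, each resulting summand vanishes. By Lemma~\ref{shuffle root elt}, every factor $\Psi(e_{\beta_a}(r_a))$ equals, up to a sign and an integer power of $\vv$, a monomial divided by the standard interval-pole denominator. Writing $\Psi(e_h)=\Psi(e_{\beta_1}(r_1))\star\cdots\star\Psi(e_{\beta_m}(r_m))$ with factors ordered according to the PBWD order and unfolding~(\ref{shuffle product}), we clear the standard denominator $D=\prod_{i=1}^{n-2}\prod_{r,r'}(x_{i,r}-x_{i+1,r'})$ of $S^{(n)}_{\unl l}$ to write $\Psi(e_h)=N/D$ with $N=\sum_\sigma N_\sigma$, the sum ranging over the variable-to-factor assignments $\sigma$ produced by the symmetrization; each $N_\sigma$ is a product of the factor monomials and of $\zeta$-numerators.

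The crucial observation is the following vanishing criterion. A $\zeta_{k,k+1}$-numerator $x-\vv x'$ appearing in $N_\sigma$, with $x$ a level-$k$ variable in some $F_a:=\Psi(e_{\beta_a}(r_a))$ and $x'$ a level-$(k+1)$ variable in some $F_b$ with $a<b$ in the shuffle order, specializes under $\phi_{\unl d}$ to $\vv^{-k}(y-y')$, where $y,y'$ are the $y$-variables of the slots containing $x$ and $x'$; hence this numerator vanishes precisely when $x,x'$ are assigned to the same $\unl d$-slot. By direct inspection, the remaining $\zeta$-numerators, of types $\zeta_{k,k}$ and $\zeta_{k+1,k}$, are generically nonzero under $\phi_{\unl d}$. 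Therefore $\phi_{\unl d}(N_\sigma)=0$ as soon as $\sigma$ combined with $\phi_{\unl d}$ produces such a ``vanishing junction''.

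It remains to prove the combinatorial claim that, whenever $\deg(h)<\unl d$, every $\sigma$ generates at least one vanishing junction. Let $\gamma$ be the first positive root with $d'_\gamma>d_\gamma$, and set $j_0:=j(\gamma)$, $i_0:=i(\gamma)$. The grading identity $\sum_\beta d_\beta[\beta]=\sum_\beta d'_\beta[\beta]$ restricted to level $j_0$, combined with $d'_\beta=d_\beta$ for all $\beta<\gamma$, yields
\[
\sum_{i>i_0}d'_{\alpha_{j_0}+\cdots+\alpha_i}\;=\;\sum_{i>i_0}d_{\alpha_{j_0}+\cdots+\alpha_i}-(d'_\gamma-d_\gamma)\;<\;\sum_{i>i_0}d_{\alpha_{j_0}+\cdots+\alpha_i},
\]
so $h$ has strictly fewer factors starting at $j_0$ and extending past $i_0$ than $\unl d$ has slots of this shape. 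Crucially, by the ordering~(\ref{order}), the natural level-ordered decomposition of any $\unl d$-slot into $h$-factor sub-intervals is automatically increasing in the PBWD order, so every such sub-interval junction is of the vanishing type. A pigeonhole argument at level $j_0$ based on the above strict inequality forces every $\sigma$ to produce at least one such junction in some $\unl d$-slot. Summing over $\sigma$ gives $\phi_{\unl d}(\Psi(e_h))=0$.

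The main technical obstacle is the treatment of assignments in which an $h$-factor $F_a$ spans several $\unl d$-slots rather than sitting inside a single slot. Such configurations escape the ``natural sub-interval'' paradigm, but they can be handled by an induction on the PBWD order of the smallest spanning factor: the top-level variable of such an $F_a$ sits in some slot whose next higher level (when present) is occupied by a PBWD-later factor and thus produces a vanishing $\zeta_{k,k+1}$-junction, while in the complementary case the assignment reduces to a configuration with strictly fewer spanning factors, to which the inductive hypothesis applies.
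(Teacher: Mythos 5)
Your overall strategy is exactly the paper's: the paper's proof is a single sentence asserting that under $\phi_{\unl{d}}$ every summand of the symmetrization acquires a factor $\zeta_{i,i+1}(\vv)=0$, and you correctly identify this mechanism, correctly check that only the $\zeta_{k,k+1}$-numerators can vanish identically (and do so exactly when the two variables land in the same slot at adjacent levels with the lower-level one in a PBWD-earlier factor), and correctly derive the counting inequality $\sum_{i>i_0}d'_{\alpha_{j_0}+\cdots+\alpha_i}<\sum_{i>i_0}d_{\alpha_{j_0}+\cdots+\alpha_i}$. So you have reduced the lemma to the right combinatorial claim — the one the paper leaves unjustified — but your proof of that claim has two genuine gaps. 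First, the ``pigeonhole at level $j_0$'' is never actually connected to the existence of a vanishing junction. The natural injection would send a slot of shape $[j_0;i]$ with $i>i_0$ to the factor containing one of its variables, but a priori that factor may be based at a level strictly below $j_0$, in which case it is PBWD-\emph{earlier} than every factor based at $j_0$ and escapes your count entirely. Ruling this out is where the hypothesis $d_\beta=d'_\beta$ for \emph{all} $\beta<\gamma$ must be used structurally, not just numerically: one shows by induction on $j<j_0$ that, in a junction-free assignment, the set of variables lying in slots based at levels $\le j$ coincides with the set of variables lying in factors based at levels $\le j$ (the inclusion holds because within a slot the factor index is non-increasing upwards, so all variables of a slot based at $j'$ sit in factors based at $\le j'$, hence at exactly $j'$ by the inductive step; equality then follows by comparing cardinalities, which is where $d_{\alpha_{j'}+\cdots+\alpha_i}=d'_{\alpha_{j'}+\cdots+\alpha_i}$ enters). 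Once this matching is in place, every variable of a slot based at $j_0$ lies in a factor based at exactly $j_0$, and the injection sending a slot $[j_0;i']$ with $i'\ge i$ to the factor of its level-$i$ variable yields $\sum_{i'\ge i}d_{\alpha_{j_0}+\cdots+\alpha_{i'}}\le\sum_{i'\ge i}d'_{\alpha_{j_0}+\cdots+\alpha_{i'}}$ for every $i$, which at $i=i_0+1$ contradicts your inequality. Nothing of this sort appears in your write-up.

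Second, the induction on spanning factors does not close. The first branch is in fact correct once one checks it (if the slot of the top variable of the minimal spanning factor $F_a$ extends one level higher, the factor occupying that next level must be based at a level $\ge j(\beta_a)$ — it cannot sit inside that slot and reach down past $F_a$'s top, and it cannot be an earlier spanning factor by minimality — hence it is PBWD-later). But the complementary branch, ``the assignment reduces to a configuration with strictly fewer spanning factors,'' is not substantiated: no reduction is described, and it is not explained why the reduced configuration still satisfies the hypotheses or why a junction found there lifts to one in the original assignment. With the level-by-level matching described above, the whole spanning-factor dichotomy becomes unnecessary, so I would recommend replacing it rather than repairing it.
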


\begin{proof}
The condition $\deg(h)<\unl{d}$ guarantees that $\phi_{\unl{d}}$-specialization
of any summand of the symmetrization appearing in $\Psi(e_h)$ contains among
all the $\zeta$-factors at least one factor of the form $\zeta_{i,i+1}(\vv)=0$,
hence, it is zero. The result follows.
\end{proof}

\begin{Lem}\label{same degrees}
The specializations $\{\phi_{\unl{d}}(\Psi(e_h))\}_{h\in H}^{\deg(h)=\unl{d}}$
are linearly independent over $\BC(\vv)$.
\end{Lem}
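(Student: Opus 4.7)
The plan is to compute $\phi_{\unl{d}}(\Psi(e_h))$ explicitly and extract ``leading'' data that distinguish different $h$'s with $\deg(h)=\unl{d}$.

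First I would identify which permutations in the symmetrization defining $\Psi(e_h)=\Psi(e_{\beta_1}(r_1))\star\cdots\star\Psi(e_{\beta_N}(r_N))$ survive $\phi_{\unl{d}}$. A permutation contributes non-trivially only when it places the variables from each factor $\Psi(e_{\beta_i}(r_i))$ entirely into a single specialization group, which must therefore be of matching $\beta_i$-type. Indeed, if two variables from different original factors land in the same specialization group of interval $[\beta']$, then by walking along the slots in rows $j(\beta'),j(\beta')+1,\ldots,i(\beta')$ one finds adjacent rows $k,k+1$ filled by variables from different factors, and the $\zeta$-factor between them that is present in the shuffle product specializes to $\zeta_{k,k+1}(\vv)=0$, killing the term. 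Since $\deg(h)=\unl{d}$, the set of such \emph{good} permutations is a torsor over $\prod_{\beta\in\Delta^+}\Sigma_{d_\beta}$, the action being by permutations of the factors of type $\beta$ among the $d_\beta$ groups of type $\beta$.

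For each good bijection, Lemma~\ref{shuffle root elt} combined with a direct computation shows that the restriction of $\Psi(e_\beta(r))$ to the $s$-th group of $\beta$ specializes to $C_\beta\cdot y_{\beta,s}^r$ for some non-zero constant $C_\beta\in\BC(\vv)$ (the specialized numerator and the specialized denominator factors cancel cleanly). The cross-group $\zeta$-factors become explicit non-zero rational functions in the $y$'s, depending on the chosen bijection but not on the values $r_i$.

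Summing over good bijections yields an expression symmetric in each block $\{y_{\beta,s}\}_{s=1}^{d_\beta}$, whose leading monomial under a suitable iterated lexicographic order on the $y$-variables is proportional to $\prod_\beta\prod_{s=1}^{d_\beta}y_{\beta,s}^{r^\beta_s}$, where $\{r^\beta_s\}_{s=1}^{d_\beta}$ is the multiset $\{r\mid h(\beta,r)>0\}$ counted with multiplicities $h(\beta,r)$. This is verified by sending $y_{\beta,s}/y_{\beta,s'}\to\infty$ and $y_{\beta,s}/y_{\beta',s'}\to\infty$ in a prescribed order and reducing block-by-block to the asymptotic analysis of Lemma~\ref{n=1 case} (each $\zeta$-factor tends to a non-zero constant in these limits). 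Since distinct $h$'s with $\deg(h)=\unl{d}$ give distinct multisets, they yield distinct leading monomials, so the linear independence over $\BC(\vv)$ follows. The main technical obstacle lies in this last step: although the sum factorizes nicely over good bijections, the cross-group $\zeta$-factors couple different $y$-blocks in a bijection-dependent way, and a careful iterated-limit argument is needed to ensure they do not cancel the claimed leading monomial.
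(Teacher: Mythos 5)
Your overall strategy---identify the surviving terms of the symmetrization as a torsor over $\prod_{\beta}\Sigma_{d_\beta}$, compute the specialization of each surviving term, and separate distinct $h$'s by leading data---is the same as the paper's. However, the decisive step, which you yourself flag as ``the main technical obstacle,'' is exactly where the paper's proof has its content, and your proposal does not close it. The resolution is the observation that every cross-block contribution (the $\zeta$-factors and cleared pole factors between groups of types $\beta\neq\beta'$, as well as the $\prod_{s\neq s'}$-part within a single type) is a product over \emph{all} pairs of indices $(s,s')$ and is therefore independent of the chosen bijection $(\sigma_\beta)$. These factor out of the sum over good bijections as the common non-zero prefactor $\prod_{\beta<\beta'}G_{\beta,\beta'}\cdot\prod_{\beta}G_\beta$ of~(\ref{explicit factors}), leaving the fully decoupled product $\prod_{\beta}\bigl(\sum_{\sigma_\beta}G_\beta^{(\sigma_\beta)}\bigr)$ of~(\ref{explicit formula for same degrees}); each block sum is then recognized, up to a non-zero constant, as the rank-one shuffle product $x^{r_\beta(h,1)}\star\cdots\star x^{r_\beta(h,d_\beta)}\in S^{(2)}_{d_\beta}$ evaluated at $\{y_{\beta,s}\}_{s=1}^{d_\beta}$, and linear independence follows from Lemma~\ref{n=1 case} together with the fact that functions in disjoint groups of variables multiply as in a tensor product. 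So your worry about ``bijection-dependent coupling between blocks'' is unfounded once the all-pairs structure is noted---but without that observation your leading-monomial/iterated-limit argument is not carried out, and it is precisely the missing piece.

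A second, more minor issue: your local claim that a mixed specialization group is always killed by a factor $\zeta_{k,k+1}(\vv)=0$ is not correct as stated. That factor occurs only when the row-$k$ variable comes from the \emph{earlier} factor of the ordered product; if it comes from the later one, the relevant factor is instead $\zeta_{k+1,k}(\vv^{-1})\neq 0$, so a single group in which factor indices decrease down the rows does not die from its own internal $\zeta$-factors. The conclusion (only the ``diagonal'' assignments survive) is still true, but one needs a global argument exploiting the total ordering~(\ref{order}) on $\Delta^+$ to locate \emph{some} group containing an increasing adjacent pair; as written, your walking-down-the-rows argument would not rule such terms out.
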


\begin{proof}
Consider the image $\Psi(e_h)\in S^{(n)}_{\unl{k}}$ (here, $\unl{k}$ is
the $\BN^I$-degree of $\unl{d}=\deg(h)$). It is a sum of $\unl{k}!$ terms,
but most of them specialize to zero under $\phi_{\unl{d}}$, as in the proof
of Lemma~\ref{lower degrees}. The summands which do not specialize to zero
are parametrized by
  $\Sigma_{\unl{d}}:=\prod_{\beta\in \Delta^+} \Sigma_{d_{\beta}}$.
More precisely, given
  $(\sigma_\beta)_{\beta\in \Delta^+}\in \Sigma_{\unl{d}}$,
the associated summand corresponds to the case when for all $\beta\in \Delta^+$
and $1\leq s\leq d_\beta$, the $(\sum_{\beta'<\beta}d_{\beta'}+s)$-th factor of
the corresponding term of $\Psi(e_h)$ is evaluated at
  $\vv^{-j(\beta)}y_{\beta,\sigma_\beta(s)},\ldots,\vv^{-i(\beta)}y_{\beta,\sigma_\beta(s)}$.
The image of this summand under $\phi_{\unl{d}}$ equals
  $$\prod_{\beta<\beta'} G_{\beta,\beta'}\cdot
    \prod_{\beta}G_\beta\cdot \prod_{\beta} G_\beta^{(\sigma_\beta)}$$
(up to a common sign and an integer power of $\vv$), where
\begin{equation}\label{explicit factors}
\begin{split}
  & G_{\beta,\beta'}\ =
    \prod_{1\leq s\leq d_\beta}^{1\leq s'\leq d_{\beta'}}
    \left(\prod_{j\in[\beta],j'\in [\beta']}^{j=j'}(y_{\beta,s}-\vv^{-2}y_{\beta',s'})\ \cdot
          \prod_{j\in[\beta],j'\in [\beta']}^{j=j'+1}(y_{\beta,s}-\vv^{2}y_{\beta',s'})\right)\times\\
  & \ \ \ \ \ \ \ \ \ \ \prod_{1\leq s\leq d_\beta}^{1\leq s'\leq d_{\beta'}}
    (y_{\beta,s}-y_{\beta',s'})^{\delta_{j(\beta')>j(\beta)}\delta_{i(\beta)+1\in [\beta']}},\\
  & G_\beta=(1-\vv^2)^{d_\beta(i(\beta)-j(\beta))}\ \cdot
    \prod_{1\leq s\ne s'\leq d_{\beta}} (y_{\beta,s}-\vv^2 y_{\beta,s'})^{i(\beta)-j(\beta)}\ \cdot
    \prod_{1\leq s\leq d_\beta} y_{\beta,s}^{i(\beta)-j(\beta)},\\
  & G_\beta^{(\sigma_\beta)}=
    \prod_{s=1}^{d_\beta} y_{\beta,\sigma_\beta(s)}^{r_\beta(h,s)}\cdot
    \prod_{s<s'} \frac{y_{\beta,\sigma_\beta(s)}-\vv^{-2}y_{\beta,\sigma_\beta(s')}}
                      {y_{\beta,\sigma_\beta(s)}-y_{\beta,\sigma_\beta(s')}}.
\end{split}
\end{equation}
Here, the collection $\{r_\beta(h,1),\ldots,r_\beta(h,d_\beta)\}$ is obtained by
listing every $r\in \BZ$ with multiplicity $h(\beta,r)>0$ with respect to the
total order $\preceq_\beta$ on $\BZ$, see Section~\ref{ssec formulation main thm 1}.
We also use the standard \emph{delta function} notation:
  $\delta_{\mathrm{condition}}=
   \begin{cases}
     1, & \text{if } \mathrm{condition}\ \text{holds} \\
     0, & \text{if } \mathrm{condition}\ \text{fails}
   \end{cases}.$

Note that the factors $\{G_{\beta,\beta'}\}_{\beta<\beta'}\cup\{G_\beta\}_{\beta}$
in~(\ref{explicit factors}) are independent of
$(\sigma_\beta)_{\beta\in \Delta^+}\in \Sigma_{\unl{d}}$.
Therefore, the specialization $\phi_{\unl{d}}(\Psi(e_h))$ has the following form:
\begin{equation}\label{explicit formula for same degrees}
  \phi_{\unl{d}}(\Psi(e_h))=c\ \cdot
  \prod_{\beta,\beta'\in \Delta^+}^{\beta<\beta'} G_{\beta,\beta'}\ \cdot
  \prod_{\beta\in \Delta^+}G_\beta\cdot
  \prod_{\beta\in \Delta^+}\left(\sum_{\sigma_\beta\in \Sigma_{d_{\beta}}} G_\beta^{(\sigma_\beta)}\right)
  \ \, \mathrm{with}\, \ c\in \BC^\times\cdot \vv^\BZ.
\end{equation}

For any $\beta\in \Delta^+$, we note that the sum
  $\sum_{\sigma_\beta\in \Sigma_{d_{\beta}}} G_\beta^{(\sigma_\beta)}$
coincides (up to a factor of $\BC^\times$) with the value of the shuffle element
  $x^{r_\beta(h,1)}\star\cdots \star x^{r_\beta(h,d_\beta)}\in S^{(2)}_{d_\beta}$
(in the shuffle algebra $S^{(2)}$!) evaluated at $\{y_{\beta,s}\}_{s=1}^{d_\beta}$.
The latter elements are linearly independent, due to Lemma~\ref{n=1 case}.

Thus,~(\ref{explicit formula for same degrees}) together with the above observation
complete our proof of Lemma~\ref{same degrees}.
\end{proof}

Now we can complete our proof of the linear independence of the ordered
PBWD monomials $\{e_h\}_{h\in H}$. Assume the contrary, that a nontrivial
linear combination $\sum_{h\in H} c_h e_h$ vanishes in $U_\vv^>(L\ssl_n)$
(here, all but finitely many of $c_h$ are zero, but at least one of them is non-zero).
Define $\unl{d}:=\max\{\deg(h)|c_h\ne 0\}$. Applying the specialization map
$\phi_{\unl{d}}$ to $\sum_{h\in H} c_h \Psi(e_h)=0$, we get
$\sum_{h\in H}^{\deg(h)=\unl{d}} c_h \phi_{\unl{d}}(\Psi(e_h))=0$ by
Lemma~\ref{lower degrees}. Furthermore, we get $c_h=0$ for all $h\in H$
of degree $\deg(h)=\unl{d}$, due to Lemma~\ref{same degrees}.
This contradicts our choice of $\unl{d}$.

\begin{Rem}
The machinery of the \emph{specialization maps} $\phi_{\unl{d}}$ that was used in the above
proof is of its own interest (cf.~\cite[(1.4)]{fhhsy} and~\cite[(4.24)]{n}).
\end{Rem}


\subsubsection{Spanning property of $e_h$ and dominance property of the specialization maps}\label{ssec spanning prop}
\

Let $M\subset S^{(n)}$ be the $\BC(\vv)$-span of $\{\Psi(e_h)\}_{h\in H}$.
For $\unl{k}\in \BN^I$, let $T_{\unl{k}}$ be a finite set consisting of all degree vectors
$\unl{d}=\{d_\beta\}_{\beta\in \Delta^+}\in \BN^{\frac{n(n-1)}{2}}$ such that
  $\sum_{\beta\in \Delta^+} d_\beta \beta = \sum_{i\in I} k_i\alpha_i$.
We order $T_{\unl{k}}$ with respect
to the anti-lexicographical order on $\BN^{\frac{n(n-1)}{2}}$. In particular,
the minimal element $\unl{d}_{\min}=\{d_\beta\}_{\beta\in \Delta^+}\in T_{\unl{k}}$
is characterized by $d_\beta=0$ for all non-simple roots $\beta\in \Delta^+$.

\begin{Lem}\label{spanning}
Let $F\in S^{(n)}_{\unl{k}}$ and $\unl{d}\in T_{\unl{k}}$.
If $\phi_{\unl{d}'}(F)=0$ for all $\unl{d}'\in T_{\unl{k}}$ such that $\unl{d}'>\unl{d}$,
then there exists an element $F_{\unl{d}}\in M$ such that
$\phi_{\unl{d}}(F)=\phi_{\unl{d}}(F_{\unl{d}})$ and
$\phi_{\unl{d}'}(F_{\unl{d}})=0$ for all $\unl{d}'>\unl{d}$.
\end{Lem}

\begin{proof}
Consider the following total order on the set
$\{(\beta,s)| \beta\in \Delta^+, 1\leq s\leq d_\beta\}$:
\begin{equation}\label{ordering of pairs}
 (\beta,s)\leq(\beta',s')\ \mathrm{iff}\
 \beta<\beta'\ \mathrm{or}\ \beta=\beta',s\leq s'.
\end{equation}

First, we note that the wheel conditions~(\ref{wheel conditions}) for $F$ guarantee
that $\phi_{\unl{d}}(F)$ (which is a Laurent polynomial in $\{y_{\beta,s}\}$)
vanishes up to appropriate orders under the following specializations:
\begin{enumerate}
\item[(i)] $y_{\beta,s}=\vv^{-2}y_{\beta',s'}$ for $(\beta,s)<(\beta',s')$,

\item[(ii)]$y_{\beta,s}=\vv^2y_{\beta',s'}$ for $(\beta,s)<(\beta',s')$.
\end{enumerate}
\noindent
The orders of vanishing are computed similarly to~\cite{fhhsy,n}. Explicitly,
let us view the specialization appearing in the definition of $\phi_{\unl{d}}$
as a step-by-step specialization in each interval $[\beta]$, ordered first in the
non-increasing length order, while the intervals of the same length are ordered
in the non-decreasing order of $j(\beta)$. As we specialize the variables in
the $s$-th interval ($1\leq s\leq \sum_{\beta\in \Delta^+} d_\beta$), we count
only those wheel conditions that arise from the non-specialized yet variables.
A straightforward case-by-case verification\footnote{This can be checked by
treating each of the following cases separately:
  $j=j'=i=i'$, $j=j'=i<i'$, $j=j'<i=i'$, $j=j'<i<i'$, $j<j'\leq i'<i$, $j<j'<i'=i$,
  $j<j'<i<i'$, $j=i<j'=i'$, $j=i<j'<i'$, $j<j'=i=i'$, $j<j'=i<i'$, $j<i<j'\leq i'$,
  where we set $j:=j(\beta),j':=j(\beta'),i:=i(\beta),i':=i(\beta')$.}
shows that the corresponding orders of vanishing under the specializations
(i) and (ii) equal
$\#\{(j,j')\in [\beta]\times [\beta']|j=j'\}-\delta_{\beta,\beta'}$
and $\#\{(j,j')\in [\beta]\times [\beta']|j=j'+1\}$, respectively.

Second, we claim that $\phi_{\unl{d}}(F)$ vanishes under the following specializations:
\begin{enumerate}
\item[(iii)] $y_{\beta,s}=y_{\beta',s'}$ for $(\beta,s)<(\beta',s')$
             such that $j(\beta)<j(\beta')$ and $i(\beta)+1\in [\beta']$.
\end{enumerate}
Indeed, if $j(\beta)<j(\beta')$ and $i(\beta)+1\in [\beta']$, there are positive
roots $\gamma,\gamma'\in \Delta^+$ such that
  $j(\gamma)=j(\beta), i(\gamma)=i(\beta'),
   j(\gamma')=j(\beta'), i(\gamma')=i(\beta)$.
Consider the degree vector $\unl{d}'\in T_{\unl{k}}$ given by
  $d'_{\alpha}=
   d_{\alpha}+\delta_{\alpha,\gamma}+\delta_{\alpha,\gamma'}-\delta_{\alpha,\beta}-\delta_{\alpha,\beta'}$.
Then, $\unl{d}'>\unl{d}$ and thus $\phi_{\unl{d}'}(F)=0$. The result follows.

Combining the above vanishing conditions for $\phi_{\unl{d}}(F)$, we see that
it is divisible exactly by the product
$\prod_{\beta<\beta'} G_{\beta,\beta'}\cdot \prod_{\beta}G_\beta$
of~(\ref{explicit factors}). Therefore, we have
\begin{equation}\label{woops 1}
  \phi_{\unl{d}}(F)\ =
  \prod_{\beta,\beta'\in \Delta^+}^{\beta<\beta'} G_{\beta,\beta'}\ \cdot
  \prod_{\beta\in\Delta^+}G_\beta\cdot G
\end{equation}
for some symmetric Laurent polynomial
\begin{equation}\label{woops 2}
  G\in \BC(\vv)[\{y^{\pm 1}_{\beta,s}\}_{\beta\in \Delta^+}^{1\leq s\leq d_\beta}]^{\Sigma_{\unl{d}}}
  \simeq \bigotimes_{\beta\in \Delta^+} \BC(\vv)[\{y^{\pm 1}_{\beta,s}\}_{s=1}^{d_\beta}]^{\Sigma_{d_\beta}}.
\end{equation}

Combining~(\ref{woops 1},~\ref{woops 2}) with formula~(\ref{explicit formula for same degrees})
and the discussion after it, we see that there is a (unique) linear combination
$F_{\unl{d}}=\sum_{h\in H}^{\deg(h)=\unl{d}} c_h \Psi(e_h)$ with $c_h\in \BC(\vv)$ such that
$\phi_{\unl{d}}(F)=\phi_{\unl{d}}(F_{\unl{d}})$, due to Lemma~\ref{n=1 case}.
The equality $\phi_{\unl{d}'}(F_{\unl{d}})=0$ for $\unl{d}'>\unl{d}$ is due
to Lemma~\ref{lower degrees}.

This completes our proof of Lemma~\ref{spanning}.
\end{proof}

Using Lemma above, we can now show that any shuffle element $F\in S^{(n)}_{\unl{k}}$
belongs to $M\cap S^{(n)}_{\unl{k}}$. Let $\unl{d}_{\max}$ and $\unl{d}_{\min}$
denote the maximal and the minimal elements of $T_{\unl{k}}$, respectively.
The condition of Lemma~\ref{spanning} is vacuous for $\unl{d}=\unl{d}_{\max}$.
Therefore, Lemma~\ref{spanning} applies. Applying it iteratively, we eventually
find an element $\wt{F}\in M$ such that $\phi_{\unl{d}}(F)=\phi_{\unl{d}}(\wt{F})$
for all $\unl{d}\in T_{\unl{k}}$. In the particular case of $\unl{d}=\unl{d}_{\min}$,
this yields $F=\wt{F}$, cf.~Example~\ref{example specialization maps}(d). Hence, $F\in M$.

\medskip
Invoking the injectivity of $\Psi$ (Proposition~\ref{simple shuffle}), we thus see that
$\{e_h\}_{h\in H}$ span $U_\vv^>(L\ssl_n)$. Combining this with the linear independence
of $\{e_h\}_{h\in H}$ established in Section~\ref{ssec linear indep} completes our proof of
Theorem~\ref{Main Theorem 1}(a). We note that the result of Theorem~\ref{Main Theorem 1}(b)
follows as well, as explained in the beginning of Section~\ref{sec proofs of Theorems 1,2}.

\begin{Rem}\label{easier hard shuffle}
The above argument also implies the surjectivity of $\Psi$. Combining this
with the injectivity of $\Psi$, established in Proposition~\ref{simple shuffle},
we obtain a new proof of Theorem~\ref{hard shuffle}.
\end{Rem}

\begin{Rem}
We note that the above argument actually provides a much bigger class of the PBWD bases
for $U^>_\vv(L\ssl_n)$, with the PBWD basis elements given rather in the shuffle form.
\end{Rem}

\begin{Rem}\label{Negut's pbw}
In~\cite{n}, the shuffle realization of the quantum toroidal algebra
$U_{\vv,\bar{\vv}}(\ddot{\gl}_n)$ (which is an associative $\BC(\vv,\bar{\vv})$-algebra
with $\vv,\bar{\vv}$ being two independent formal variables) was established
by studying the crucial \emph{slope $\leq \mu$ subalgebras}. In particular,
combining the proofs of Proposition 3.9 and Lemma 3.14 of~\emph{loc.cit.},
one obtains the PBWD basis of $U_{\vv,\bar{\vv}}(\ddot{\gl}_n)$ with the PBWD basis
elements given explicitly in the shuffle realization, see elements $E^\mu_{[j;i)}$
of~\cite[(3.46)]{n}. This gives rise to the PBWD basis of $U_\vv(L\ssl_n)$ by
 viewing the latter as a ``vertical'' subalgebra of $U_{\vv,\bar{\vv}}(\ddot{\gl}_n)$.
The corresponding PBWD basis elements are given by
  $\Psi^{-1}((1-\vv^2)^{i-j}\frac{p(x_{j,1},\ldots,x_{i,1})}{(x_{j,1}-x_{j+1,1})\cdots(x_{i-1,1}-x_{i,1})})$,
where
  $p(x_{j,1},\ldots,x_{i,1})=\prod_{a=j}^i x_{a,1}^{\lfloor\mu(a-j+1)\rfloor-\lfloor\mu(a-j)\rfloor}$
with $1\leq j\leq i<n$ and $\mu\in \frac{1}{i-j+1}\BZ$.
As we fix $1\leq j\leq i<n$ and let $\mu$ run over $\frac{1}{i-j+1}\BZ$, the degree of $p$
varies over $\BZ$ multiplicity-free. Comparing this to Lemma~\ref{shuffle root elt},
we see that the corresponding PBWD basis of $U^>_\vv(L\ssl_n)$ is reminiscent to
a particular one of Theorem~\ref{Main Theorem 1}(a), but it should be stressed right away that
the former utilizes a total order on $\Delta^+\times \BZ$ that is different
from~(\ref{extended order}) used in~(\ref{ordered}).
\end{Rem}


\subsection{Integral form $\fS^{(n)}$}\label{ssec integral shuffle algebra}
\

For $\unl{k}\in \BN^I$, set $|\unl{k}|:=\sum_{i=1}^{n-1} k_i$.
Consider a $\BC[\vv,\vv^{-1}]$-submodule $\wt{\fS}^{(n)}_{\unl{k}}\subset S^{(n)}_{\unl{k}}$ 
consisting of those $F\in S^{(n)}_{\unl{k}}$ for which the numerator $f$ in~(\ref{pole conditions})
is of the form:
\begin{equation}
\label{eqn:integral f}
  f\in (\vv-\vv^{-1})^{|\unl{k}|} \cdot 
  \BC[\vv,\vv^{-1}][\{x_{i,r}^{\pm 1}\}_{i\in I}^{1\leq r\leq k_i}]^{\Sigma_{\unl{k}}}.
\end{equation}
Then
  $\wt{\fS}^{(n)}:=\underset{\unl{k}\in \BN^I}\bigoplus \wt{\fS}^{(n)}_{\underline{k}}$
is clearly a $\BC[\vv,\vv^{-1}]$-subalgebra of $S^{(n)}$.

\begin{Rem}
If one wishes to work with forms defined over $\BZ[\vv,\vv^{-1}]$, formula~(\ref{eqn:integral f}) 
should be replaced accordingly. To this end, we note that the shuffle product $F\star G$ 
of~(\ref{shuffle product}) still makes sense as $\frac{1}{\unl{k}!\cdot \unl{\ell}!}\cdot \Sym(\dots)$
may be written simply as a sum over so-called $(\unl{k},\unl{\ell})$-shuffles, since $F$ and $G$ are symmetric. 
We leave details to the interested reader.
\end{Rem}

Moreover, $\Psi(\fU^>_\vv(L\ssl_n))\subset \wt{\fS}^{(n)}$, due to Lemma~\ref{shuffle root elt}.
The key goal of this section is to explicitly describe the image $\Psi(\fU^>_\vv(L\ssl_n))$.

\begin{Ex}
Consider $F(x_1,x_2)=(\vv-\vv^{-1})^2(x_1x_2)^r\in \wt{\fS}^{(2)}_2$ with $r\in \BZ$.
Then, we have $[2]_\vv\cdot F\in \Psi(\fU^>_\vv(L\ssl_2))$, but
$F\notin \Psi(\fU^>_\vv(L\ssl_2))$, as follows from
Lemmas~\ref{necessity for n=2},~\ref{integral n=1 case} below.
\end{Ex}

Pick $F\in \wt{\fS}^{(n)}_{\unl{k}}$. For any
  $\unl{d}=\{d_\beta\}_{\beta\in \Delta^+}\in \BN^{\frac{n(n-1)}{2}}$
such that
  $\sum_{\beta\in \Delta^+} d_\beta \beta=\sum_{i\in I} k_i\alpha_i$,
consider
  $\phi_{\unl{d}}(F)\in
   \BC[\vv,\vv^{-1}][\{y_{\beta,s}^{\pm 1}\}_{\beta\in \Delta^+}^{1\leq s\leq d_\beta}]^{\Sigma_{\unl{d}}}$
of~(\ref{specialization map}).
First, we note that $\phi_{\unl{d}}(F)$ is divisible by
\begin{equation}\label{factor A}
  A_{\unl{d}}:=(\vv-\vv^{-1})^{|\unl{k}|}.
\end{equation}
Second, following the first part of the proof of Lemma~\ref{spanning},
$\phi_{\unl{d}}(F)$ is also divisible by
\begin{equation}\label{factor B}
\begin{split}
  & B_{\unl{d}}:=
  \prod_{(\beta,s)<(\beta',s')}
  (y_{\beta,s}-\vv^{-2}y_{\beta',s'})^{\#\{(j,j')\in [\beta]\times [\beta']|j=j'\}-\delta_{\beta,\beta'}}\ \times \\
  & \ \ \qquad \prod_{(\beta,s)<(\beta',s')}(y_{\beta,s}-\vv^{2}y_{\beta',s'})^{\#\{(j,j')\in [\beta]\times [\beta']|j=j'+1\}}
\end{split}  
\end{equation}
due to the wheel conditions~(\ref{wheel conditions}), where we use the total order~(\ref{ordering of pairs}) 
on the set $\{(\beta,s)\}_{\beta\in \Delta^+}^{1\leq s\leq d_\beta}$. 
Combining these observations, we define the \emph{reduced specialization map}
\begin{equation}\label{reduced specialization map}
  \varphi_{\unl{d}}\colon \wt{\fS}^{(n)}_{\unl{k}}\longrightarrow
  \BC[\vv,\vv^{-1}][\{y_{\beta,s}^{\pm 1}\}_{\beta\in \Delta^+}^{1\leq s\leq d_\beta}]^{\Sigma_{\unl{d}}}
  \quad \mathrm{via}\quad
  \varphi_{\unl{d}}(F):=\frac{\phi_{\unl{d}}(F)}{A_{\unl{d}}B_{\unl{d}}}.
\end{equation}

Let us introduce another type of specialization maps.
Given a collection of positive integers
  $\unl{t}=\{t_{\beta,r}\}_{\beta\in \Delta^+}^{1\leq r\leq \ell_\beta}$
($\ell_\beta\in \BN$), define a degree vector
$\unl{d}=\{d_\beta\}_{\beta\in \Delta^+}\in \BN^{\frac{n(n-1)}{2}}$ via
$d_\beta:=\sum_{r=1}^{\ell_\beta} t_{\beta,r}$.
Let us now define the \emph{vertical specialization map}
\begin{equation}\label{vertical specialization map}
  \varpi_{\unl{t}}\colon
  \BC[\vv,\vv^{-1}][\{y_{\beta,s}^{\pm 1}\}_{\beta\in \Delta^+}^{1\leq s\leq d_\beta}]^{\Sigma_{\unl{d}}}
  \longrightarrow \BC[\vv,\vv^{-1}][\{z_{\beta,r}^{\pm 1}\}_{\beta\in \Delta^+}^{1\leq r\leq \ell_\beta}].
\end{equation}
For each $\beta\in \Delta^+$, we split the variables $\{y_{\beta,s}\}_{s=1}^{d_\beta}$
into $\ell_\beta$ groups of size $t_{\beta,r}$ each ($1\leq r\leq \ell_\beta$)
and specialize the variables in the $r$-th group to
  $\vv^{-2}z_{\beta,r},\vv^{-4}z_{\beta,r},\vv^{-6}z_{\beta,r},\ldots,\vv^{-2t_{\beta,r}}z_{\beta,r}$.
For any
  $K\in
   \BC[\vv,\vv^{-1}][\{y_{\beta,s}^{\pm 1}\}_{\beta\in \Delta^+}^{1\leq s\leq d_\beta}]^{\Sigma_{\unl{d}}}$,
we define $\varpi_{\unl{t}}(K)$ as the corresponding specialization of $K$.
We note that $\varpi_{\unl{t}}(K)$ is independent of our splitting of the
variables $\{y_{\beta,s}\}_{s=1}^{d_\beta}$ into groups.

Finally, we shall combine~(\ref{reduced specialization map}) and~(\ref{vertical specialization map})
to obtain the key tool in the study of the integral form $\Psi(\fU^>_\vv(L\ssl_n))$ of $S^{(n)}$.
Given $\unl{k}\in \BN^I$, $\unl{d}=\{d_\beta\}_{\beta\in \Delta^+}\in \BN^{\frac{n(n-1)}{2}}$
and a collection of positive integers
  $\unl{t}=\{t_{\beta,r}\}_{\beta\in \Delta^+}^{1\leq r\leq \ell_\beta}$
such that
\begin{equation}
\label{eqn:kdl-compatibility}
  \sum_{\beta\in \Delta^+} d_\beta \beta = \sum_{i\in I} k_i\alpha_i 
  \qquad \mathrm{and}\qquad
  \sum_{r=1}^{\ell_\beta} t_{\beta,r}=d_\beta,
\end{equation}  
we define the \textbf{cross specialization map}
\begin{equation}\label{cross specialization map}
  \Upsilon_{\unl{d},\unl{t}}\colon \wt{\fS}^{(n)}_{\unl{k}}\longrightarrow
  \BC[\vv,\vv^{-1}][\{z_{\beta,r}^{\pm 1}\}_{\beta\in \Delta^+}^{1\leq r\leq \ell_\beta}]
  \quad \mathrm{via}\quad \Upsilon_{\unl{d},\unl{t}}(F):=\varpi_{\unl{t}}(\varphi_{\unl{d}}(F)).
\end{equation}

\begin{Def}\label{integral element}
$F\in S^{(n)}_{\unl{k}}$ is \textbf{integral} if $F\in \wt{\fS}^{(n)}_{\unl{k}}$
and $\Upsilon_{\unl{d},\unl{t}}(F)$ is divisible by
$\prod_{\beta\in \Delta^+}^{1\leq r\leq \ell_\beta}[t_{\beta,r}]_\vv!$
(the product of $\vv$-factorials) for all possible $\unl{d}$ and $\unl{t}$
satisfying~(\ref{eqn:kdl-compatibility}).
\end{Def}

\begin{Ex}\label{explaining integrality for n=2}
In the simplest case $n=2$, a symmetric Laurent polynomial $F\in S^{(2)}_k$
is integral if and only if it has the form $F=(\vv-\vv^{-1})^k\cdot \bar{F}$
with $\bar{F}\in \BC[\vv,\vv^{-1}][\{x_p^{\pm 1}\}_{p=1}^k]^{\Sigma_k}$ satisfying
the following divisibility condition:
\begin{equation}\label{integrality condition for n=2}
   \bar{F}(\vv^{-2}z_1,\vv^{-4}z_1,\ldots,\vv^{-2k_1}z_1,\ldots,\vv^{-2}z_{\ell},\ldots,\vv^{-2k_{\ell}}z_{\ell})
   \ \ \mathrm{is\ divisible\ by}\ \ [k_1]_\vv!\cdots [k_{\ell}]_\vv!
\end{equation}
for any decomposition $k=k_1+\ldots+k_{\ell}$ into a sum of positive integers.
\end{Ex}

Let $\fS^{(n)}_{\unl{k}}\subset \wt{\fS}^{(n)}_{\unl{k}}$ denote the 
$\BC[\vv,\vv^{-1}]$-submodule of all integral elements and set
  $$\fS^{(n)}:=\underset{\unl{k}\in \BN^I}\bigoplus \fS^{(n)}_{\underline{k}}.$$
The following is the key result of this section:

\begin{Thm}\label{shuffle integral form}
The $\BC(\vv)$-algebra isomorphism $\Psi\colon U_\vv^{>}(L\ssl_n)\iso S^{(n)}$
of Theorem~\ref{hard shuffle} gives rise to a $\BC[\vv,\vv^{-1}]$-algebra
isomorphism $\Psi\colon \fU_\vv^{>}(L\ssl_n)\iso \fS^{(n)}$.
\end{Thm}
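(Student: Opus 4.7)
The plan is to prove the two inclusions $\Psi(\fU^>_\vv(L\ssl_n)) \subseteq \fS^{(n)}$ and $\fS^{(n)} \subseteq \Psi(\fU^>_\vv(L\ssl_n))$ separately, following the blueprint of Section~\ref{ssec proof of Theorem 1} but tracking integrality throughout. For the forward inclusion, I would first check that each algebra generator $\wt{e}_\beta(r)=(\vv-\vv^{-1})e_\beta(r)$ lies in $\fS^{(n)}$: the explicit form of $\Psi(e_\beta(r))$ from Lemma~\ref{shuffle root elt}, combined with $(1-\vv^2)^{i-j}=(-\vv)^{i-j}(\vv-\vv^{-1})^{i-j}$, yields the overall prefactor $(\vv-\vv^{-1})^{|\unl{k}|}$ required to land in $\wt{\fS}^{(n)}$, and the cross specializations of a single generator are trivially divisible by the relevant $[t_{\beta,i}]_\vv!$ (as $d_\beta\leq 1$ for a single generator). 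The remaining task is to show that $\fS^{(n)}$ is closed under the shuffle product $\star$; granting this and invoking the PBWD basis of Theorem~\ref{Main Theorem 2}(b), any element of $\fU^>_\vv(L\ssl_n)$ becomes a $\BC[\vv,\vv^{-1}]$-linear combination of ordered products of $\wt{e}_\beta(r)$'s, whose $\Psi$-image lies in $\fS^{(n)}$.

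For the reverse inclusion, I would adapt the spanning argument of Lemma~\ref{spanning} inductively over the lexicographic order on $T_{\unl{k}}$, keeping integrality of the coefficients at each step. Given $F\in\fS^{(n)}_{\unl{k}}$, and having cancelled $\phi_{\unl{d}'}(F)$ for all $\unl{d}'>\unl{d}$ by subtracting an integral combination of PBWD monomials, I must produce $F_{\unl{d}}=\sum_{\deg(h)=\unl{d}} c_h \wt{e}_h$ with $c_h\in\BC[\vv,\vv^{-1}]$ matching $\phi_{\unl{d}}(F)$. Formula~(\ref{explicit formula for same degrees}) expresses $\phi_{\unl{d}}(\Psi(\wt{e}_h))$ as the product of the universal factors $\prod G_{\beta,\beta'}\cdot\prod G_\beta$, which up to a unit in $\BC[\vv,\vv^{-1}]$ coincide with the divisors $A\cdot B$ of~(\ref{factor A},~\ref{factor B}) defining the reduced specialization $\varphi_{\unl{d}}$, times a tensor of $n=2$ shuffle products $\sum_{\sigma_\beta}G_\beta^{(\sigma_\beta)}$ indexed by $\beta\in\Delta^+$. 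Thus the problem decouples across the root slots and reduces to the following claim in the $n=2$ shuffle algebra: a symmetric Laurent polynomial $\varphi_{\unl{d}}(F)$ whose further $\varpi_{\unl{t}}$-specializations enjoy the prescribed $[t_{\beta,i}]_\vv!$-divisibility can be written as an integral combination of $\star$-products of the degree-one generators $(\vv-\vv^{-1})x^r$.

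The main obstacle will therefore be exactly this $n=2$ integrality statement, which is advertised in the paper as Lemmas~\ref{necessity for n=2} and~\ref{integral n=1 case}. Its sharpness hinges on the precise matching between the $\vv$-factorials produced by $k$-fold $\star$-products (Lemma~\ref{k-th fold product}) and the divisibility conditions cut out by the cross specializations~(\ref{cross specialization map}); the characterization of Example~\ref{explaining integrality for n=2} is tight, and both implications must be verified. Once the $n=2$ case is in hand, both inclusions close: the forward one via $\star$-closure of $\fS^{(n)}$ and Theorem~\ref{Main Theorem 2}(b), and the reverse one by iterating the inductive step down to the minimal degree vector $\unl{d}_{\min}$, at which point $\phi_{\unl{d}_{\min}}$ is essentially the identity (Example~\ref{example specialization maps}(d)) and forces $F$ itself to equal the constructed integral combination, completing the identification $\Psi(\fU^>_\vv(L\ssl_n))=\fS^{(n)}$.
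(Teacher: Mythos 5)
Your treatment of the reverse inclusion $\fS^{(n)}\subseteq\Psi(\fU^>_\vv(L\ssl_n))$ matches the paper: the paper's Lemma~\ref{spanning integral} runs the induction of Lemma~\ref{spanning} while tracking integrality, uses the factorization~(\ref{explicit formula for same degrees}) to decouple the root slots, and closes with the $n=2$ statement of Lemma~\ref{integral n=1 case}, exactly as you describe.

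The forward inclusion, however, has a genuine gap. You reduce it to the claim that $\fS^{(n)}$ is closed under $\star$, which you then ``grant'' without argument. This is not an innocuous step: the divisibility conditions of Definition~\ref{integral element} are imposed on the cross specializations $\Upsilon_{\unl{d},\unl{t}}$, and for a product $F\star G$ a given specialization $\phi_{\unl{d}}$ distributes the variables of each interval $[\beta]$ between the $F$-slot and the $G$-slot in an arbitrary way; the resulting partial specializations of $F$ and $G$ are not themselves of the form $\phi_{\unl{d}'}(F)$, $\phi_{\unl{d}'}(G)$, so the integrality hypotheses on the factors cannot be applied directly. The paper deliberately avoids this: it never proves $\star$-closure of $\fS^{(n)}$ as an input. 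Instead, Lemma~\ref{necessity for n>2} proves statement (I) --- that $\Psi(\wt{e}_{\beta_1}(r_1)\cdots\wt{e}_{\beta_l}(r_l))\in\fS^{(n)}$ for an \emph{arbitrary} product of generators --- by a direct bookkeeping of which $\zeta$-factors survive the two-step specialization: for each pair $r\neq r'$ within a $z_{\beta,i}$-group, the relative placement of $x_{l,r},x_{l,r'},x_{l+1,r'}$ either kills the summand or produces exactly the factor $(y_{\beta,r}-\vv^{-2}y_{\beta,r'})$ cancelled by $B$ of~(\ref{factor B}), while the unused factors $\zeta_{i(\beta),i(\beta)}$ assemble into $[t_{\beta,i}]_\vv!$ as in Lemma~\ref{necessity for n=2}. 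Since $\fU^>_\vv(L\ssl_n)$ is by definition spanned over $\BC[\vv,\vv^{-1}]$ by such products, (I) gives $\Psi(\fU^>_\vv(L\ssl_n))\subseteq\fS^{(n)}$ with no appeal to $\star$-closure; the latter is then \emph{deduced} from the completed theorem as Corollary~\ref{useful corollary}(a). To repair your argument you should replace the $\star$-closure step by this direct computation on products of generators (your checking of single generators is the trivial case of it, since there all $t_{\beta,i}=1$). A smaller remark: invoking Theorem~\ref{Main Theorem 2}(b) in the forward inclusion risks circularity, since in the paper that theorem is proved simultaneously with the present one; fortunately it is not needed --- algebra generation by the $\wt{e}_\beta(r)$ suffices.
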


The proof of Theorem~\ref{shuffle integral form} is presented in
Section~\ref{ssec proof of Theorem 2}.

\begin{Cor}\label{useful corollary}
(a) $\fS^{(n)}$ is a $\BC[\vv,\vv^{-1}]$-subalgebra of $S^{(n)}$.

\noindent
(b) Theorem~\ref{Main Theorem 2}(a) holds, that is, the subalgebra
$\fU^>_\vv(L\ssl_n)$ is independent of all the choices.
\end{Cor}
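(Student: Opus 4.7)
The plan is to deduce both parts directly from Theorem~\ref{shuffle integral form}, which asserts that $\Psi$ restricts to a $\BC[\vv,\vv^{-1}]$-algebra isomorphism $\fU^>_\vv(L\ssl_n)\iso \fS^{(n)}$. The key observation driving both parts is that the right-hand side $\fS^{(n)}$ is defined \emph{intrinsically} in the shuffle algebra via Definition~\ref{integral element}: it only refers to the pole and wheel conditions, the integrality factor $(\vv-\vv^{-1})^{|\unl{k}|}$, and the cross specialization maps $\Upsilon_{\unl{d},\unl{t}}$ of~(\ref{cross specialization map}). None of these data involve the choices (1)--(3) used to define $e_\beta(r), \wt{e}_\beta(r)$ in~(\ref{higher roots}, \ref{integral basis PBW}).

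For part (a), note that $\fU^>_\vv(L\ssl_n)$ is by construction a $\BC[\vv,\vv^{-1}]$-subalgebra of $U^>_\vv(L\ssl_n)$, hence closed under the product. Since $\Psi$ is an algebra homomorphism and Theorem~\ref{shuffle integral form} identifies $\fS^{(n)}=\Psi(\fU^>_\vv(L\ssl_n))$ as sets, the subset $\fS^{(n)}\subset S^{(n)}$ is automatically closed under the shuffle product $\star$. Thus $\fS^{(n)}$ is a $\BC[\vv,\vv^{-1}]$-subalgebra of $S^{(n)}$.

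For part (b), recall from Proposition~\ref{simple shuffle} that $\Psi\colon U^>_\vv(L\ssl_n)\hookrightarrow S^{(n)}$ is injective, and by definition this embedding depends only on the generators $e_{i,r}$, not on any of the choices (1)--(3). Theorem~\ref{shuffle integral form} then yields the identification
\begin{equation*}
  \fU^>_\vv(L\ssl_n) \;=\; \Psi^{-1}(\fS^{(n)})
\end{equation*}
as subsets of $U^>_\vv(L\ssl_n)$. The right-hand side is manifestly independent of the choices (1)--(3), since both $\Psi$ and $\fS^{(n)}$ are. Therefore, so is the left-hand side, proving Theorem~\ref{Main Theorem 2}(a).

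The only subtle point in this plan is that Theorem~\ref{shuffle integral form} is itself stated prior to the independence of $\fU^>_\vv(L\ssl_n)$ being known, so one must be careful that its proof in Section~\ref{ssec proof of Theorem 2} establishes the inclusion $\Psi(\fU^>_\vv(L\ssl_n))\subset \fS^{(n)}$ for the subalgebra defined using any particular set of choices (this is guaranteed by Lemma~\ref{shuffle root elt} together with the divisibility computations establishing~(\ref{factor A}, \ref{factor B})), and the surjectivity $\Psi(\fU^>_\vv(L\ssl_n))\supset \fS^{(n)}$ likewise. Once this is verified for one choice, the independence in (b) formally follows by the argument above, and (a) follows as an immediate consequence.
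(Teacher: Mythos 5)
Your proposal is correct and matches the paper's intended argument: the paper states this as an immediate corollary of Theorem~\ref{shuffle integral form}, relying precisely on the facts that $\fS^{(n)}$ is defined intrinsically in the shuffle algebra and that $\Psi$ does not depend on the choices (1)--(3), so that $\fU^>_\vv(L\ssl_n)=\Psi^{-1}(\fS^{(n)})$ is choice-independent and $\fS^{(n)}=\Psi(\fU^>_\vv(L\ssl_n))$ is $\star$-closed. Your remark that the proof of Theorem~\ref{shuffle integral form} must be (and is) carried out for an arbitrary fixed set of choices is exactly the right point to flag to avoid circularity.
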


The following two properties of the integral form $\fS^{(n)}$
are crucially used in~\cite{ft2}:

\begin{Prop}\label{results for FT2}
(a) Fix $1\leq \ell <n$ and consider the linear map
$\iota'_{\ell}\colon S^{(n)}\to S^{(n)}$ given by
\begin{equation}\label{shuffle shift homomorphism}
  \iota'_{\ell}(F)\left(\{x_{i,r}\}_{i\in I}^{1\leq r\leq k_i}\right):=
  \prod_{r=1}^{k_{\ell}}(1-x_{\ell,r}^{-1})\cdot F\left(\{x_{i,r}\}_{i\in I}^{1\leq r\leq k_i}\right)
  \ \ \mathrm{for}\ \ F\in S^{(n)}_{\unl{k}}, \unl{k}\in \BN^I.
\end{equation}
Then
\begin{equation}\label{shift homomorphisms}
  F\in \fS^{(n)}\Longleftrightarrow \iota'_{\ell}(F)\in \fS^{(n)}.
\end{equation}

\noindent
(b) For any $\unl{k}\in \BN^I$ and a collection
  $g_i(\{x_{i,r}\}_{r=1}^{k_i})\in
   \BC[\vv,\vv^{-1}][\{x_{i,r}^{\pm 1}\}_{r=1}^{k_i}]^{\Sigma_{k_i}}\ (\forall\, i\in I)$,
consider
\begin{equation}\label{for surjectivity in FT2}
  F=(\vv-\vv^{-1})^{|\unl{k}|}\cdot
  \frac{\prod_{i=1}^{n-1}\prod_{1\leq r\ne r'\leq k_i} (x_{i,r}-\vv^{-2}x_{i,r'})\cdot
        \prod_{i=1}^{n-1}g_i(\{x_{i,r}\}_{r=1}^{k_i})}
       {\prod_{i=1}^{n-2}\prod_{1\leq r\leq k_i}^{1\leq r'\leq k_{i+1}}(x_{i,r}-x_{i+1,r'})}.
\end{equation}
Then, $F$ is integral, i.e.\ $F\in \fS^{(n)}_{\unl{k}}$.
\end{Prop}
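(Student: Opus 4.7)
My plan is to address (a) by tracking the effect of $\iota'_l$ on the cross-specialization and invoking Gauss's lemma, and to address (b) by exhibiting a concrete factor of $\varphi_{\unl{d}}(F)$ that vanishes under $\varpi_{\unl{t}}$ whenever the $\vv$-factorial divisibility is nontrivial.

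\smallskip
\emph{Part (a).} I would first check that the three defining conditions of $\wt{\fS}^{(n)}_{\unl{k}}$ --- the pole condition~\eqref{pole conditions}, the wheel condition~\eqref{wheel conditions}, and divisibility by $(\vv-\vv^{-1})^{|\unl{k}|}$ --- are preserved in both directions under multiplication by $\prod_{r=1}^{k_l}(1-x_{l,r}^{-1})$. The reverse implications use that this factor does not vanish on the wheel locus (which is generic in the $x_{l,\bullet}$'s) and has content $1$ in $\BC[\vv^{\pm 1}][\{x_{i,r}^{\pm 1}\}]$. Next, since each interval $(\beta,s)$ with $l\in[\beta]$ contributes precisely one $x_{l,\bullet}$-variable --- mapping under $\phi_{\unl{d}}$ to $\vv^{-l}y_{\beta,s}$ and then under $\varpi_{\unl{t}}$ to $\vv^{-l-2a}z_{\beta,i}$ (where $s$ sits at position $a$ in the $i$-th group) --- I compute
\begin{equation*}
  \Upsilon_{\unl{d},\unl{t}}(\iota'_l(F)) = E_l\cdot\Upsilon_{\unl{d},\unl{t}}(F),\qquad
  E_l := \prod_{\beta\in\Delta^+:\,l\in[\beta]}\prod_{i=1}^{l_\beta}\prod_{a=1}^{t_{\beta,i}}\bigl(1-\vv^{l+2a}z_{\beta,i}^{-1}\bigr).
\end{equation*}
Finally, each factor of $E_l$ has constant term $1$ in its $z$-variable, so $E_l$ has unit content in $\BC[\vv^{\pm 1}][\{z_{\beta,i}^{\pm 1}\}]$. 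Gauss's lemma applied to the PID $\BC[\vv^{\pm 1}]$ then yields $c\mid E_l\cdot\Upsilon_{\unl{d},\unl{t}}(F)\Leftrightarrow c\mid\Upsilon_{\unl{d},\unl{t}}(F)$ for any $c\in\BC[\vv^{\pm 1}]$; specializing $c=\prod_{\beta,i}[t_{\beta,i}]_\vv!$ gives~\eqref{shift homomorphisms}.

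\smallskip
\emph{Part (b).} The pole condition and $(\vv-\vv^{-1})^{|\unl{k}|}$-divisibility are manifest from the form of $F$. For the wheel condition~\eqref{wheel conditions} at $x_{i,r_1}=\vv^2 x_{i,r_2}=\vv x_{i+\epsilon,s}$, the numerator factor $x_{i,r_2}-\vv^{-2}x_{i,r_1}$ vanishes while the denominator does not vanish generically on this locus. The heart of the argument is the integrality condition, for which I will prove the stronger vanishing $\Upsilon_{\unl{d},\unl{t}}(F)=0$ whenever some $t_{\beta_0,i_0}\ge 2$ (when all $t_{\beta,i}=1$ the divisor $\prod[1]_\vv!=1$, making divisibility automatic).

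To prove this, set $P:=\prod_{i=1}^{n-1}\prod_{1\le r\ne r'\le k_i}(x_{i,r}-\vv^{-2}x_{i,r'})$. Using the identity $(y_{s'}-\vv^{-2}y_s)=-\vv^{-2}(y_s-\vv^2 y_{s'})$, the contribution of $P$ to $\phi_{\unl{d}}(F)$ from variables in the intervals $(\beta,s)$ and $(\beta,s')$ for a pair $(\beta,s)<(\beta,s')$ equals, up to a $\vv$-power,
\begin{equation*}
  (y_{\beta,s}-\vv^{-2}y_{\beta,s'})^{|\beta|}\cdot(y_{\beta,s}-\vv^2 y_{\beta,s'})^{|\beta|},
\end{equation*}
which exceeds the corresponding $B$-requirement from~\eqref{factor B} (with exponents $|\beta|-1$ each) by one power of each, leaving $(y_{\beta,s}-\vv^2 y_{\beta,s'})$ as a factor of $\varphi_{\unl{d}}(F)$. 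Under $\varpi_{\unl{t}}$ with $s,s'$ placed at consecutive positions $a,a+1$ of the $i_0$-th group of $\beta_0$-variables (such a pair exists whenever $t_{\beta_0,i_0}\ge 2$), this factor specializes to $(\vv^{-2a}-\vv^{-2a})z_{\beta_0,i_0}=0$, forcing $\Upsilon_{\unl{d},\unl{t}}(F)=0$. The main remaining technical step is the analogous verification that $P|_{\text{sp}}$ absorbs $B$ for cross-pairs $(\beta,s)<(\beta',s')$ with $\beta\ne\beta'$; this reduces to the combinatorial inequality $\#\{j\in[\beta]:j-1\in[\beta']\}\le|[\beta]\cap[\beta']|$, which follows by a short case analysis on the relative positions of the two intervals.
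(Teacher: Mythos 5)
Your proof is correct and follows essentially the same route as the paper: part (b) is exactly the paper's argument (the factor $\prod_{i}\prod_{r\ne r'}(x_{i,r}-\vv^{-2}x_{i,r'})$ forces $\Upsilon_{\unl{d},\unl{t}}(F)=0$ whenever some $t_{\beta,i}\ge 2$, while the divisibility condition is vacuous when all $t_{\beta,i}=1$), with the specialization bookkeeping spelled out in more detail. For part (a) the paper simply declares the claim obvious from the definition of $\fS^{(n)}$; your multiplicativity-plus-Gauss's-lemma argument is a correct justification of exactly that point.
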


\begin{proof}
(a) Obvious from the above definition of the integral form $\fS^{(n)}$.

(b) The presence of the factor
  $\prod_{i=1}^{n-1}\prod_{1\leq r\ne r'\leq k_i} (x_{i,r}-\vv^{-2}x_{i,r'})$
in $F$ of~(\ref{for surjectivity in FT2}) guarantees that for any degree vector $\unl{d}=\{d_\beta\}_{\beta\in \Delta^+}$
satisfying $\sum_{\beta\in \Delta^+} d_\beta \beta = \sum_{i\in I} k_i\alpha_i$,
the reduced specialization $\varphi_{\unl{d}}(F)$ of~(\ref{reduced specialization map})
is divisible by
  $\prod_{\beta\in \Delta^+}\prod_{1\leq s\ne s'\leq d_\beta} (y_{\beta,s}-\vv^{-2}y_{\beta,s'})$.
Thus, the further specialization
  $\varpi_{\unl{t}}(\varphi_{\unl{d}}(F))=\Upsilon_{\unl{d},\unl{t}}(F)$
vanishes if at least one of $t_{\beta,r}$'s is greater than $1$.
Meanwhile, the divisibility condition of Definition~\ref{integral element}
is vacuous in the remaining case when all $t_{\beta,r}=1$.
Therefore, $F\in \fS^{(n)}_{\unl{k}}$ as claimed.
\end{proof}


\subsection{Proofs of Theorem~\ref{Main Theorem 2}(b) and Theorem~\ref{shuffle integral form}}
\label{ssec proof of Theorem 2}
\

We note that both Theorem~\ref{Main Theorem 2}(b) and Theorem~\ref{shuffle integral form}
follow from the following two results:
\begin{enumerate}
 \item[(I)]
 For any $k\geq 1, \{\beta_p\}_{p=1}^k\subset \Delta^+, \{r_p\}_{p=1}^k\subset \BZ$,
 we have $\Psi(\wt{e}_{\beta_1}(r_1)\cdots \wt{e}_{\beta_k}(r_k))\in \fS^{(n)}$.

 \item[(II)]
 Any element $F\in \fS^{(n)}$ may be written as a $\BC[\vv,\vv^{-1}]$-linear
 combination of $\{\Psi(\wt{e}_h)\}_{h\in H}$.
\end{enumerate}
The proof of~(I) is straightforward and will be easily deduced from
our definition of $\fS^{(n)}$, while the proof of (II) will follow from
Lemma~\ref{spanning} and the validity of~(II) for $n=2$, see~Lemma~\ref{integral n=1 case}.

We start by establishing both (I) and (II) for $n=2$ in Section~\ref{ssec proof baby case integral}.


\subsubsection{$n=2$ case}\label{ssec proof baby case integral}
\

For $n=2$, the description of the integral form $\fS^{(n)}\subset S^{(n)}$
is the simplest, see Example~\ref{explaining integrality for n=2}.
Set $\wt{e}_r:=(\vv-\vv^{-1})e_r\in U^>_\vv(L\ssl_2)$ for $r\in \BZ$.
The following result establishes~(I) for $n=2$:

\begin{Lem}\label{necessity for n=2}
For any $k\geq 1$ and $r_1,\ldots,r_k\in \BZ$, we have
$\Psi(\wt{e}_{r_1}\cdots \wt{e}_{r_k})\in \fS^{(2)}_k$.
\end{Lem}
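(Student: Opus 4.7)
To show $\Psi(\wt e_{r_1}\cdots\wt e_{r_k}) \in \fS^{(2)}_k$, I plan to verify the two conditions of Example~\ref{explaining integrality for n=2} directly. Writing $F = (\vv-\vv^{-1})^k\bar F$ with $\bar F := x^{r_1}\star\cdots\star x^{r_k}$, one must check (i) that $\bar F \in \BC[\vv,\vv^{-1}][\{x_i^{\pm 1}\}_{i=1}^k]^{\Sigma_k}$, and (ii) the integrality condition~\eqref{integrality condition for n=2} for every splitting $k = k_1+\ldots+k_l$ with $k_i\ge 1$. Condition (i) is routine: iterating~\eqref{shuffle product} expresses $\bar F$ as a $\BQ$-linear combination of terms $\sigma\bigl(\prod_i x_i^{r_i}\prod_{i<j}\zeta(x_i/x_j)\bigr)$ over $\sigma\in\Sigma_k$, and after antisymmetrization the denominators $(x_i - x_j)$ cancel, leaving coefficients in $\BQ[\vv,\vv^{-1}] \subset \BC[\vv,\vv^{-1}]$ since $\zeta(z) = (z-\vv^{-2})/(z-1)$ involves only $\vv^{\pm}$'s.

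For (ii), I would fix such a splitting and analyze each summand $\sigma\bigl(\prod_i x_i^{r_i}\prod_{i<j}\zeta(x_i/x_j)\bigr)\big|_{\mathrm{spec}}$ as $\sigma$ ranges over $\Sigma_k$. The crucial observation is that $\zeta(\vv^{-2}) = 0$: if $i<j$ and $\sigma(i), \sigma(j)$ lie in the same block $\alpha$ with within-block positions $s$ and $t$, then $\zeta(x_{\sigma(i)}/x_{\sigma(j)}) = \zeta(\vv^{-2(s-t)})$ vanishes exactly when $s = t+1$. Applying the same combinatorial argument as in the proof of Lemma~\ref{k-th fold product} block-by-block, only the \emph{block shuffles} $\sigma$ survive --- those $\sigma$ that partition $\{1,\ldots,k\}$ into sets of sizes $k_1, \ldots, k_l$ and restrict within each block to the identity in within-block order.

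For every surviving block shuffle $\sigma$, the within-block $\zeta$-factors contribute uniformly $\prod_\alpha \prod_{1 \le s < t \le k_\alpha}\zeta(\vv^{2(t-s)}) = \prod_\alpha \vv^{-k_\alpha(k_\alpha-1)/2}[k_\alpha]_\vv!$ (computed via the identity $\zeta(\vv^{2d}) = \vv^{-1}[d+1]_\vv/[d]_\vv$, which follows from $\vv^{2m} - 1 = \vv^m(\vv - \vv^{-1})[m]_\vv$). Consequently $\bar F\big|_{\mathrm{spec}} = \prod_\alpha [k_\alpha]_\vv! \cdot G(z_1, \ldots, z_l)$ for some $G$; the Laurent-polynomiality of $G$ in the $z_\alpha$'s (rather than rationality with possible denominators $z_\alpha - \vv^{2m}z_\beta$ from the cross-block $\zeta$'s) is inherited from condition~(i), so the required divisibility by $\prod_\alpha [k_\alpha]_\vv!$ holds. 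The main obstacle is the combinatorial identification of the surviving $\sigma$: the vanishing condition $\zeta(\vv^{-2})=0$ is block-local, yet the analysis must be carried out globally over $\Sigma_k$, and the proof of Lemma~\ref{k-th fold product} furnishes the essential single-block template that forces $\sigma|_\text{block}$ to be trivial.
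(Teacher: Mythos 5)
Your proposal is correct and follows essentially the same route as the paper's proof: for each splitting, analyze the summands of the symmetrization under the specialization $x_\bullet\mapsto \vv^{-2r}z_i$, kill every summand with a within-block inversion via $\zeta(\vv^{-2})=0$, and observe that each surviving summand carries the explicit factor $\prod_i\vv^{-k_i(k_i-1)/2}[k_i]_\vv!$ from the within-block $\zeta$-factors. The only cosmetic differences are your explicit identification $\zeta(\vv^{2d})=\vv^{-1}[d+1]_\vv/[d]_\vv$ and your (welcome) remark that the cross-block denominators $z_\alpha-\vv^{2m}z_\beta$ are harmless for the divisibility, a point the paper leaves implicit.
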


\begin{proof}
Pick any decomposition $k=k_1+\ldots+k_{\ell}$ with all $k_p\geq 1$.
We claim that as we specialize the variables $x_1,\ldots,x_k$ to
$\{\vv^{-2r}z_p\}_{1\leq p\leq \ell}^{1\leq r\leq k_p}$, the image
of any summand of the symmetrization appearing in
$\Psi(e_{r_1}\cdots e_{r_k})\in S^{(2)}_k$ is divisible by
the product $\prod_{p=1}^{\ell} [k_p]_\vv!$ of $\vv$-factorials.

To this end, let us fix $1\leq p\leq \ell$ and consider the relative
position of the variables $\vv^{-2}z_p,\vv^{-4}z_p,\ldots,\vv^{-2k_p}z_p$.
If there is an index $1\leq r<k_p$ such that $\vv^{-2(r+1)}z_p$ is placed
to the left of $\vv^{-2r}z_p$, then the above specialization of the
corresponding $\zeta$-factor equals
  $\frac{\vv^{-2(r+1)}z_p-\vv^{-2}\cdot \vv^{-2r}z_p}{\vv^{-2(r+1)}z_p-\vv^{-2r}z_p}=0$.
However, if
  $\vv^{-2r}z_p$ stays to the left of $\vv^{-2(r+1)}z_p$
for all $1\leq r<k_p$, then the total contribution of the specializations
of the corresponding $\zeta$-factors equals
\begin{equation}
\label{eqn:factorial reappearance}
  \prod_{1\leq r<s\leq k_p}
  \frac{\vv^{-2r}z_p-\vv^{-2}\cdot \vv^{-2s}z_p}{\vv^{-2r}z_p-\vv^{-2s}z_p}=
  \vv^{-\frac{k_p(k_p-1)}{2}}[k_p]_\vv!
\end{equation}
Combining this over all $1\leq p\leq \ell$, we see that $\prod_{p=1}^{\ell} [k_p]_\vv!$
indeed divides the above specialization of $\Psi(e_{r_1}\cdots e_{r_k})$.
This completes our proof of Lemma~\ref{necessity for n=2}.
\end{proof}

For simplicity of the exposition, we will assume now that
the total order $\preceq$ on $\BZ$ is the usual one $\leq$.
The following result establishes~(II) for $n=2$:

\begin{Lem}\label{integral n=1 case}
Any symmetric Laurent polynomial
  $\bar{F}\in \BC[\vv,\vv^{-1}][\{x_p^{\pm 1}\}_{p=1}^k]^{\Sigma_k}$
satisfying the divisibility condition~(\ref{integrality condition for n=2})
may be written as a $\BC[\vv,\vv^{-1}]$-linear combination of $\{\Psi(e_h)\}_{h\in H}$.
\end{Lem}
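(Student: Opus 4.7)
The plan is to expand $\bar F$ in the monomial symmetric polynomial basis and then proceed by induction on an appropriate well-ordering of ordered multi-indices $\underline r=(r_1\leq\cdots\leq r_k)$, exploiting the triangular shuffle expansion of $\Psi(e_{r_1}\cdots e_{r_k})$ together with the integrality condition \eqref{integrality condition for n=2}.

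First, I would write $\bar F = \sum_{\underline r} c_{\underline r}\, m_{\underline r}$ uniquely with $c_{\underline r}\in\BC[\vv,\vv^{-1}]$. For a multi-index $\underline r$ with distinct values $r^{(1)}<\cdots<r^{(l)}$ of respective multiplicities $k_1,\ldots,k_l$, the computation in the proof of Lemma~\ref{n=1 case} (extending Lemma~\ref{k-th fold product}) gives
$$\Psi(e_{r_1}\cdots e_{r_k}) \;=\; \nu_{\underline r}\, m_{\underline r}\;+\;\sum_{\underline r'} \nu_{\underline r,\underline r'}\, m_{\underline r'},\qquad \nu_{\underline r} \;=\; \frac{\varepsilon_{\underline r}}{k!}\prod_{i=1}^l [k_i]_\vv!,$$
with $\varepsilon_{\underline r}\in\BC^\times\cdot\vv^{\BZ}$, $\nu_{\underline r,\underline r'}\in\BC[\vv,\vv^{-1}]$, and the sum ranging over multi-indices $\underline r'\neq \underline r$ of the same total sum satisfying $r_1\leq r'_1\leq r'_k\leq r_k$. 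A total well-ordering on multi-indices refining this "interior" partial order would then be fixed.

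The inductive step: let $\underline r^*$ be the maximal multi-index for which $c_{\underline r^*}\ne 0$, set $d_{\underline r^*}:=c_{\underline r^*}/\nu_{\underline r^*}$, and replace $\bar F$ by $\bar F-d_{\underline r^*}\,\Psi(e_{r^*_1}\cdots e_{r^*_k})$. Because $(\vv-\vv^{-1})^k\,\Psi(e_{r^*_1}\cdots e_{r^*_k}) = \Psi(\tilde e_{r^*_1}\cdots \tilde e_{r^*_k})\in \fS^{(2)}$ by Lemma~\ref{necessity for n=2}, the normalized element $\Psi(e_{r^*_1}\cdots e_{r^*_k})$ itself satisfies \eqref{integrality condition for n=2}, so the new polynomial is again a $\BC[\vv,\vv^{-1}]$-coefficient symmetric Laurent polynomial satisfying \eqref{integrality condition for n=2} and has a strictly smaller maximal multi-index. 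Iterating terminates and produces the desired expansion.

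The main obstacle is establishing $d_{\underline r^*}\in\BC[\vv,\vv^{-1}]$, i.e., that $\prod_{i=1}^{l^*}[k^*_i]_\vv!$ divides $c_{\underline r^*}$. I would prove this by applying the specialization $\bar F\mapsto \bar F(\vv^{-2}z_1,\ldots,\vv^{-2k^*_1}z_1,\vv^{-2}z_2,\ldots,\vv^{-2k^*_{l^*}}z_{l^*})$ associated with the multiplicity pattern of $\underline r^*$: by \eqref{integrality condition for n=2} the result is divisible by $\prod_i[k^*_i]_\vv!$. The task is then to isolate a leading $z$-coefficient -- a natural candidate being the coefficient of $\prod_j z_j^{r^{*(j)}k^*_j}$ -- that recovers $c_{\underline r^*}$ up to a unit in $\BC[\vv,\vv^{-1}]$, by using the maximality of $\underline r^*$ in the chosen order to exclude contributions from other monomials $m_{\underline r'}$ with $c_{\underline r'}\ne 0$. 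Selecting the right total order on multi-indices so that this isolation works cleanly is the technical heart of the argument.
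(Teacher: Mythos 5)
Your proposal follows essentially the same route as the paper's proof: expand in monomial symmetric polynomials, extract the maximal multi-index, prove divisibility of its coefficient by $\prod_i[k_i]_\vv!$ via the specialization attached to its multiplicity pattern (isolating the lexicographically largest $z$-monomial), subtract the appropriately normalized $\Psi(e_{r^*_1}\cdots e_{r^*_k})$ using Lemma~\ref{necessity for n=2} to preserve the integrality condition, and iterate. The only point you leave slightly underdeveloped is termination, which the paper settles by noting that the maximal multi-indices strictly decrease while the minimal exponents appearing are bounded below along the iteration; your observation that all new multi-indices stay in the interval $[r_1,r_k]$ with fixed total sum supplies the same finiteness.
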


\begin{proof}
We may assume that $\bar{F}$ is homogeneous of the total degree $N$.
Let $V_N$ denote the set of all ordered $k$-tuples of integers
$\unl{r}=(r_1,r_2,\ldots,r_k), r_1\leq\cdots\leq r_k$, such that $r_1+\ldots+r_k=N$.
This set is totally ordered with respect to the lexicographical order:
\begin{equation*}
  \unl{r}\prec \unl{r}'
  \ \ \mathrm{iff} \ \ \exists\, 1\leq p\leq k \
  \mathrm{s.t.}\ r_p < r'_p \ \mathrm{and}\
  r_s=r'_s\ \forall\, s>p.
\end{equation*}

Let us present $\bar{F}$ as a linear combination of the monomial symmetric polynomials:
\begin{equation*}
  \bar{F}(x_1,\ldots,x_k)=\sum_{\unl{r}\in V_N} \mu_{\unl{r}}m_{\unl{r}}(x_1,\ldots,x_k)
  \ \ \mathrm{with}\ \  \mu_{\unl{r}}\in \BC[\vv,\vv^{-1}].
\end{equation*}
Pick the maximal element $\unl{r}_{\max}=(r_1,\ldots,r_k)$ of
the finite set $V_N(\bar{F}):=\{\unl{r}\in V_N|\mu_{\unl{r}}\ne 0\}$
and consider a decomposition $k=k_1+\ldots+k_{\ell}$ such that
\begin{equation*}
  r_1=\cdots=r_{k_1}<r_{k_1+1}=\cdots=r_{k_1+k_2}<\cdots<r_{k_1+\ldots+k_{\ell-1}+1}=\cdots=r_k.
\end{equation*}
Evaluating $\bar{F}$ at the corresponding specialization
$\{\vv^{-2s}z_p\}_{1\leq p\leq \ell}^{1\leq s\leq k_p}$,
we see that the coefficient of the lexicographically largest monomial
in the variables $\{z_p\}_{p=1}^{\ell}$ equals $\mu_{\unl{r}_{\max}}$,
up to an integer power of $\vv$.
Therefore, the divisibility condition~(\ref{integrality condition for n=2}) implies that
\begin{equation}
\label{eqn:key integrality}
  \frac{\mu_{\unl{r}_{\max}}}{\prod_{p=1}^{\ell} [k_p]_\vv!}\in \BC[\vv,\vv^{-1}].
\end{equation}
Set $\bar{F}^{(0)}:=\bar{F}$ and define
$\bar{F}^{(1)}\in \BC[\vv,\vv^{-1}][\{x_p^{\pm 1}\}_{p=1}^k]^{\Sigma_k}$ via
\begin{equation}\label{iterated sequence}
  \bar{F}^{(1)}:=\bar{F}^{(0)}-
  \vv^{\sum_{p=1}^{\ell} k_p(k_p-1)/2}
  \frac{\mu_{\unl{r}_{\max}}}{\prod_{p=1}^{\ell} [k_p]_\vv!}\Psi(e_{r_1}\cdots e_{r_k}).
\end{equation}
We note that $\bar{F}^{(1)}$ satisfies~(\ref{integrality condition for n=2}),
due to~(\ref{eqn:key integrality}) and  Lemma~\ref{necessity for n=2}. Applying the same argument 
to $\bar{F}^{(1)}$ in place of $F=\bar{F}^{(0)}$, we obtain $\bar{F}^{(2)}$ also 
satisfying~(\ref{integrality condition for n=2}).
Proceeding further, we thus construct a sequence of symmetric Laurent polynomials
$\{\bar{F}^{(s)}\}_{s\in \BN}$ satisfying~(\ref{integrality condition for n=2}).

According to our proof of Lemma~\ref{n=1 case}, especially formula~(\ref{eqn:explicit mu value}),
the sequence $\unl{r}^{(i)}_{\max}\in V_N$ of the maximal elements of
$V_N(\bar{F}^{(i)})$ strictly decreases. Meanwhile, the sequence of the minimal
powers of any variable in $\bar{F}^{(s)}$ is a non-decreasing sequence.
Hence, $\bar{F}^{(s)}=0$ for some $s\in \BN$.

This completes our proof of Lemma~\ref{integral n=1 case}.
\end{proof}


\subsubsection{General case}\label{ssec proof general case integral}
\

Let us now generalize the arguments of Section~\ref{ssec proof baby case integral}
to prove (I) and (II) for any $n>2$. The proof of the former is quite similar
(though is more elaborate) to that of Lemma~\ref{necessity for n=2}:

\begin{Lem}\label{necessity for n>2}
$\Psi(\wt{e}_{\beta_1}(r_1)\cdots \wt{e}_{\beta_{m}}(r_{m}))\in \fS^{(n)}$ for any
$m\geq 1, \{\beta_p\}_{p=1}^{m}\subset \Delta^+, \{r_p\}_{p=1}^{m}\subset \BZ$.
\end{Lem}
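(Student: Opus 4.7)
The plan is to extend the argument of Lemma~\ref{necessity for n=2} from the $n=2$ case to the multi-root setting. First I observe that each $\Psi(\wt{e}_\beta(r))$ already lies in $\wt{\fS}^{(n)}$: indeed, by Lemma~\ref{shuffle root elt} and the rescaling $\wt{e}_\beta(r)=(\vv-\vv^{-1})e_\beta(r)$, its shuffle image carries the overall factor $(\vv-\vv^{-1})(1-\vv^2)^{i(\beta)-j(\beta)}$, which differs from $(\vv-\vv^{-1})^{i(\beta)-j(\beta)+1}$ only by a sign and an integer power of $\vv$, as required by the definition of $\wt{\fS}^{(n)}$. Since $\wt{\fS}^{(n)}$ is $\star$-closed, the element $F:=\Psi(\wt{e}_{\beta_1}(r_1)\cdots\wt{e}_{\beta_l}(r_l))$ lies in $\wt{\fS}^{(n)}_{\unl{k}}$ with $\unl{k}:=\sum_{i=1}^l [\beta_i]$.

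It then remains to verify the divisibility condition of Definition~\ref{integral element}: for any degree vector $\unl{d}=\{d_\beta\}$ with $\sum_\beta d_\beta[\beta]=\unl{k}$ and any collection $\unl{t}=\{t_{\beta,i}\}$, the cross specialization $\Upsilon_{\unl{d},\unl{t}}(F)$ of~(\ref{cross specialization map}) is divisible by $\prod_{\beta,i}[t_{\beta,i}]_\vv!$. To do so, I would expand $F$ as a single symmetrization over $\Sigma_{\unl{k}}$ (modulo the internal symmetries of each factor) of a product of $\zeta$-factors and the explicit expressions from Lemma~\ref{shuffle root elt}. As in the proofs of Lemmas~\ref{lower degrees},~\ref{same degrees}, most summands contain at least one $\zeta$-factor of the form $\zeta_{i,i+1}(\vv)=0$ or $\zeta_{i,i}(\vv^{-2})=0$ after specialization and therefore vanish; the only surviving summands are those in which the variables of the $j$-th factor get mapped bijectively onto the $[\beta_j]$-slice of one of the slots of $\unl{d}$, and the $\vv$-shifts force the corresponding slot to be of type $\beta_j$ itself.

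For each surviving summand, after dividing out the factors $A$ and $B$ of~(\ref{factor A},~\ref{factor B}), the resulting contribution factorizes over the slots $(\beta,i)$ as a product of independent local contributions. Within a fixed slot $(\beta,i)$, the $t_{\beta,i}$ factors of type $\beta$ evaluated at $\vv^{-2}z_{\beta,i},\vv^{-4}z_{\beta,i},\ldots,\vv^{-2t_{\beta,i}}z_{\beta,i}$ and summed over their internal orderings reproduce precisely the computation of Lemma~\ref{necessity for n=2} (applied to the appropriate integers $r_j$), and hence contribute the factor $\vv^{-t_{\beta,i}(t_{\beta,i}-1)/2}[t_{\beta,i}]_\vv!$. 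Taking the product over all slots yields the required divisibility by $\prod_{\beta,i}[t_{\beta,i}]_\vv!$.

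The main obstacle is the bookkeeping required to establish the clean factorization into $n=2$-type local pieces. Specifically, one must verify that the $\zeta$-factors linking variables from different factors and different slots either collapse into the cancellable product $A\cdot B$ or else force the summand to vanish — the same exhaustive case analysis as the one mentioned in the footnote after the list of cases following~(\ref{ordering of pairs}) in the proof of Lemma~\ref{spanning}, combined with the formula~(\ref{explicit formula for same degrees}). Once this combinatorial reduction is carried out, the integrality of $F$ reduces cleanly to the $n=2$ case already proved in Lemma~\ref{necessity for n=2}.
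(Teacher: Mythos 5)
Your opening step (membership in $\wt{\fS}^{(n)}$ via Lemma~\ref{shuffle root elt} and the $\star$-closedness of $\wt{\fS}^{(n)}$) agrees with the paper, and your plan to extract one factor $[t_{\beta,i}]_\vv!$ per slot from an $n=2$-type computation is the right idea. However, the combinatorial claim on which your reduction rests is false: for a general degree vector $\unl{d}$ it is \emph{not} true that the only summands surviving the cross specialization $\Upsilon_{\unl{d},\unl{t}}$ are those in which the variables of each factor $\wt{e}_{\beta_j}(r_j)$ are mapped onto a single slot of type $\beta_j$. Take $n=3$ and $F=\Psi\big(\wt{e}_{\alpha_1+\alpha_2}(r)\,\wt{e}_{\alpha_1+\alpha_2}(r')\big)$, so $\unl{k}=(2,2)$, and take $\unl{d}$ with $d_{\alpha_1}=d_{\alpha_2}=2$, $d_{\alpha_1+\alpha_2}=0$. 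Every summand necessarily splits each factor between an $\alpha_1$-slot and an $\alpha_2$-slot, no alignment is possible, and yet $\phi_{\unl{d}}(F)\neq 0$ in general (the denominators $x_{1,\cdot}-x_{2,\cdot}$ and the cross-colour $\zeta$-factors specialize to generically nonzero values). With $t_{\alpha_1,1}=t_{\alpha_2,1}=2$ the required divisibility by $[2]_\vv!\cdot[2]_\vv!$ is then a nontrivial statement about precisely these non-aligned summands; your argument, which would declare them all zero, cannot establish it. The alignment you invoke is forced only in the special situation of Lemma~\ref{same degrees}, where $\unl{d}=\deg(h)$, whereas Definition~\ref{integral element} requires the divisibility for \emph{every} $\unl{d}$ with $\sum_\beta d_\beta[\beta]=\unl{k}$.

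The paper's proof avoids any alignment hypothesis and works with an arbitrary surviving summand. Fixing a slot $(\beta,i)$ of $\unl{t}$ and a pair of strings $r\neq r'$ inside it, and using only that two variables of the same colour can never occur in the same factor $\Psi(\wt{e}_\bullet(\bullet))$, it locates for each colour $l$ with $j(\beta)\le l<i(\beta)$ a single $\zeta$-factor (among $\zeta_{l,l}(x_{l,r}/x_{l,r'})$, $\zeta_{l+1,l}(x_{l+1,r'}/x_{l,r})$, $\zeta_{l,l+1}(x_{l,r'}/x_{l+1,r'})$, depending on the relative placement) which either kills the summand or contributes exactly one linear factor $(y_{\beta,r}-\vv^{-2}y_{\beta,r'})$; summed over $l$ these account precisely for the divisor $B$ of~(\ref{factor B}). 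Only the leftover $\zeta_{i(\beta),i(\beta)}$-factors among the top-colour variables of the slot are then fed into the $z$-specialization, where they reproduce the computation of Lemma~\ref{necessity for n=2} and produce $\vv^{-t_{\beta,i}(t_{\beta,i}-1)/2}[t_{\beta,i}]_\vv!$ (or zero). You would need to replace your ``clean factorization indexed by the original factors'' by this per-slot, per-pair, per-colour accounting for the proof to go through.
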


\begin{proof}
Define $\unl{k}\in \BN^I$ via
$\sum_{i\in I} k_i\alpha_i = \sum_{p=1}^{m} \beta_p$, so that
  $F:=\Psi(\wt{e}_{\beta_1}(r_1)\cdots \wt{e}_{\beta_{m}}(r_{m}))
   \in S^{(n)}_{\unl{k}}$.
First, we note that $F$ is divisible by $(\vv-\vv^{-1})^{|\unl{k}|}$,
due to Lemma~\ref{shuffle root elt}. Therefore, $F\in \wt{\fS}^{(n)}_{\unl{k}}$.

It remains to show that $\Upsilon_{\unl{d},\unl{t}}(F)$ satisfies
the divisibility condition of Definition~\ref{integral element} for any
$\unl{d}=\{d_\beta\}_{\beta\in \Delta^+}\in \BN^{\frac{n(n-1)}{2}}$
and a collection of positive integers
$\unl{t}=\{t_{\beta,s}\}_{\beta\in \Delta^+}^{1\leq s\leq \ell_\beta}$
satisfying~(\ref{eqn:kdl-compatibility}). To this end, recall that 
the cross specialization $\Upsilon_{\unl{d},\unl{t}}(F)$ is computed in three steps:

\begin{itemize}

 \item[--] 
 first, we specialize $x_{\ast,\ast}$-variables in $f$ of~(\ref{pole conditions})
 to $\vv^{?}$-multiples of $y_{\ast,\ast}$-variables as in~(\ref{specialization map});

 \item[--] 
 second, we divide that specialization by the product of appropriate powers
 of $(\vv-\vv^{-1})$ and linear terms in $y_{\ast,\ast}$-variables arising
 via the wheel conditions, see~(\ref{factor A},~\ref{factor B})
 and~(\ref{reduced specialization map});

 \item[--] 
 finally, we specialize $y_{\ast,\ast}$-variables to $\vv^{?}$-multiples
 of $z_{\ast,\ast}$-variables as in~(\ref{vertical specialization map}).

\end{itemize}

\noindent
Fix $\beta\in \Delta^+$ and $1\leq s\leq \ell_\beta$, and consider those
$x_{\ast,\ast}$ that eventually got specialized to $\vv^{?}z_{\beta,s}$.
Without loss of generality, we may assume those are
$\{x_{i,r}\}_{j(\beta)\leq i\leq i(\beta)}^{1\leq r\leq t_{\beta,s}}$.
We may also assume that $x_{i,r}$ was specialized to $\vv^{-i}y_{\beta,r}$
under the first specialization~(\ref{specialization map}), while
$y_{\beta,r}$ was specialized to $\vv^{-2r}z_{\beta,s}$ under
the second specialization~(\ref{vertical specialization map}), for any
$j(\beta)\leq i\leq i(\beta),\ 1\leq r\leq t_{\beta,s}$.

For $j(\beta)\leq i<i(\beta)$ and $1\leq r\ne r'\leq t_{\beta,s}$, consider
the relative position of the variables $x_{i,r}, x_{i,r'}, x_{i+1,r'}$.
As $x_{i,r}, x_{i,r'}$ cannot enter the same function $\Psi(\wt{e}_\ast(\ast))$,
$x_{i,r}$ is placed either to the left of $x_{i,r'}$ or to the right.
In the former case, we gain the factor $\zeta_{i,i}(x_{i,r}/x_{i,r'})$, which upon
the specialization $\phi_{\unl{d}}$ contributes the factor $(y_{\beta,r}-\vv^{-2}y_{\beta,r'})$.
Likewise, if $x_{i+1,r'}$ is placed to the left of $x_{i,r}$, we gain the factor
$\zeta_{i+1,i}(x_{i+1,r'}/x_{i,r})$, which upon the specialization $\phi_{\unl{d}}$
contributes the factor $(y_{\beta,r}-\vv^{-2}y_{\beta,r'})$ as well. In the remaining
case, when $x_{i,r'}$ is to the left of $x_{i,r}$ while $x_{i+1,r'}$ is not,
we gain the factor $\zeta_{i,i+1}(x_{i,r'}/x_{i+1,r'})$,
which upon the specialization $\phi_{\unl{d}}$ specializes to $0$.
As $i$ ranges from $j(\beta)$ up to $i(\beta)-1$, we thus gain the
$(i(\beta)-j(\beta))$-th power of $(y_{\beta,r}-\vv^{-2}y_{\beta,r'})$.
Note that this power exactly coincides with the power of
$(y_{\beta,r}-\vv^{-2}y_{\beta,r'})$ in $B_{\unl{d}}$ of~(\ref{factor B}),
by which we divide $\phi_{\unl{d}}(F)$ to define the reduced specialization
$\varphi_{\unl{d}}(F)$ of~(\ref{reduced specialization map}).

However, we have not used yet the $\zeta$-factors
$\zeta_{i(\beta),i(\beta)}(x_{i(\beta),r}/x_{i(\beta),r'})$ for
$x_{i(\beta),r}$ placed to the left of $x_{i(\beta),r'}$. If there
is $1\leq r<t_{\beta,s}$ such that $x_{i(\beta),r+1}$ is placed
to the left of $x_{i(\beta),r}$, then
$\zeta_{i(\beta),i(\beta)}(x_{i(\beta),r+1}/x_{i(\beta),r})$ specializes
to zero upon~(\ref{vertical specialization map}). In the remaining case,
when each $x_{i(\beta),r}$ stays to the left of $x_{i(\beta),r+1}$,
the total contribution of the specializations of the corresponding $\zeta$-factors equals
$\vv^{-t_{\beta,s}(t_{\beta,s}-1)/2}[t_{\beta,s}]_\vv!$ as
in formula~(\ref{eqn:factorial reappearance}).

This completes our proof of Lemma~\ref{necessity for n>2}.
\end{proof}

Let $\wt{M}\subset S^{(n)}$ be the $\BC[\vv,\vv^{-1}]$-span of $\{\Psi(\wt{e}_h)\}_{h\in H}$.
Generalizing Lemma~\ref{spanning}, we have:

\begin{Lem}\label{spanning integral}
Let $F\in \fS^{(n)}_{\unl{k}}$ and $\unl{d}\in T_{\unl{k}}$.
If $\phi_{\unl{d}'}(F)=0$ for all $\unl{d}'\in T_{\unl{k}}$ such that $\unl{d}'>\unl{d}$,
then there exists an element $F_{\unl{d}}\in \wt{M}$ such that
$\phi_{\unl{d}}(F)=\phi_{\unl{d}}(F_{\unl{d}})$ and
$\phi_{\unl{d}'}(F_{\unl{d}})=0$ for all $\unl{d}'>\unl{d}$.
\end{Lem}

\begin{proof}
The proof of this lemma is completely analogous to the one of Lemma~\ref{spanning}.
More precisely, combining formulas~(\ref{woops 1},~\ref{woops 2}) and the condition
$F\in \fS^{(n)}_{\unl{k}}$ together with formula~(\ref{explicit formula for same degrees})
and the discussion following it, the result follows from its $n=2$ counterpart.
The latter has been already taken care of in Lemma~\ref{integral n=1 case}.
\end{proof}

Combining Lemmas~\ref{necessity for n>2},~\ref{spanning integral} with the argument 
following our proof of Lemma~\ref{spanning} completes our proof of both 
Theorems~\ref{Main Theorem 2}(b) and~\ref{shuffle integral form}.
We note that the result of Theorem~\ref{Main Theorem 2}(c)
follows as well, as explained in the beginning of Section~\ref{sec proofs of Theorems 1,2}.


\section{Generalizations to $U_{\vv_1,\vv_2}(L\ssl_n)$}
\label{sec 2-parametric quantuma affine}

The two-parameter quantum loop algebra $U_{\vv_1,\vv_2}(L\ssl_n)$ was introduced
in~\cite{hrz}\footnote{To be more precise, this recovers the algebra
of~\emph{loc.cit}.\ with the trivial central charges.}
as a generalization of $U_\vv(L\ssl_n)$ (one recovers the latter from the former
by setting $\vv_1=\vv_2^{-1}=\vv$ and identifying some Cartan elements,
see~\cite[Remark~3.3(4)]{hrz}). The key results of~\cite{hrz} are:
\begin{enumerate}
 \item[1)] the Drinfeld-Jimbo type realization of $U_{\vv_1,\vv_2}(L\ssl_n)$,
 see~\cite[Theorem 3.12]{hrz};

 \item[2)] the PBW basis of its subalgebras
 $U^>_{\vv_1,\vv_2}(L\ssl_n), U^<_{\vv_1,\vv_2}(L\ssl_n)$,
 see~\cite[Theorem 3.11]{hrz}.
\end{enumerate}
\noindent
However, the latter result (\cite[Theorem 3.11]{hrz}) is stated  without
any glimpse of a proof.

The primary goal of this section is to generalize Theorem~\ref{Main Theorem 1}
to the case of $U^>_{\vv_1,\vv_2}(L\ssl_n)$, thus proving~\cite[Theorem 3.11]{hrz}.
Along the way, we also generalize Theorem~\ref{hard shuffle} by providing the
shuffle realization of $U^>_{\vv_1,\vv_2}(L\ssl_n)$, which is of independent interest.
The latter is used to construct the PBWD bases for the integral form
of $U^>_{\vv_1,\vv_2}(L\ssl_n)$, generalizing Theorem~\ref{Main Theorem 2}.


\subsection{Two-parameter quantum loop algebra $U^>_{\vv_1,\vv_2}(L\ssl_n)$}
\label{ssec 2-parameter quantum}
\

For the purpose of this section, it suffices to work only with the
subalgebra $U^>_{\vv_1,\vv_2}(L\ssl_n)$ of $U_{\vv_1,\vv_2}(L\ssl_n)$.
Let $\vv_1,\vv_2$ be two independent formal variables and set
$\BK:=\BC(\vv_1^{1/2},\vv_2^{1/2})$. Following~\cite[Definition 3.1]{hrz},
define $U^>_{\vv_1,\vv_2}(L\ssl_n)$ to be the associative $\BK$-algebra
generated by $\{e_{i,r}\}_{i\in I}^{r\in \BZ}$
with the following defining relations:
\begin{equation}\label{Twoparam 1}
  (z - (\langle j,i\rangle \langle i,j\rangle)^{1/2} w)e_i(z)e_j(w)=
  (\langle j,i\rangle z - (\langle j,i\rangle \langle i,j\rangle^{-1})^{1/2} w)e_j(w)e_i(z)
\end{equation}
as well as Serre relations: 
\begin{equation}\label{Twoparam 2}
\begin{split}
  & e_i(z)e_j(w)=e_j(w)e_i(z)\ \mathrm{if}\ c_{ij}=0,\\
  & [e_i(z_1),[e_i(z_2),e_{i+1}(w)]_{\vv_2}]_{\vv_1}+
    [e_i(z_2),[e_i(z_1),e_{i+1}(w)]_{\vv_2}]_{\vv_1}=0,\\
  & [e_i(z_1),[e_i(z_2),e_{i-1}(w)]_{\vv_2^{-1}}]_{\vv_1^{-1}}+
    [e_i(z_2),[e_i(z_1),e_{i-1}(w)]_{\vv_2^{-1}}]_{\vv_1^{-1}}=0,\\
\end{split}
\end{equation}
where $e_i(z)=\sum_{r\in \BZ}{e_{i,r}z^{-r}}$ and $\langle i,j\rangle\in \BK$ is defined via
  $\langle i,j\rangle:=\vv_1^{\delta_{ij}-\delta_{i+1,j}}\vv_2^{\delta_{i,j+1}-\delta_{ij}}$.


\subsection{PBWD bases of $U^>_{\vv_1,\vv_2}(L\ssl_n)$}\label{ssec formulation main thm 3}
\

We shall follow the notations of Section~\ref{ssec formulation main thm 1},
except that now $(\lambda_1,\ldots,\lambda_{p-1})\in \{\vv_1,\vv_2\}^{p-1}$.
Similarly to~(\ref{higher roots}), we define the \emph{PBWD basis elements}
$e_\beta(r)\in U^{>}_{\vv_1,\vv_2}(L\ssl_n)$ via
\begin{equation*}
  e_\beta(r):=
  [\cdots[[e_{i_1,r_1},e_{i_2,r_2}]_{\lambda_1},e_{i_3,r_3}]_{\lambda_2},\cdots,e_{i_p,r_p}]_{\lambda_{p-1}}.
\end{equation*}
The monomials
  $e_h:=\prod\limits_{(\beta,r)\in \Delta^+\times \BZ}^{\rightarrow} e_\beta(r)^{h(\beta,r)}\ (h\in H)$
will be called the \emph{ordered PBWD monomials} of $U^{>}_{\vv_1,\vv_2}(L\ssl_n)$.
Here, the arrow over the product sign refers to the total order~(\ref{extended order}).

Our first main result establishes the PBWD property of $U^>_{\vv_1,\vv_2}(L\ssl_n)$:

\begin{Thm}\label{Main Theorem 3}
The ordered PBWD monomials $\{e_h\}_{h\in H}$ form a $\BK$-basis
of $U^{>}_{\vv_1,\vv_2}(L\ssl_n)$.
\end{Thm}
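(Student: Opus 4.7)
The plan is to mirror the shuffle-theoretic strategy of Section~\ref{ssec proof of Theorem 1}. This route is essentially forced on us here: flatness of the deformation $U^>_{\vv_1,\vv_2}(L\ssl_n)$ is an open problem (as noted in the introduction), so the alternative shortcut via the PBW property of $U(\ssl_n[t,t^{-1}])$ is unavailable, and all genuine input must come from the shuffle side.

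First, I would construct a two-parameter shuffle algebra $S^{(n)}_{\vv_1,\vv_2}$ of $\BK$-valued symmetric rational functions (in the same variables as $S^{(n)}$), with mixing factors
\begin{equation*}
  \zeta^{\vv_1,\vv_2}_{i,j}(z) := \frac{\langle j,i\rangle z - (\langle j,i\rangle \langle i,j\rangle^{-1})^{1/2}}{z - (\langle j,i\rangle \langle i,j\rangle)^{1/2}}
\end{equation*}
dictated by~(\ref{Twoparam 1}), pole conditions of the form~(\ref{pole conditions}), and wheel conditions determined by the common vanishing locus of the two two-parameter Serre expressions in~(\ref{Twoparam 2}). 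Since $U^>_{\vv_1,\vv_2}(L\ssl_n)$ is by definition the $\BK$-algebra on $\{e_{i,r}\}$ modulo~(\ref{Twoparam 1}, \ref{Twoparam 2}), the assignment $e_{i,r}\mapsto x_{i,1}^r$ extends to a $\BK$-algebra homomorphism $\Psi\colon U^>_{\vv_1,\vv_2}(L\ssl_n)\to S^{(n)}_{\vv_1,\vv_2}$. A direct computation in the spirit of Lemma~\ref{shuffle root elt} then shows that $\Psi(e_\beta(r))$ has a single monomial numerator (up to a non-zero $\BK$-prefactor) over the denominator $\prod_{l=j(\beta)}^{i(\beta)-1}(x_{l,1}-x_{l+1,1})$.

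To prove linear independence of $\{e_h\}_{h\in H}$, I would assume a putative nontrivial relation $\sum_{h\in H} c_h e_h = 0$ of minimal lex-degree $\unl{d}$, apply $\Psi$, and then apply the two-parameter analog of the specialization map $\phi_{\unl{d}}$ of~(\ref{specialization map}). An analog of Lemma~\ref{lower degrees} persists because $\zeta^{\vv_1,\vv_2}_{i,i+1}$ vanishes precisely at the critical ratio occurring in the $\phi_{\unl{d}}$-specialization, killing all summands with $\deg(h)<\unl{d}$. An analog of Lemma~\ref{same degrees} then factors each surviving $\phi_{\unl{d}}(\Psi(e_h))$ as a common $\unl{d}$-dependent prefactor times, for each $\beta\in\Delta^+$, the evaluation of the $n=2$ shuffle product $x^{r_\beta(h,1)}\star\cdots\star x^{r_\beta(h,d_\beta)}$ in $S^{(2)}_{\vv_1,\vv_2}$. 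The argument thus reduces to the $n=2$ baby case, which in turn reduces to a two-parameter analog of Lemma~\ref{k-th fold product} computing the $k$-th $\star$-power of $x^r$ and extracting the coefficient of the leading monomial symmetric polynomial. The main obstacle I anticipate is verifying that this leading coefficient is a \emph{non-zero} element of $\BK$: in the one-parameter case this was the transparent inequality $[k]_\vv!\neq 0$, whereas here it requires a genuine two-parameter combinatorial identity generalizing~(\ref{combinatorial identity}) (most cleanly derived by letting one variable tend to $\infty$ and inducting on $k$, exactly as in the proof of Lemma~\ref{k-th fold product}, with $\vv^{-2}$ replaced by the relevant two-parameter ratio).

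For the spanning statement I would follow Section~\ref{ssec spanning prop} essentially verbatim: the order-of-vanishing analysis under the specializations (i)--(iii) of Lemma~\ref{spanning} goes through after replacing $\vv^{\pm 2}$ by the two-parameter ratios that arise from $\zeta^{\vv_1,\vv_2}_{i,j}$ and the wheel conditions, and the iterative subtraction over the lex-ordered finite set $T_{\unl{k}}$ expresses every $F\in S^{(n)}_{\vv_1,\vv_2}$ as a $\BK$-linear combination of $\{\Psi(e_h)\}$, using the $n=2$ baby case at each step. This simultaneously yields the surjectivity of $\Psi$, establishing Theorem~\ref{hard shuffle 2-parametric} as a byproduct, and together with the linear independence above completes the proof that $\{e_h\}_{h\in H}$ is a $\BK$-basis of $U^>_{\vv_1,\vv_2}(L\ssl_n)$.
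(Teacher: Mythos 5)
Your strategy coincides with the paper's own proof (Sections~\ref{ssec 2-param shuffle algebra}--\ref{ssec proof of Theorem 3}): introduce a two-parameter shuffle algebra $\widetilde{S}^{(n)}$, send $e_{i,r}\mapsto x_{i,1}^r$, and rerun the specialization-map arguments of Sections~\ref{ssec linear indep} and~\ref{ssec spanning prop}, reducing everything to rank one. Two corrections. First, your formula for $\zeta_{i,j}$ cannot be the full exchange ratio read off from~(\ref{Twoparam 1}): the homomorphism property requires the quotient of $\zeta_{i,j}(z/w)$ by $\zeta_{j,i}(w/z)$ to equal that ratio, so the ratio has to be split between the two factors rather than assigned wholesale to one of them; moreover your $\zeta_{i,i}$ then has its pole at $z=\vv_1\vv_2^{-1}$ instead of $z=1$, the symmetrization no longer cancels it, and the subspace cut out by the pole conditions~(\ref{pole conditions}) would not be $\star$-closed. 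The paper's choice keeps every denominator equal to $z-1$ and arranges for $\zeta_{i,i+1}$ to vanish at $\vv_1^{1/2}\vv_2^{-1/2}$, the ratio imposed by the specialization maps. Also, the ``main obstacle'' you anticipate at rank one is not actually there: for $i=j$, relation~(\ref{Twoparam 1}) is literally the one-parameter relation with $\vv=\vv_1^{1/2}\vv_2^{-1/2}$, so Lemmas~\ref{k-th fold product} and~\ref{n=1 case} apply verbatim and the leading coefficient is a nonzero multiple of $[k]_{\vv_1^{1/2}\vv_2^{-1/2}}!$; no new two-parameter identity is required.

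The more substantive gap is at the very end. Showing that every $F\in\widetilde{S}^{(n)}$ lies in the $\BK$-span of $\{\Psi(e_h)\}$ gives surjectivity of $\Psi$, but to conclude that $\{e_h\}$ spans $U^>_{\vv_1,\vv_2}(L\ssl_n)$ you also need $\Psi$ to be injective: for $u\in U^>_{\vv_1,\vv_2}(L\ssl_n)$ the spanning argument only yields $u-\sum_h c_h e_h\in\Ker\Psi$. The paper supplies injectivity as Proposition~\ref{simple shuffle for 2-parametric}, proved as in Proposition~\ref{simple shuffle} via a nondegenerate pairing on the source compatible with a pairing on the shuffle side. This is genuine external input that the specialization-map machinery does not provide, and your proposal neither states nor uses it.
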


The proof of Theorem~\ref{Main Theorem 3} is outlined in
Section~\ref{ssec proof of Theorem 3} and is based on
the shuffle approach.

\begin{Rem}\label{rosso's choice}
We note that the PBWD basis elements introduced in~\cite[(3.14)]{hrz} are
\begin{equation}\label{simplest choice 2-param}
  e_{\alpha_j+\alpha_{j+1}+\ldots+\alpha_i}(r):=
  [\cdots[[e_{j,r},e_{j+1,0}]_{\vv_1},e_{j+2,0}]_{\vv_1},\cdots,e_{i,0}]_{\vv_1}.
\end{equation}
In this particular case, Theorem~\ref{Main Theorem 3}
recovers~\cite[Theorem 3.11]{hrz} provided without a proof.
\end{Rem}

\begin{Rem}\label{full 2-parameter vs halves}
The entire two-parameter quantum loop algebra $U_{\vv_1,\vv_2}(L\ssl_n)$
admits a triangular decomposition, cf.~Proposition~\ref{Triangular decomposition}(a).
Hence, an analogue of Theorem~\ref{Main Theorem 1 entire algebra} holds
for $U_{\vv_1,\vv_2}(L\ssl_n)$ as well, thus providing a family of
PBWD $\BK$-bases for $U_{\vv_1,\vv_2}(L\ssl_n)$.
\end{Rem}


\subsection{Integral form $\fU^>_{\vv_1,\vv_2}(L\ssl_n)$ and its PBWD bases}
\label{ssec formulation main thm 3.1}
\

For $(\beta,r)\in \Delta^+\times \BZ$, we define $\wt{e}_\beta(r)\in U^>_{\vv_1,\vv_2}(L\ssl_n)$ via
  $$\wt{e}_\beta(r):=(\vv_1^{1/2}\vv_2^{-1/2}-\vv_1^{-1/2}\vv_2^{1/2})e_\beta(r).$$
We also define $\wt{e}_h\in U^>_{\vv_1,\vv_2}(L\ssl_n)$
via~(\ref{ordered}) but using $\wt{e}_\beta(r)$ instead of $e_\beta(r)$.
Finally, we define integral form $\fU^>_{\vv_1,\vv_2}(L\ssl_n)$ as the
$\BC[\vv_1^{1/2},\vv_2^{1/2},\vv_1^{-1/2},\vv_2^{-1/2}]$-subalgebra of
$U^>_{\vv_1,\vv_2}(L\ssl_n)$ generated by $\{\wt{e}_\beta(r)\}_{\beta\in \Delta^+}^{r\in \BZ}$.

The following counterpart of Theorem~\ref{Main Theorem 2} provides
a much stronger version of Theorem~\ref{Main Theorem 3}:

\begin{Thm}\label{Main Theorem 3.1}
(a) The subalgebra $\fU^{>}_{\vv_1,\vv_2}(L\ssl_n)$ is
independent of all our choices.

\noindent
(b) Elements $\{\wt{e}_h\}_{h\in H}$ form a basis of the free
$\BC[\vv_1^{1/2},\vv_2^{1/2},\vv_1^{-1/2},\vv_2^{-1/2}]$-module
$\fU^{>}_{\vv_1,\vv_2}(L\ssl_n)$.
\end{Thm}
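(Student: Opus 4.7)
The plan is to mirror the proofs of Section~\ref{sec proofs of Theorems 1,2} in the two-parameter setting, with Theorem~\ref{hard shuffle 2-parametric} playing the role of Theorem~\ref{hard shuffle}. First, I would introduce the integral shuffle subalgebra $\fS^{(n)}_{\vv_1,\vv_2}\subset S^{(n)}_{\vv_1,\vv_2}$ by literally transcribing Definition~\ref{integral element}: impose the pole and wheel conditions of Section~\ref{ssec usual shuffle algebra} (in their two-parameter form), insert a factor of $(\vv_1^{1/2}\vv_2^{-1/2}-\vv_1^{-1/2}\vv_2^{1/2})^{|\unl{k}|}$, and demand divisibility of the cross specializations $\Upsilon_{\unl{d},\unl{t}}(F)$ by the two-parameter analog of $\prod[t_{\beta,i}]_\vv!$. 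The specialization points used in $\phi_{\unl{d}}$ are dictated by the $\zeta$-factors of~(\ref{Twoparam 1}): the variable $x_{k,\bullet}$ inside the $s$-th copy of the interval $[\beta]$ gets specialized to an appropriate half-integer monomial in $\vv_1,\vv_2$ times $y_{\beta,s}$, with the exponents forced by requiring that the corresponding $\zeta$-products pick up exactly the factor $(\vv_1^{1/2}\vv_2^{-1/2}-\vv_1^{-1/2}\vv_2^{1/2})^{i(\beta)-j(\beta)}$ produced by~(\ref{integral basis PBW}).

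The key preliminary step is a two-parameter analog of Lemma~\ref{shuffle root elt}: a straightforward induction using~(\ref{Twoparam 1},~\ref{Twoparam 2}) shows that modulo $\BC[\vv_1^{\pm 1/2},\vv_2^{\pm 1/2}]^\times$,
\begin{equation*}
  \Psi(e_\beta(r))=
  (\vv_1^{1/2}\vv_2^{-1/2}-\vv_1^{-1/2}\vv_2^{1/2})^{i(\beta)-j(\beta)}
  \cdot\frac{p(x_{j(\beta),1},\ldots,x_{i(\beta),1})}
       {\prod_{l=j(\beta)}^{i(\beta)-1}(x_{l,1}-x_{l+1,1})}
\end{equation*}
with $p$ a Laurent monomial. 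This places $\Psi(\fU^>_{\vv_1,\vv_2}(L\ssl_n))$ inside the integral subspace $\wt{\fS}^{(n)}_{\vv_1,\vv_2}$, and, together with a verbatim transcription of the specialization analysis of Lemma~\ref{necessity for n>2}, yields $\Psi(\wt{e}_{\beta_1}(r_1)\cdots\wt{e}_{\beta_l}(r_l))\in \fS^{(n)}_{\vv_1,\vv_2}$.

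With these ingredients in place, the linear independence argument of Section~\ref{ssec linear indep} carries over: the factors $G_{\beta,\beta'}$ and $G_\beta$ appearing in~(\ref{explicit factors},~\ref{explicit formula for same degrees}) are of the same combinatorial shape (only the numerical values of the $\vv$-exponents change), so $\phi_{\unl{d}}(\Psi(e_h))=0$ when $\deg(h)<\unl{d}$ and, at the extremal degree, the problem reduces to the two-parameter analog of Lemma~\ref{n=1 case} \textemdash\ a rank-one computation identical in structure to Lemma~\ref{k-th fold product}, with $[k]_\vv!$ replaced by its $(\vv_1/\vv_2)^{1/2}$-counterpart. The spanning arguments of Lemma~\ref{spanning} and Lemma~\ref{spanning integral} then apply unchanged, the vanishing orders in conditions (i)--(iii) being purely combinatorial and reducible to the same case-by-case verification of interval overlaps. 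This simultaneously proves part (b) of the theorem, while part (a) follows from (b) exactly as Corollary~\ref{useful corollary}(b) follows from Theorem~\ref{shuffle integral form}: the $\BC[\vv_1^{\pm 1/2},\vv_2^{\pm 1/2}]$-span of $\{\wt{e}_h\}$ admits an intrinsic description via $\Psi$ as $\fS^{(n)}_{\vv_1,\vv_2}$, which is manifestly independent of choices (1)--(3).

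The main obstacle I anticipate is bookkeeping: tracking the correct half-integer powers of $\vv_1,\vv_2$ in the specialization points and in the $G$-factors, and verifying that the asymmetric two-parameter wheel conditions (which replace the clean symmetric $\vv\mapsto \vv^{-1}$ form of~(\ref{wheel conditions})) still yield divisibility by the analog of~(\ref{factor B}). Once this is pinned down, the structural logic of Sections~\ref{ssec proof of Theorem 1} and~\ref{ssec proof of Theorem 2} transfers essentially verbatim; in particular, no new ideas beyond those already developed in the one-parameter case should be required.
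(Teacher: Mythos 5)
Your proposal coincides with the paper's intended argument: the paper disposes of Theorem~\ref{Main Theorem 3.1} by stating that it ``follows easily from the one of Theorem~\ref{Main Theorem 3} \dots in the same way as we deduced the proof of Theorem~\ref{Main Theorem 2} in Section~\ref{ssec proof of Theorem 2} from that of Theorem~\ref{Main Theorem 1}'', i.e.\ by transporting the integral shuffle form $\fS^{(n)}$, the specialization and cross-specialization maps (with $\vv^{-k}$ replaced by $(\vv_1^{1/2}\vv_2^{-1/2})^{-k}$ and $[k]_\vv!$ by its $\vv_1^{1/2}\vv_2^{-1/2}$-counterpart), and the Lemmas of Section~\ref{ssec proof of Theorem 2} to the two-parameter setting --- exactly the route you describe. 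The details you supply (the two-parameter analogue of Lemma~\ref{shuffle root elt}, the rank-one factorial computation, and the deduction of part (a) from the choice-free shuffle description of the image of $\Psi$) are precisely the ones the paper leaves implicit.
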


The proof of Theorem~\ref{Main Theorem 3.1} follows easily from the one
of Theorem~\ref{Main Theorem 3} presented below in the same way as we
deduced the proof of Theorem~\ref{Main Theorem 2} in Section~\ref{ssec proof of Theorem 2}
from that of Theorem~\ref{Main Theorem 1}.

\begin{Rem}\label{rosso's RTT interpretation}
We note that it is often more convenient to work with the two-parameter
quantum loop algebra $U_{\vv_1,\vv_2}(L\gl_n)$, cf.~Remark~\ref{ft's RTT interpretation}(a).
Its integral form $\fU_{\vv_1,\vv_2}(L\gl_n)$
is defined analogously to $\fU_{\vv}(L\gl_n)$. Following the arguments
of~\cite[Proposition~3.11]{ft2}, $\fU_{\vv_1,\vv_2}(L\gl_n)$ is identified with
the RTT integral form $\fU^\mathrm{rtt}_{\vv_1,\vv_2}(L\gl_n)$
under the $\BK$-algebra isomorphism
  $U_{\vv_1,\vv_2}(L\gl_n)\simeq
   \fU^\mathrm{rtt}_{\vv_1,\vv_2}(L\gl_n)\otimes_{\BC[\vv_1^{1/2},\vv_2^{1/2},\vv_1^{-1/2},\vv_2^{-1/2}]} \BK$
of~\cite{jl}, cf.~Remark~\ref{ft's RTT interpretation}(b).
Thus, the analogue of~\cite[Theorem 3.24]{ft2} provides a family of PBWD bases for 
$\fU_{\vv_1,\vv_2}(L\gl_n)$, cf.~Theorems~\ref{Full PBWD for integral-special choice},~\ref{Full PBWD for integral}.
\end{Rem}


\subsection{Shuffle algebra $\wt{S}^{(n)}$}
\label{ssec 2-param shuffle algebra}
\

Define the shuffle algebra $(\wt{S}^{(n)},\star)$ analogously to $(S^{(n)},\star)$
with the following modifications:

\begin{itemize}

\item[(1)] 
all vector spaces are now defined over $\BK$ (rather than over $\BC(\vv)$);

\item[(2)] 
$(\zeta_{i,j}(z))_{i,j\in I} \in \mathrm{Mat}_{I\times I}(\BK(z))$,
used in the shuffle product~(\ref{shuffle product}), are now chosen as:
\begin{equation*}
  \zeta_{i,j}(z)=
  \left(\frac{z-\vv_1^{1/2}\vv_2^{-1/2}}{z-1}\right)^{\delta_{j,i-1}}
  \left(\frac{z-\vv_1^{-1}\vv_2}{z-1}\right)^{\delta_{ji}}
  \left(\vv_1^{1/2}\vv_2^{1/2}\cdot \frac{z-\vv_1^{1/2}\vv_2^{-1/2}}{z-1}\right)^{\delta_{j,i+1}};
\end{equation*}

\item[(3)] 
the wheel conditions~(\ref{wheel conditions}) for $F$ are replaced with
\begin{equation*}
  F(\{x_{i,r}\})=0\quad \mathrm{once}\quad
  x_{i,r_1}=\vv_1^{1/2}\vv_2^{-1/2} x_{i+\epsilon,s}=\vv_1\vv_2^{-1} x_{i,r_2}
\end{equation*}
for some $\epsilon\in \{\pm 1\},\, i,\, r_1\ne r_2,\, s$.

\end{itemize}

\noindent
The following result is analogous to Proposition~\ref{simple shuffle} 
(recovering the latter for $\vv_1=\vv_2^{-1}=\vv$):

\begin{Prop}\label{simple shuffle for 2-parametric}
The assignment $e_{i,r}\mapsto x_{i,1}^r\ (i\in I, r\in \BZ)$
gives rise to an injective  $\BK$-algebra homomorphism
$\Psi\colon U_{\vv_1,\vv_2}^{>}(L\ssl_n)\to \widetilde{S}^{(n)}$.
\end{Prop}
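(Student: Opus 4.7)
The plan is to imitate the proof of Proposition~\ref{simple shuffle}, which splits into two independent tasks: (i) verify that the assignment $e_{i,r}\mapsto x_{i,1}^r$ is compatible with the defining relations~(\ref{Twoparam 1},~\ref{Twoparam 2}), thereby producing a well-defined $\BK$-algebra homomorphism $\Psi$ by a two-parameter analogue of Proposition~\ref{Triangular decomposition}(b) (the fact that $U^{>}_{\vv_1,\vv_2}(L\ssl_n)$ is presented by generators and the relations of Section~\ref{ssec 2-parameter quantum} is by definition here, so no further triangular decomposition is needed); and (ii) establish injectivity of $\Psi$ by a non-degenerate bilinear pairing argument.

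For (i), the quadratic relation~(\ref{Twoparam 1}) is a direct calculation. Expanding both $x_{i,1}^r\star x_{j,1}^s$ and $x_{j,1}^s\star x_{i,1}^r$ via~(\ref{shuffle product}) with the $\zeta_{i,j}$ of Section~\ref{ssec 2-param shuffle algebra}(2), the required identity reduces to an elementary identity in $\BK[z,w]$ treated case-by-case for $c_{ij}=0,2,\pm 1$; the asymmetric factor $\vv_1^{1/2}\vv_2^{1/2}$ in $\zeta_{i,i+1}$ versus $\zeta_{i,i-1}$ is precisely what matches the non-self-dual coefficient $\langle j,i\rangle$ in~(\ref{Twoparam 1}). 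For the Serre relations~(\ref{Twoparam 2}), I would compute the image in $\wt{S}^{(n)}$ of the Serre expression, a sum of six triple shuffle products landing in degree $(k_i,k_{i\pm 1})=(2,1)$. Each summand is of the form~(\ref{pole conditions}); after performing the symmetrizations and collecting terms, the resulting numerator is forced to vanish on the wheel locus $x_{i,r_1}=\vv_1^{1/2}\vv_2^{-1/2}x_{i\pm 1,s}=\vv_1\vv_2^{-1}x_{i,r_2}$ from Section~\ref{ssec 2-param shuffle algebra}(3), which upon careful bookkeeping yields cancellation of the six summands. The only technical issue here is the asymmetric treatment of $\vv_1$ and $\vv_2$ in the $[\cdot,\cdot]_{\vv_1},[\cdot,\cdot]_{\vv_2^{\pm 1}}$-brackets of~(\ref{Twoparam 2}), which is precisely encoded in the twisted form of $\zeta_{i,j}$.

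For (ii), I would follow the strategy of~\cite[Lemma~2.20, Propositions~2.30 and 3.8]{n} adapted to the two-parameter setting. One introduces a bialgebra (Drinfeld-type) pairing between $U^{>}_{\vv_1,\vv_2}(L\ssl_n)$ and a suitable negative half, non-degenerate on each graded piece; such a pairing is available from the Drinfeld--Jimbo realization~\cite[Theorem~3.12]{hrz} together with~\cite{jz2}. On the shuffle side, one defines the matching pairing by a residue/contour integral formula weighted by the product of $\zeta_{i,j}$-factors, and verifies the intertwining $\langle \Psi(e),\Psi(e')\rangle_{\wt{S}^{(n)}}=\langle e,e'\rangle_{U}$ on generators. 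The main obstacle will be precisely this compatibility check: since $\zeta_{i,j}(z)\ne \zeta_{j,i}(z^{-1})$ in the two-parameter setup, the natural orientation of the residue contours and the normalization of the dual pairing must be chosen with care. Once the intertwining is established, non-degeneracy of the source pairing immediately gives $\ker\Psi=0$, completing the proof.
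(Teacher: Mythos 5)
Your proposal follows the same route as the paper, which offers no separate argument for this proposition and simply declares it ``completely analogous'' to Proposition~\ref{simple shuffle}: compatibility of the assignment with the defining relations~(\ref{Twoparam 1},~\ref{Twoparam 2}) gives the homomorphism (and, as you note, no triangular-decomposition input is needed since $U^{>}_{\vv_1,\vv_2}(L\ssl_n)$ is defined by these generators and relations), while injectivity comes from a non-degenerate pairing on the source matched with a pairing on the target exactly as in~\cite[Lemma 2.20, Propositions 2.30, 3.8]{n}. One small caution on your Serre-relation step: the symmetrized image of the relator must vanish identically as a rational function, and the wheel conditions of $\wt{S}^{(n)}$ are a property one checks the image satisfies (so that $\Psi$ lands in $\wt{S}^{(n)}$ at all) rather than the mechanism producing the cancellation of the six summands.
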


Our proof of Theorem~\ref{Main Theorem 3} below implies the counterpart of
Theorem~\ref{hard shuffle}, see Remark~\ref{proof of shuffle 2-parameter}:

\begin{Thm}\label{hard shuffle 2-parametric}
$\Psi\colon U_{\vv_1,\vv_2}^{>}(L\ssl_n)\iso \widetilde{S}^{(n)}$
of Proposition~\ref{simple shuffle for 2-parametric} is a $\BK$-algebra isomorphism.
\end{Thm}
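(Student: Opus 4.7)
The plan is to mirror the entire two-step strategy that yielded Theorem~\ref{hard shuffle} in Section~\ref{ssec proof of Theorem 1}, while simultaneously proving Theorem~\ref{Main Theorem 3}. Injectivity of $\Psi$ is already given by Proposition~\ref{simple shuffle for 2-parametric}, so the real content is surjectivity, and per the pattern of Remark~\ref{easier hard shuffle}, it will drop out as a by-product of showing that the \emph{ordered PBWD monomials} $\{e_h\}_{h\in H}$ linearly span $U^{>}_{\vv_1,\vv_2}(L\ssl_n)$ after projecting through $\Psi$. Thus I would organize the proof to establish the linear independence and spanning of $\{\Psi(e_h)\}_{h\in H}$ in $\widetilde{S}^{(n)}$ in parallel.

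The key adaptation is to identify the correct specialization maps in the two-parameter setting. Inspecting the modified $\zeta_{i,j}(z)$, one sees that $\zeta_{i,i+1}$ vanishes at $z=\vv_1^{1/2}\vv_2^{-1/2}$ and $\zeta_{i,i}$ vanishes at $z=\vv_1^{-1}\vv_2$, so the analogue of~(\ref{specialization map}) should specialize the $s$-th copy of the interval $[\beta]$ at variables of the form $\vv_1^{-j/2+??}\vv_2^{j/2+??}\cdot y_{\beta,s}$ along the Dynkin chain, chosen so that specializing any pair within a single interval makes the relevant $\zeta$-factor vanish. Concretely, one would set $x_{k,\bullet}\mapsto (\vv_1\vv_2^{-1})^{-k/2}\cdot y_{\beta,s}$ for $k\in [\beta]$. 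With this in hand, I would reprove the analogues of Lemma~\ref{shuffle root elt} (computing $\Psi(e_\beta(r))$ explicitly with $\vv_1,\vv_2$ replacing $\vv$), Lemma~\ref{lower degrees} (specializations of lower-degree monomials vanish, using the $\vv_1^{1/2}\vv_2^{-1/2}$ zero), and Lemma~\ref{same degrees} (the image at the leading degree factors into the same shape~(\ref{explicit formula for same degrees}) with modified $G_{\beta,\beta'}, G_\beta, G_\beta^{(\sigma_\beta)}$). The wheel conditions in their modified form guarantee the divisibility estimates needed in the spanning step, which is the analogue of Lemma~\ref{spanning}.

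The main obstacle will be the $n=2$ base case. One needs the two-parameter counterpart of Lemmas~\ref{k-th fold product} and~\ref{n=1 case}: that $x^r\star\cdots\star x^r$ in $\widetilde{S}^{(2)}$ equals a nonzero scalar (now in $\BK$) multiple of $(x_1\cdots x_k)^r$, and that the ordered monomials form a basis. The combinatorial identity replacing~(\ref{combinatorial identity}) is
\begin{equation*}
  \sum_{i=1}^k \prod_{\substack{1\le j\le k\\ j\ne i}}\frac{x_j-\vv_1^{-1}\vv_2\, x_i}{x_j-x_i} = 1+(\vv_1^{-1}\vv_2)+\cdots+(\vv_1^{-1}\vv_2)^{k-1},
\end{equation*}
which is the same identity evaluated at $q:=\vv_1^{-1}\vv_2$ in place of $\vv^{-2}$, so nonvanishing of this constant in $\BK$ holds since $\vv_1,\vv_2$ are independent. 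From this, the argument that the shuffle products $x^{r_1}\star\cdots\star x^{r_k}$ give a triangular basis of $\widetilde{S}^{(2)}_k$ relative to monomial symmetric functions goes through verbatim. Once the $n=2$ case is secured, the remainder of Sections~\ref{ssec linear indep} and~\ref{ssec spanning prop} transcribes, yielding simultaneously Theorem~\ref{Main Theorem 3} and, via the surjectivity of $\Psi$, Theorem~\ref{hard shuffle 2-parametric}.
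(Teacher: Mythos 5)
Your proposal matches the paper's argument essentially verbatim: the paper proves Theorem~\ref{Main Theorem 3} by rerunning Sections~\ref{ssec linear indep} and~\ref{ssec spanning prop} with the single modification that the specialization maps send $x_{k,\bullet}\mapsto(\vv_1^{1/2}\vv_2^{-1/2})^{-k}y_{\beta,s}$ (exactly your choice), and then deduces Theorem~\ref{hard shuffle 2-parametric} from the resulting surjectivity together with Proposition~\ref{simple shuffle for 2-parametric}, per Remark~\ref{proof of shuffle 2-parameter}. Your identification of the vanishing loci of the modified $\zeta_{i,j}$ and the two-parameter analogue of the combinatorial identity~(\ref{combinatorial identity}) with $q=\vv_1^{-1}\vv_2$ are both correct, so the proposal is sound and takes the same route as the paper.
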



\subsection{Proof of Theorem~\ref{Main Theorem 3}}\label{ssec proof of Theorem 3}
\

The proof of Theorem~\ref{Main Theorem 3} is completely analogous to our proof
of Theorem~\ref{Main Theorem 1}(a) and is based on the embedding
$\Psi\colon U_{\vv_1,\vv_2}^{>}(L\ssl_n)\hookrightarrow \widetilde{S}^{(n)}$
of Proposition~\ref{simple shuffle for 2-parametric}.
Indeed, the linear independence of $\{e_h\}_{h\in H}$ is deduced exactly
as in Section~\ref{ssec linear indep} with the only modification of the
\emph{specialization maps}
  $\phi_{\unl{d}}\colon \widetilde{S}^{(n)}_{\unl{\ell}}\to
   \BK[\{y_{\beta,s}^{\pm 1}\}_{\beta\in \Delta^+}^{1\leq s\leq d_\beta}]^{\Sigma_{\unl{d}}}$
of~(\ref{specialization map}) by replacing $\vv^{-i}$ with $(\vv_1^{1/2}\vv_2^{-1/2})^{-i}$.
Then, the results of Lemmas~\ref{lower degrees} and~\ref{same degrees} still hold,
thus implying the linear independence of $\{e_h\}_{h\in H}$. Meanwhile,
the fact that $\{e_h\}_{h\in H}$ span $U^{>}_{\vv_1,\vv_2}(L\ssl_n)$ is deduced
using the arguments of Section~\ref{ssec spanning prop}. To be more precise, Lemma~\ref{spanning}
still holds and its iterative application immediately implies that any shuffle
element $F\in \widetilde{S}^{(n)}$ belongs to the $\BK$-span of $\{\Psi(e_h)\}_{h\in H}$.

\begin{Rem}\label{proof of shuffle 2-parameter}
Combining the last statement in the above proof of Theorem~\ref{Main Theorem 3}
with the injectivity of $\Psi$ (Proposition~\ref{simple shuffle for 2-parametric}),
we obtain a proof of Theorem~\ref{hard shuffle 2-parametric}.
\end{Rem}


\section{Generalizations to $U_{\vv}(L\ssl(m|n))$}
\label{sec super-Lie quantuma affine}

The quantum loop superalgebra $U_\vv(L\ssl(m|n))$\footnote{To be more precise,
one actually needs to use the classical Lie superalgebra $A(m-1,n-1)$ in place
of $\ssl(m|n)$, which do coincide only when $m\ne n$. However, we shall ignore
this difference, since we will be working only with the \emph{positive subalgebras}
and those are isomorphic: $U^>_\vv(L\ssl(m|n))\simeq U^>_\vv(LA(m-1,n-1))$.}
was introduced in~\cite{y}, both
in the Drinfeld-Jimbo and the new Drinfeld realizations, see~\cite[Theorem 8.5.1]{y}
for an identification of those. The representation theory of these algebras was
partially studied in~\cite{z1} by crucially utilizing a weak version of the
PBW theorem for $U^>_{\vv}(L\ssl(m|n))$,~\cite[Theorem 3.12]{z1}.
Inspired by~\cite{hrz}, the author also conjectured the PBW theorem
for $U^>_{\vv}(L\ssl(m|n))$, see~\cite[Remark 3.13(2)]{z1}.

The primary goal of this section is to generalize Theorem~\ref{Main Theorem 1}
to the case of $U^>_\vv(L\ssl(m|n))$, thus proving the conjecture of~\cite{z1}.
Along the way, we also generalize Theorem~\ref{hard shuffle} by providing
the shuffle realization of $U^>_{\vv}(L\ssl(m|n))$, which is of independent interest.
The latter is used to construct the PBWD bases for the integral form of
$U^>_{\vv}(L\ssl(m|n))$, generalizing Theorem~\ref{Main Theorem 2}.

\begin{Rem}
(a) We should stress right away that in the exposition below we do choose
a distinguished Dynkin diagram with a single simple positive root of odd degree.
The generalization of all our results to an arbitrary Dynkin diagram is
carried out in~\cite{ts}, cf.~Section~\ref{ssec all Dynkin diagrams}.

\noindent
(b) The shuffle algebras associated to quantum loop superalgebras
seem to be new in the literature as they involve both symmetric and
skew-symmetric functions (``bosons'' and ``fermions'').
\end{Rem}


\subsection{Quantum loop superalgebra $U^>_\vv(L\ssl(m|n))$}
\label{ssec quantum super affine}
\

For the purpose of this section, it suffices to work only with
the subalgebra $U^>_\vv(L\ssl(m|n))$ of $U_\vv(L\ssl(m|n))$. Let
$I=\{1,\ldots,m+n-1\}$ from now on. Consider a free $\BZ$-module
$\oplus_{i=1}^{m+n} \BZ\epsilon_i$ with the bilinear form $(\cdot,\cdot)$
determined by $(\epsilon_i,\epsilon_j)=(-1)^{\delta_{i>m}}\delta_{ij}$.
Let $\vv$ be a formal variable and define $\{\vv_i\}_{i\in I}\subset \{\vv,\vv^{-1}\}$
via $\vv_i:=\vv^{(\epsilon_i,\epsilon_i)}$. For $i,j\in I$, set
$\bar{c}_{ij}:=(\alpha_i,\alpha_j)$ with $\alpha_i:=\epsilon_i-\epsilon_{i+1}$.

Following~\cite{y} (cf.~\cite[Theorem 3.3]{z1}), define $U^>_\vv(L\ssl(m|n))$
to be the associative $\BC(\vv)$-superalgebra generated by
$\{e_{i,r}\}_{i\in I}^{r\in \BZ}$ with the $\BZ_2$-grading
$|e_{m,r}|=\bar{1}, |e_{i,r}|=\bar{0}\ (i\ne m, r\in \BZ)$,
and subject to the following defining relations:
\begin{equation}\label{SuperLie 1}
    (z - \vv^{\bar{c}_{ij}} w) e_i(z)e_j(w)=(\vv^{\bar{c}_{ij}} z - w)e_j(w)e_i(z)
  \ \mathrm{if}\ \bar{c}_{ij}\ne 0,
\end{equation}
\begin{equation}\label{SuperLie 2}
\begin{split}
  & [e_i(z),e_j(w)]=0\ \mathrm{if}\ \bar{c}_{ij}=0,\\
  & [e_i(z_1),[e_i(z_2),e_j(w)]_{\vv^{-1}}]_\vv+[e_i(z_2),[e_i(z_1),e_j(w)]_{\vv^{-1}}]_\vv=0
    \ \mathrm{if}\ \bar{c}_{ij}=\pm 1 \ \mathrm{and}\ i\ne m,
\end{split}
\end{equation}
as well as quartic Serre relations:
\begin{multline}\label{SuperLie 3}
  [[[e_{m-1}(w),e_m(z_1)]_{\vv^{-1}},e_{m+1}(u)]_\vv,e_m(z_2)]\ +\\
  [[[e_{m-1}(w),e_m(z_2)]_{\vv^{-1}},e_{m+1}(u)]_\vv,e_m(z_1)]=0,
\end{multline}
where $e_i(z)=\sum_{r\in \BZ} e_{i,r}z^{-r}$ and we use the super-bracket notations:
\begin{equation*}
  [a,b]_x:=ab-(-1)^{|a||b|}x\cdot ba,\qquad [a,b]:=[a,b]_1
\end{equation*}
for $\BZ_2$-homogeneous elements $a,b$ (we set $(-1)^{\bar{0}}:=1$ and $(-1)^{\bar{1}}:=-1$).


\subsection{PBWD bases of $U^>_\vv(L\ssl(m|n))$}\label{ssec formulation main thm 4}
\

Let $\Delta^+=\{\alpha_j+\alpha_{j+1}+\ldots+\alpha_i\}_{1\leq j\leq i< m+n}$.
For $\beta\in \Delta^+$, define its parity $|\beta|\in \BZ_2$ via
\begin{equation}\label{root parity}
  |\beta|=
  \begin{cases}
     \bar{1}, & \text{if } m\in [\beta] \\
     \bar{0}, & \text{if } m\notin [\beta]
  \end{cases}.
\end{equation}
We shall follow the notations of Section~\ref{ssec formulation main thm 1}.
In particular, we define the \emph{PBWD basis elements}
$e_\beta(r)\in U^{>}_\vv(L\ssl(m|n))$ via~(\ref{higher roots}), but with 
$[\cdot,\cdot]$ denoting the super-bracket. 

Let $\bar{H}$ denote the set of all functions
$h\colon \Delta^+\times \BZ\to \BN$ with finite support and such that
$h(\beta,r)\leq 1$ if $|\beta|=\bar{1}$. The monomials
\begin{equation}\label{ordered supercase}
  e_h\ :=\prod\limits_{(\beta,r)\in \Delta^+\times \BZ}^{\rightarrow} e_\beta(r)^{h(\beta,r)}\ ,
  \qquad \forall\, h\in \bar{H}
\end{equation}
will be called the \emph{ordered PBWD monomials} of $U^{>}_\vv(L\ssl(m|n))$.
Here, the arrow over the product sign refers to
the total order~(\ref{extended order}), as before.

Our first main result establishes the PBWD property of $U^{>}_\vv(L\ssl(m|n))$:

\begin{Thm}\label{Main Theorem 4}
The ordered PBWD monomials $\{e_h\}_{h\in \bar{H}}$ form a $\BC(\vv)$-basis
of $U^{>}_\vv(L\ssl(m|n))$.
\end{Thm}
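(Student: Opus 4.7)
The plan is to adapt the shuffle-theoretic proof of Theorem~\ref{Main Theorem 1}(a) to the super setting via the embedding $\Psi\colon U^>_\vv(L\ssl(m|n))\hookrightarrow S^{(m|n)}$ into the trigonometric super shuffle algebra of Section~\ref{ssec trigonometric super shuffle}, whose elements are symmetric in $\{x_{i,r}\}_r$ for $i\ne m$ and skew-symmetric in the fermionic variables $\{x_{m,r}\}_r$. The restriction $h(\beta,r)\le 1$ for odd roots is forced by relation~(\ref{SuperLie 2}) at $i=j=m$ (where $\bar c_{mm}=0$), which gives the super-commutativity $e_m(z)e_m(w)+e_m(w)e_m(z)=0$; hence $e_{m,r}^2=0$, and more generally $e_\beta(r)^2=0$ for every $\beta$ with $p(\beta)=\bar 1$ (cf.~the weak version in~\cite{z1}). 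On the shuffle side this is reflected in the skew-symmetry in the $x_m$-variables.

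Paralleling Section~\ref{ssec proof of Theorem 1}, I would first record a super-analogue of Lemma~\ref{shuffle root elt} expressing $\Psi(e_\beta(r))$ as a single monomial (up to an invertible scalar) divided by $(x_{j,1}-x_{j+1,1})\cdots(x_{i-1,1}-x_{i,1})$ with $[j;i]=[\beta]$, and then import the specialization maps $\phi_{\unl d}$ of~(\ref{specialization map}), restricted to admissible degree vectors $\unl d=\{d_\beta\}$ with $d_\beta\le 1$ whenever $p(\beta)=\bar 1$. The vanishing analysis underlying Lemmas~\ref{lower degrees} and~\ref{same degrees} carries over verbatim, so that $\phi_{\unl d}(\Psi(e_h))$ factors as in~(\ref{explicit formula for same degrees}); the sole simplification is that for odd $\beta$ the internal sum $\sum_{\sigma_\beta}G_\beta^{(\sigma_\beta)}$ collapses to a single term (since $|\Sigma_1|=1$), replacing Lemma~\ref{n=1 case} in that block by a triviality. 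Linear independence of $\{e_h\}_{h\in\bar H}$ then follows from the minimal-counterexample argument of Section~\ref{ssec linear indep}.

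For the spanning property, I would mimic Section~\ref{ssec spanning prop}, descending through the lexicographically ordered finite set $T_{\unl k}$ of admissible degree vectors: at each step the wheel conditions of $S^{(m|n)}$ (both the ones inherited from $S^{(n)}$ and the extra ones flagged in Section~\ref{ssec trigonometric super shuffle} that encode the super Serre relation~(\ref{SuperLie 3})) should force $\phi_{\unl d}(F)$ to be divisible by the super analogue of $\prod_{\beta<\beta'}G_{\beta,\beta'}\cdot\prod_\beta G_\beta$, so that $\phi_{\unl d}(F)$ matches $\phi_{\unl d}$ of an explicit $\BC(\vv)$-linear combination of $\{\Psi(e_h)\}_{\deg h=\unl d}$. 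Iterating and evaluating at the minimal $\unl d$ (which does not change the function) identifies $F$ with an element of the span.

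The main obstacle will be the precise sign-bookkeeping introduced by the skew-symmetrization in the $x_m$-variables, together with the case-by-case verification that the orders of vanishing under the super wheel conditions still add up to the correct powers of linear factors in the analogues of~(\ref{factor B}) at the mixed even/odd boundary---in particular whenever $m\in[\beta]\cap[\beta']$, or $m=i(\beta)+1\in[\beta']$, where new cancellations and sign flips can occur. Once this combinatorial check is in place, the argument simultaneously yields Theorem~\ref{Main Theorem 4} and, by the same mechanism as in Remark~\ref{easier hard shuffle}, Theorem~\ref{hard shuffle superLie}.
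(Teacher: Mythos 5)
Your overall strategy is the paper's: embed into $S^{(m|n)}$, prove linear independence via the specialization maps $\phi_{\unl{d}}$, and prove spanning by descending through the ordered set $T_{\unl{k}}$ of degree vectors. However, there is a concrete error that breaks both halves of your argument: you restrict to degree vectors $\unl{d}=\{d_\beta\}$ with $d_\beta\le 1$ whenever $p(\beta)=\bar{1}$, and accordingly claim that the internal sum $\sum_{\sigma_\beta}G_\beta^{(\sigma_\beta)}$ for odd $\beta$ collapses to a single term. The condition defining $\bar{H}$ is $h(\beta,r)\le 1$ for each \emph{pair} $(\beta,r)$ with $p(\beta)=\bar{1}$; the degree $d_\beta=\sum_{r\in\BZ}h(\beta,r)$ of an odd root can be arbitrarily large, realized by products $e_\beta(r_1)\cdots e_\beta(r_k)$ with distinct $r_1\prec_\beta\cdots\prec_\beta r_k$ (only the square $e_\beta(r)^2$ vanishes, not mixed products). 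Consequently $T_{\unl{k}}$ must contain degree vectors with $d_\beta>1$ for odd $\beta$, the group $\Sigma_{d_\beta}$ is nontrivial there, and the sum $\sum_{\sigma_\beta}\bar{G}_\beta^{(\sigma_\beta)}$ carries signs $(-1)^{\sigma_\beta}$ and is a genuine skew-symmetrization.

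What this omission costs you is precisely the new rank-one ingredient of the paper's proof: for $m\in[\beta]$ the block $\sum_{\sigma_\beta}\bar{G}_\beta^{(\sigma_\beta)}$ is identified (up to a nonzero constant) with the shuffle product $x^{r_\beta(h,1)}\star\cdots\star x^{r_\beta(h,d_\beta)}$ computed in $S^{(1|1)}$, and one needs the analogue of Lemma~\ref{n=1 case} for $U^>_\vv(L\ssl(1|1))$, namely that $S^{(1|1)}\simeq\bigoplus_k\Lambda_k$ (skew-symmetric Laurent polynomials) with the skew-symmetrization product, so that these products for $r_1\prec\cdots\prec r_k$ form a basis. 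This is not a triviality and cannot be replaced by ``$|\Sigma_1|=1$.'' Relatedly, the subtlety you correctly flag at the even/odd boundary is concentrated exactly in the case $\beta=\beta'$ with $m\in[\beta]$ and $d_\beta\ge 2$ (where the step-by-step count of vanishing orders at $y_{\beta,s}=\vv^{\pm 2}y_{\beta,s'}$ must be taken as a maximum over two specialization orders), and in the extra vanishing $y_{\beta,s}=y_{\beta',s'}$ for $m\in[\beta]\cap[\beta']$ coming from skew-symmetry --- both of which your restriction would erase rather than resolve. Once you drop the restriction $d_\beta\le 1$, state and prove the $\ssl(1|1)$ rank-one lemma, and redo the vanishing-order bookkeeping for odd intervals, the argument does go through and simultaneously yields Theorem~\ref{hard shuffle superLie}, as you indicate.
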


The proof of Theorem~\ref{Main Theorem 4} is presented in
Section~\ref{ssec proof of Theorem 4} and is based on
the shuffle approach.

\begin{Rem}\label{zhang's choice}
We note that the PBWD basis elements introduced in~\cite[(3.12)]{z1} are
\begin{equation}\label{simplest choice super-Lie}
  e_{\alpha_j+\alpha_{j+1}+\ldots+\alpha_i}(r):=
  [\cdots[[e_{j,r},e_{j+1,0}]_{\vv_{j+1}},e_{j+2,0}]_{\vv_{j+2}},\cdots,e_{i,0}]_{\vv_i}.
\end{equation}
In this particular case, Theorem~\ref{Main Theorem 4} recovers
the conjecture of~\cite[Remark 3.13(2)]{z1}.
\end{Rem}

\begin{Rem}
The entire quantum loop superalgebra $U_\vv(L\ssl(m|n))$ admits a triangular
decomposition as in Proposition~\ref{Triangular decomposition},
see~\cite[Theorem 3.3]{z1}. Hence, an analogue of
Theorem~\ref{Main Theorem 1 entire algebra} holds for $U_\vv(L\ssl(m|n))$ as well,
thus providing a family of PBWD $\BC(\vv)$-bases for $U_\vv(L\ssl(m|n))$.
\end{Rem}


\subsection{Integral form $\fU^>_\vv(L\ssl(m|n))$ and its PBWD bases}
\label{ssec formulation main thm 4.1}
\

Following~(\ref{integral basis PBW}), for any $(\beta,r)\in \Delta^+\times \BZ$, 
we define $\wt{e}_\beta(r)\in U^>_\vv(L\ssl(m|n))$ via 
  $$\wt{e}_\beta(r):=(\vv-\vv^{-1})e_\beta(r).$$ 
We also define $\wt{e}_h$ via~(\ref{ordered supercase}) but using $\wt{e}_\beta(r)$ instead of $e_\beta(r)$.
Finally, let $\fU^>_\vv(L\ssl(m|n))$ denote the $\BC[\vv,\vv^{-1}]$-subalgebra
of $U^>_\vv(L\ssl(m|n))$ generated by $\{\wt{e}_\beta(r)\}_{\beta\in \Delta^+}^{r\in \BZ}$.

The following counterpart of Theorem~\ref{Main Theorem 2} provides
a much stronger version of Theorem~\ref{Main Theorem 4}:

\begin{Thm}\label{Main Theorem 4.1}
(a) The subalgebra $\fU^>_\vv(L\ssl(m|n))$ is independent of all our choices.

\noindent
(b) The elements $\{\wt{e}_h\}_{h\in \bar{H}}$ form a basis of the free
$\BC[\vv,\vv^{-1}]$-module $\fU^{>}_\vv(L\ssl(m|n))$.
\end{Thm}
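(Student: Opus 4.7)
The plan is to mirror precisely the strategy of Section~\ref{ssec proof of Theorem 2} used to deduce Theorem~\ref{Main Theorem 2} from Theorem~\ref{Main Theorem 1}. The key tool will be the shuffle realization $\Psi\colon U_\vv^{>}(L\ssl(m|n))\iso S^{(m|n)}$ of Theorem~\ref{hard shuffle superLie}, proved in parallel with Theorem~\ref{Main Theorem 4}. I will define an integral form $\fS^{(m|n)}\subset S^{(m|n)}$ by the direct analogue of Definition~\ref{integral element}, demanding a factor of $(\vv-\vv^{-1})^{|\unl{k}|}$ in the numerator and appropriate $\vv$-factorial divisibility of the cross-specializations $\Upsilon_{\unl{d},\unl{t}}$, with the understanding that the variables $y_{\beta,s}$ indexed by fermionic roots $\beta$ (with $p(\beta)=\bar{1}$) are themselves skew-symmetric. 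The central intermediate claim is that $\Psi$ restricts to a $\BC[\vv,\vv^{-1}]$-algebra isomorphism $\fU^>_\vv(L\ssl(m|n))\iso \fS^{(m|n)}$; once that is established, both parts of the theorem follow formally.

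This shuffle-integral identification breaks into two statements, in direct parallel with Section~\ref{ssec proof general case integral}. Necessity---that $\Psi(\wt{e}_{\beta_1}(r_1)\cdots\wt{e}_{\beta_l}(r_l))\in\fS^{(m|n)}$ for any ordered product of PBWD generators---is verified by extending the step-by-step $\zeta$-factor analysis of Lemma~\ref{necessity for n>2}: for each $\beta\in \Delta^+$ and $1\leq i\leq l_\beta$, the surviving contributions are either zero (killed by a $\zeta_{l,l+1}(\vv)=0$-type factor) or carry a factor of $[t_{\beta,i}]_\vv!$ picking up the required $\vv$-factorial. Spanning---that every $F\in \fS^{(m|n)}_{\unl{k}}$ is a $\BC[\vv,\vv^{-1}]$-combination of $\Psi(\wt{e}_h)$---proceeds by descending lexicographic induction on degree vectors $\unl{d}\in T_{\unl{k}}$ exactly as in Lemma~\ref{spanning integral}, with base cases provided by the rank-one situations $\ssl_2$ (already handled by Lemma~\ref{integral n=1 case}) and its fermionic cousin $\ssl(1|1)$, the latter being essentially trivial due to skew-symmetry. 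Granted this identification, part~(a) is automatic: $\fS^{(m|n)}$ is defined intrinsically, with no reference to the decompositions~(1)--(3), so $\fU^>_\vv(L\ssl(m|n))=\Psi^{-1}(\fS^{(m|n)})$ is choice-independent. Part~(b) follows by combining the spanning statement with the linear independence of $\{\wt{e}_h\}_{h\in \bar{H}}$ over $\BC[\vv,\vv^{-1}]$ inherited from Theorem~\ref{Main Theorem 4}.

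The principal obstacle is the consistent bookkeeping of fermionic skew-symmetry throughout the combinatorial analysis. In particular, the ring targeted by the reduced specialization $\varphi_{\unl{d}}$ must be modified to a tensor product of symmetric polynomial rings in $\{y_{\beta,s}\}$ for bosonic $\beta$ with a Grassmann-algebra-like object for fermionic $\beta$, and the factor $B$ of~(\ref{factor B}) must be rewritten accordingly (with certain vanishing orders dropping to zero on the fermionic diagonal). The cap $h(\beta,r)\leq 1$ for fermionic $\beta$ built into $\bar{H}$ matches exactly with the fact that $\varpi_{\unl{t}}$ vanishes automatically once $t_{\beta,i}\geq 2$ for a fermionic $\beta$, so the fermionic degrees receive no $\vv$-factorial correction and no separate nontrivial rank-one base case is needed. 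Finally, the case analysis in the proof of Lemma~\ref{necessity for n>2} comparing the relative positions of the intervals $[\beta]$ and $[\beta']$ must be refined according to whether the distinguished index $m$ lies in neither, one, or both intervals, but no genuinely new cancellation phenomena arise beyond tracking an occasional sign.
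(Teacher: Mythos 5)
Your overall strategy is exactly the one the paper intends: its own ``proof'' of Theorem~\ref{Main Theorem 4.1} is a one-line deferral saying it follows from Theorem~\ref{Main Theorem 4} in the same way that Theorem~\ref{Main Theorem 2} was deduced from Theorem~\ref{Main Theorem 1}, i.e.\ by introducing a super analogue $\fS^{(m|n)}$ of Definition~\ref{integral element}, proving $\Psi(\fU^>_\vv(L\ssl(m|n)))=\fS^{(m|n)}$ via the two directions (I) and (II) of Section~\ref{ssec proof of Theorem 2}, and reading off parts (a) and (b). Your reduction of the spanning step to the rank-one cases $\ssl_2$ and $\ssl(1|1)$, and your conclusion that the fermionic side requires no $\vv$-factorial corrections, match the intended adaptation.

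There is, however, one concrete misstatement that would break the necessity direction if your definition of $\fS^{(m|n)}$ is taken literally. You justify dropping the $\vv$-factorial condition for fermionic roots by asserting that $\Upsilon_{\unl{d},\unl{t}}$ ``vanishes automatically once $t_{\beta,i}\geq 2$ for a fermionic $\beta$''. This is false: skew-symmetry in $\{y_{\beta,s}\}$ kills a specialization only at coinciding values, whereas $\varpi_{\unl{t}}$ sends these variables to the pairwise distinct points $\vv^{-2}z_{\beta,i},\ldots,\vv^{-2t_{\beta,i}}z_{\beta,i}$. For instance, in $S^{(1|1)}$ one has $\Psi(\wt{e}_{r_1}\wt{e}_{r_2})=\tfrac{(\vv-\vv^{-1})^2}{2}(x_1^{r_1}x_2^{r_2}-x_1^{r_2}x_2^{r_1})$, whose value at $(x_1,x_2)=(\vv^{-2}z,\vv^{-4}z)$ is a nonzero multiple of $1-\vv^{2(r_2-r_1)}$; this is neither zero nor divisible by $[2]_\vv=\vv+\vv^{-1}$ when $r_2-r_1$ is odd. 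So if the divisibility by $\prod[t_{\beta,i}]_\vv!$ is required for \emph{all} collections $\unl{t}$, the images $\Psi(\wt{e}_h)$ fail to lie in your $\fS^{(m|n)}$ and statement (I) collapses. The fix is to impose the $[t_{\beta,i}]_\vv!$-divisibility only for bosonic $\beta$ (equivalently, to restrict to $\unl{t}$ with $t_{\beta,i}=1$ whenever $p(\beta)=\bar{1}$): the true reason no factorial appears on the fermionic side is not vanishing but the fact that $\zeta_{m,m}(z)=1$, so the fermionic rank-one shuffle product produces plain determinants $\det(x_i^{r_j})$, which already form a $\BC[\vv,\vv^{-1}]$-basis of the skew-symmetric Laurent polynomials; hence the fermionic rank-one integral form is simply $(\vv-\vv^{-1})^k$ times all such polynomials, with no further conditions. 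With this correction the remainder of your argument goes through as the paper intends.
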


The proof of Theorem~\ref{Main Theorem 4.1} follows easily from the one
of Theorem~\ref{Main Theorem 4} presented below in the same way as we
deduced the proof of Theorem~\ref{Main Theorem 2} in
Section~\ref{ssec proof of Theorem 2} from that of Theorem~\ref{Main Theorem 1}.

\begin{Rem}\label{zhang's RTT interpretation}
We note that it is often more convenient to work with the quantum loop superalgebra 
$U_\vv(L\gl(m|n))$, cf.~Remark~\ref{ft's RTT interpretation}(a). Its integral form
$\fU_\vv(L\gl(m|n))$ is defined analogously to $\fU_{\vv}(L\gl_n)$.
Following the arguments of~\cite[Proposition 3.11]{ft2}, $\fU_\vv(L\gl(m|n))$
is identified with the RTT integral form
$\fU^\mathrm{rtt}_\vv(L\gl(m|n))$,~\cite[Definition 3.1]{z3},
under the $\BC(\vv)$-algebra isomorphism
  $U_\vv(L\gl(m|n))\simeq
   \fU^\mathrm{rtt}_\vv(L\gl(m|n))\otimes_{\BC[\vv,\vv^{-1}]} \BC(\vv)$,
cf.~Remark~\ref{ft's RTT interpretation}(b).
Hence, the analogue of~\cite[Theorem 3.24]{ft2} provides a family of PBWD bases for
$\fU_\vv(L\gl(m|n))$, cf.~Theorems~\ref{Full PBWD for integral-special choice},~\ref{Full PBWD for integral}.
\end{Rem}


\subsection{Shuffle algebra $S^{(m|n)}$}\label{ssec trigonometric super shuffle}
\

Consider an $\BN^I$-graded $\BC(\vv)$-vector space
  $\BS^{(m|n)}=\underset{\underline{k}\in \BN^{I}}\bigoplus\BS^{(m|n)}_{\underline{k}}$,
where $\BS^{(m|n)}_{(k_1,\ldots,k_{m+n-1})}$ consists of rational
functions in the variables $\{x_{i,r}\}_{i\in I}^{1\leq r\leq k_i}$
which are \textbf{supersymmetric}, that is
\begin{enumerate}
 \item[1)]
 symmetric in $\{x_{i,r}\}_{r=1}^{k_i}$ for every $i\ne m$;

 \item[2)]
 skew-symmetric in $\{x_{m,r}\}_{r=1}^{k_m}$.
\end{enumerate}
We also fix an $I\times I$ matrix of rational functions
$(\zeta_{i,j}(z))_{i,j\in I} \in \mathrm{Mat}_{I\times I}(\BC(\vv)(z))$ via
\begin{equation}
  \zeta_{i,j}(z)=
  \frac{z-\vv^{-\bar{c}_{ij}}}{z-1}.
\end{equation}
This allows us to endow $\BS^{(m|n)}$ with a structure of an associative unital
algebra with the shuffle product defined via~(\ref{shuffle product}),
but a supersymmetrization $\SSym$ in place of the symmetrization $\Sym$.
Here, the \emph{supersymmetrization} of $f\in \BC(\{x_{i,1},\ldots,x_{i,s_i}\}_{i\in I})$
is defined via
\begin{equation*}
  \SSym_{\Sigma_{\unl{s}}}(f)(\{x_{i,1},\ldots,x_{i,s_i}\}_{i\in I})\ :=
  \sum_{(\sigma_1,\ldots,\sigma_{m+n-1})\in \Sigma_{\unl{s}}}
  \mathrm{sgn}(\sigma_m) f\left(\{x_{i,\sigma_i(1)},\ldots,x_{i,\sigma_i(s_i)}\}_{i\in I}\right).
\end{equation*}

As before, we will be interested only in the subspace of
$\BS^{(m|n)}$ defined by the \emph{pole} and \emph{wheel conditions}
(but now there are two kinds of the latter one):

\begin{itemize}

\item[$\bullet$] 
We say that $F\in \BS^{(m|n)}_{\underline{k}}$ satisfies the
\emph{pole conditions} if
\begin{equation}\label{pole conditions supercase}
  F=
  \frac{f(x_{1,1},\ldots,x_{m+n-1,k_{m+n-1}})}
  {\prod_{i=1}^{m+n-2}\prod_{r\leq k_i}^{r'\leq k_{i+1}}(x_{i,r}-x_{i+1,r'})},
\end{equation}
where $f\in \BC(\vv)[\{x_{i,r}^{\pm 1}\}_{i\in I}^{1\leq r\leq k_i}]$ is a
supersymmetric Laurent polynomial, that is, symmetric in $\{x_{i,r}\}_{r=1}^{k_i}$ for
every $i\ne m$ and skew-symmetric in $\{x_{m,r}\}_{r=1}^{k_m}$.

\item[$\bullet$] 
We say that $F\in \BS^{(m|n)}_{\underline{k}}$ satisfies the
\emph{first kind wheel conditions} if
\begin{equation}\label{wheel conditions 1 supercase}
  F(\{x_{i,r}\})=0\quad \mathrm{once}\quad
  x_{i,r_1}=\vv_i x_{i+\epsilon,s}=\vv_i^2x_{i,r_2}
\end{equation}
for some
  $\epsilon\in \{\pm 1\},\, i\in I\backslash\{m\},\, 
   1\leq r_1\ne r_2\leq k_i,\, 1\leq s\leq k_{i+\epsilon}$.

\item[$\bullet$] 
We say that $F\in \BS^{(m|n)}_{\underline{k}}$ satisfies the
\emph{second kind wheel conditions} if
\begin{equation}\label{wheel conditions 2 supercase}
  F(\{x_{i,r}\})=0\quad \mathrm{once}\quad x_{m-1,s}=\vv x_{m,r_1}=x_{m+1,s'}=\vv^{-1} x_{m,r_2}
\end{equation}
for some 
  $1\leq r_1\ne r_2\leq k_m,\, 1\leq s\leq k_{m-1},\, 1\leq s'\leq k_{m+1}$.

\end{itemize}

\noindent
Let $S^{(m|n)}_{\underline{k}}\subset \BS^{(m|n)}_{\underline{k}}$ denote
the subspace of all elements $F$ satisfying these three conditions. We define
  $$S^{(m|n)}:=\underset{\underline{k}\in \BN^{I}}\bigoplus S^{(m|n)}_{\underline{k}}$$
It is straightforward to check that $S^{(m|n)}\subset\BS^{(m|n)}$ is $\star$-closed.
Similar to Proposition~\ref{simple shuffle}, the \textbf{shuffle algebra}
$\left(S^{(m|n)},\star\right)$ is related to $U^>_\vv(L\ssl(m|n))$ via:

\begin{Prop}\label{simple shuffle superLie}
The assignment $e_{i,r}\mapsto x_{i,1}^r\ (i\in I, r\in \BZ)$
gives rise to an injective $\BC(\vv)$-algebra homomorphism
$\Psi\colon U_\vv^{>}(L\ssl(m|n))\to S^{(m|n)}$.
\end{Prop}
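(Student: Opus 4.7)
The plan is to follow the two-step strategy of Proposition~\ref{simple shuffle}: first verify that the assignment $e_{i,r}\mapsto x_{i,1}^r$ descends to a well-defined algebra map $U_\vv^{>}(L\ssl(m|n))\to S^{(m|n)}$, then establish injectivity via a non-degenerate Hopf super-pairing. For well-definedness, I would check compatibility with each of the defining relations (\ref{SuperLie 1})--(\ref{SuperLie 3}). The choice of structure functions $\zeta_{i,j}(z)$ together with the (anti-)symmetrization in the shuffle product (\ref{shuffle product}) have been designed precisely so that the quadratic relations (\ref{SuperLie 1}) and the commutation in (\ref{SuperLie 2}) (case $\bar{c}_{ij}=0$) drop out after elementary manipulation. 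The cubic Serre relation in (\ref{SuperLie 2}) (when $i\ne m$) reduces to the same three-variable polynomial identity as in the non-super setting, implicitly handled in the proof of Proposition~\ref{simple shuffle}. The quartic super-Serre relation (\ref{SuperLie 3}) requires a direct but somewhat elaborate verification in four variables with an antisymmetrization over the two $m$-colored arguments; no new conceptual input is needed, only careful computation.

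That the image lies in $S^{(m|n)}$ reduces to checking the pole and wheel conditions on the subalgebra generated by $\{x_{i,1}^r\}_{i\in I}^{r\in\BZ}$. The pole conditions (\ref{pole conditions supercase}) are immediate from the form (\ref{shuffle product}) of the shuffle product, since poles of a shuffle product of Laurent polynomials can arise only from the denominator factors of $\zeta_{i,i\pm 1}$. The first- and second-kind wheel conditions (\ref{wheel conditions 1 supercase})--(\ref{wheel conditions 2 supercase}) are the standard shuffle-algebra reformulations of the Serre and super-Serre relations, and hold automatically on shuffle products of degree-one elements, following the framework of~\cite{e} and~\cite{fo1}--\cite{fo3}; in particular, the second-kind wheel condition is the dual counterpart of the quartic relation (\ref{SuperLie 3}) and requires no separate verification once (\ref{SuperLie 3}) has been checked.

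For injectivity I would adapt the argument of \cite[Lemma 2.20, Proposition 2.30, Proposition 3.8]{n}: present $U^{>}_\vv(L\ssl(m|n))$ inside the Drinfeld double of~\cite{y} to obtain a non-degenerate Hopf super-pairing $(\cdot,\cdot)_U$ on the source, and construct a pairing $(\cdot,\cdot)_S$ on $S^{(m|n)}$ via an explicit constant-term / torus-integral formula built from the $\zeta$-matrix. One checks on the generators $\{e_{i,r}\}$ that $\Psi$ intertwines these pairings, and this then extends to all of $U_\vv^{>}(L\ssl(m|n))$ by compatibility with the (co)product. Non-degeneracy of $(\cdot,\cdot)_U$ then forces $\ker\Psi=0$.

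The main obstacle will be the consistent sign bookkeeping inherent to the super setting: the antisymmetrization in the $m$-colored variables on the shuffle side has to match the Koszul signs arising in the super-Hopf pairing and in the coproduct formulas on $U^{>}_\vv(L\ssl(m|n))$. Aside from that, and from the direct computational check of the quartic relation (\ref{SuperLie 3}) and of its dual avatar (\ref{wheel conditions 2 supercase}), the argument is a faithful super-analogue of the $U^{>}_\vv(L\ssl_n)$ proof.
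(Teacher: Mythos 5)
Your proposal matches the paper's approach: the paper proves the super case exactly as it proves Proposition~\ref{simple shuffle}, namely by checking compatibility of $e_{i,r}\mapsto x_{i,1}^r$ with the defining relations~(\ref{SuperLie 1})--(\ref{SuperLie 3}) (together with the pole and wheel conditions on the image) and then deducing injectivity from a non-degenerate pairing on the source compatible with one on the target, following \cite[Lemma 2.20, Proposition 2.30, Proposition 3.8]{n}. Your additional remarks on the quartic relation, the second-kind wheel condition, and the Koszul sign bookkeeping are exactly the points where the super case requires extra care, and they do not change the structure of the argument.
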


Our proof of Theorem~\ref{Main Theorem 4} below implies the counterpart
of Theorem~\ref{hard shuffle}, see Remark~\ref{proof of shuffle super-Lie}:

\begin{Thm}\label{hard shuffle superLie}
$\Psi\colon U_\vv^{>}(L\ssl(m|n))\iso S^{(m|n)}$ of Proposition~\ref{simple shuffle superLie} 
is an algebra isomorphism.
\end{Thm}


\subsection{Proof of Theorem~\ref{Main Theorem 4}}\label{ssec proof of Theorem 4}
\

The proof of Theorem~\ref{Main Theorem 4} is similar to our proof of
Theorem~\ref{Main Theorem 1}(a) and is based on the embedding
$\Psi\colon U_\vv^{>}(L\ssl(m|n))\hookrightarrow S^{(m|n)}$ of
Proposition~\ref{simple shuffle superLie}. Thus, we will only
outline the proof, highlighting the key changes.

\medskip
We start by establishing Theorem~\ref{Main Theorem 4} in
the simplest case $m=n=1$:

\begin{Lem}\label{m=n=1 case}
For any total order $\preceq$ on $\BZ$, the ordered monomials
$\{e_{r_1}e_{r_2}\cdots e_{r_k}\}_{k\in \BN}^{r_1\prec\cdots\prec r_k}$
form a $\BC(\vv)$-basis of $U^{>}_\vv(L\ssl(1|1))$.
\end{Lem}

\begin{proof}
This follows from the $\BC(\vv)$-algebra isomorphism
$S^{(1|1)}\simeq \bigoplus_k\Lambda_k$, where $\Lambda_k$ denotes
the vector space of skew-symmetric Laurent polynomials in $k$ variables,
while the algebra structure on the direct sum arises via the standard
skew-symmetrization maps $\Lambda_k\otimes \Lambda_{\ell}\to \Lambda_{k+\ell}$.
\end{proof}

Let us now treat the general case of $m, n$.
Given a degree vector $\unl{d}=\{d_\beta\}_{\beta\in \Delta^+}\in \BN^{\Delta^+}$,
define $\unl{\ell}\in \BN^I$ via
$\sum_{\beta\in \Delta^+} d_\beta \beta=\sum_{i\in I} \ell_i\alpha_i$.
The \textbf{specialization map}
\begin{equation*}
  \phi_{\unl{d}}\colon S^{(m|n)}_{\unl{\ell}}\longrightarrow
  \BC(\vv)[\{y_{\beta,s}^{\pm 1}\}_{\beta\in \Delta^+}^{1\leq s\leq d_\beta}]
\end{equation*}
is defined similar to~(\ref{specialization map}), but with the only change
that the variable $x_{i,r}$ in the $s$-th copy of the interval $[\beta]$
is specialized to $\vv^{-i}y_{\beta,s}$ if $i\leq m$ and to $\vv^{i-2m}y_{\beta,s}$ if $i>m$.
We note that $\phi_{\unl{d}}(F)$ is a supersymmetric Laurent polynomial, that is,
symmetric in $\{y_{\beta,s}\}_{s=1}^{d_\beta}$ if $|\beta|=\bar{0}$
and skew-symmetric in $\{y_{\beta,s}\}_{s=1}^{d_\beta}$ if $|\beta|=\bar{1}$.

Thus defined specialization maps $\phi_{\unl{d}}$ still satisfy
Lemmas~\ref{lower degrees} and~\ref{same degrees} (with $\bar{H}$ used
in place of $H$ in the formulation of the latter), hence,
the linear independence of $\{e_h\}_{h\in \bar{H}}$ as follows from
the argument presented  right after our proof of Lemma~\ref{same degrees}.

Furthermore, for any $h\in \bar{H}$ with $\deg(h)=\unl{d}$, we have the following
generalization of the formulas~(\ref{explicit factors},~\ref{explicit formula for same degrees})
from our proof of Lemma~\ref{same degrees}:
\begin{equation}\label{specialization 2-param}
  \phi_{\unl{d}}(\Psi(e_h))\ =
  c\ \cdot \prod_{\beta,\beta'\in \Delta^+}^{\beta<\beta'} \bar{G}_{\beta,\beta'}\ \cdot
  \prod_{\beta\in \Delta^+}\bar{G}_\beta\cdot
  \prod_{\beta\in \Delta^+} \left(\sum_{\sigma_\beta\in \Sigma_{d_{\beta}}} \bar{G}_\beta^{(\sigma_\beta)}\right)
  \ \mathrm{with}\ c\in \BC^\times\cdot \vv^\BZ
\end{equation}
where
\begin{equation}\label{explicit factors superLie}
\begin{split}
   & \bar{G}_{\beta,\beta'}\ =
     \prod_{1\leq s\leq d_\beta}^{1\leq s'\leq d_{\beta'}}
     (y_{\beta,s}-\vv^{-2}y_{\beta',s'})^{\nu^-(\beta,\beta')}\cdot
     (y_{\beta,s}-\vv^2y_{\beta',s'})^{\nu^+(\beta,\beta')}\times\\
   & \ \ \ \ \ \ \ \ \ \ \prod_{1\leq s\leq d_\beta}^{1\leq s'\leq d_{\beta'}}
    (y_{\beta,s}-y_{\beta',s'})^{\delta_{j(\beta')>j(\beta)}\delta_{i(\beta)+1\in [\beta']}+\delta_{m\in[\beta]}\delta_{m\in [\beta']}},\\
   & \bar{G}_\beta=(1-\vv^2)^{d_\beta(i(\beta)-j(\beta))}\ \cdot
     \prod_{1\leq s\ne s'\leq d_{\beta}} (y_{\beta,s}-\vv^2 y_{\beta,s'})^{i(\beta)-j(\beta)}\cdot
     \prod_{1\leq s\leq d_\beta} y_{\beta,s}^{i(\beta)-j(\beta)},\\
   & \bar{G}_\beta^{(\sigma_\beta)}=
     \prod_{s=1}^{d_\beta} y_{\beta,\sigma_\beta(s)}^{r_\beta(h,s)}\cdot
     \begin{cases}
       \prod_{s<s'} \frac{y_{\beta,\sigma_\beta(s)}-\vv^{-2}y_{\beta,\sigma_\beta(s')}}
                    {y_{\beta,\sigma_\beta(s)}-y_{\beta,\sigma_\beta(s')}}, & \text{if } m>i(\beta)\\
       \prod_{s<s'} \frac{y_{\beta,\sigma_\beta(s)}-\vv^2y_{\beta,\sigma_\beta(s')}}
                    {y_{\beta,\sigma_\beta(s)}-y_{\beta,\sigma_\beta(s')}}, & \text{if } m<j(\beta)\\
       \mathrm{sgn}(\sigma_\beta), & \text{if } m\in [\beta]
     \end{cases}.
\end{split}
\end{equation}
Here, the collection $\{r_\beta(h,1),\ldots,r_\beta(h,d_\beta)\}$ is defined
exactly as after~(\ref{explicit factors})
(that is, listing every $r\in \BZ$ with multiplicity $h(\beta,r)>0$ with respect
to the total order $\preceq_\beta$ on $\BZ$), while the powers
$\nu^\pm(\beta,\beta')$ are given by the following explicit formulas:
\begin{equation}\label{power 1}
\begin{split}
  & \nu^-(\beta,\beta')=\#\{(j,j')\in [\beta]\times[\beta']|j=j'<m\} \ + \\
  & \qquad \qquad \quad \, \#\{(j,j')\in [\beta]\times [\beta']|j=j'+1>m\}
\end{split}
\end{equation}
and
\begin{equation}\label{power 2}
\begin{split}
  & \nu^+(\beta,\beta')=\#\{(j,j')\in [\beta]\times [\beta']|j=j'>m\} \ + \\
  & \qquad \qquad \quad \, \#\{(j,j')\in [\beta]\times [\beta']|j=j'+1\leq m\}.
\end{split}
\end{equation}

For any $\beta\in \Delta^+$, we note that the sum
  $\sum_{\sigma_\beta\in \Sigma_{d_{\beta}}} \bar{G}_\beta^{(\sigma_\beta)}$
coincides (up to a factor of $\BC^\times$) with the value of the shuffle element
  $x^{r_\beta(h,1)}\star\cdots \star x^{r_\beta(h,d_\beta)}$,
viewed as an element of

\begin{enumerate}
 \item[1)] the shuffle algebra $S^{(2|0)}$ if $m>i(\beta)$,

 \item[2)] the shuffle algebra $S^{(0|2)}$ if $m<j(\beta)$,

 \item[3)] the shuffle algebra $S^{(1|1)}$ if $m\in [\beta]$,
\end{enumerate}
evaluated at $\{y_{\beta,s}\}_{s=1}^{d_\beta}$.
The latter elements are linearly independent, due to
Lemmas~\ref{n=1 case},~\ref{m=n=1 case}.

\begin{Rem}
The above reduction to the rank $1$ cases (that is, $m+n=2$)
together with Lemma~\ref{m=n=1 case} explains why $H$ had to be
replaced by $\bar{H}$ in the current setting.
\end{Rem}

The fact that $\{e_h\}_{h\in \bar{H}}$ span $U^>_\vv(L\ssl(m|n))$ follows from
the validity of Lemma~\ref{spanning} in the current setting. Let us now prove
the latter using the same ideas and notations as before.

First, we note that the wheel
conditions~(\ref{wheel conditions 1 supercase},~\ref{wheel conditions 2 supercase})
for $F$ guarantee that $\phi_{\unl{d}}(F)$
(which is a Laurent polynomial in $\{y_{\beta,s}\}$)
vanishes up to appropriate orders under the following specializations:
\begin{enumerate}
\item[(i)] $y_{\beta,s}=\vv^{-2}y_{\beta',s'}$ for $(\beta,s)<(\beta',s')$,

\item[(ii)] $y_{\beta,s}=\vv^{2}y_{\beta',s'}$ for $(\beta,s)<(\beta',s')$.
\end{enumerate}
A straightforward case-by-case verification shows that these
orders of vanishing exactly equal the corresponding powers of
$y_{\beta,s}-\vv^{-2}y_{\beta',s'}$ and $y_{\beta,s}-\vv^{2}y_{\beta',s'}$
appearing in $\bar{G}_{\beta,\beta'}$ (if $\beta<\beta'$) or $\bar{G}_\beta$
(if $\beta=\beta'$) of~(\ref{explicit factors superLie}).
In the former case, those are explicitly given by~(\ref{power 1},~\ref{power 2}).

\begin{Rem}
We should
point out right away that the computation of the corresponding orders requires
an extra argument in the case when $\beta=\beta'$ and $m\in [\beta]$.
 Recall that the way we counted these orders in the proof of Lemma~\ref{spanning}
was by realizing the specialization $\phi_{\unl{d}}$ as a step-by-step
specialization in each interval in the specified order. A priori, we can choose
another order of the intervals or even another way to perform this specialization.
Let us now illustrate how our argument should be modified in the particular case
$\beta=\beta', m\in [\beta]$.
 Note that if we first specialize the variables in the interval $[\beta]$ to the corresponding
$\vv$-multiples of $y_{\beta,s}$, then the wheel conditions contribute $i(\beta)-j(\beta)$
to the order of vanishing at $y_{\beta,s}=\vv^2y_{\beta,s'}$ and $i(\beta)-j(\beta)-1$
to the order of vanishing at $y_{\beta,s}=\vv^{-2}y_{\beta,s'}$. If instead we first
specialize the variables in the interval $[\beta]$ to the corresponding $\vv$-multiples
of $y_{\beta,s'}$, then the wheel conditions contribute $i(\beta)-j(\beta)-1$ to the order
of vanishing at $y_{\beta,s}=\vv^2y_{\beta,s'}$ and $i(\beta)-j(\beta)$ to the order of
vanishing at $y_{\beta,s}=\vv^{-2}y_{\beta,s'}$. Thus, none of these two specializations
provides the desired orders of vanishing simultaneously for $y_{\beta,s}=\vv^2 y_{\beta,s'}$
and $y_{\beta,s}=\vv^{-2}y_{\beta,s'}$. However, picking the maximal of the orders separately
for $y_{\beta,s}=\vv^2 y_{\beta,s'}$ and $y_{\beta,s}=\vv^{-2}y_{\beta,s'}$, we recover
$i(\beta)-j(\beta)$ for both of them, so that they equal the corresponding powers of
$y_{\beta,s}-\vv^{2}y_{\beta,s'}$ and $ y_{\beta,s}-\vv^{-2}y_{\beta,s'}$
appearing in $\bar{G}_\beta$ of~(\ref{explicit factors superLie}).
\end{Rem}

Second, we claim that $\phi_{\unl{d}}(F)$ vanishes under the following specializations:
\begin{enumerate}
\item[(iii)] $y_{\beta,s}=y_{\beta',s'}$ for $(\beta,s)<(\beta',s')$
such that $j(\beta)<j(\beta')$ and $i(\beta)+1\in [\beta']$.
\end{enumerate}
Indeed, if $j(\beta)<j(\beta')$ and $i(\beta)+1\in [\beta']$, there are positive
roots $\gamma,\gamma'\in \Delta^+$ such that
  $j(\gamma)=j(\beta), i(\gamma)=i(\beta'),
   j(\gamma')=j(\beta'), i(\gamma')=i(\beta)$.
Consider the degree vector $\unl{d}'\in T_{\unl{k}}$ given by
  $d'_{\alpha}=
   d_{\alpha}+\delta_{\alpha,\gamma}+\delta_{\alpha,\gamma'}-\delta_{\alpha,\beta}-\delta_{\alpha,\beta'}$.
Then, $\unl{d}'>\unl{d}$ and thus $\phi_{\unl{d}'}(F)=0$. The result follows.

Finally, we also note that the skew-symmetry of the elements of
$S^{(m|n)}$ with respect to the variables $\{x_{m,\ast}\}$ implies that
$\phi_{\unl{d}}(F)$ vanishes under the following specializations:
\begin{enumerate}
\item[(iv)] $y_{\beta,s}=y_{\beta',s'}$ for all $\beta<\beta'$ (and any $s,s'$) such
that $[\beta]\ni m\in [\beta']$.
\end{enumerate}

\medskip

Combining the above vanishing conditions for $\phi_{\unl{d}}(F)$,
we see that it is divisible exactly by the product
$\prod_{\beta<\beta'} \bar{G}_{\beta,\beta'}\cdot \prod_{\beta} \bar{G}_\beta$
of~(\ref{explicit factors superLie}). Therefore, we have
\begin{equation*}
  \phi_{\unl{d}}(F)\ =
  \prod_{\beta,\beta'\in \Delta^+}^{\beta<\beta'} \bar{G}_{\beta,\beta'}\ \cdot
  \prod_{\beta\in \Delta^+}\bar{G}_\beta\cdot \bar{G},
\end{equation*}
where $\bar{G}$ is a supersymmetric Laurent polynomial in $\{y_{\beta,s}\}_{\beta\in \Delta^+}^{1\leq s\leq d_\beta}$, that is:

\begin{itemize}
  
\item[(1)] 
$\bar{G}$ is symmetric in $\{y_{\beta,s}\}_{s=1}^{d_\beta}$ if $|\beta|=\bar{0}$,

\item[(2)]
$\bar{G}$ is skew-symmetric in $\{y_{\beta,s}\}_{s=1}^{d_\beta}$ if $|\beta|=\bar{1}$.

\end{itemize}

\noindent
Combining this observation with
formulas~(\ref{specialization 2-param},~\ref{explicit factors superLie})
and the discussion following them, we obtain a proof of Lemma~\ref{spanning}
in the current setting, due to Lemmas~\ref{n=1 case},~\ref{m=n=1 case}.

Following the arguments from the end of Section~\ref{ssec spanning prop},
we see that $\{\Psi(e_h)\}_{h\in \bar{H}}$ linearly span $S^{(m|n)}$.
Invoking the injectivity of $\Psi$ (Proposition~\ref{simple shuffle superLie}),
this implies that $\{e_h\}_{h\in \bar{H}}$ span $U^>_\vv(L\ssl(m|n))$.
This completes our proof of Theorem~\ref{Main Theorem 4}.

\begin{Rem}\label{proof of shuffle super-Lie}
The above argument also provides a proof of Theorem~\ref{hard shuffle superLie}.
\end{Rem}


\section{Generalizations to the Yangian $Y_\hbar(\ssl_n)$}
\label{sec yangian counterparts}

The PBWD bases for the Yangian $Y_\hbar(\fg)$ of any semisimple
Lie algebra $\fg$ have been constructed $25$ years ago
in~\cite{l}\footnote{See~\cite[Appendix B]{ft2} for a correction of
a gap in the proof of~\cite{l}.}. Note that while the Yangian deforms
the universal enveloping of the loop algebra, that is
$Y_\hbar(\fg)/(\hbar)\simeq U(\fg[t])$, there is a canonical construction
of the \emph{Drinfeld-Gavarini dual} (Hopf) subalgebra
$Y'_\hbar(\fg)\subset Y_\hbar(\fg)$ such that $Y'_\hbar(\fg)/(\hbar)$ is
a commutative $\BC$-algebra, see~\cite[Appendix A]{ft2} and the original
references~\cite{d2,g}. The PBWD bases for the Drinfeld-Gavarini dual
$Y'_\hbar(\fg)$ were constructed in~\cite[Theorem A.7]{ft2}, following~\cite{g}.

As just mentioned, the PBWD results
(cf.~Theorems~\ref{Main Theorem 1},~\ref{Main Theorem 1 entire algebra},~\ref{Main Theorem 2},~\ref{Full PBWD for integral})
are known both for $Y_\hbar(\fg)$ and $Y'_\hbar(\fg)$
for an arbitrary semisimple $\fg$. Thus, the key objective of this section
is to provide the shuffle realizations of $Y_\hbar(\ssl_n)$
and $Y'_\hbar(\ssl_n)$ similar to those of
Theorems~\ref{hard shuffle},~\ref{shuffle integral form}.
For the latter purpose, it suffices to consider only the subalgebras
$Y^>_\hbar(\ssl_n), {Y'_\hbar}^{>}(\ssl_n)\simeq \bY^>_\hbar(\ssl_n)$.


\subsection{Algebras $Y^>_\hbar(\ssl_n)$ and $\bY^>_\hbar(\ssl_n)$}\label{ssec half-yangian sl_n}
\

Let $I=\{1,\ldots,n-1\}$, $(c_{ij})_{i,j\in I}$ be the Cartan matrix
of $\ssl_n$, and $\hbar$ be a formal variable. Following~\cite{d}, define
the \emph{positive subalgebra} of the Yangian of $\ssl_n$, denoted by $Y^>_\hbar(\ssl_n)$, to be
the associative $\BC[\hbar]$-algebra generated by $\{e_{i,r}\}_{i\in I}^{r\in \BN}$
with the following defining relations:
\begin{equation}\label{Yan 1}
  [e_{i,r+1}, e_{j,s}]-[e_{i,r},e_{j,s+1}]=
  \frac{c_{ij}\hbar}{2}\left(e_{i,r}e_{j,s}+e_{j,s}e_{i,r}\right)
\end{equation}
as well as Serre relations:
\begin{equation}\label{Yan 7}
\begin{split}
  & [e_{i,r},e_{j,s}]=0\  \mathrm{if}\ c_{ij}=0,\\
  & [e_{i,r_1},[e_{i,r_2},e_{j,s}]]+
    [e_{i,r_2},[e_{i,r_1},e_{j,s}]]=0\ \mathrm{if}\ c_{ij}=-1.
\end{split}
\end{equation}

Let $\{\alpha_i\}_{i=1}^{n-1}$ and $\Delta^+$ be as in
Section~\ref{ssec formulation main thm 1}. For any
$(\beta,r)\in \Delta^+\times \BN$, we choose:
\begin{enumerate}
\item[1)]
a decomposition $\beta=\alpha_{i_1}+\ldots+\alpha_{i_p}$ such that
$[\cdots[e_{\alpha_{i_1}},e_{\alpha_{i_2}}],\cdots,e_{\alpha_{i_p}}]$
is a non-zero root vector $e_\beta$ of $\ssl_n$
(here, $e_{\alpha_i}$ denotes the standard Chevalley generator of $\ssl_n$);

\item[2)]
a decomposition $r=r_1+\ldots+r_p$ with $r_k\in \BN$.
\end{enumerate}
Then, we define the \emph{PBWD basis elements} $e_\beta(r)\in Y^>_\hbar(\ssl_n)$ via
\begin{equation}\label{higher roots yangian}
  e_\beta(r):=
  [\cdots[[e_{i_1,r_1},e_{i_2,r_2}],e_{i_3,r_3}],\cdots,e_{i_p,r_p}].
\end{equation}
Let $H^+$ denote the set of all functions $h\colon \Delta^+\times \BN\to \BN$
with finite support. The monomials
\begin{equation}\label{ordered yangian}
  e_h\ :=\prod\limits_{(\beta,r)\in \Delta^+\times \BN}^{\rightarrow} e_\beta(r)^{h(\beta,r)}\ ,
  \quad \forall\, h\in H^+
\end{equation}
will be called the \emph{ordered PBWD monomials} of $Y^>_\hbar(\ssl_n)$.
Here, the arrow over the product sign refers to the total order
on $\Delta^+\times \BN$ obtained as the restriction of the order~(\ref{extended order}).

The following is due to~\cite{l} (cf.~\cite[Theorem B.3]{ft2}):

\begin{Thm}[\cite{l}]\label{pbwd for yangian}
The elements $\{e_h\}_{h\in H^+}$ form a basis of the free
$\BC[\hbar]$-module $Y^>_\hbar(\ssl_n)$.
\end{Thm}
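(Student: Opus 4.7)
The plan is to follow a standard deformation-theoretic argument, paralleling Lusztig's approach in~\cite{l}. The starting observation is that the defining relations~(\ref{Yan 1}),(\ref{Yan 7}) specialize at $\hbar=0$ to those of $U(\ssl_n[t])$: relation~(\ref{Yan 1}) becomes $[e_{i,r+1},e_{j,s}]=[e_{i,r},e_{j,s+1}]$, forcing $[e_{i,r},e_{j,s}]\bmod\hbar$ to depend only on $r+s$, while~(\ref{Yan 7}) reduces to the classical Serre relations. This yields a surjective $\BC$-algebra homomorphism $\pi\colon U(\ssl_n[t])\twoheadrightarrow Y^>_\hbar(\ssl_n)/\hbar Y^>_\hbar(\ssl_n)$ sending $e_\beta\otimes t^r$ to the class of $e_\beta(r)$ (independence of the non-zero image on the choices in~(\ref{higher roots yangian}) is guaranteed by classical PBW for $\ssl_n[t]$).

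First I would establish the \emph{spanning property}: any product of generators $e_{i_1,r_1}\cdots e_{i_k,r_k}$ can be rewritten as a $\BC[\hbar]$-linear combination of \emph{ordered PBWD monomials} $e_h$. This is carried out by induction, using~(\ref{Yan 1}) to move generators past each other at the cost of lower-order error terms, and~(\ref{Yan 7}) together with the Jacobi identity to assemble the non-simple root vectors $e_\beta(r)$ out of commutators of Chevalley generators. The crucial point is to set up a suitable filtration on $Y^>_\hbar(\ssl_n)$ (for instance, by the $\BN^I$-weight, the total $r$-grading, and the lexicographic ordering on $\Delta^+\times\BN$ induced by~(\ref{order})) that strictly decreases under each rewriting step, guaranteeing termination.

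Next, for linear independence, I would exploit the classical limit. If $\sum_{h\in H^+} c_h(\hbar) e_h=0$ with $c_h\in\BC[\hbar]$ not all zero, dividing out by the largest common power of $\hbar$ dividing all $c_h$ reduces to the case where some $c_h(0)\ne 0$. Specializing at $\hbar=0$ produces a non-trivial relation $\sum_h c_h(0)\,\bar{e}_h=0$ in $Y^>_\hbar(\ssl_n)/\hbar Y^>_\hbar(\ssl_n)$. Since each $\bar{e}_h$ is the image under $\pi$ of an ordered PBW monomial in $U(\ssl_n[t])$, classical PBW for $\ssl_n[t]$ furnishes the contradiction provided $\pi$ is injective. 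But the spanning property above shows that $Y^>_\hbar(\ssl_n)/\hbar Y^>_\hbar(\ssl_n)$ is $\BC$-spanned by $\{\bar{e}_h\}_{h\in H^+}$, hence has $\BC$-dimension no larger than that of $U(\ssl_n[t])$; combined with surjectivity of $\pi$, this forces $\pi$ to be an isomorphism, closing the argument.

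The main obstacle is the spanning step: the Serre-type relations~(\ref{Yan 7}) are stated only for Chevalley generators, whereas the $e_\beta(r)$ for non-simple $\beta$ are iterated commutators, so showing that a product $e_\beta(r)\cdot e_{\beta'}(r')$ with $(\beta,r)>(\beta',r')$ rewrites as a $\BC[\hbar]$-combination of ordered PBWD monomials of strictly smaller filtration degree requires a delicate combinatorial analysis involving repeated applications of~(\ref{Yan 1}), the Jacobi identity, and careful tracking of the $\hbar$-corrections. This is precisely where the bulk of Lusztig's argument in~\cite{l} (and of the streamlined version in~\cite[Appendix B]{ft2}) is concentrated.
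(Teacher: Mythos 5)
You should first note that the paper does not actually prove Theorem~\ref{pbwd for yangian}: it is imported from \cite{l} (Levendorskii, not Lusztig), with the correction of \cite[Appendix~B]{ft2}. Your outline reproduces the standard two-step strategy of those references --- straightening to get spanning, then passage to the classical limit for independence --- and the spanning plan (filtration by weight, loop degree and the ordering on $\Delta^+\times\BN$, with rewriting via~(\ref{Yan 1}), (\ref{Yan 7}) and Jacobi) is viable, though you should be aware that this straightening step is exactly where the gap in \cite{l} corrected in \cite[Appendix~B]{ft2} occurred, so it cannot be waved through.

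The genuine gap is in the linear-independence step. From a relation $\sum_h c_h(\hbar)e_h=0$ with $c_h=\hbar^N d_h$ you cannot conclude $\sum_h d_h e_h=0$, nor even that $\sum_h d_h(0)\bar e_h=0$ in $Y^>_\hbar(\ssl_n)/\hbar Y^>_\hbar(\ssl_n)$, unless you already know that $Y^>_\hbar(\ssl_n)$ has no $\hbar$-torsion; and torsion-freeness over $\BC[\hbar]$ is precisely (part of) the freeness you are trying to prove, so the argument as written is circular. Your fallback via dimension counting does not repair this: a surjection from a free module whose reduction mod $\hbar$ is an isomorphism need not be injective. Concretely, in $A=\BC[\hbar][e]/(\hbar e)$ the monomials $e^k$ span $A$ over $\BC[\hbar]$ and their images form a $\BC$-basis of $A/\hbar A$, yet $\hbar e=0$. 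Closing the gap requires an independent input: either an a priori proof of flatness of $Y^>_\hbar(\ssl_n)$ over $\BC[\hbar]$, or a faithful representation or embedding defined over $\BC[\hbar]$ separating the $e_h$ (this is the role the shuffle homomorphism of Proposition~\ref{simple 1 shuffle yangian} together with the specialization maps~(\ref{specialization map yangian}) can play, provided its injectivity is obtained from a pairing argument as in Proposition~\ref{simple shuffle} rather than deduced from the PBW theorem itself). A secondary point: your map $\pi$ is oriented the wrong way. Since only the $e$'s are present, the classical limit is $U(\fn_+[t])$, not $U(\ssl_n[t])$, and the homomorphism that exists for free (by checking that the $\hbar=0$ relations hold) is $Y^>_\hbar(\ssl_n)/\hbar Y^>_\hbar(\ssl_n)\to U(\fn_+[t])$; defining a map out of $U(\fn_+[t])$ presupposes a presentation of that algebra by the specialized relations, which is essentially the $\hbar=0$ case of the assertion. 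With the correctly oriented map, classical PBW does give independence of the $\bar e_h$ in the quotient --- but, by the above, that alone does not lift to independence over $\BC[\hbar]$.
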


\begin{Rem}
This result actually holds for any total order on $\Delta^+\times \BN$
used in~(\ref{ordered yangian}), see~\cite{l}.
\end{Rem}

For any $\beta\in \Delta^+$ and $r\in \BN$, define $\wt{e}_\beta(r)\in Y^>_\hbar(\ssl_n)$ via
\begin{equation}\label{integral basis PBW yangian}
   \wt{e}_\beta(r):=\hbar\cdot e_\beta(r).
\end{equation}
For $h\in H^+$, we also define $\wt{e}_h$ via the formula~(\ref{ordered yangian})
but using $\wt{e}_\beta(r)$ instead of $e_\beta(r)$. Finally, define an integral form
$\bY^>_\hbar(\ssl_n)$ as the $\BC[\hbar]$-subalgebra of $Y^>_\hbar(\ssl_n)$
generated by $\{\wt{e}_\beta(r)\}_{\beta\in \Delta^+}^{r\in \BN}$.

The following result is proved in~\cite[Theorem A.7]{ft2}:

\begin{Thm}[\cite{ft2}]\label{pbwd for gavarini yangian}
(a) The subalgebra $\bY^>_\hbar(\ssl_n)$ is independent of all our choices
1)--2) made when defining $e_\beta(r)$ in~(\ref{higher roots yangian}) and hence
$\wt{e}_\beta(r)$ in~(\ref{integral basis PBW yangian}).

\noindent
(b) The ordered PBWD monomials $\{\wt{e}_h\}_{h\in H^+}$ form
a basis of the free $\BC[\hbar]$-module $\bY^>_\hbar(\ssl_n)$.
\end{Thm}


\subsection{Rational shuffle algebra $W^{(n)}$ and its integral form $\fW^{(n)}$}
\label{ssec rational shuffle algebra}
\

Define the shuffle algebra $(\bar{W}^{(n)},\star)$ analogously to
the shuffle algebra $(S^{(n)},\star)$ of Section~\ref{ssec usual shuffle algebra}
with the following modifications:

\begin{itemize}

\item[(1)] 
all rational functions are defined over $\BC[\hbar]$;

\item[(2)] 
the rational functions
  $(\zeta_{i,j}(z))_{i,j\in I} \in \mathrm{Mat}_{I\times I}(\BC[\hbar](z))$
are chosen via
\begin{equation*}
  \zeta_{i,j}(z)=1+\frac{c_{ij}\hbar}{2z};
\end{equation*}

\item[(3)] 
shuffle product $\star$ is defined
via~(\ref{shuffle product}), but $\zeta_{i,i'}(x_{i,r}/x_{i',r'})$ 
replaced with $\zeta_{i,i'}(x_{i,r}-x_{i',r'})$;

\item[(4)] 
the \emph{pole conditions}~(\ref{pole conditions})
for $F\in \bar{W}^{(n)}_{\unl{k}}$ are replaced with
\begin{equation}\label{pole conditions yangian}
  F=
  \frac{f(x_{1,1},\ldots,x_{n-1,k_{n-1}})}
       {\prod_{i=1}^{n-2}\prod_{r\leq k_i}^{r'\leq k_{i+1}}(x_{i,r}-x_{i+1,r'})},\
  \mathrm{where}\ f\in \BC[\hbar][\{x_{i,r}\}_{i\in I}^{1\leq r\leq k_i}]^{\Sigma_{\unl{k}}};
\end{equation}

\item[(5)] 
the \emph{wheel conditions}~(\ref{wheel conditions})
for $F\in \bar{W}^{(n)}_{\unl{k}}$ are replaced with
\begin{equation}\label{wheel conditions yangian}
  F(\{x_{i,r}\})=0\quad \mathrm{once}\quad
  x_{i,r_1}=x_{i+\epsilon,s}+\frac{\hbar}{2}=x_{i,r_2}+\hbar\
\end{equation}
for some $\epsilon\in \{\pm 1\},\, i,\, r_1\ne r_2,\, s$.

\end{itemize}

\noindent
The \textbf{rational shuffle algebra} $\left(\bar{W}^{(n)},\star\right)$
is related to $Y^>_\hbar(\ssl_n)$ via the following construction:

\begin{Prop}\label{simple 1 shuffle yangian}
The assignment $e_{i,r}\mapsto x_{i,1}^r\ (i\in I,r\in \BN)$
gives rise to a $\BC[\hbar]$-algebra homomorphism
$\Psi\colon Y^>_\hbar(\ssl_n)\to \bar{W}^{(n)}$.
\end{Prop}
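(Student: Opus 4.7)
The plan is to verify that the proposed assignment $e_{i,r}\mapsto x_{i,1}^r$ is compatible with the defining relations~(\ref{Yan 1}) and~(\ref{Yan 7}) of $Y^>_\hbar(\ssl_n)$, so that the universal property of the presentation yields the desired $\BC[\hbar]$-algebra homomorphism $\Psi$. This is the rational analogue of Proposition~\ref{simple shuffle}, and the structure of the proof mirrors that of its trigonometric counterpart, with the matrix $\zeta_{i,j}(z)$ now as specified in~(2) of Section~\ref{ssec rational shuffle algebra}.

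First I would unwind the shuffle product on degree-one elements. For $i\ne j$ one has simply $\Psi(e_{i,r})\star \Psi(e_{j,s})=x_{i,1}^r x_{j,1}^s\,\zeta_{i,j}(x_{i,1}/x_{j,1})$, while the opposite order produces $\zeta_{j,i}(x_{j,1}/x_{i,1})$; for $i=j$ an extra $\Sigma_2$-symmetrization intervenes. Repackaging~(\ref{Yan 1}) into generating series, the quadratic defining relation amounts to a single rational identity relating $\zeta_{i,j}(z)$ and $\zeta_{j,i}(z^{-1})$, and one verifies it directly from the formula $\zeta_{i,j}(z)=\frac{z+c_{ij}\hbar/2}{z-1}$. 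The quadratic Serre-type relations (the $c_{ij}=0$ case of~(\ref{Yan 7})) follow analogously, reducing to checking commutativity of the corresponding two-variable shuffle expressions.

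The main step, and the expected obstacle, is the cubic Serre relation in~(\ref{Yan 7}) for $c_{ij}=-1$. For this I would compute the left-hand side as an explicit three-variable symmetric rational function $G(x_{i,1},x_{i,2},x_{j,1})$ obtained by $\star$-multiplying the degree-one pieces and collecting terms. The wheel condition~(\ref{wheel conditions yangian}) is encoded into the target $\bar{W}^{(n)}$ precisely so that it accommodates the Serre relation: specializing $x_{i,1}=x_{j,1}+\hbar/2$ and $x_{i,2}=x_{j,1}-\hbar/2$ (or the mirror) imposes two independent linear vanishings. The plan is to show that the symmetric combination appearing in~(\ref{Yan 7}) vanishes identically by a direct expansion, using that the $\zeta$-factors produce exactly the linear combination of numerators that cancels after symmetrization in $\{x_{i,1},x_{i,2}\}$. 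This is the classical rank-two check, essentially identical to the argument behind the trigonometric wheel condition, and it is the one step that requires genuine computation rather than formal manipulation.

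Once these relations are verified, $\Psi$ is a well-defined $\BC[\hbar]$-algebra homomorphism: closedness of $\bar{W}^{(n)}$ under $\star$ guarantees that shuffle products of the generators $x_{i,1}^r$ automatically satisfy the pole condition~(\ref{pole conditions yangian}) and the wheel condition~(\ref{wheel conditions yangian}), so no separate verification of membership in $\bar{W}^{(n)}$ is required beyond what is already built into the shuffle multiplication. Note that, in contrast to Proposition~\ref{simple shuffle}, injectivity is \emph{not} asserted here: the pairing-based argument invoked in that proof has no immediate rational analogue, which is precisely why $\Psi$ need not be an isomorphism and an explicit description of its image is postponed to Theorems~\ref{hard shuffle yangian} and~\ref{shuffle integral form yangian}.
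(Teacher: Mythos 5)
Your proposal is correct and follows essentially the same route as the paper, which states this proposition without proof as the direct rational analogue of Proposition~\ref{simple shuffle}: one verifies compatibility of the assignment with the defining relations~(\ref{Yan 1}),~(\ref{Yan 7}) (the quadratic relation via the $\zeta$-identity, the Serre relation via the rank-two expansion), membership in the target being automatic since $\bar{W}^{(n)}$ is $\star$-closed and the degree-one generators satisfy the pole and wheel conditions vacuously. The only cosmetic point is that in the rational setting the $\zeta$-factors are naturally evaluated at differences of variables, so the quadratic check relates $\zeta_{i,j}(x-y)$ to $\zeta_{j,i}(y-x)$ rather than to $\zeta_{j,i}(z^{-1})$.
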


The next result is straightforward, cf.~Lemma~\ref{shuffle root elt}:

\begin{Lem}\label{shuffle root elt yangian}
For $1\leq j<i<n$ and $r\in \BN$, we have
\begin{equation*}
  \Psi(e_{\alpha_j+\alpha_{j+1}+\ldots+\alpha_i}(r))=
  \hbar^{i-j}\frac{p(x_{j,1},\ldots,x_{i,1})}{(x_{j,1}-x_{j+1,1})\cdots (x_{i-1,1}-x_{i,1})},
\end{equation*}
where $p(x_{j,1},\ldots,x_{i,1})\in \BC[\hbar][x_{j,1},\ldots,x_{i,1}]$
is a degree $r$ monomial, up to a sign.
\end{Lem}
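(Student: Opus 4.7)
The plan is to proceed by induction on $i - j \geq 1$. For the base case $i = j + 1$, one computes the shuffle commutator $[x_{j,1}^{r_1}, x_{j+1,1}^{r_2}]_\star$ directly from the definition in $\bar{W}^{(n)}$. Since each index-component carries a single variable the symmetrization is trivial, and the only nontrivial ingredient is the pair $\zeta_{j,j+1}$ versus $\zeta_{j+1,j}$ evaluated on $(x_{j,1}, x_{j+1,1})$. A short manipulation collapses this difference to a constant multiple of $\hbar/(x_{j,1} - x_{j+1,1})$, giving
\[
  [x_{j,1}^{r_1}, x_{j+1,1}^{r_2}]_\star = -\hbar\cdot\frac{x_{j,1}^{r_1}\, x_{j+1,1}^{r_2}}{x_{j,1} - x_{j+1,1}},
\]
which matches the claim with $\hbar^1$ and the degree-$(r_1+r_2)=r$ monomial $-x_{j,1}^{r_1} x_{j+1,1}^{r_2}$ in the numerator.

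For the inductive step, I would use the structure of root vectors in $\ssl_n$: any iterated commutator $[\cdots[e_{i_1,r_1}, e_{i_2,r_2}], \ldots, e_{i_p,r_p}]$ landing in the root space of $\beta = \alpha_j + \cdots + \alpha_i$ must satisfy $\{i_1, \ldots, i_p\} = \{j, \ldots, i\}$, with each new index adjacent to the contiguous interval already formed. Hence one can always write $e_\beta(r) = [e_{\beta'}(r'), e_{l,r''}]$ with $l \in \{j, i\}$, $\beta' = \beta - \alpha_l$, and $r = r' + r''$. By the obvious symmetry between $l = j$ and $l = i$, it suffices to treat $l = i$, so $\beta' = \alpha_j + \cdots + \alpha_{i-1}$.

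The crux is then to evaluate $[F, x_{i,1}^{r''}]_\star$, where $F := \Psi(e_{\beta'}(r'))$ is by induction supported only on variables $x_{k,1}$ for $j \leq k \leq i-1$. Because $c_{k,i} = 0$ for $k < i-1$, the $\zeta_{k,i}$ factors in the shuffle product are trivial, and the commutator collapses to the base-case computation applied to the pair $(x_{i-1,1}, x_{i,1})$. This contributes exactly one additional factor $-\hbar/(x_{i-1,1} - x_{i,1})$ together with the new variable weight $x_{i,1}^{r''}$; combining with the inductive formula for $F$ yields the claimed shape, and the new numerator monomial has total degree $r' + r'' = r$.

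The only genuine point of care is the base case: one must verify that the $\hbar$-shift built into the rational $\zeta_{i,j}$ produces exactly the factor $-\hbar/(x_{j,1} - x_{j+1,1})$ after the antisymmetrization, with no residual $\hbar$-free contribution. Once this explicit cancellation is checked (it is a one-line algebraic identity), the inductive step is routine bookkeeping of signs and degrees, and no combinatorial ingredient beyond the characterization of admissible Chevalley-bracket decompositions in $\ssl_n$ enters.
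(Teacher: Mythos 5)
Your proposal is correct and is exactly the ``straightforward computation'' the paper leaves implicit (it gives no proof of this lemma, and proves its trigonometric counterpart, Lemma~\ref{shuffle root elt}, with the single phrase ``Straightforward computation''): the base case reduces to $\zeta_{j,j+1}(x_{j,1}-x_{j+1,1})-\zeta_{j+1,j}(x_{j+1,1}-x_{j,1})=-\hbar/(x_{j,1}-x_{j+1,1})$, and the induction correctly uses that a nonzero left-nested Chevalley bracket forces each new simple root to extend the contiguous interval at one of its two ends, so only one nontrivial $\zeta$-factor (the adjacent one) survives at each step. The only cosmetic caveat is that the $\zeta$-factors must be read as evaluated at differences with denominator $z$ rather than $z-1$ (as the pole conditions~(\ref{pole conditions yangian}) and the relation~(\ref{Yan 1}) force), which is precisely the reading your computation uses.
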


\begin{Ex}
For the particular choice
  $$e_{\alpha_j+\alpha_{j+1}+\ldots+\alpha_i}(r)=[\cdots[e_{j,r},e_{j+1,0}],\cdots,e_{i,0}]$$
of the PBWD basis elements (as used in~\cite[\S2]{ft2}),
we have $p(x_{j,1},\ldots,x_{i,1})=(-1)^{i-j}x_{j,1}^r$.
\end{Ex}

Let us adapt our key tool, the specialization maps, to this setting.
Given a degree vector $\unl{d}=\{d_\beta\}_{\beta\in \Delta^+}\in \BN^{\Delta^+}$,
define $\unl{\ell}\in \BN^I$ via $\sum_{\beta\in \Delta^+} d_\beta \beta = \sum_{i\in I} \ell_i\alpha_i$.
The \textbf{specialization map}
\begin{equation}\label{specialization map yangian}
  \phi_{\unl{d}}\colon \bar{W}^{(n)}_{\unl{\ell}}\longrightarrow
  \BC[\hbar][\{y_{\beta,s}\}_{\beta\in \Delta^+}^{1\leq s\leq d_\beta}]^{\Sigma_{\unl{d}}}
\end{equation}
is defined similar to~(\ref{specialization map}), but with the only change that
the variable $x_{i,r}$ in the $s$-th copy of the interval $[\beta]$ is specialized
to $y_{\beta,s}-\frac{i\hbar}{2}$.

Then, arguing exactly as in Section~\ref{ssec linear indep}, we get:

\begin{Prop}
The elements $\{\Psi(e_h)\}_{h\in H^+}$ are linearly independent.
\end{Prop}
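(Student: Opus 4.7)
The plan is to imitate the proof of Theorem~\ref{Main Theorem 1}(a) from Section~\ref{ssec linear indep} essentially verbatim, replacing the trigonometric specializations $x_{k,\bullet}\mapsto \vv^{-k}y_{\beta,s}$ by their additive rational counterparts $x_{k,\bullet}\mapsto y_{\beta,s}-\tfrac{k\hbar}{2}$ built into $\phi_{\unl{d}}$ of~\eqref{specialization map yangian}. Assign to each ordered PBWD monomial $e_h$ the degree vector $\deg(h)=\{d_\beta\}_{\beta\in\Delta^+}\in\BN^{n(n-1)/2}$ with $d_\beta=\sum_{r\in\BN}h(\beta,r)$, and equip $\BN^{n(n-1)/2}$ with the lexicographical order induced by~\eqref{order}. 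Assuming a nontrivial relation $\sum_{h\in H^+}c_h\Psi(e_h)=0$ exists, pick one of minimal degree $\unl{d}$; using $\BN^I$-homogeneity of $\Psi(e_h)$, I may assume all surviving summands lie in a single graded piece $\bar{W}^{(n)}_{\unl{l}}$ with $\unl{l}=\sum_\beta d_\beta[\beta]$. Applying $\phi_{\unl{d}}$ and deriving a contradiction will finish the proof.

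The two key lemmas to reprove are the Yangian analogues of Lemma~\ref{lower degrees} and Lemma~\ref{same degrees}. For the first, if $\deg(h)<\unl{d}$, then each summand of the symmetrization defining $\Psi(e_h)$ is forced to pair two variables $x_{i,\bullet},x_{i+1,\bullet}$ that specialize to $y_{\beta,s}-\tfrac{k\hbar}{2}$ and $y_{\beta,s}-\tfrac{(k+1)\hbar}{2}$, producing a $\zeta_{i,i+1}$-factor whose numerator $\bigl(y_{\beta,s}-\tfrac{k\hbar}{2}\bigr)-\bigl(y_{\beta,s}-\tfrac{(k+1)\hbar}{2}\bigr)+\tfrac{c_{i,i+1}\hbar}{2}=0$ vanishes, exactly as $\zeta_{i,i+1}(\vv)=0$ did before. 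For the second, using Lemma~\ref{shuffle root elt yangian}, I would expand $\phi_{\unl{d}}(\Psi(e_h))$ as a product
\[
  \phi_{\unl{d}}(\Psi(e_h))=c\cdot\prod_{\beta<\beta'}\bar{G}_{\beta,\beta'}\cdot\prod_\beta \bar{G}_\beta\cdot\prod_\beta\Bigl(\sum_{\sigma_\beta\in\Sigma_{d_\beta}}\bar{G}_\beta^{(\sigma_\beta)}\Bigr),\qquad c\in\BC^\times,
\]
with $\bar{G}_{\beta,\beta'},\bar{G}_\beta$ depending only on $\unl{d}$ and mirroring the factors in~\eqref{explicit factors} under the dictionary $\vv^{-k}\leftrightarrow -\tfrac{k\hbar}{2}$, $(1-\vv^2)\leftrightarrow -\hbar$. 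The innermost $\sigma_\beta$-sum equals, up to a nonzero scalar, the value at $\{y_{\beta,s}\}_{s=1}^{d_\beta}$ of the $d_\beta$-fold shuffle product $x^{r_\beta(h,1)}\star\cdots\star x^{r_\beta(h,d_\beta)}$ inside the rank-one rational shuffle algebra $\bar{W}^{(2)}$.

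The crux therefore reduces to the rank-one statement that for any total order $\preceq$ on $\BN$, the ordered monomials $\{e_{r_1}\cdots e_{r_k}\}_{k\in\BN}^{r_1\preceq\cdots\preceq r_k}$ have linearly independent $\Psi$-images in $\bar{W}^{(2)}$. This is implied by Theorem~\ref{pbwd for yangian} for $\ssl_2$, but I would also give a self-contained proof via the rational analogue of Lemma~\ref{k-th fold product}: the $k$-fold shuffle product $x^r\star\cdots\star x^r$ is a scalar multiple of $(y_1\cdots y_k)^r$, as the combinatorial sum $\sum_{i=1}^k\prod_{j\ne i}\frac{y_j-y_i-\hbar}{y_j-y_i}$ is a constant-in-$\{y_i\}$ rational function equal to $k$ (computed by sending $y_k\to\infty$ and inducting). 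The leading monomial symmetric polynomial argument of Lemma~\ref{n=1 case} then transfers intact. The main obstacle I anticipate is not conceptual but rather bookkeeping: verifying that the wheel-condition orders of vanishing in~\eqref{wheel conditions yangian} match the powers appearing in $\bar{G}_{\beta,\beta'},\bar{G}_\beta$ under the above dictionary. Once this is done, $\phi_{\unl{d}}$ applied to the minimal relation forces $c_h=0$ for all $h$ with $\deg(h)=\unl{d}$, contradicting minimality and completing the proof.
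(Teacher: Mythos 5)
Your proposal is correct and follows essentially the paper's own route: the paper establishes this Proposition precisely by rerunning the argument of Section~\ref{ssec linear indep} with the additive specialization maps of~(\ref{specialization map yangian}), i.e.\ the Yangian analogues of Lemmas~\ref{lower degrees} and~\ref{same degrees} reducing everything to the rank-one computation of Lemma~\ref{k-th fold product yangian} and the triangularity argument of Lemma~\ref{n=1 case}. The only superfluous element is your anticipated ``main obstacle'' of matching wheel-condition vanishing orders to the powers in $\bar{G}_{\beta,\beta'},\bar{G}_{\beta}$ --- that bookkeeping is needed only for the spanning/surjectivity statements, not for linear independence.
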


Combining this with Theorem~\ref{pbwd for yangian}, we obtain:

\begin{Prop}\label{simple 2 shuffle yangian}
$\Psi\colon Y^>_\hbar(\ssl_n)\to \bar{W}^{(n)}$ is
an injective $\BC[\hbar]$-algebra homomorphism.
\end{Prop}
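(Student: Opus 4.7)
The plan is to deduce this statement as an immediate consequence of the two results that precede it. Proposition~\ref{simple 1 shuffle yangian} already provides the $\BC[\hbar]$-algebra homomorphism structure, so only injectivity requires argument. By Theorem~\ref{pbwd for yangian}, the ordered PBWD monomials $\{e_h\}_{h\in H^+}$ form a basis of the free $\BC[\hbar]$-module $Y^>_\hbar(\ssl_n)$, so any element admits a unique finite expansion
\begin{equation*}
  x=\sum_{h\in H^+} c_h\, e_h, \qquad c_h\in \BC[\hbar].
\end{equation*}
By the Proposition stated immediately above, the images $\{\Psi(e_h)\}_{h\in H^+}$ are $\BC[\hbar]$-linearly independent in $\bar{W}^{(n)}$. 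Therefore $\Psi(x)=0$ forces $\sum_h c_h\, \Psi(e_h)=0$, whence $c_h=0$ for all $h\in H^+$, and consequently $x=0$.

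There is no serious obstacle here; the real content has been packaged into the two preceding statements. The only point that deserves a second look is that the linear independence asserted in the preceding Proposition is over $\BC[\hbar]$ (and not merely over $\BC$). This is transparent from the proof sketched there, which is a literal transcription of Section~\ref{ssec linear indep}: the rational specialization maps $\phi_{\unl{d}}$ of~(\ref{specialization map yangian}) land in $\BC[\hbar]$-modules of polynomials, the rational analogues of Lemmas~\ref{lower degrees}--\ref{same degrees} produce identities of the form~(\ref{explicit formula for same degrees}) with all scalars in $\BC[\hbar]$, and the rank-one input (the $Y^>_\hbar(\ssl_2)$-analogue of Lemma~\ref{n=1 case}) is an identity of $\BC[\hbar]$-bases of symmetric polynomial rings. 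Should one prefer to bypass this verification, one may extend scalars from $\BC[\hbar]$ to $\BC(\hbar)$: the source $Y^>_\hbar(\ssl_n)$ is free, hence torsion-free, over $\BC[\hbar]$ by Theorem~\ref{pbwd for yangian}, so injectivity of $\Psi\otimes_{\BC[\hbar]}\BC(\hbar)$ is equivalent to injectivity of $\Psi$ itself.
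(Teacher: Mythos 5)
Your proposal is correct and is essentially the paper's own argument: the paper obtains this Proposition precisely by combining the homomorphism property from Proposition~\ref{simple 1 shuffle yangian}, the PBWD basis of the free $\BC[\hbar]$-module $Y^>_\hbar(\ssl_n)$ from Theorem~\ref{pbwd for yangian}, and the $\BC[\hbar]$-linear independence of $\{\Psi(e_h)\}_{h\in H^+}$ established via the specialization maps~(\ref{specialization map yangian}) as in Section~\ref{ssec linear indep}. Your closing remarks on working over $\BC[\hbar]$ versus $\BC(\hbar)$ are a harmless (and correct) supplement.
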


However, in contrast to Theorem~\ref{hard shuffle}, the embedding
$\Psi\colon Y^>_\hbar(\ssl_n)\hookrightarrow \bar{W}^{(n)}$
is not an isomorphism. The description of its image is similar
to Theorem~\ref{shuffle integral form}, but is significantly simpler.

\begin{Def}\label{good element yangian}
$F\in \bar{W}^{(n)}_{\unl{k}}$ is \textbf{good} if $\phi_{\unl{d}}(F)$
is divisible by $\hbar^{\sum_{\beta\in \Delta^+} d_\beta(i(\beta)-j(\beta))}$
for any degree vector $\unl{d}=\{d_\beta\}_{\beta\in \Delta^+}\in \BN^{\Delta^+}$
such that $\sum_{\beta\in \Delta^+} d_\beta \beta = \sum_{i\in I}k_i\alpha_i$.
\end{Def}

\begin{Ex}\label{explaining integrality for n=2 yangian}
In the simplest case $n=2$, any element
$F\in \bar{W}^{(n)}_{\unl{k}}\ (\unl{k}\in \BN^I)$ is good.
\end{Ex}

Let $W^{(n)}_{\unl{k}}\subset \bar{W}^{(n)}_{\unl{k}}$ denote the
$\BC[\hbar]$-submodule of all good elements and set 
  $$W^{(n)}:=\underset{\unl{k}\in \BN^I}\bigoplus W^{(n)}_{\underline{k}}$$

\begin{Lem}\label{necessity for Yangian}
$\Psi(Y^>_\hbar(\ssl_n))\subseteq W^{(n)}$.
\end{Lem}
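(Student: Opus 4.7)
Since $\Psi$ is a $\BC[\hbar]$-algebra homomorphism (\refp{simple 1 shuffle yangian}) and $Y^>_\hbar(\ssl_n)$ is spanned over $\BC[\hbar]$ by the ordered PBWD monomials $e_h$ (\reft{pbwd for yangian}), the plan is to show that every product of the form $F:=\Psi(e_{\beta_1}(r_1))\star\cdots\star\Psi(e_{\beta_l}(r_l))$ lies in $W^{(n)}$. Fix $\unl{k}:=\sum_{j=1}^l[\beta_j]\in\BN^I$ and an arbitrary degree vector $\unl{d}=\{d_\beta\}_{\beta\in\Delta^+}$ with $\sum_\beta d_\beta[\beta]=\unl{k}$; the task is to prove that $\phi_{\unl{d}}(F)$ is divisible by $\hbar^{\sum_\beta d_\beta(i(\beta)-j(\beta))}$.

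By \refl{shuffle root elt yangian}, each factor $\Psi(e_{\beta_j}(r_j))$ explicitly contains $\hbar^{i(\beta_j)-j(\beta_j)}$ in its numerator (in the sense of~\refe{pole conditions yangian}), so the numerator of $F$ is automatically divisible by $\hbar^{\sum_j(i(\beta_j)-j(\beta_j))}=\hbar^{|\unl{k}|-l}$. Since the target exponent equals $|\unl{k}|-\sum_\beta d_\beta$, the explicit factors alone already suffice when $l\leq\sum_\beta d_\beta$; otherwise, I must locate an additional $\hbar^{l-\sum_\beta d_\beta}$ inside the shuffle symmetrization.

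These extra $\hbar$-factors will come from the $\zeta$-contributions of the shuffle product. Under $\phi_{\unl{d}}$, two variables landing in the same copy of the same interval $[\beta]$ (hence specialized to $y_{\beta,s}-k\hbar/2$ and $y_{\beta,s}-k'\hbar/2$ for some $k\ne k'$) differ by $(k'-k)\hbar/2$, so any $\zeta$-factor linking them specializes to a nonzero pure multiple of $\hbar$. Following the strategy of the proof of \refl{necessity for n>2}, I would fix a surviving summand of the symmetrization, analyze the flow of variables from the pieces $\Psi(e_{\beta_j}(r_j))$ into the various interval copies $(\beta,s)$, and count, for each copy, the number of such ``inter-piece'' $\zeta$-specializations; summing the counts across all copies should recover precisely the missing $\hbar^{l-\sum_\beta d_\beta}$.

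The hard part will be the combinatorial bookkeeping of this last step: one must show, copy by copy, that the number of surviving $\zeta$-factors linking variables which originate from distinct $\Psi(e_{\beta_j}(r_j))$-factors is at least the local ``merger count'' (one less than the number of distinct pieces contributing to that copy), and that these local contributions globally sum to $l-\sum_\beta d_\beta$. This is the rational counterpart of the case-by-case verification in the proof of \refl{necessity for n>2}, and is strictly simpler in the present setting because the additive nature of the Yangian variables converts $\hbar$-shifts into $\hbar$-divisibilities directly, with no secondary specialization $\varpi_{\unl{t}}$ or $\vv$-factorial divisibility to track; in particular, for the simple-root factors ($i(\beta_j)=j(\beta_j)$) the explicit contribution vanishes and one sees transparently that all of the necessary $\hbar$-divisibility must and does come from the $\zeta$-collapses.
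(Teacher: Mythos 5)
Your overall strategy can be made to work and your arithmetic checks out, but the write-up has two real defects. First, the assertion that \emph{any} $\zeta$-factor linking two variables landing in the same copy of $[\beta]$ ``specializes to a nonzero pure multiple of $\hbar$'' is false: if the two variables carry non-adjacent colours $k,k'$ then $c_{kk'}=0$ and the factor is identically $1$, while for adjacent colours the factor (after clearing the pole $x-y$) specializes to $0$ for one relative order of the two variables in the shuffle product and to $\pm\hbar$ for the other. Divisibility by $\hbar$ therefore comes \emph{only} from inter-piece pairs of adjacent colours, which is what your merger count must capture. Second, the step you explicitly defer as ``the hard part'' is genuinely needed and is not carried out. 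It is true and fillable: the colours of $[\beta]$ form an integer interval which the $k$ contributing pieces partition into $k$ nonempty parts, the number of adjacent pairs $(l,l+1)$ lying inside a single part is at most $(i(\beta)-j(\beta)+1)-k$, hence at least $k-1$ adjacent pairs cross between parts and each such pair carries an inter-piece $\zeta$-factor specializing to a multiple of $\hbar$; summing these local bounds over all copies gives at least (number of piece--copy incidences) minus (number of copies), which is at least $l-\sum_{\beta}d_\beta$ since every piece meets at least one copy. Until this is written out, the proof is incomplete.

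The paper sidesteps all of this bookkeeping by choosing a coarser spanning set: since $Y^>_\hbar(\ssl_n)$ is generated over $\BC[\hbar]$ by the degree-one elements $e_{i,r}$, it suffices to treat $F=\Psi(e_{i_1,r_1}\cdots e_{i_N,r_N})$. Then every pair of distinct variables is linked by a $\zeta$-factor of the shuffle product, so in each summand of the symmetrization each of the $i(\beta)-j(\beta)$ adjacent pairs $(y_{\beta,s}-l\hbar/2,\,y_{\beta,s}-(l+1)\hbar/2)$ inside each copy $(\beta,s)$ contributes a multiple of $\hbar$, yielding exactly $\hbar^{\sum_{\beta\in\Delta^+}d_\beta(i(\beta)-j(\beta))}$ with no prefactor accounting and no merger counts. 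Your route, once completed, proves the marginally stronger statement that every product $\Psi(e_{\beta_1}(r_1)\cdots e_{\beta_l}(r_l))$ of root vectors is \emph{good} (closer in spirit to the quantum-loop case of \refl{necessity for n>2}), but for the lemma as stated the reduction to single generators is both sufficient and far shorter.
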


\begin{proof}
Let $F=\Psi(e_{i_1,r_1}\cdots e_{i_N,r_N})\in \bar{W}^{(n)}_{\unl{k}}$ and choose 
$\unl{d}=\{d_\beta\}_{\beta\in \Delta^+}\in \BN^{\Delta^+}$ such that 
$\sum_{\beta\in \Delta^+} d_\beta \beta = \sum_{i\in I} k_i\alpha_i$.
For any $\beta\in \Delta^+$ and $1\leq s\leq d_\beta$, consider $\zeta$-factors
between those pairs of $x_{\ast,\ast}$-variables that are specialized to
$y_{\beta,s}-\frac{\ell\hbar}{2}, y_{\beta,s}-\frac{(\ell+1)\hbar}{2}$ with
$j(\beta)\leq \ell < i(\beta)$ in the definition of the specialization map~(\ref{specialization map yangian}).
Each of them contributes a multiple of $\hbar$ into $\phi_{\unl{d}}(F)$
and there are exactly $\sum_{\beta\in \Delta^+} d_\beta(i(\beta)-j(\beta))$
of such pairs. Hence, $F\in W^{(n)}_{\unl{k}}$.
\end{proof}

The following is the key result of this section:

\begin{Thm}\label{hard shuffle yangian}
The $\BC[\hbar]$-algebra embedding
  $\Psi\colon Y^>_\hbar(\ssl_n)\hookrightarrow \bar{W}^{(n)}$
of Proposition~\ref{simple 2 shuffle yangian} gives rise to a
$\BC[\hbar]$-algebra isomorphism
$\Psi\colon Y^>_\hbar(\ssl_n)\iso W^{(n)}$.
\end{Thm}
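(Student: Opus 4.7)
The plan is to adapt the proof of Theorem~\ref{hard shuffle} from Section~\ref{ssec spanning prop} to the rational setting, since the embedding $\Psi\colon Y^>_\hbar(\ssl_n)\hookrightarrow W^{(n)}$ is already supplied by Proposition~\ref{simple 2 shuffle yangian} together with Lemma~\ref{necessity for Yangian}. It remains to prove surjectivity onto $W^{(n)}$, that is, that every good element can be realized as $\Psi$ of a $\BC[\hbar]$-linear combination of ordered PBWD monomials $e_h\ (h\in H^+)$.

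I would first dispose of the base case $n=2$. By Example~\ref{explaining integrality for n=2 yangian} every element of $\bar{W}^{(2)}_k$ is good, and $\bar{W}^{(2)}_k$ is isomorphic to $\BC[\hbar][x_1,\ldots,x_k]^{\Sigma_k}$ as a $\BC[\hbar]$-module, so the task reduces to a direct rational analogue of Lemma~\ref{n=1 case}: a straightforward computation of $x^{r_1}\star\cdots\star x^{r_k}$ (parallel to Lemma~\ref{k-th fold product}) shows that the ordered monomials $e_{r_1}\cdots e_{r_k}$, with $r_1\leq\cdots\leq r_k$, map under $\Psi$ to a triangular basis of symmetric polynomials with respect to the monomial basis. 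With this in hand, the argument proceeds by exactly the inductive scheme of Section~\ref{ssec spanning prop}: for $\unl{k}\in \BN^I$ set $T_{\unl{k}}=\{\unl{d}\in \BN^{\frac{n(n-1)}{2}}\mid \sum_\beta d_\beta[\beta]=\unl{k}\}$ ordered lexicographically, and prove the Yangian counterpart of Lemma~\ref{spanning}: if $F\in W^{(n)}_{\unl{k}}$ satisfies $\phi_{\unl{d}'}(F)=0$ for all $\unl{d}'>\unl{d}$, then there exists $F_{\unl{d}}\in \Psi(Y^>_\hbar(\ssl_n))$ of degree $\unl{d}$ with $\phi_{\unl{d}}(F_{\unl{d}})=\phi_{\unl{d}}(F)$. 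Starting from the maximal $\unl{d}\in T_{\unl{k}}$ (where the hypothesis is vacuous) and descending to $\unl{d}_{\min}$ (where $\phi_{\unl{d}_{\min}}$ essentially recovers $F$), one obtains $F\in \Psi(Y^>_\hbar(\ssl_n))$.

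The heart of the argument, and where the main obstacle lies, is establishing the correct factorization of $\phi_{\unl{d}}(F)$ in the Yangian setting. The wheel conditions~(\ref{wheel conditions yangian}) force $\phi_{\unl{d}}(F)$ to vanish under the specializations $y_{\beta,s}=y_{\beta',s'}+\hbar$ and $y_{\beta,s}=y_{\beta',s'}-\hbar$ (the additive analogues of the $\vv^{\pm 2}$ specializations in Section~\ref{ssec spanning prop}), with vanishing orders computed by the identical case-by-case combinatorics as in that section, while the lexicographic minimality hypothesis on $\unl{d}$ forces vanishing under $y_{\beta,s}=y_{\beta',s'}$ when $j(\beta)<j(\beta')$ and $i(\beta)+1\in [\beta']$. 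Together these yield a factorization
\begin{equation*}
  \phi_{\unl{d}}(F)=\prod_{\beta<\beta'} G^{\mathrm{rat}}_{\beta,\beta'}\cdot
  \prod_{\beta} G^{\mathrm{rat}}_\beta\cdot G,
\end{equation*}
where $G^{\mathrm{rat}}_{\beta,\beta'}, G^{\mathrm{rat}}_\beta$ are the rational counterparts of~(\ref{explicit factors}) (with $\hbar^{d_\beta(i(\beta)-j(\beta))}$ replacing $(1-\vv^2)^{d_\beta(i(\beta)-j(\beta))}$, cf.\ Lemma~\ref{shuffle root elt yangian}). The crucial point—and the reason the good condition of Definition~\ref{good element yangian} is precisely the right one—is that the overall $\hbar^{\sum_\beta d_\beta(i(\beta)-j(\beta))}$ factor must be extractable with $G$ still a polynomial in $\BC[\hbar][\{y_{\beta,s}\}]^{\Sigma_{\unl{d}}}$; the good condition is exactly the divisibility hypothesis that guarantees this. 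Having produced such a symmetric $G$ with integral $\hbar$-coefficients, the $n=2$ base case applied in each block of $\{y_{\beta,s}\}_{s=1}^{d_\beta}$ yields the desired $F_{\unl{d}}\in \Psi(Y^>_\hbar(\ssl_n))$ with matching specialization, completing the induction and hence the proof.
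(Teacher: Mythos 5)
Your proposal is correct and follows essentially the same route as the paper: the paper also reduces to the $n=2$ case (Lemma~\ref{integral n=2 case yangian}, proved via the triangularity coming from Lemma~\ref{k-th fold product yangian}) and then handles $n>2$ by running the specialization-map/spanning argument of Sections~\ref{ssec spanning prop} and~\ref{ssec proof of Theorem 2}, with the \emph{good} condition supplying exactly the $\hbar$-divisibility needed to extract the rational analogue of the factors $G_\beta$ over $\BC[\hbar]$. The paper leaves these details to the reader, and your filling-in of them (additive wheel specializations, the factorization of $\phi_{\unl{d}}(F)$, and the role of Definition~\ref{good element yangian}) is consistent with what it intends.
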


In view of Example~\ref{explaining integrality for n=2 yangian},
this theorem for $n=2$ is equivalent to the following result:

\begin{Lem}\label{integral n=2 case yangian}
Any symmetric polynomial $F\in \BC[\hbar][\{x_p\}_{p=1}^k]^{\Sigma_k}$
may be written as a $\BC[\hbar]$-linear combination of $\{\Psi(e_h)\}_{h\in H^+}$.
\end{Lem}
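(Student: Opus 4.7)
The plan is to mirror the argument of Lemma~\ref{integral n=1 case}, substantially simplified by the absence of wheel or integrality obstructions in this rational rank-one setting (Example~\ref{explaining integrality for n=2 yangian}). I shall induct on the top $x$-degree $N$ of $F$; the base case $N=0$, where $F\in \BC[\hbar]$, is handled by the empty PBWD monomial $\Psi(1)=1$. For the inductive step, write $F=F_N+F_{<N}$ with $F_N$ the $x$-homogeneous component of degree $N$, and expand
\begin{equation*}
  F_N(x_1,\ldots,x_k) = \sum_{\unl{r}\in V_N} \mu_{\unl{r}}\, m_{\unl{r}}(x_1,\ldots,x_k), \qquad \mu_{\unl{r}}\in \BC[\hbar],
\end{equation*}
where $V_N$ denotes the finite set of sorted tuples $\unl{r}=(r_1\leq \cdots \leq r_k)$ of nonnegative integers with $\sum r_i=N$, equipped with the lexicographic order of Lemma~\ref{integral n=1 case}.

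The central computation is the rational analogue of Lemma~\ref{k-th fold product}: for any $k\geq 1$ and $r\in \BN$, the $k$-fold shuffle power satisfies
\begin{equation*}
  \underbrace{x^r\star \cdots \star x^r}_{k}\, =\, (x_1\cdots x_k)^r \quad \text{in } \bar{W}^{(2)}_k,
\end{equation*}
which I shall prove by induction on $k$ via the identity $\sum_{i=1}^{k}\prod_{j\ne i}\zeta_{1,1}(x_j-x_i)=k$ — the left side is a rational function of degree $0$ in the $x_j$'s with no poles, hence constant, and its value is found by letting $x_k\to \infty$ and invoking $\zeta_{1,1}(z)+\zeta_{1,1}(-z)=2$. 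Consequently, for a sorted $\unl{r}=(r_1,\ldots,r_k)\in V_N$ with multiplicity composition $(k_1,\ldots,k_l)$ encoding the sizes of the maximal constant runs, the shuffle element $\Psi(e_{r_1}\cdots e_{r_k})=x^{r_1}\star \cdots \star x^{r_k}$ has top-$x$-degree component (equivalently, its $\hbar^0$-specialization) equal to
\begin{equation*}
  \tfrac{1}{k!}\sum_{\sigma\in\Sigma_k}\prod_{i=1}^{k}x_{\sigma(i)}^{r_i}\, =\, \tfrac{\prod_i k_i!}{k!}\cdot m_{\unl{r}}(x_1,\ldots,x_k),
\end{equation*}
whose coefficient $\tfrac{\prod_i k_i!}{k!}$ is a non-zero rational number (hence a unit of $\BC[\hbar]$), while every remaining monomial appearing in $\Psi(e_{r_1}\cdots e_{r_k})$ carries strictly smaller $x$-degree.

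Armed with this, I pick the lex-maximal $\unl{r}_{\max}\in V_N(F_N):=\{\unl{r}\in V_N:\mu_{\unl{r}}\ne 0\}$, write $(r_1,\ldots,r_k)=\unl{r}_{\max}$, and set $F^{(1)} := F - \tfrac{k!}{\prod_i k_i!}\,\mu_{\unl{r}_{\max}}\, \Psi(e_{r_1}\cdots e_{r_k})$. The scalar factor subtracted lies in $\BC[\hbar]$ since the multinomial coefficient $\tfrac{k!}{\prod_i k_i!}$ is a positive integer; therefore $F^{(1)}\in \BC[\hbar][\{x_i\}]^{\Sigma_k}$. Moreover the $x$-degree-$N$ component of $F^{(1)}$ has lex-maximal partition strictly smaller than $\unl{r}_{\max}$, so iterating on the finite set $V_N$ reduces $F$, in finitely many steps, to an element of $x$-degree strictly less than $N$, to which the induction hypothesis applies. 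The hard part of this argument is the $k$-fold shuffle power identity in paragraph two: the point is to verify that the coefficient of the lex-leading monomial $m_{\unl{r}}$ in $\Psi(e_{r_1}\cdots e_{r_k})$ is a unit of $\BC[\hbar]$, so that the iterative stripping produces $\BC[\hbar]$-coefficients without ever dividing by $\hbar$. Everything else parallels Lemma~\ref{integral n=1 case} routinely.
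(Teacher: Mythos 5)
Your argument is correct and follows essentially the same route as the paper, which proves this lemma by repeating the proof of Lemma~\ref{integral n=1 case} with Lemma~\ref{k-th fold product} replaced by the rational $k$-fold power computation of Lemma~\ref{k-th fold product yangian}; your extra care in inducting on the top $x$-degree (exploiting that the $\hbar^0$-part of $\Psi(e_{r_1}\cdots e_{r_k})$ is exactly a rational multiple of $m_{\unl{r}}$, with all $\zeta$-corrections dropping the $x$-degree) is precisely the adjustment needed because the rational shuffle elements are not $x$-homogeneous. The only discrepancy is the scalar in the $k$-fold power identity --- you obtain $(x_1\cdots x_k)^r$ where Lemma~\ref{k-th fold product yangian} states $k\cdot(x_1\cdots x_k)^r$ --- but this is immaterial, since the only property used is that the constant is a non-zero rational number, hence a unit of $\BC[\hbar]$.
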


The proof of Lemma~\ref{integral n=2 case yangian} is completely analogous
to those of Lemmas~\ref{n=1 case},~\ref{integral n=1 case}, and crucially relies
on the following simple computation (cf.~Lemma~\ref{k-th fold product}):

\begin{Lem}\label{k-th fold product yangian}
For any $k\geq 1$ and $r\in \BN$, the $k$-th power
of $x^r\in \bar{W}^{(2)}_1$ equals
\begin{equation}\label{factorial formula yangian}
  \underbrace{x^r\star\cdots \star x^r}_{k\ \mathrm{times}}=k!\cdot (x_1\cdots x_k)^r.
\end{equation}
\end{Lem}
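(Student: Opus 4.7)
The plan is to prove the lemma by induction on $k$, in complete analogy with the proof of the trigonometric Lemma~\ref{k-th fold product}. The base case $k=1$ is immediate, since $x^r \in \bar{W}^{(2)}_1$ equals $1 \cdot x_1^r$.

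For the inductive step, write $(x^r)^{\star k} = (x^r)^{\star(k-1)} \star x^r$ and substitute the inductive hypothesis $(x^r)^{\star(k-1)} = (k-1)(x_1 \cdots x_{k-1})^r$. Applying the shuffle product formula~\eqref{E:shuffle product} with $F = (k-1)(x_1 \cdots x_{k-1})^r$ and $G = x_k^r$, the monomial $(x_1 \cdots x_k)^r$ is fully $\Sigma_k$-symmetric and pulls out of the symmetrization. The computation therefore reduces to evaluating
\[
\mathrm{Sym}_{\Sigma_k}\!\left(\prod_{i=1}^{k-1} \zeta_{1,1}(x_i/x_k)\right),
\]
which, because each summand of the symmetrization depends only on which variable plays the role of $x_k$, collapses (up to the combinatorial factor $(k-1)!/k! = 1/k$) to the sum
\[
S_k := \sum_{i=1}^{k} \prod_{\substack{1 \leq j \leq k \\ j \neq i}} \zeta_{1,1}(x_j/x_i).
\]

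The crux is then a combinatorial identity showing that $S_k$ is a constant whose value, combined with the prefactor $(k-1)! \cdot 1!$ from~\eqref{E:shuffle product}, the normalization $1/k!$ of $\mathrm{Sym}_{\Sigma_k}$, and the inductive factor $(k-1)$, produces exactly $k$. I would prove this identity by the same two-step argument used in the trigonometric case: first, $S_k$ is a rational function of $(x_1, \ldots, x_k)$ of total degree zero whose apparent simple poles along each hyperplane $x_i = x_j$ cancel between the $i$-th and $j$-th summands, so $S_k$ is in fact a constant; second, this constant is evaluated by sending $x_k \to \infty$, in which case the $i = k$ summand has an explicit limit while the sum of the remaining $k-1$ summands is reduced to its $(k-1)$-variable analogue and handled by the inductive hypothesis.

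The main obstacle is the arithmetic bookkeeping: determining the precise value of $S_k$ and verifying that it combines correctly with all the normalization factors above to yield exactly $k \cdot (x_1 \cdots x_k)^r$, rather than some other multiple. This is a purely computational check once the structural reduction has been carried out, and it mirrors the factor $\vv^{1-k}[k]_\vv$ that appears in the trigonometric combinatorial identity~\eqref{E:combinatorial identity}.
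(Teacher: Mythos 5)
Your proposal is correct and follows exactly the paper's (very terse) argument: induction on $k$, reducing the inductive step to the combinatorial identity $\sum_{i=1}^k \prod_{j\ne i}\frac{x_j-x_i+\hbar}{x_j-x_i}=k$ (the rational analogue of~\eqref{E:combinatorial identity}), which is then proved by the same constancy-plus-$x_k\to\infty$ limit argument. The only cosmetic point is that in the rational shuffle algebra the $\zeta$-factors are evaluated at differences $x_j-x_i$ rather than ratios $x_j/x_i$, which is what produces the displayed identity~\eqref{E:combinatorial identity yangian}.
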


\begin{proof}
The proof is by induction in $k$ and boils down to the verification of the equality
\begin{equation}\label{combinatorial identity yangian}
   \sum_{p=1}^k \prod_{1\leq s\leq k}^{s\ne p} \frac{x_s-x_p+\hbar}{x_s-x_p}=k,
\end{equation}
which is proved similarly to~(\ref{combinatorial identity}).
\end{proof}

The proof of Theorem~\ref{hard shuffle yangian} for $n>2$ is completely
analogous to those of Theorems~\ref{Main Theorem 1},~\ref{Main Theorem 2}
and crucially utilizes its $n=2$ case, established in Lemma~\ref{integral n=2 case yangian}.
We refer the interested reader to~\cite[Proof of Theorem 3.30]{ts}
for more details.

\begin{Def}\label{integral element yangian}
$F\in \bar{W}^{(n)}_{\unl{k}}$ is \textbf{integral} if $F$
is divisible by $\hbar^{|\unl{k}|}$.
\end{Def}

\begin{Rem}
We note that any integral shuffle element is obviously good,
cf.~Definition~\ref{good element yangian}.
\end{Rem}

Let $\fW^{(n)}_{\unl{k}}\subset W^{(n)}_{\unl{k}}$ denote the $\BC[\hbar]$-submodule of 
all integral elements and set 
  $$\fW^{(n)}:=\underset{\unl{k}\in \BN^I}\bigoplus \fW^{(n)}_{\underline{k}}$$
The following is our second key result of this section:

\begin{Thm}\label{shuffle integral form yangian}
The $\BC[\hbar]$-algebra isomorphism $\Psi\colon Y^>_\hbar(\ssl_n)\iso W^{(n)}$
of Theorem~\ref{hard shuffle yangian} gives rise to a $\BC[\hbar]$-algebra
isomorphism $\Psi\colon \bY^>_\hbar(\ssl_n)\iso \fW^{(n)}$.
\end{Thm}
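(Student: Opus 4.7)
The plan is to deduce Theorem~\ref{shuffle integral form yangian} from Theorem~\ref{hard shuffle yangian} by mimicking the proof of Theorem~\ref{shuffle integral form} carried out in Section~\ref{ssec proof of Theorem 2}, with substantial simplifications because the integrality condition here is merely divisibility by $\hbar^{|\unl{k}|}$, rather than the elaborate divisibility by $\vv$-factorials of Definition~\ref{integral element}. As in loc.~cit., the argument splits into two claims: (I) $\Psi(\bY^>_\hbar(\ssl_n))\subseteq\fW^{(n)}$, and (II) every $F\in\fW^{(n)}$ is a $\BC[\hbar]$-linear combination of $\{\Psi(\wt{e}_h)\}_{h\in H^+}$; combined with the bijectivity of $\Psi$ from Theorem~\ref{hard shuffle yangian} and the PBWD basis property of Theorem~\ref{pbwd for gavarini yangian}, these two claims establish the result.

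Claim~(I) follows directly from Lemma~\ref{shuffle root elt yangian}: since $\Psi(\wt{e}_\beta(r))=\hbar\cdot\Psi(e_\beta(r))$ is divisible by $\hbar^{i(\beta)-j(\beta)+1}=\hbar^{|[\beta]|}$, the shuffle product $\Psi(\wt{e}_{\beta_1}(r_1)\cdots\wt{e}_{\beta_l}(r_l))$ of total $\BN^I$-degree $\unl{k}=\sum_{i=1}^{l}[\beta_i]$ is divisible by $\hbar^{\sum_i|[\beta_i]|}=\hbar^{|\unl{k}|}$, cf.~the proof of Lemma~\ref{necessity for Yangian}.

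For Claim~(II), the $n=2$ base case is immediate: if $F\in\fW^{(2)}_k$, write $F=\hbar^k\bar{F}$ with $\bar{F}\in\BC[\hbar][\{x_i\}_{i=1}^{k}]^{\Sigma_k}$ and apply Lemma~\ref{integral n=2 case yangian} to $\bar{F}$; reinserting the $\hbar^k$ factor converts the resulting combination of $\Psi(e_h)$ into one of $\Psi(\wt{e}_h)$, because each $\wt{e}_h$ with $\deg h$ corresponding to $k$ generators equals $\hbar^k\cdot e_h$. For general $n$, the strategy mirrors Lemma~\ref{spanning integral}: given $F\in\fW^{(n)}_{\unl{k}}$, I process the degree vectors in $T_{\unl{k}}$ in decreasing lex order. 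The wheel conditions and symmetry arguments of Section~\ref{ssec spanning prop} yield a factorization
\begin{equation*}
  \phi_{\unl{d}}(F)=\prod_{\beta<\beta'}G_{\beta,\beta'}\cdot\prod_\beta G_\beta\cdot G,
\end{equation*}
whose Yangian analogues of the factors~(\ref{explicit factors}) carry a prefactor $\hbar^{\sum_\beta d_\beta(i(\beta)-j(\beta))}$ inside $\prod_\beta G_\beta$, coming from Lemma~\ref{shuffle root elt yangian}. The integrality of $F$ combined with this prefactor forces $G$ to be divisible by $\hbar^{|\unl{k}|-\sum_\beta d_\beta(i(\beta)-j(\beta))}=\hbar^{\sum_\beta d_\beta}$. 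Since $G$ decomposes as a product over $\beta\in\Delta^+$ of symmetric polynomials in the $d_\beta$ variables $\{y_{\beta,s}\}_{s=1}^{d_\beta}$ each divisible by $\hbar^{d_\beta}$, applying the $n=2$ base case factor by factor produces an element $F_{\unl{d}}$ in the $\BC[\hbar]$-span of $\{\Psi(\wt{e}_h)\}_{\deg h=\unl{d}}$ with $\phi_{\unl{d}}(F_{\unl{d}})=\phi_{\unl{d}}(F)$; the Yangian analog of Lemma~\ref{lower degrees} ensures $\phi_{\unl{d}'}(F_{\unl{d}})=0$ for $\unl{d}'>\unl{d}$.

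The main obstacle is verifying that the global $\hbar^{|\unl{k}|}$-divisibility of $F$ splits correctly between the common prefactor in $\prod_\beta G_\beta$ and the residual $G$ at every stage of the iteration, a matching that hinges on the numerical identity $|\unl{k}|=\sum_\beta d_\beta(i(\beta)-j(\beta)+1)$ together with the fact that each subtracted $F_{\unl{d}}$ itself lies in $\fW^{(n)}$ by Claim~(I). Once this is in place, iterating from $\unl{d}_{\max}$ down to $\unl{d}_{\min}$ exhausts $F$ (as in Section~\ref{ssec spanning prop}), expressing it as a $\BC[\hbar]$-linear combination of $\{\Psi(\wt{e}_h)\}_{h\in H^+}$ and finishing the proof of Claim~(II).
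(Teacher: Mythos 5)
Your proposal is correct and follows essentially the same route as the paper, which proves this theorem by declaring it ``completely analogous'' to Theorem~\ref{shuffle integral form}: claim (I) is the adaptation of Lemma~\ref{necessity for n>2} (with the $\vv$-factorial bookkeeping collapsing to the immediate $\hbar^{|\unl{k}|}$-divisibility from Lemma~\ref{shuffle root elt yangian}), and claim (II) is the adaptation of Lemma~\ref{spanning integral}, reduced via the factorization of $\phi_{\unl{d}}(F)$ to the rank-one statement that $\hbar^{k}(x_1\cdots x_k)^r\in\Psi(\bY^>_\hbar(\ssl_2))$. The only cosmetic slip is that $G$ lies in the tensor product $\bigotimes_\beta\hbar^{d_\beta}\BC[\hbar][\{y_{\beta,s}\}]^{\Sigma_{d_\beta}}$ (a sum of pure tensors, not a single product), but this does not affect the argument.
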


The proof of Theorem~\ref{shuffle integral form yangian} is completely analogous
to that of Theorem~\ref{shuffle integral form}, but is much simpler.
In particular, adapting Lemma~\ref{necessity for n>2} to the current setting,
the key combinatorial computation from its proof is not needed, while
Lemma~\ref{spanning integral} is adapted without any changes.
We refer the interested reader to~\cite[Proof of Theorem 3.9]{ts}
for more details.

\begin{Rem}
Let us note right away that the key simplification in the proof of
Theorem~\ref{shuffle integral form yangian} (comparing to that of
Theorem~\ref{shuffle integral form}) and in the definition of
the integral elements of Definition~\ref{integral element yangian}
(comparing to those of Definition~\ref{integral element})
is due to the following rank $1$ computations:
\begin{enumerate}
 \item[(1)] $\hbar^k(x_1\cdots x_k)^r\in \Psi(\bY^>_\hbar(\ssl_2))$
  for any $k,r\in \BN$, due to Lemma~\ref{k-th fold product yangian};

 \item[(2)] $(\vv-\vv^{-1})^k[k]_\vv!(x_1\cdots x_k)^r\in \Psi(\fU^>_\vv(L\ssl_2))$
  for any $k\in \BN,r\in \BZ$, due to Lemma~\ref{k-th fold product};

 \item[(3)] $(\vv-\vv^{-1})^k(x_1\cdots x_k)^r\notin \Psi(\fU^>_\vv(L\ssl_2))$
  for any $k>1,r\in \BZ$, due to Lemma~\ref{necessity for n=2}.
\end{enumerate}
\end{Rem}


\section{Generalizations to the super Yangian $Y_\hbar(\ssl(m|n))$}
\label{sec superyangian counterparts}

The super Yangian $Y_\hbar(\gl(m|n))$ of the Lie superalgebra $\gl(m|n)$
was first introduced in~\cite{na}, following the RTT formalism of~\cite{frt}.
Its finite-dimensional representations were classified in~\cite{z4}.
Around the same time, the super Yangian $Y_\hbar(A(m,n))$ of the classical
Lie superalgebra $A(m,n)$ in the new Drinfeld presentation was introduced in~\cite{s}.
As shown in \emph{loc.~cit.}, these super Yangians posses most of the properties
the usual Yangians have (including the PBWD bases). The explicit relation
between the super Yangians of~\cite{na,s} was established in~\cite{go}.

The primary goal of this section is to generalize
Theorem~\ref{hard shuffle yangian} to the case of $Y^>_\hbar(\ssl(m|n))$
(we note that
  $Y^>_\hbar(\ssl(m|n))\simeq Y^>_\hbar(\gl(m|n))\simeq Y^>_\hbar(A(m-1,n-1))$).
The resulting shuffle algebra $W^{(m|n)}$ is a mixture of the shuffle algebra
$S^{(m|n)}$ from Section~\ref{ssec trigonometric super shuffle} and
the rational shuffle algebra $W^{(n)}$ from Section~\ref{ssec rational shuffle algebra}.
We also generalize Theorem~\ref{shuffle integral form yangian} to the case of
$\bY^>_\hbar(\ssl(m|n))$.


\subsection{Algebras $Y^>_\hbar(\ssl(m|n))$ and $\bY^>_\hbar(\ssl(m|n))$}
\label{ssec super half-yangian sl_n}
\

Let $I=\{1,\ldots,m+n-1\}$, $(\bar{c}_{ij})_{i,j\in I}$ be defined as in
Section~\ref{ssec quantum super affine}, and $\hbar$ be a formal variable.
Following~\cite{s,go} (see Remark~\ref{correcting Stukopin} for a
correction of the defining relations in~\cite[Definition~2]{s}),
define $Y^>_\hbar(\ssl(m|n))$ to be the associative $\BC[\hbar]$-superalgebra
generated by $\{e_{i,r}\}_{i\in I}^{r\in \BN}$ with the $\BZ_2$-grading
  $|e_{m,r}|=\bar{1}, |e_{i,r}|=\bar{0}\ (i\ne m, r\in \BN)$,
and subject to the following defining relations:
\begin{equation}\label{superYan 1}
  [e_{i,r+1}, e_{j,s}]-[e_{i,r},e_{j,s+1}]=
  \frac{\bar{c}_{ij}\hbar}{2}\left(e_{i,r}e_{j,s}+e_{j,s}e_{i,r}\right)
  \ \mathrm{if}\ \bar{c}_{ij}\ne 0,
\end{equation}
\begin{equation}\label{superYan 2}
\begin{split}
  & [e_{i,r},e_{j,s}]=0\  \mathrm{if}\ \bar{c}_{ij}=0,\\
  & [e_{i,r_1},[e_{i,r_2},e_{j,s}]]+
    [e_{i,r_2},[e_{i,r_1},e_{j,s}]]=0\ \mathrm{if}\ \bar{c}_{ij}=\pm 1\ \mathrm{and}\ i\ne m,
\end{split}
\end{equation}
as well as quartic Serre relations:
\begin{equation}\label{superYan 3}
  [[e_{m-1,s},e_{m,r_1}],[e_{m+1,s'},e_{m,r_2}]]+
  [[e_{m-1,s},e_{m,r_2}],[e_{m+1,s'},e_{m,r_1}]]=0,
\end{equation}
where, as before, we use the super-bracket  $[a,b]=ab-(-1)^{|a||b|}\cdot ba$ for $\BZ_2$-homogeneous $a,b$.

\begin{Rem}\label{correcting Stukopin}
(a) The first relation of~(\ref{superYan 2}) implies the validity
of the second one for $i=m=j\pm 1$, which is also listed among
the defining relations of~\cite[Definition 2]{s}.

\noindent
(b) The wrong defining relation $[[e_{m-1,s},e_{m,r_1}],[e_{m+1,s'},e_{m,r_2}]]=0$
of~\cite[Definition~2]{s} should be replaced with the above relation~(\ref{superYan 3}).
\end{Rem}

Let $\{\alpha_i\}_{i=1}^{m+n-1}$ be as in Section~\ref{ssec quantum super affine}
and $\Delta^+$ be as in Section~\ref{ssec formulation main thm 4}.
For $\beta\in \Delta^+$, we define its parity $|\beta|\in \BZ_{2}$
via~(\ref{root parity}). We define the \emph{PBWD basis elements}
$e_\beta(r)\in Y^>_\hbar(\ssl(m|n))$ via~(\ref{higher roots yangian}), 
but with $[\cdot,\cdot]$ denoting the super-bracket.

Let $\bar{H}^+$ denote the set of all functions
$h\colon \Delta^+\times \BN\to \BN$ with finite support
and such that $h(\beta,r)\leq 1$ if $|\beta|=\bar{1}$.
The monomials
\begin{equation}\label{ordered superyangian}
  e_h\ :=\prod\limits_{(\beta,r)\in \Delta^+\times \BN}^{\rightarrow} e_\beta(r)^{h(\beta,r)}\ ,
  \qquad \forall\, h\in \bar{H}^+
\end{equation}
will be called the \emph{ordered PBWD monomials} of $Y^>_\hbar(\ssl(m|n))$.
Analogously to~\cite{l}, we have:

\begin{Thm}[\cite{s}]\label{pbwd for superyangian}
The elements $\{e_h\}_{h\in \bar{H}^+}$ form a basis of the free
$\BC[\hbar]$-module $Y^>_\hbar(\ssl(m|n))$.
\end{Thm}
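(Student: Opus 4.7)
The plan is to follow the shuffle-algebra strategy that yielded Theorem~\ref{pbwd for yangian} from Theorem~\ref{Main Theorem 1}, upgraded to the super setting by the modifications introduced in Section~\ref{ssec trigonometric super shuffle}. Equivalently, I would mimic Section~\ref{ssec proof of Theorem 4} after passing from trigonometric $\zeta$-factors to additive Yangian-style $\zeta$-factors, as in Section~\ref{ssec rational shuffle algebra}.

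First, I would introduce a rational super shuffle algebra $\bar{W}^{(m|n)}$, defined over $\BC[\hbar]$, whose graded components consist of rational functions of the form~(\ref{pole conditions supercase}) that are symmetric in $\{x_{i,r}\}_{r=1}^{k_i}$ for $i\neq m$, skew-symmetric in $\{x_{m,r}\}_{r=1}^{k_m}$, and satisfy additive analogs of the wheel conditions~(\ref{wheel conditions 1 supercase},~\ref{wheel conditions 2 supercase}) (where multiplication by $\vv_i$ is replaced by the shift $x\mapsto x+\hbar/2$, and $\vv^{\pm 1}$ by $x\mapsto x\pm \hbar/2$). The shuffle product uses rational $\zeta$-factors of the shape $\zeta_{i,j}(z)=(z+\bar{c}_{ij}\hbar/2)/(z-1)$, together with the sign conventions dictated by the skew-symmetry in the $x_{m,\bullet}$-variables. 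A direct verification of relations~(\ref{superYan 1}--\ref{superYan 3}) then yields an algebra homomorphism $\Psi\colon Y^>_\hbar(\ssl(m|n))\to \bar{W}^{(m|n)}$ given by $e_{i,r}\mapsto x_{i,1}^r$.

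Next, I would establish the linear independence of $\{\Psi(e_h)\}_{h\in \bar{H}^+}$ by adapting the specialization-map machinery of Sections~\ref{ssec linear indep} and~\ref{ssec proof of Theorem 4}. For each degree vector $\unl{d}=\{d_\beta\}_{\beta\in\Delta^+}$ one defines $\phi_{\unl{d}}$ by sending the variable $x_{k,\bullet}$ from the $s$-th copy of $[\beta]$ to $y_{\beta,s}-k\hbar/2$ when $k\leq m$ and to $y_{\beta,s}+(k-2m)\hbar/2$ when $k>m$, i.e.\ the additive analog of the super-case specialization from Section~\ref{ssec proof of Theorem 4}. The analogs of Lemmas~\ref{lower degrees} and~\ref{same degrees} then reduce the question to three rank-one situations, governed respectively by $Y^>_\hbar(\ssl_2)$ (handled by Lemma~\ref{k-th fold product yangian}), its skew-symmetric mirror (where the shuffle algebra is a $\BC[\hbar]$-algebra of skew-symmetric polynomials), and $Y^>_\hbar(\ssl(1|1))$, whose shuffle realization collapses to a Grassmann algebra and explains why the restriction $h(\beta,r)\leq 1$ for $p(\beta)=\bar 1$ in the definition of $\bar{H}^+$ is sharp.

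The main obstacle is the spanning property. I would handle it by a flatness argument rather than an explicit shuffle inversion: using a filtration on $Y^>_\hbar(\ssl(m|n))$ by the total degree in $r$, the relations~(\ref{superYan 1},~\ref{superYan 2},~\ref{superYan 3}) identify the associated graded of $Y^>_\hbar(\ssl(m|n))/(\hbar)$ with $U(\ssl(m|n)[t])^{>}$, in which the ordered PBWD monomials form a basis by the classical super PBW theorem. Combined with the already-established linear independence of $\{\Psi(e_h)\}_{h\in\bar{H}^+}$ over $\BC[\hbar]$, a standard Nakayama-style argument then lifts this basis to $Y^>_\hbar(\ssl(m|n))$ itself. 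The delicate point, flagged by Remark~\ref{correcting Stukopin}, is to verify carefully that the corrected form~(\ref{superYan 3'}) of the quartic Serre-like relation really produces $U(\ssl(m|n)[t])^{>}$ at $\hbar=0$ with no extra relations appearing; this is the place where the argument of~\cite{s} must be revisited rather than cited directly.
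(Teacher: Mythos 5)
The paper does not actually prove this theorem: it is quoted from Stukopin~[s] (with the defining relations corrected as in Remark~\ref{correcting Stukopin}), exactly as Theorem~\ref{pbwd for yangian} is quoted from Levendorskii~[l], and the shuffle algebra is then built \emph{on top of} it --- Proposition~\ref{simple shuffle super yangian} uses the PBWD property to upgrade the homomorphism $\Psi$ to an embedding. So you are supplying a proof where the paper only cites. Your linear-independence half is sound and is essentially the mechanism the paper runs for the non-super Yangian: the specialization maps of~(\ref{specialization map yangian}) adapted as in Section~\ref{ssec proof of Theorem 4}, reducing to the rank-one computations of Lemmas~\ref{k-th fold product yangian} and~\ref{m=n=1 case}; it proves independence of the $e_h$ without needing injectivity of $\Psi$, so there is no circularity. (A small slip: of your three rank-one cases, the ``mirror'' of $Y^>_\hbar(\ssl_2)$, coming from roots with $m<j(\beta)$, is still a \emph{symmetric} polynomial algebra with $\hbar$ replaced by $-\hbar$; only the $m\in[\beta]$ case, i.e.\ $\ssl(1|1)$, is skew-symmetric.)

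The genuine gap is in the spanning step. First, a ``standard Nakayama-style argument'' is not available as stated: $(\hbar)$ is not contained in the Jacobson radical of $\BC[\hbar]$, so ``linearly independent over $\BC[\hbar]$ and spanning mod $\hbar$'' does not imply spanning (take $N=(1-\hbar)\BC[\hbar]\subset M=\BC[\hbar]$). One must invoke the grading with $\deg e_{i,r}=(1_i;r)$ and $\deg\hbar=(0;1)$, for which relations~(\ref{superYan 1}--\ref{superYan 3}) are homogeneous and every graded piece is finite-dimensional over $\BC$, and induct on the degree. Second, and more seriously, the claim that the ordered monomials span $Y^>_\hbar(\ssl(m|n))/(\hbar)$ is precisely the hard content and is not delivered by ``the classical super PBW theorem'': $Y^>_\hbar/(\hbar)$ is the algebra \emph{presented} by the $\hbar=0$ relations, and what is needed is a surjection $U(\fn^+[t])\twoheadrightarrow Y^>_\hbar/(\hbar)$ (with $\fn^+\subset\ssl(m|n)$ the positive nilpotent part) carrying $e_\beta\otimes t^r$ to the iterated brackets $\bar e_\beta(r)$ --- equivalently, that the degenerate Drinfeld relations, including the corrected quartic relation~(\ref{superYan 3'}), present $U(\fn^+[t])$, or at least that the $\bar e_\beta(r)$ close under bracket so that arbitrary words can be straightened. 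Note that ``extra relations appearing at $\hbar=0$'' would only \emph{help} spanning; the issue is whether the given relations \emph{suffice} to reorder. This is exactly the portion of~[s] the paper relies on, so as written your argument defers the essential step rather than supplying it. A route that avoids this, consistent with the rest of the paper, is to prove injectivity of $\Psi$ independently (via a nondegenerate pairing, as in Proposition~\ref{simple shuffle}) and then extract spanning from the shuffle side as in Section~\ref{ssec spanning prop}.
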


For any $(\beta,r)\in \Delta^+\times \BZ$, define $\wt{e}_\beta(r)\in Y^>_\hbar(\ssl(m|n))$ via
  $$\wt{e}_\beta(r):=\hbar\cdot e_\beta(r).$$
For $h\in \bar{H}^+$, we also define $\wt{e}_h$ via~(\ref{ordered superyangian})
but using $\wt{e}_\beta(r)$ instead of $e_\beta(r)$.
Finally, let $\bY^>_\hbar(\ssl(m|n))$ denote the $\BC[\hbar]$-subalgebra
of $Y^>_\hbar(\ssl(m|n))$ generated by
$\{\wt{e}_\beta(r)\}_{\beta\in \Delta^+}^{r\in \BN}$.

The following result is analogous to~\cite[Theorem A.7]{ft2}:

\begin{Thm}\label{pbwd for gavarini superyangian}
(a) The subalgebra $\bY^>_\hbar(\ssl(m|n))$ is independent
of all our choices.

\noindent
(b) The ordered PBWD monomials $\{\wt{e}_h\}_{h\in \bar{H}^+}$
form a basis of the free $\BC[\hbar]$-module $\bY^>_\hbar(\ssl(m|n))$.
\end{Thm}


\subsection{Rational shuffle algebra $W^{(m|n)}$ and its integral form $\fW^{(m|n)}$}
\label{ssec rational shuffle super algebra}
\

Define the shuffle algebra $(\bar{W}^{(m|n)},\star)$ analogously
to the shuffle algebra $(S^{(m|n)},\star)$ of
Section~\ref{ssec trigonometric super shuffle}
with the following modifications:

\begin{itemize}

\item[(1)] 
all rational functions are defined over $\BC[\hbar]$;

\item[(2)] 
the rational functions
  $(\zeta_{i,j}(z))_{i,j\in I} \in \mathrm{Mat}_{I\times I}(\BC[\hbar](z))$
are chosen via
\begin{equation*}
  \zeta_{i,j}(z)=1+\frac{\bar{c}_{ij}\hbar}{2z};
\end{equation*}

\item[(3)] 
the shuffle product $\star$ is defined
via~(\ref{shuffle product}), but with the supersymmetrization $\SSym$
in place of the symmetrization $\Sym$ and $\zeta_{i,i'}(x_{i,r}-x_{i',r'})$ in place
of $\zeta_{i,i'}(x_{i,r}/x_{i',r'})$;

\item[(4)] 
the \emph{pole conditions}~(\ref{pole conditions supercase})
for $F\in \bar{W}^{(m|n)}_{\unl{k}}$ are replaced with
\begin{equation}\label{pole conditions super yangian}
  F=
  \frac{f(x_{1,1},\ldots,x_{n-1,k_{n-1}})}
       {\prod_{i=1}^{n-2}\prod_{r\leq k_i}^{r'\leq k_{i+1}}(x_{i,r}-x_{i+1,r'})},
\end{equation}
where $f\in \BC[\hbar][\{x_{i,r}\}_{i\in I}^{1\leq r\leq k_i}]$ is a
supersymmetric polynomial, that is, symmetric in $\{x_{i,r}\}_{r=1}^{k_i}$
for every $i\ne m$ and skew-symmetric in $\{x_{m,r}\}_{r=1}^{k_m}$;

\item[(5)] 
the \emph{first kind wheel conditions}~(\ref{wheel conditions 1 supercase})
for $F\in \bar{W}^{(m|n)}_{\unl{k}}$ are replaced with
\begin{equation}\label{wheel condition 1 super yangian}
  F(\{x_{i,r}\})=0\quad \mathrm{once}\quad
  x_{i,r_1}=x_{i+\epsilon,s}+\hbar/2=x_{i,r_2}+\hbar\
\end{equation}
for some $\epsilon\in \{\pm 1\},\, i\ne m,\, r_1\ne r_2,\, s$;

\item[(6)] 
the \emph{second kind wheel conditions}~(\ref{wheel conditions 2 supercase})
for $F\in \bar{W}^{(m|n)}_{\unl{k}}$ are replaced with
\begin{equation}\label{wheel condition 2 super yangian}
  F(\{x_{i,r}\})=0\quad \mathrm{once}\quad
  x_{m-1,s}=x_{m,r_1}+\hbar/2=x_{m+1,s'}=x_{m,r_2}-\hbar/2
\end{equation}
for some $r_1\ne r_2,\, s,\, s'$.

\end{itemize}

\noindent
In view of Theorem~\ref{pbwd for superyangian}, the 
\textbf{rational shuffle algebra} $\left(\bar{W}^{(m|n)},\star\right)$ is related
to $Y^>_\hbar(\ssl(m|n))$ via the following construction
(cf.~Propositions~\ref{simple 1 shuffle yangian},~\ref{simple 2 shuffle yangian}):

\begin{Prop}\label{simple shuffle super yangian}
The assignment $e_{i,r}\mapsto x_{i,1}^r\ (i\in I,r\in \BN)$
gives rise to a $\BC[\hbar]$-algebra embedding
$\Psi\colon Y^>_\hbar(\ssl(m|n))\hookrightarrow \bar{W}^{(m|n)}$.
\end{Prop}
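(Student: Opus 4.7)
The plan is to establish Proposition~\ref{simple shuffle super yangian} in two steps: first, verify that the assignment $e_{i,r}\mapsto x_{i,1}^r$ descends to a well-defined $\BC[\hbar]$-superalgebra homomorphism $\Psi$, and second, prove its injectivity via specialization maps in the spirit of Section~\ref{ssec linear indep}, combined with the $\BZ_2$-graded modifications from Section~\ref{ssec proof of Theorem 4} and the rational adjustments from Section~\ref{ssec rational shuffle algebra}.

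For the well-definedness, I would directly verify that the shuffle products of the images $\Psi(e_{i,r}) = x_{i,1}^r$ satisfy each of the relations~(\ref{superYan 1}--\ref{superYan 3}). The quadratic relations~(\ref{superYan 1}) and the Serre-type relations~(\ref{superYan 2}) are checked as in the non-super rational case of Proposition~\ref{simple 1 shuffle yangian}: unwinding the shuffle product using the new matrix $\zeta_{i,j}(z)$ reduces each identity to a small symbolic computation with rational functions over $\BC[\hbar]$, and the skew-symmetrization in the $m$-th family of variables correctly accounts for the sign $(-1)^{|a||b|}$ appearing in the super-bracket. The higher degree relation~(\ref{superYan 3}), equivalent to~(\ref{superYan 3'}), is the super-Yangian analogue of the imaginary Serre relation at node $m$; its verification requires the second kind wheel condition~(\ref{wheel condition 2 super yangian}) and reduces to a direct check that $\Psi$ applied to the left-hand side lies in the subspace cut out by that wheel condition (in fact, it vanishes identically modulo antisymmetrization in $\{x_{m,\bullet}\}$).

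For the injectivity, the plan is to repeat the argument of Section~\ref{ssec linear indep} in its rational, super incarnation. Explicitly, for any degree vector $\unl{d}=\{d_\beta\}_{\beta\in\Delta^+}$ with $\sum_\beta d_\beta[\beta]=\unl{k}$, I would define a specialization map
\begin{equation*}
  \phi_{\unl{d}}\colon \bar{W}^{(m|n)}_{\unl{k}}\longrightarrow
  \BC[\hbar]\bigl[\{y_{\beta,s}\}_{\beta\in \Delta^+}^{1\leq s\leq d_\beta}\bigr]
\end{equation*}
by splitting the variables into blocks indexed by positive roots and sending the $k$-th variable of the $s$-th copy of $[\beta]$ to $y_{\beta,s}-k\hbar/2$ if $k\leq m$ and to $y_{\beta,s}+(k-2m)\hbar/2$ if $k>m$ (the sign flip matching the bilinear form on $\oplus_i \BZ\epsilon_i$). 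The images land in functions which are symmetric in $\{y_{\beta,s}\}_{s=1}^{d_\beta}$ for $p(\beta)=\bar{0}$ and skew-symmetric for $p(\beta)=\bar{1}$. Then the analogues of Lemmas~\ref{lower degrees} and~\ref{same degrees} hold with formulas parallel to~(\ref{specialization 2-param},~\ref{explicit factors superLie}) but with $\vv^{\pm 2}$-shifts replaced by $\mp\hbar$-shifts and the $\vv$-factorials of Lemma~\ref{k-th fold product} replaced by the classical $k!$ of Lemma~\ref{k-th fold product yangian}; the rank-one inputs are the trivial bases of symmetric polynomials $\BC[\hbar][x_1,\ldots,x_k]^{\Sigma_k}$ (corresponding to $Y^>_\hbar(\ssl_2)$) and of skew-symmetric polynomials $\Lambda_k$ (corresponding to $Y^>_\hbar(\ssl(1|1))$, cf.\ Lemma~\ref{m=n=1 case}).

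Once the linear independence of $\{\Psi(e_h)\}_{h\in \bar{H}^+}$ is established by the usual minimal-degree-argument, combining it with Theorem~\ref{pbwd for superyangian} yields injectivity of $\Psi$. The main obstacle will be the verification of the super-Serre relation~(\ref{superYan 3}) in $\bar{W}^{(m|n)}$; this is the novel piece with no direct precedent in the earlier sections, and handling the interaction between the second kind wheel condition~(\ref{wheel condition 2 super yangian}) and the skew-symmetry in $\{x_{m,\bullet}\}$ requires some care. All remaining technical details are routine adaptations of arguments already presented in this paper.
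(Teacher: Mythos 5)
Your proposal follows essentially the same route the paper takes (the paper offers no separate proof, merely pointing to Propositions~\ref{simple 1 shuffle yangian},~\ref{simple 2 shuffle yangian} and Theorem~\ref{pbwd for superyangian}): check the defining relations on the images of the generators, then deduce injectivity from the linear independence of $\{\Psi(e_h)\}_{h\in\bar{H}^+}$ --- established via the specialization maps $\phi_{\unl{d}}$ with exactly the shifts you describe --- combined with the PBW theorem for $Y^>_\hbar(\ssl(m|n))$. One correction of emphasis: the verification of the Serre relation~(\ref{superYan 3}) does not ``require'' the second kind wheel condition~(\ref{wheel condition 2 super yangian}). For well-definedness you must show that $\Psi$ applied to the left-hand side vanishes \emph{identically} as a rational function after the (skew-)symmetrization built into the shuffle product; merely ``lying in the subspace cut out by the wheel condition'' would be far too weak. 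The wheel conditions point in the opposite direction: they are imposed on the target so that the image of $\Psi$ lands in (and, in the hard theorems, fills out) the subspace they carve out, and they play no role in checking that $\Psi$ is a homomorphism. Your parenthetical remark that the expression vanishes identically modulo antisymmetrization in $\{x_{m,\bullet}\}$ is the statement actually needed, so the slip does not derail the argument.
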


To describe the images of $Y^>_\hbar(\ssl(m|n))$ and its subalgebra
$\bY^>_\hbar(\ssl(m|n))$ under the embedding $\Psi$, let us introduce
the specialization maps in the current setting.
Given a degree vector $\unl{d}=\{d_\beta\}_{\beta\in \Delta^+}\in \BN^{\Delta^+}$,
define $\unl{\ell}\in \BN^I$ via
  $\sum_{\beta\in \Delta^+} d_\beta \beta = \sum_{i\in I} \ell_i\alpha_i$.
The \textbf{specialization map}
\begin{equation*}
  \phi_{\unl{d}}\colon \bar{W}^{(m|n)}_{\unl{\ell}}\longrightarrow
  \BC[\hbar][\{y_{\beta,s}\}_{\beta\in \Delta^+}^{1\leq s\leq d_\beta}]
\end{equation*}
   is defined similar to~(\ref{specialization map yangian}),
but with the only change that the variable $x_{i,r}$ in the $s$-th copy
of the interval $[\beta]$ is specialized to $y_{\beta,s}-\frac{i\hbar}{2}$
if $i\leq m$ and to $y_{\beta,s}+\frac{(i-2m)\hbar}{2}$ if $i>m$.

\begin{Def}\label{good element super yangian}
(a) $F\in \bar{W}^{(m|n)}_{\unl{k}}$ is \textbf{good} if $\phi_{\unl{d}}(F)$
is divisible by $\hbar^{\sum_{\beta\in \Delta^+} d_\beta(i(\beta)-j(\beta))}$
for any degree vector $\unl{d}=\{d_\beta\}_{\beta\in \Delta^+}\in \BN^{\Delta^+}$
such that $\sum_{\beta\in \Delta^+} d_\beta \beta = \sum_{i\in I} k_i \alpha_i$.

\noindent
(b) $F\in \bar{W}^{(m|n)}_{\unl{k}}$ is \textbf{integral}
if $F$ is divisible by $\hbar^{|\unl{k}|}$.
\end{Def}

\begin{Rem}
We note that any integral shuffle element $F\in \bar{W}^{(m|n)}_{\unl{k}}$ is obviously good.
\end{Rem}

Let $W^{(m|n)}_{\unl{k}}\subset \bar{W}^{(m|n)}_{\unl{k}}$ and $\fW^{(m|n)}_{\unl{k}}\subset \bar{W}^{(m|n)}_{\unl{k}}$
denote the $\BC[\hbar]$-submodules of all good and integral elements, respectively, and set
  $$W^{(m|n)}:=\underset{\unl{k}\in \BN^I}\bigoplus W^{(m|n)}_{\underline{k}}\ , 
    \qquad 
    \fW^{(m|n)}:=\underset{\unl{k}\in \BN^I}\bigoplus \fW^{(m|n)}_{\underline{k}}$$
The following are the key results of this section:

\begin{Thm}\label{hard shuffle super yangian}
The $\BC[\hbar]$-algebra embedding
  $\Psi\colon Y^>_\hbar(\ssl(m|n))\hookrightarrow \bar{W}^{(m|n)}$
of Proposition~\ref{simple shuffle super yangian} gives rise to
a $\BC[\hbar]$-algebra isomorphism
$\Psi\colon Y^>_\hbar(\ssl(m|n))\iso W^{(m|n)}$.
\end{Thm}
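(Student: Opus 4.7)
The plan is to combine the strategies used for Theorems~\ref{hard shuffle superLie} and~\ref{hard shuffle yangian}. Injectivity of $\Psi$ is already given by Proposition~\ref{simple shuffle super yangian}, so the task is to establish surjectivity onto the subalgebra $W^{(m|n)}\subset \bar{W}^{(m|n)}$ of \emph{good} elements. This is done in two stages: first I would verify $\Psi\bigl(Y^>_\hbar(\ssl(m|n))\bigr)\subseteq W^{(m|n)}$, and then I would prove the opposite inclusion via an iterative spanning argument based on the specialization maps $\phi_{\unl{d}}$.

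For the first inclusion, I would establish a super-Yangian analogue of Lemma~\ref{shuffle root elt yangian}: for $1\leq j\leq i<m+n$ and $r\in\BN$, $\Psi(e_{\alpha_j+\cdots+\alpha_i}(r))$ equals $\hbar^{i-j}$ times a rational function of the form $p/\prod_{k=j}^{i-1}(x_{k,1}-x_{k+1,1})$, where $p$ is a monomial up to sign. Then, imitating the proof of Lemma~\ref{necessity for Yangian}, any product $\Psi(e_{i_1,r_1}\cdots e_{i_N,r_N})$ specialized via $\phi_{\unl{d}}$ (with $x_{k,\bullet}$ in the $s$-th copy of $[\beta]$ sent to $y_{\beta,s}-k\hbar/2$ for $k\leq m$ and to $y_{\beta,s}+(k-2m)\hbar/2$ for $k>m$) picks up at least $\sum_\beta d_\beta(i(\beta)-j(\beta))$ factors of $\hbar$ from the $\zeta$-factors linking consecutive variables inside each interval, exactly matching the divisibility required by Definition~\ref{good element super yangian}.

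For surjectivity, I would adapt Lemma~\ref{spanning} to the present context. Fix $F\in W^{(m|n)}_{\unl{k}}$. The wheel conditions~(\ref{wheel condition 1 super yangian},~\ref{wheel condition 2 super yangian}) together with the skew-symmetry in the $x_{m,\bullet}$ variables force $\phi_{\unl{d}}(F)$ to vanish under specializations of the types (i)--(iv) that appeared in Section~\ref{ssec proof of Theorem 4}, so that $\phi_{\unl{d}}(F)$ is divisible by the rational counterpart
\[
  \prod_{\beta<\beta'} \bar{G}^\mathrm{rat}_{\beta,\beta'}\cdot
  \prod_{\beta\in \Delta^+}\bar{G}^\mathrm{rat}_\beta
\]
of the factors in~(\ref{explicit factors superLie}), where each $(y-\vv^{\pm 2}y')$ is replaced by $(y-y'\pm\hbar)$, and each $(y-y')^{\delta_{\cdots}}$ is kept as is. The \emph{good} condition supplies the additional divisibility by $\hbar^{\sum_\beta d_\beta(i(\beta)-j(\beta))}$, so the quotient is a genuine element of $\BC[\hbar][\{y_{\beta,s}\}]$ which is $\Sigma_{d_\beta}$-symmetric when $p(\beta)=\bar 0$ and $\Sigma_{d_\beta}$-skew-symmetric when $p(\beta)=\bar 1$. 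This quotient splits as a tensor product over $\beta\in\Delta^+$, reducing the problem to a rank-one statement for each $\beta$.

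The main obstacle will be this rank-one reduction. When $p(\beta)=\bar 0$, the relevant statement is Lemma~\ref{integral n=2 case yangian}: every symmetric polynomial over $\BC[\hbar]$ is a $\BC[\hbar]$-linear combination of shuffle products, with coefficients controlled by Lemma~\ref{k-th fold product yangian}. When $p(\beta)=\bar 1$, I would establish a fermionic analogue of Lemma~\ref{m=n=1 case} for $Y^>_\hbar(\ssl(1|1))$: the shuffle subalgebra $W^{(1|1)}$ is isomorphic to $\bigoplus_k \Lambda^\mathrm{rat}_k$ (skew-symmetric polynomials over $\BC[\hbar]$) via anti-symmetrization, and each skew-symmetric polynomial is a $\BC[\hbar]$-combination of shuffle products of the generators $x^r$; this is straightforward because $x^{r_1}\star\cdots\star x^{r_k}$ for $r_1<\cdots<r_k$ yields, up to an invertible constant, the Vandermonde-type skew-symmetrization of $x_1^{r_1}\cdots x_k^{r_k}$. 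Combining these rank-one results with the tensor decomposition above yields an element $F_{\unl{d}}\in\Psi(Y^>_\hbar(\ssl(m|n)))$ with $\phi_{\unl{d}}(F)=\phi_{\unl{d}}(F_{\unl{d}})$ and $\phi_{\unl{d}'}(F_{\unl{d}})=0$ for $\unl{d}'>\unl{d}$. Iterating from $\unl{d}_{\max}$ downward, and noting that $\phi_{\unl{d}_{\min}}$ is essentially the identity as in Example~\ref{example specialization maps}(d), gives $F\in\Psi(Y^>_\hbar(\ssl(m|n)))$, completing the proof.
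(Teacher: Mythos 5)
Your proposal is correct and follows essentially the same route as the paper, which proves this theorem in one line by declaring it "completely analogous" to Theorems~\ref{hard shuffle superLie} and~\ref{hard shuffle yangian}; your write-up is precisely the intended unpacking of that analogy (good-ness of the image via $\hbar$-factors from consecutive $\zeta$-factors, the spanning argument via $\phi_{\unl{d}}$ with vanishing conditions (i)--(iv), and the rank-one reduction to the bosonic Lemma~\ref{integral n=2 case yangian} and a fermionic $\ssl(1|1)$ analogue of Lemma~\ref{m=n=1 case}). The only detail worth flagging is that, as in the Remark of Section~\ref{ssec proof of Theorem 4}, computing the orders of vanishing in the case $\beta=\beta'$ with $m\in[\beta]$ requires taking the maximum over two different step-by-step specialization orders, a point your outline implicitly imports but does not state.
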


\begin{Thm}\label{shuffle integral form super yangian}
The $\BC[\hbar]$-algebra isomorphism
$\Psi\colon Y^>_\hbar(\ssl(m|n))\iso W^{(m|n)}$
of Theorem~\ref{hard shuffle super yangian} gives rise to
a $\BC[\hbar]$-algebra isomorphism
$\Psi\colon \bY^>_\hbar(\ssl(m|n))\iso \fW^{(m|n)}$.
\end{Thm}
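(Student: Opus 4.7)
The plan is to establish the isomorphism $\Psi\colon \bY^>_\hbar(\ssl(m|n))\iso \fW^{(m|n)}$ in two directions, paralleling our proof of Theorem~\ref{shuffle integral form yangian} while incorporating the super-structure features developed in the proofs of Theorems~\ref{Main Theorem 4} and~\ref{hard shuffle superLie}. By Theorem~\ref{hard shuffle super yangian}, $\Psi$ is already a $\BC[\hbar]$-algebra isomorphism onto $W^{(m|n)}$, so the task reduces to matching the Drinfeld--Gavarini integral form $\bY^>_\hbar(\ssl(m|n))$ with the integral submodule $\fW^{(m|n)}\subset W^{(m|n)}$.

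For the direction $\Psi(\bY^>_\hbar(\ssl(m|n)))\subseteq \fW^{(m|n)}$, I would first establish a super-Yangian analog of Lemma~\ref{shuffle root elt yangian}, namely
\begin{equation*}
  \Psi(e_\beta(r))=\hbar^{i(\beta)-j(\beta)}\cdot
  \frac{p(x_{j(\beta),1},\ldots,x_{i(\beta),1})}
       {(x_{j(\beta),1}-x_{j(\beta)+1,1})\cdots(x_{i(\beta)-1,1}-x_{i(\beta),1})},
\end{equation*}
with $p$ a monomial up to sign; this is a direct computation with the super-brackets of~\eqref{higher roots yangian}. Consequently $\Psi(\wt{e}_\beta(r))=\hbar\cdot \Psi(e_\beta(r))$ is divisible by $\hbar^{i(\beta)-j(\beta)+1}$, and the shuffle product of $l$ such elements in total degree $\unl{k}=\sum_{i=1}^l[\beta_i]$ is divisible by $\hbar^{|\unl{k}|}$, hence lies in $\fW^{(m|n)}_{\unl{k}}$.

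For the reverse inclusion, I would iteratively invoke the super-Yangian counterpart of Lemma~\ref{spanning integral}: given $F\in \fW^{(m|n)}_{\unl{k}}$ and $T_{\unl{k}}$ ordered lexicographically, process the degree vectors $\unl{d}\in T_{\unl{k}}$ from largest to smallest, each time subtracting from $F$ an appropriate $\BC[\hbar]$-combination of $\Psi(\wt{e}_h)$ with $\deg(h)=\unl{d}$ that matches $\phi_{\unl{d}}(F)$ while still vanishing at all strictly larger $\unl{d}'$. The vanishing orders of $\phi_{\unl{d}}(F)$ under the specializations (i)--(iv) of Section~\ref{ssec proof of Theorem 4} (coming from the wheel conditions~\eqref{wheel condition 1 super yangian} and~\eqref{wheel condition 2 super yangian}, the lexicographic replacement argument, and the skew-symmetry in $\{x_{m,\bullet}\}$) force $\phi_{\unl{d}}(F)$ to be divisible by the rational analog of $\prod_{\beta<\beta'}\bar{G}_{\beta,\beta'}\cdot\prod_\beta \bar{G}_\beta$ from~\eqref{explicit factors superLie}, with each binomial $(y-\vv^{\pm 2}y')$ replaced by $(y-y'\mp\hbar)$. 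The residual quotient then reduces, block by block in $\beta\in \Delta^+$, to a rank one computation over either $\bar{W}^{(2)}$ (when $m\notin[\beta]$) or $\bar{W}^{(1|1)}$ (when $m\in[\beta]$); the former is Lemma~\ref{integral n=2 case yangian}, and the latter is handled directly by observing that $W^{(1|1)}_k$ coincides with the space of skew-symmetric polynomials in $k$ variables (since $\zeta_{1,1}\equiv 1$ in that rank), so $\fW^{(1|1)}_k=\hbar^k\cdot W^{(1|1)}_k$ is spanned over $\BC[\hbar]$ by the images $\Psi(\wt{e}_{r_1}\cdots \wt{e}_{r_k})$ with $r_1<\cdots <r_k$, using that the generators $e_{1,r}$ of $Y^>_\hbar(\ssl(1|1))$ anticommute and square to zero.

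The main obstacle will be the careful bookkeeping of vanishing orders at the hyperplanes $y_{\beta,s}=y_{\beta,s'}\pm\hbar$ in the diagonal case $\beta=\beta'$ with $m\in[\beta]$, which was already the delicate point in the proof of Theorem~\ref{Main Theorem 4}: a single step-by-step specialization in a fixed order of variables does not produce the desired vanishing orders at both $+\hbar$ and $-\hbar$ simultaneously, and one must take the maximum across two opposite orderings of the interval. Once this combinatorial verification is transferred from the trigonometric to the rational setting, the iterative reduction together with the two rank one inputs closes the proof; it is moreover considerably simpler than that of Theorem~\ref{shuffle integral form}, since the $\vv$-factorials that enforce additional integrality constraints in the trigonometric setting do not arise here, exactly as observed in the remark following Theorem~\ref{shuffle integral form yangian}.
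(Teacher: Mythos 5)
Your proposal is correct and follows exactly the route the paper intends: the paper's own proof of this theorem is the single remark that it is ``proved completely analogously'' to Theorems~\ref{hard shuffle superLie},~\ref{hard shuffle yangian},~\ref{shuffle integral form yangian}, and your argument is precisely a careful working-out of that analogy (divisibility of $\Psi(\wt{e}_\beta(r))$ by $\hbar^{i(\beta)-j(\beta)+1}$ for the forward inclusion, the iterated spanning lemma with the super-modified vanishing orders for the reverse one, and the reduction to the rank-one cases $\bar{W}^{(2)}$ and $\bar{W}^{(1|1)}$). You also correctly flag the one genuinely delicate point, namely the computation of vanishing orders at $y_{\beta,s}=y_{\beta,s'}\pm\hbar$ when $\beta=\beta'$ and $m\in[\beta]$, which requires taking the maximum over two opposite orderings of the interval, exactly as in the trigonometric super case.
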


Both Theorems~\ref{hard shuffle super yangian},~\ref{shuffle integral form super yangian}
are proved completely analogously to
Theorems~\ref{hard shuffle superLie},~\ref{hard shuffle yangian},~\ref{shuffle integral form yangian}.
We refer the interested reader to~\cite[Proofs of Theorems 3.9, 3.30]{ts} for more details.


\section{Further directions}\label{sec further directions}

In this section, we briefly outline some of the related results
that will be addressed elsewhere.


\subsection{Integral forms of Grojnowski and Chari-Pressley and their PBWD bases}
\label{ssec grojnowski integral form}
\

We shall follow the notations of Section~\ref{sec Classical quantuma affine}.
For $i\in I, r\in \BZ, k\in \BN$, define the divided power
\begin{equation}\label{devided power}
  \se_{i,r}^{(k)}:=\frac{e_{i,r}^k}{[k]_\vv!}
\end{equation}
Following~\cite[\S7.8]{gr}, define the integral form $\sU^>_\vv(L\ssl_n)$
as the $\BC[\vv,\vv^{-1}]$-subalgebra of $U^>_\vv(L\ssl_n)$ generated by all
the divided powers $\{\se_{i,r}^{(k)}\}_{i\in I}^{r\in \BZ,k\in \BN}$.
The main objective of this section is to construct a family of PBWD bases
for $\sU^>_\vv(L\ssl_n)$ as well as to provide its shuffle realization.

Recall the PBWD basis elements $\{e_{\beta}(r)\}_{\beta\in \Delta^+}^{r\in \BZ}$
of~(\ref{higher roots}), which do depend on all the choices 1)--3) made prior
to their definition.
For $\beta\in \Delta^+, r\in \BZ, k\in \BN$, we define the divided power
\begin{equation}\label{higher root divided power}
  \se_{\beta}(r)^{(k)}:=\frac{e_{\beta}(r)^k}{[k]_\vv!}
\end{equation}
As a simple corollary of~\cite[Proof of Theorem 6.6]{lus}, we obtain:

\begin{Lem}
$\se_{\beta}(r)^{(k)}\in \sU^>_\vv(L\ssl_n)$ for any
$\beta\in \Delta^+, r\in \BZ, k\in \BN$.
\end{Lem}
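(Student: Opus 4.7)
The plan is to argue by induction on the length $\ell(\beta) := i(\beta)-j(\beta)+1$ of $\beta \in \Delta^+$. When $\ell(\beta)=1$, i.e., $\beta = \alpha_l$ for some $l \in I$, we have $\se_\beta(r)^{(k)} = \se_{l,r}^{(k)}$, which is a generator of $\sU^>_\vv(L\ssl_n)$ by definition, settling the base case.

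For the inductive step with $\ell(\beta) > 1$, set $\beta' := \alpha_j + \ldots + \alpha_{i-1}$, so that $e_\beta(r) = [e_{\beta'}(r), e_{i,0}]_\vv$ by~(\ref{grojnowski root choice}). The key algebraic input I would establish first is that $e_{\beta'}(r)$ and $e_{i,0}$ satisfy rank-two $\vv$-Serre-type relations analogous to those between two adjacent Chevalley generators of $U_\vv^>(\ssl_3)$. Concretely, writing out the iterated $\vv$-bracket defining $e_{\beta'}(r)$ and pushing $e_{i,0}$ through it step by step using~(\ref{Aff 2}) and~(\ref{Aff 7}), one should verify the identity
\[
  [e_{i,0},[e_{i,0}, e_{\beta'}(r)]_{\vv^{-1}}]_\vv = 0,
\]
together with a $\vv$-commutation relation between $e_{\beta'}(r)$ and the new higher root vector $e_\beta(r)=[e_{\beta'}(r),e_{i,0}]_\vv$. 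Crucially, the spectral parameter $r$ enters these identities only through the leftmost factor $e_{j,r}$ inside $e_{\beta'}(r)$, so the relations take the same shape as their classical rank-two counterparts uniformly in $r$.

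Granting those Serre-type relations, the divided-power identity of~\cite[Section 5.5]{lus} then expresses $\se_\beta(r)^{(k)}$ as a $\BC[\vv,\vv^{-1}]$-polynomial in $\{\se_{\beta'}(r)^{(m)}, \se_{i,0}^{(m)}\}_{0 \leq m \leq k}$. The first family of divided powers lies in $\sU^>_\vv(L\ssl_n)$ by the inductive hypothesis, while the second lies in $\sU^>_\vv(L\ssl_n)$ by definition, so $\se_\beta(r)^{(k)} \in \sU^>_\vv(L\ssl_n)$, completing the induction.

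The main obstacle will be the first step, i.e.\ the verification of those Serre-type relations between $e_{\beta'}(r)$ and $e_{i,0}$. In the non-affine case, inside $U_\vv(\ssl_n)$, this is classical and goes back to Lusztig's rank-two computations, but in the loop setting one must track the spectral parameter $r$ carefully and show that the auxiliary terms produced by~(\ref{Aff 2}) either cancel in pairs or collapse into factors of the standard Serre shape. Once this verification is in place, the divided-power manipulations from~\cite[Section 5.5]{lus} apply verbatim.
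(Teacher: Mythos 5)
Your proposal is correct and matches the paper's intent: the paper offers no argument beyond the citation of \cite[Section 5.5]{lus}, and your induction on the length of $\beta$, reducing at each step to the rank-two divided-power identity for a pair $(e_{\beta'}(r), e_{i,0})$ satisfying the $A_2$ $\vv$-Serre relations, is exactly what that citation presupposes. The only step deserving extra care is the Serre relation quadratic in $e_{\beta'}(r)$ (equivalently, the $\vv$-commutation of $e_{\beta'}(r)$ with $e_{\beta}(r)$), which is most painlessly verified in the shuffle algebra via Lemma~\ref{shuffle root elt} and the injectivity of $\Psi$ rather than by pushing generators through the iterated bracket.
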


\noindent
The monomials
  $$\se_h\ :=\prod\limits_{(\beta,r)\in \Delta^+\times \BZ}^{\rightarrow} \se_\beta(r)^{(h(\beta,r))}\ ,
    \quad \forall\, h\in H$$
will be called the \emph{ordered PBWD monomials} of $\sU^{>}_\vv(L\ssl_n)$.
Here, the arrow over the product sign refers to the total order~(\ref{extended order}).

Our first main result of this Section establishes the PBWD property for $\sU^{>}_\vv(L\ssl_n)$:

\begin{Thm}\label{Main Theorem Grojnowski}
The elements $\{\se_h\}_{h\in H}$ form a basis of
the free $\BC[\vv,\vv^{-1}]$-module $\sU^{>}_\vv(L\ssl_n)$.
\end{Thm}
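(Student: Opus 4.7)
The plan is to adapt the shuffle-algebra strategy of Sections~\ref{ssec proof of Theorem 1} and~\ref{ssec proof of Theorem 2}, with the integral form $\fU^>_\vv(L\ssl_n)$ replaced by $\sU^>_\vv(L\ssl_n)$ and the scaled generators $\wt{e}_\beta(r)$ replaced by the divided powers $\se_\beta(r)^{(k)}$. The linear independence of $\{\se_h\}_{h\in H}$ over $\BC[\vv,\vv^{-1}]$ is immediate: $\se_h$ differs from the ordered PBWD monomial $e_h$ of Theorem~\ref{Main Theorem 1}(a) only by the nonzero $\BC(\vv)^\times$-scalar $\prod_{(\beta,r)}[h(\beta,r)]_\vv!^{-1}$, and $\{e_h\}_{h\in H}$ is already $\BC(\vv)$-linearly independent by Theorem~\ref{Main Theorem 1}(a).

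For the spanning property, I would first settle the rank-one case $n=2$. Here Lemma~\ref{k-th fold product} gives the explicit shuffle image $\Psi(\se_r^{(k)})=\frac{\vv^{-k(k-1)/2}}{k!}\cdot(x_1\cdots x_k)^r$. An iterative subtraction procedure modelled on Lemma~\ref{integral n=1 case} then expresses any product $\se_{r_1}^{(k_1)}\cdots\se_{r_l}^{(k_l)}$ as a $\BC[\vv,\vv^{-1}]$-linear combination of ordered monomials from $\{\se_h\}$. The key combinatorial input, replacing the scalar $\nu_{\unl r}$ appearing in the proof of Lemma~\ref{n=1 case}, is that the coefficient of the leading monomial symmetric polynomial under the specialization adapted to a composition $(k_1,\ldots,k_l)$ is a Gaussian multinomial, which already lies in $\BC[\vv,\vv^{-1}]$, so that each subtraction step preserves integrality and the procedure terminates.

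For general $n$, I would characterize the image $\Psi(\sU^>_\vv(L\ssl_n))$ inside $S^{(n)}$ by divisibility conditions on the cross-specializations $\Upsilon_{\unl{d},\unl{t}}$ of Section~\ref{ssec integral shuffle algebra}, analogous to Definition~\ref{integral element} but with the $(\vv-\vv^{-1})^{|\unl{k}|}$-prefactor dropped and the divisibility required only by the product of Gaussian factorials $\prod_{\beta,i}[t_{\beta,i}]_\vv!$. The lexicographic degree-reduction argument of Sections~\ref{ssec spanning prop} and~\ref{ssec proof general case integral}, built on Lemmas~\ref{lower degrees},~\ref{same degrees}, and~\ref{spanning integral}, then writes every element of this subspace as a $\BC[\vv,\vv^{-1}]$-combination of $\{\Psi(\se_h)\}_{h\in H}$, with the rank-one case supplying the integral coefficients at each fixed degree vector $\unl{d}\in T_{\unl{k}}$.

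The main obstacle will be verifying that every monomial in the Grojnowski generators $\se_{i,r}^{(k)}$ actually satisfies these refined divisibility conditions, that is, extending Lemma~\ref{higher roots divided power} from a single divided-power PBWD element to arbitrary products and understanding their interaction with the two-step specialization $\Upsilon_{\unl{d},\unl{t}}$. Unlike the proof of Lemma~\ref{necessity for n>2}, where a straightforward count of $\zeta$-factor contributions produced the required power of $(\vv-\vv^{-1})$, here one must track $[k]_\vv!$-divisibilities through the composite specialization, which amounts to a Lusztig-type quantum binomial calculus in the new Drinfeld presentation along the lines of~\cite[Section 5.5]{lus}.
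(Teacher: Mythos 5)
Your first two steps (linear independence via rescaling of Theorem~\ref{Main Theorem 1}(a), and the $n=2$ spanning via iterated subtraction against the leading monomial symmetric polynomial, whose coefficient in $\Psi(\se_h)$ is a unit) are sound and match the paper's route. But the characterization of the image $\Psi(\sU^>_\vv(L\ssl_n))$ that you propose for general $n$ is wrong, and the obstacle you flag at the end is not a technical nuisance but a genuine failure of your definition. Requiring $\Upsilon_{\unl{d},\unl{t}}(F)$ to be divisible by $\prod_{\beta,i}[t_{\beta,i}]_\vv!$ is exactly the condition that the divided powers are designed to destroy: already for $n=2$ one has $\Psi(\se_{0}^{(2)})=\tfrac{\vv^{-1}}{2}\in S^{(2)}_2$ by Lemma~\ref{k-th fold product}, and its specialization at $(\vv^{-2}z,\vv^{-4}z)$ is a unit, hence not divisible by $[2]_\vv!$. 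So the necessity direction of your characterization fails on the generators themselves, and the degree-reduction argument cannot even get started, since Lemma~\ref{spanning integral} presupposes that the elements to be spanned satisfy the defining conditions of the target submodule.

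The correct characterization, which the paper adopts in Definition~\ref{good element quantum} (modeled on the Yangian Definition~\ref{good element yangian} rather than on Definition~\ref{integral element}), drops the cross-specialization and $\vv$-factorial conditions entirely: $F$ is \emph{good} if its numerator $f$ has coefficients in $\BC[\vv,\vv^{-1}]$ and $\phi_{\unl{d}}(F)$ is divisible by $(\vv-\vv^{-1})^{\sum_{\beta}d_\beta(i(\beta)-j(\beta))}$ for every degree vector $\unl{d}$. The power of $(\vv-\vv^{-1})$ records only the commutator prefactors of Lemma~\ref{shuffle root elt}; no $[k]_\vv!$-divisibility is imposed because, as your own rank-one computation shows, $\Psi(\se_\beta(r)^{(k)})$ contributes none. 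Necessity (Lemma~\ref{necessity for Grojnowski}) then follows from Lemma~\ref{k-th fold product} together with the $\zeta$-factor count in the proof of Lemma~\ref{necessity for Yangian}, and sufficiency follows by the same degree-reduction as in Section~\ref{ssec spanning prop}, with the rank-one input being simply that every symmetric Laurent polynomial over $\BC[\vv,\vv^{-1}]$ is a $\BC[\vv,\vv^{-1}]$-combination of the $\Psi(\se_h)$ --- no Lusztig-type quantum binomial calculus through $\Upsilon_{\unl{d},\unl{t}}$ is needed. You should replace your proposed integral-form-style conditions by this \emph{good} condition; with that substitution the rest of your outline goes through.
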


The proof of Theorem~\ref{Main Theorem Grojnowski} is completely analogous
to our proofs of Theorems~\ref{Main Theorem 1},~\ref{Main Theorem 2} and
is based on (as well as used in) the description of $\Psi(\sU^>_\vv(L\ssl_n))$,
viewed as a subspace of $S^{(n)}$. For the latter purpose,
let us adapt Definition~\ref{good element yangian} to the current setting:

\begin{Def}\label{good element quantum}
$F\in S^{(n)}_{\unl{k}}$ is \textbf{good} if it satisfies the following two properties:
\begin{enumerate}
  \item[(i)]
    $F=
    \frac{f(x_{1,1},\ldots,x_{n-1,k_{n-1}})}
         {\prod_{i=1}^{n-2}\prod_{r\leq k_i}^{r'\leq k_{i+1}}(x_{i,r}-x_{i+1,r'})}$
    with $f\in \BC[\vv,\vv^{-1}][\{x_{i,r}^{\pm 1}\}_{i\in I}^{1\leq r\leq k_i}]^{\Sigma_{\unl{k}}}$;

  \item[(ii)]
    the specialization $\phi_{\unl{d}}(F)$ of~(\ref{specialization map})
    is divisible by $(\vv-\vv^{-1})^{\sum_{\beta\in \Delta^+} d_\beta(i(\beta)-j(\beta))}$
    for any degree vector $\unl{d}=\{d_\beta\}_{\beta\in \Delta^+}\in \BN^{\Delta^+}$
    such that $\sum_{\beta\in \Delta^+} d_\beta \beta = \sum_{i\in I} k_i \alpha_i$.
\end{enumerate}
\end{Def}

Let $\sS^{(n)}_{\unl{k}}\subset S^{(n)}_{\unl{k}}$ denote the $\BC[\vv,\vv^{-1}]$-submodule of all good elements and set 
  $$\sS^{(n)}:=\underset{\unl{k}\in \BN^I}\bigoplus \sS^{(n)}_{\underline{k}}$$

\begin{Lem}\label{necessity for Grojnowski}
$\Psi(\sU^>_\vv(L\ssl_n))\subseteq \sS^{(n)}$.
\end{Lem}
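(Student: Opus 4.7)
The plan is to reduce the lemma to two claims and verify each in turn: (I) that $\sS^{(n)}$ is a $\BC[\vv,\vv^{-1}]$-subalgebra of $S^{(n)}$ under the shuffle product $\star$, and (II) that $\Psi(\se_{i,r}^{(k)}) \in \sS^{(n)}$ for every $i \in I$, $r\in\BZ$, $k\in\BN$. Since $\Psi$ is a $\BC(\vv)$-algebra homomorphism by Proposition~\ref{simple shuffle} and $\sU^>_\vv(L\ssl_n)$ is generated over $\BC[\vv,\vv^{-1}]$ by the divided powers $\{\se_{i,r}^{(k)}\}_{i\in I}^{r\in\BZ, k\in\BN}$, these two claims together yield $\Psi(\sU^>_\vv(L\ssl_n)) \subseteq \sS^{(n)}$.

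For Claim (II), the element $\Psi(\se_{i,r}^{(k)})$ is supported in degree $k\alpha_i \in \BN^I$, so only variables in the $i$-th family appear and the only active shuffle factor is $\zeta_{i,i}$. A computation parallel to Lemma~\ref{k-th fold product}, applied to the $i$-th variable family, identifies $\Psi(e_{i,r}^k)$ as a $\BC[\vv,\vv^{-1}]$-multiple of $[k]_\vv!\cdot (x_{i,1}\cdots x_{i,k})^r$; dividing by $[k]_\vv!$ then exhibits $\Psi(\se_{i,r}^{(k)})$ as an integral multiple of $(x_{i,1}\cdots x_{i,k})^r$, giving a numerator $f \in \BC[\vv,\vv^{-1}][\{x_{i,r}^{\pm 1}\}_{1\leq r\leq k}]$. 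The divisibility hypothesis of Definition~\ref{good element quantum} is vacuous in this case, since the only degree vector $\unl{d}$ with $\sum_\beta d_\beta[\beta] = k\alpha_i$ has $d_{\alpha_i}=k$ and all other $d_\beta=0$, so $\sum_\beta d_\beta(i(\beta)-j(\beta))=0$.

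The main obstacle is Claim (I). The integrality of the shuffle numerator is immediate: each $\zeta_{i,j}(z) = \frac{z-\vv^{-c_{ij}}}{z-1}$ has numerator in $\BC[\vv,\vv^{-1}][z]$, so the shuffle product of two elements with $\BC[\vv,\vv^{-1}]$-polynomial numerators produces another such after clearing the standard denominators $(x_{i,r}-x_{i+1,r'})$. The delicate point is the $(\vv-\vv^{-1})$-divisibility of $\phi_{\unl{d}}(F \star G)$: given $F \in \sS^{(n)}_{\unl{k}}$, $G \in \sS^{(n)}_{\unl{l}}$, and $\unl{d}$ with $\sum_\beta d_\beta[\beta] = \unl{k}+\unl{l}$, I would expand $\phi_{\unl{d}}(F \star G)$ as a sum over distributions of each $[\beta]$-interval between the $F$- and $G$-components (allowing splits of a single interval between $F$ and $G$), use the inherited partial-specialization divisibility of $F$ and $G$, and track the $(\vv-\vv^{-1})$-contributions of the bridging $\zeta_{i,i\pm 1}$-specializations. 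The combinatorial heart is the observation that each split of an interval $[\beta]$ contributes a factor $(\vv-\vv^{-1})^{i(\beta)-j(\beta)}$ from the crossing $\zeta$-factors, which combines with the divisibility carried by the partial specializations of $F$ and $G$ to produce the required overall power $(\vv-\vv^{-1})^{\sum_\beta d_\beta(i(\beta)-j(\beta))}$. This accounting closely mirrors — and is strictly simpler than — the argument in the proof of Lemma~\ref{necessity for n>2}, where one must additionally control divisibility by $\vv$-factorials under the secondary specialization $\varpi_{\unl{t}}$; here only the primary specialization $\phi_{\unl{d}}$ enters.
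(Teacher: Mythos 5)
Your reduction to (I) closure of $\sS^{(n)}$ under $\star$ plus (II) integrality of the images of the divided powers is logically sound, and your treatment of (II) is correct and coincides with the paper's use of Lemma~\ref{k-th fold product}: $\Psi(\se_{i,r}^{(k)})$ is a unit multiple of $(x_{i,1}\cdots x_{i,k})^r$, and the divisibility requirement of Definition~\ref{good element quantum} is vacuous in degree $k\alpha_i$. However, the paper deliberately avoids your claim (I). It never proves directly that $\sS^{(n)}$ (or $\fS^{(n)}$) is $\star$-closed; that is only obtained a posteriori from the isomorphism theorems. Instead, the paper's proof of this lemma works directly with $\Psi$ applied to a product of generators $\se_{i_1,r_1}^{(k_1)}\cdots \se_{i_N,r_N}^{(k_N)}$ and repeats the counting from the proof of Lemma~\ref{necessity for Yangian}: since each factor $(x_{i,1}\cdots x_{i,k})^r$ involves a single color $i$, every consecutive pair of variables inside a specialized interval $[\beta]$ necessarily comes from two \emph{distinct} factors, so each of the $i(\beta)-j(\beta)$ such pairs produces a bridging $\zeta_{l,l\pm1}$-factor whose specialization is a multiple of $(\vv-\vv^{-1})$. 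This yields the full power $(\vv-\vv^{-1})^{\sum_\beta d_\beta(i(\beta)-j(\beta))}$ from the $\zeta$-factors alone, with no appeal to any inherited divisibility, and the proof is two lines.

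Your sketch of (I) has a genuine soft spot. First, the statement that ``each split of an interval $[\beta]$ contributes a factor $(\vv-\vv^{-1})^{i(\beta)-j(\beta)}$ from the crossing $\zeta$-factors'' is not correct: only those consecutive pairs $(l,l+1)$ whose two variables are assigned to different tensor factors produce a crossing $\zeta$-factor; the pairs internal to $F$ (resp.\ $G$) contribute nothing from $\zeta$-factors and must be covered by the goodness of $F$ (resp.\ $G$). Second, to invoke that goodness you need to identify the partial specialization of $F$ induced by an arbitrary (possibly non-contiguous) distribution of the positions of $[\beta]$ with a genuine specialization map $\phi_{\unl{d}'}(F)$: this works because the $F$-positions decompose into maximal contiguous runs, each of which is again a root interval in type $A$, and the $\vv$-powers assigned by $\phi_{\unl{d}}$ on such a run agree with those of $\phi_{\unl{d}'}$ for the corresponding root; the number of internal consecutive pairs then matches $\sum(\mathrm{length}-1)$ over the runs, which is exactly the exponent guaranteed by Definition~\ref{good element quantum} for $\unl{d}'$. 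Without these two points the accounting does not close up to the required total exponent. With them your (I) can be completed, but it is substantially more work than the statement of the lemma requires, and you should either supply that argument or switch to the paper's direct route through products of generators.
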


\begin{proof}
As $\sU^>_\vv(L\ssl_n)$ is generated by $\se_{i,r}^{(k)}$ over $\BC[\vv,\vv^{-1}]$,
it suffices to verify both properties~(i,~ii) of Definition~\ref{good element quantum}
for any shuffle element $F=\Psi(\se_{i_1,r_1}^{(k_1)}\cdots \se_{i_N,r_N}^{(k_N)})$.
The validity of~(i) for $F$ follows from the equality 
  $\Psi(\se_{i,r}^{(k)})=\vv^{-\frac{k(k-1)}{2}}(x_1\cdots x_k)^r$,
due to Lemma~\ref{k-th fold product}.
On the other hand, the validity of~(ii) for $F$ is established using the arguments from
our proof of Lemma~\ref{necessity for Yangian}.
\end{proof}

The second key result of this section provides
a shuffle realization of $\sU_\vv^{>}(L\ssl_n)$:

\begin{Thm}\label{shuffle Grojnowski}
The $\BC(\vv)$-algebra isomorphism $\Psi\colon U_\vv^{>}(L\ssl_n)\iso S^{(n)}$
of Theorem~\ref{hard shuffle} gives rise to a $\BC[\vv,\vv^{-1}]$-algebra
isomorphism $\Psi\colon \sU_\vv^{>}(L\ssl_n)\iso \sS^{(n)}$.
\end{Thm}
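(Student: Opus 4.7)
The plan is to mirror the two-step strategy used for Theorems~\ref{Main Theorem 2}(b) and~\ref{shuffle integral form} in Section~\ref{ssec proof of Theorem 2}. The inclusion $\Psi(\sU^>_\vv(L\ssl_n))\subseteq \sS^{(n)}$ is already Lemma~\ref{necessity for Grojnowski}, so the real content is to prove that any good shuffle element $F\in \sS^{(n)}$ can be expanded as a $\BC[\vv,\vv^{-1}]$-linear combination of $\{\Psi(\se_h)\}_{h\in H}$. This simultaneously yields Theorem~\ref{Main Theorem Grojnowski}: the linear independence of $\{\se_h\}_{h\in H}$ over $\BC[\vv,\vv^{-1}]$ is automatic from Theorem~\ref{Main Theorem 1}(a), since each $\se_h$ differs from the corresponding $e_h$ by a non-zero scalar in $\BC(\vv)^\times$.

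First I would establish the rank-one case $n=2$: any symmetric Laurent polynomial $\bar{F}\in \BC[\vv,\vv^{-1}][\{x_i^{\pm 1}\}_{i=1}^k]^{\Sigma_k}$ lies in the $\BC[\vv,\vv^{-1}]$-span of $\{\Psi(\se_h)\}_{h\in H}$. By Lemma~\ref{k-th fold product}, for a divided-power monomial $\se_h=\se_{r_1}^{(k_1)}\cdots\se_{r_l}^{(k_l)}$ with $r_1<\cdots<r_l$ and $k_1+\cdots+k_l=k$, the shuffle image has the form
\begin{equation*}
  \Psi(\se_h)=\tfrac{1}{k!}\prod_{i=1}^l \vv^{-k_i(k_i-1)/2}\cdot m_{(r_1^{k_1},\ldots,r_l^{k_l})}(x_1,\ldots,x_k)+(\textrm{strictly lower monomial symmetric terms}),
\end{equation*}
and the leading coefficient $\tfrac{1}{k!}\prod_i \vv^{-k_i(k_i-1)/2}$ is a unit of $\BC[\vv,\vv^{-1}]$. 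Hence the iterative subtraction scheme of Lemma~\ref{integral n=1 case}, with $\Psi(\se_h)$ in place of $\Psi(e_{r_1}\cdots e_{r_k})$, terminates and produces the desired expansion with coefficients in $\BC[\vv,\vv^{-1}]$.

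For general $n>2$, I would then rerun the specialization-map argument of Lemma~\ref{spanning integral}. Fix $F\in \sS^{(n)}_{\unl{k}}$ and iterate over $\unl{d}\in T_{\unl{k}}$ in decreasing lexicographic order. The wheel and pole conditions together with the good-element divisibility of Definition~\ref{good element quantum} show that $\phi_{\unl{d}}(F)$ factors as $\prod_{\beta<\beta'}G_{\beta,\beta'}\cdot \prod_\beta G_\beta\cdot G$, with $G_{\beta,\beta'},G_\beta$ as in~(\ref{explicit factors}) and the residue $G$ a $\BC[\vv,\vv^{-1}]$-Laurent polynomial symmetric in $\{y_{\beta,s}\}_{s=1}^{d_\beta}$ for each $\beta\in\Delta^+$. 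Applying the rank-one step above to each block $\{y_{\beta,s}\}_s$ separately yields an element $F_{\unl{d}}$ in the $\BC[\vv,\vv^{-1}]$-span of $\{\Psi(\se_h)\}_{\deg(h)=\unl{d}}$ with $\phi_{\unl{d}}(F)=\phi_{\unl{d}}(F_{\unl{d}})$, via the factorization~(\ref{explicit formula for same degrees}). Iterating from $\unl{d}_{\max}$ down to $\unl{d}_{\min}$ recovers $F$ itself, completing the proof.

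The main obstacle will be the precise accounting of $(\vv-\vv^{-1})$-factors in the divisibility step above. Specifically, $\prod_\beta G_\beta$ contributes $(1-\vv^2)^{\sum_\beta d_\beta(i(\beta)-j(\beta))}$, and this must exhaust precisely the $(\vv-\vv^{-1})$-divisibility guaranteed by the good condition, so that the residue $G$ is a $\BC[\vv,\vv^{-1}]$-valued Laurent polynomial rather than a $\BC(\vv)$-valued one. Happily, the two exponents match on the nose, by the very formulation of Definition~\ref{good element quantum}. Importantly, no further $[t_{\beta,i}]_\vv!$-divisibility is needed here, because passing from $e_\beta(r)^k$ to $\se_\beta(r)^{(k)}$ divides out the $[k]_\vv!$-obstruction that forced the cross-specialization $\varpi_{\unl{t}}$ in Definition~\ref{integral element}; consequently, the entire technical apparatus of Section~\ref{ssec proof general case integral} collapses, in the Grojnowski setting, to essentially the argument of Lemma~\ref{spanning}.
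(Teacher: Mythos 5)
Your proposal is correct and follows essentially the same route as the paper, which only sketches this argument: one inclusion is Lemma~\ref{necessity for Grojnowski}, and the spanning of $\sS^{(n)}$ by $\{\Psi(\se_h)\}_{h\in H}$ is obtained by rerunning the specialization-map scheme of Lemmas~\ref{spanning},~\ref{spanning integral}, with the rank-one input now being that the divided powers have unit leading coefficients in the monomial symmetric basis (so the $[k]_\vv!$-bookkeeping and the cross specializations $\varpi_{\unl{t}}$ of Definition~\ref{integral element} are not needed). Your accounting of the $(\vv-\vv^{-1})$-powers against the scalar factor of $\prod_\beta G_\beta$, and the observation that the \emph{good} condition of Definition~\ref{good element quantum} supplies exactly that divisibility, match the intended argument (cf.\ the Yangian analogue, Definition~\ref{good element yangian} and Theorem~\ref{hard shuffle yangian}).
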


Define $\sU^<_\vv(L\ssl_n)$ as the $\BC[\vv,\vv^{-1}]$-subalgebra
of $U^<_\vv(L\ssl_n)$ generated by the divided powers
$\mathsf{f}_{i,r}^{(k)}:=f_{i,r}^k/[k]_\vv!\ (i\in I,r\in \BZ, k\in \BN)$.
Finally, consider the $\BC[\vv,\vv^{-1}]$-subalgebra $\sU^0_\vv(L\ssl_n)$
of $U^0_\vv(L\ssl_n)$ introduced in~\cite[\S3]{cp}, cf.~\cite[\S2.3]{t2}.
Following~\cite{cp}, define the integral form $\sU_\vv(L\ssl_n)$ as
the $\BC[\vv,\vv^{-1}]$-subalgebra of $U_\vv(L\ssl_n)$ generated by
$\sU^<_\vv(L\ssl_n),\sU^0_\vv(L\ssl_n),\sU^>_\vv(L\ssl_n)$.

\begin{Rem}\label{Lusztig vs Grojnowski}
Identifying $U_\vv(L\ssl_n)$ with the Drinfeld-Jimbo quantum loop
algebra $U^{\mathrm{DJ}}_\vv(L\ssl_n)$, see~\cite{d}, the form
$\sU_\vv(L\ssl_n)$ is identified with the Lusztig form
of $U^{\mathrm{DJ}}_\vv(L\ssl_n)$, due to~\cite{cp}.
\end{Rem}

The following triangular decomposition of $\sU_\vv(L\ssl_n)$ is
due to~\cite[Proposition 6.1]{cp}:

\begin{Thm}[\cite{cp}]\label{triangular CP}
The multiplication map
\begin{equation*}
  m\colon
   \sU^{<}_\vv(L\ssl_n)\otimes_{\BC[\vv,\vv^{-1}]}
   \sU^{0}_\vv(L\ssl_n)\otimes_{\BC[\vv,\vv^{-1}]}
   \sU^{>}_\vv(L\ssl_n)\longrightarrow \sU_\vv(L\ssl_n)
\end{equation*}
is an isomorphism of the free $\BC[\vv,\vv^{-1}]$-modules.
\end{Thm}

Combining Theorems~\ref{Main Theorem Grojnowski} and~\ref{triangular CP},
we obtain a family of PBWD bases for the form $\sU_\vv(L\ssl_n)$.

\begin{Rem}
The results of
Theorems~\ref{Main Theorem 2},~\ref{shuffle integral form},~\ref{Main Theorem Grojnowski},~\ref{shuffle Grojnowski}
were recently used in~\cite{t2} to establish the duality between
the forms $\fU^>_\vv(L\ssl_n)$ and $\sU^<_\vv(L\ssl_n)$
(resp.\ $\fU^<_\vv(L\ssl_n)$ and $\sU^>_\vv(L\ssl_n)$)
with respect to the new Drinfeld pairing.
We refer the interested reader to~\cite{t2} for more details.
\end{Rem}


\subsection{Generalizations to all Dynkin diagrams associated with $\ssl(m|n)$}
\label{ssec all Dynkin diagrams}
\

Recall that a novel feature of Lie superalgebras (in contrast to Lie algebras)
is that they admit several non-isomorphic Dynkin diagrams. Likewise, one may
consider various quantizations of universal enveloping superalgebras starting
from different Dynkin diagrams. The explicit isomorphism of such algebras
associated to various Dynkin diagrams of the same type is highly non-trivial:
it has been established for quantum finite/affine superalgebras in~\cite{y},
but seems to be an open question for general super Yangians.
Furthermore, the \emph{positive subalgebras} (those generated by $\{e_{i,r}\}$)
do essentially depend on the choice of a Dynkin diagram.

In the recent paper~\cite{ts}, we address the above question for the
Lie superalgebra $A(m,n)$ as well as generalize the results of
Sections~\ref{sec super-Lie quantuma affine},~\ref{sec superyangian counterparts}
to all of its Dynkin diagrams. Explicitly, given a superspace
$V=V_{\bar{0}}\oplus V_{\bar{1}}$ with a basis $\sfv_1,\ldots,\sfv_n$ such that each $\sfv_i$ is
either \emph{even} ($\sfv_i\in V_{\bar{0}}$) or \emph{odd} ($\sfv_i\in V_{\bar{1}}$),
one may define the quantum loop superalgebras $U_\vv(L\gl(V)), U_\vv(L\ssl(V))$
as well as the super Yangians $Y_\hbar(\gl(V)), Y_\hbar(\ssl(V))$, both
in the RTT presentation of~\cite{frt} and the new Drinfeld presentation of~\cite{d}
(their equivalence is established following the ideas of~\cite{df}). The corresponding
\emph{positive subalgebras} $U^>_\vv(L\ssl(V))\simeq U^>_\vv(L\gl(V))$
(resp.\ $Y^>_\hbar(\ssl(V))\simeq Y^>_\hbar(\gl(V))$) are generated by
$\{e_{i,r}\}_{i\in I}^{r\in \BZ}$ (resp.\ $\{e_{i,r}\}_{i\in I}^{r\in \BN}$)
subject to the defining relations~\cite[(4.2--4.5)]{ts}
(resp.~\cite[(2.58--2.60, 2.78)]{ts}) and with the $\BZ_2$-grading
$|e_{i,r}|=|\alpha_i|$, where
\begin{equation*}
  |\alpha_i|=
  \begin{cases}
    \bar{0}, & \text{if } \sfv_i\ \text{and } \sfv_{i+1}\ \text{have\ the same parity} \\
    \bar{1}, & \text{otherwise }
  \end{cases}.
\end{equation*}

The construction of the PBWD bases for $U^>_\vv(L\ssl(V)),Y^>_\hbar(\ssl(V))$
and their integral forms $\fU^>_\vv(L\ssl(V)),\bY^>_\hbar(\ssl(V))$ is similar
to Theorems~\ref{Main Theorem 4},~\ref{Main Theorem 4.1},~\ref{pbwd for superyangian},~\ref{pbwd for gavarini superyangian}.
The corresponding \emph{ordered PBWD monomials} are defined analogously
to~(\ref{ordered supercase},~\ref{ordered superyangian}) with the indexing
sets $\bar{H},\bar{H}^+$ defined as before, but using a different $\BZ_2$-grading
on $\Delta^+$:
  $$|\alpha_j+\alpha_{j+1}+\ldots+\alpha_i|=|\alpha_j|+|\alpha_{j+1}|+\ldots+|\alpha_i|.$$
The associated shuffle algebras $S^{(V)},W^{(V)}$ and their integral forms
$\fS^{(V)},\fW^{(V)}$ are defined similar to
$S^{(m|n)},W^{(m|n)},\fS^{(m|n)},\fW^{(m|n)}$. Their elements
are supersymmetric rational functions in $\{x_{\ast,\ast}\}$, that is,
symmetric in $\{x_{i,\ast}\}$ if $|\alpha_i|=\bar{0}$ and skew-symmetric
in $\{x_{i,\ast}\}$ if $|\alpha_i|=\bar{1}$.


\end{document}